\documentclass[11pt, reqno]{amsart}

\usepackage{preamble}

\title{New Characterizations of First Order Sobolev Spaces}
\author{Przemysław Górka, Kacper Kurowski}
\date{}

\begin{document}
\begin{abstract}
We provide new characterizations of Sobolev spaces that are true under some mild conditions.
We study modified first order Sobolev spaces on metric measure spaces: $\testCurves$-Newtonian space, $\hat{\testCurves}$-Newtonian space, and Ambrosio-Gigli-Savaré-like space. We prove that if the measure is Borel regular and $\sigma$-finite, then the modified $\testCurves$-Newtonian space is equivalent to the Hajłasz--Sobolev space. Moreover, if additionally the measure is doubling then all modified spaces are equivalent to the Hajłasz--Sobolev space. 
\end{abstract}
\keywords{Sobolev spaces,  metric measure spaces, doubling measure, Poincar\'{e} inequality, analysis on metric spaces}
\subjclass[2020]{Primary 46E36, 30L99, 46E35; Secondary 43A85, 42B35.}

\maketitle
 \tableofcontents

\section{Introduction}

In recent decades, several definitions of first order Sobolev spaces on metric measures spaces have been proposed. 
Hajłasz \cite{hajlasz_orig}, defined the so-called Hajłasz--Sobolev space $M^{1,p}(X)$ on a metric measure space $(X, \metricAlone, \measure)$ as the space of those $f \in L^p(\measure)$ for which there exists a nonnegative function $g \in L^p(\measure)$ such that the inequality 
\begin{equation*}
    \abs{ f(x) - f(y) }
    \le 
    \del{ g(x) + g(y) }
    \metric\del{x, y}
\end{equation*}
holds for almost every $x, y \in X$. Each function $g$ that satisfy the above inequality is called the Hajłasz gradients of $u$. 
Another proposition for a first order Sobolev space on a metric measure space is the Newtonian space $N^{1,p}(X)$ introduced by Shanmugalingam \cite{newtonian}. 
It is the space of all functions $f \in L^p(\measure)$ for which there exists a nonnegative Borel function $g \in L^p(\measure)$ such that the inequality 
\begin{equation*}
    \abs{ f(\gamma(a)) - f(\gamma(b)) }
    \le 
    \int_\gamma g
    \quad 
\end{equation*}
holds for  $p\text{-modulus almost every rectifiable } \gamma \colon [a,b] \to X$.  
Yet another definition of a first order Sobolev space on a metric measure space has been proposed by Ambrosio, Gigli and Savaré \cite{AGS, AGS1, gigli}. They defined this space as the space of all functions $f \in L^2(\measure)$ for which there exists a nonnegative function $g \in L^2(\measure)$ such that 
\begin{equation*}
    \int_{C\del{[0,1];X} }
        \abs{ f(\gamma(0)) - f(\gamma(1)) }
    \ \mathrm{d}\mu(\gamma)
    \le 
    \int_{C\del{[0,1];X} }
        \int_0^1 g(\gamma(t)) \abs{\dot{\gamma}}(t)
        \ \mathrm{d}t
    \ \mathrm{d}\mu(\gamma) 
\end{equation*}
for all test plans $\mu$, where $\abs{\dot{\gamma}}$ is the metric speed of $\gamma$. Other definitions of first order Sobolev spaces on metric measure space than the ones listed above have been introduced (see Cheeger \cite{cheeger} for instance). Nevertheless, in the paper we will focus only on those three spaces.

The advantage of $M^{1,p}$ spaces is that, unlike most other approaches, the theory is rich without assuming the measure is doubling or the space is connected \cite{hk, hajlasz_orig, HajCon}. It is well known that if the measure $\measure$ on the metric space is doubling and supports some Poincar\'{e} inequality then  Newtonian space $N^{1,p}$ and the Hajłasz--Sobolev space $M^{1,p}$ are equivalent \cite{newtonian, hajlaszGradientsAreUpper,Heinonen2015SobolevSO}.

 Let us make the following observations. The integral along the curve $\int_\gamma g$ is usually defined as a Lebesgue integral. However, we can equivalently treat it as a Lebesgue--Stieltjes integral\footnote{The proof of this statement is a part of Remark \ref{rem::continuous_gamma_then_symmetrized_integral_is_the_usual_integral}} 
$\int_\gamma g(\gamma(t)) \ \mathrm{d}\mu_\gamma(t)$.
The latter interpretation is more general as the latter integral is well-defined for all $\gamma \colon [a,b] \to X$ that are right-continuous and of bounded variation. 
Next, let us fix $x, y \in X$ and let $\gamma_x^y \colon [0,1] \to X$ be defined by $\gamma_x^y(t) = \indicator{[0,1/2)}(t) x + \indicator{[1/2,1]}(t) y$. 
Then $\int_{\gamma_x^y} g = g(y)\metric\del{x,y}$ for any Borel $g \colon X \to [0,\infty]$. 
The right hand side of this expression is similar to the right hand side of the definition of a Hajłasz gradient. 
We can make this resemblance even more apparent by ``symmetrizing'' the integral, that is, by taking the average of this integral and the integral along the curve $\gamma_y^x$: 
\begin{equation*}
    \frac{1}{2}\del{
        \int_{\gamma_x^y} g + \int_{\gamma_y^x} g
    }
    =
    \frac{1}{2} \del{g(x) + g(y)} \metric\del{x, y}.
\end{equation*}
In consequence, if we were to define a modification of  Newtonian spaces in which we use the symmetrized integral\footnote{We will make this notion rigorous in Definition \ref{def::symmetrized_integral}} instead of the usual integral along the curve, we might define function spaces that are highly comparable with the Hajłasz--Sobolev spaces.
Within this paper we explore this idea and apply similar modifications to the definition of the first order Sobolev spaces introduced by Ambrosio, Gigli and Savaré.

The main result of this paper, Theorem \ref{thm::final_comparison}, shows that, if the measure $\measure$ is Borel regular and $\sigma$-finite, the modified Newtonian space is equivalent to the Hajłasz--Sobolev space. Also, if, additionally, $\measure$ is doubling, the  ``Ambrosio-Gigli-Savaré-like'' space is equivalent to the Hajłasz--Sobolev space.
This theorem therefore provides new characterizations of the Hajłasz--Sobolev spaces that are true in rather general settings.

The reminder of the paper is structured as follows.
We devote Section \ref{sec::preliminaries} for the Preliminaries.
In Section \ref{sec::bounded_variation} we first recall some of the basic properties of functions of bounded variation from the interval $[a,b]$. Then we introduce the family of test curves $\testCurves\del{[a,b];X}$ and their reversal.
Finally, we introduce the symmetrized integral along curves from this family. 
In Section \ref{sec::topology_on_TC} we endow family $\testCurves\del{[a,b];X}$ with the topology of convergence in measure and discuss examples of functions that are Borel in this topology.
Section \ref{sec::function_spaces} is devoted to the introduction of three normed spaces that can be viewed as first order Sobolev spaces.
The ones introduced in Subsections \ref{subsec::TC-Newtonian} and \ref{subsec::gigli-like_space} are modifications of the Newtonian spaces and the Sobolev spaces introduced by Ambrosio, Gigli and Savaré, respectively. The one introduced in Subsection \ref{subsec::TC-hat_Newtonian_space} can be seen as a space whose definition is ``in-between'' the definition of the other two spaces.
Section \ref{sec::hajlasz_and_TC_Newtonian} is dedicated for the comparison between the Hajłasz--Sobolev spaces and the modified Newtonian spaces. Finally, in Section \ref{sec::final_comparison} we compare the Hajłasz--Sobolev spaces and the Ambrosio-Gigli-Savaré-like spaces. We also dedicate this section for Theorem \ref{thm::final_comparison} that summarizes all the comparisons between the previously metioned spaces.

\section{Preliminaries} \label{sec::preliminaries}
\noindent{\bf Notation}\\
We make the convention that $|\infty - \infty| = \infty$ and $|(-\infty) - (-\infty)| = \infty$.
Moreover, we let $[n]=\set{1,...,n}$ for $n \in \mathbb{N}$. 
We will use $\lambda$ to denote the Lebesgue measure. If $(X, \metricAlone)$ is a metric space, we will use $\mathcal{B}(X)$ to denote the family of Borel subsets of $X.$
\begin{proposition}
\label{mierzalnosc}
Let $(X_i, \metricAlone_i)$ be a metric space and $ \gamma_i \colon [a,b] \to X_i$ be Borel map such that $\image (\gamma_i)$ is separable, where $i\in [n]$. Then, $F_{\gamma_1,...,\gamma_n}: [a,b]^n \to \bigtimes_{i=1}^n X_i$  defined as $F_{\gamma_1,...,\gamma_n}(t_1,...,t_n)=(\gamma_1(t_1),...,\gamma_n(t_n))$ is a Borel map.
\end{proposition}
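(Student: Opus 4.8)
The plan is to factor the statement into two parts: (i) coordinate‑wise Borel measurability, which is immediate from the hypotheses, and (ii) the identification of the Borel $\sigma$-algebra of the target product with the product of the Borel $\sigma$-algebras of the factors, which is the only place where the separability assumption is genuinely used.

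First I would replace each $X_i$ by $Y_i := \image(\gamma_i)$, equipped with the subspace metric. Since $\gamma_i$ is Borel as a map into $X_i$, it is also Borel as a map into $Y_i$; moreover, a Borel set $B \subseteq \bigtimes_{i=1}^n X_i$ has Borel trace $B \cap \bigtimes_{i=1}^n Y_i$ for the subspace topology on $\bigtimes_{i=1}^n Y_i$. Because $F := F_{\gamma_1,\dots,\gamma_n}$ takes all its values in $\bigtimes_{i=1}^n Y_i$, we get $F^{-1}(B) = F^{-1}\del{B \cap \bigtimes_{i=1}^n Y_i}$, so it suffices to prove that $F \colon [a,b]^n \to Y := \bigtimes_{i=1}^n Y_i$ is Borel. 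The point of this reduction is that each $Y_i$, being a separable metric space, is second countable.

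Next I would verify coordinate‑wise measurability. Let $\mathrm{pr}_i \colon [a,b]^n \to [a,b]$ and $\pi_i \colon Y \to Y_i$ denote the $i$-th coordinate projections; the former is continuous, hence Borel. Then $\pi_i \circ F = \gamma_i \circ \mathrm{pr}_i$ is a composition of Borel maps and therefore Borel, so for every $A \in \mathcal{B}(Y_i)$ the set $F^{-1}\del{\pi_i^{-1}(A)} = (\pi_i \circ F)^{-1}(A)$ is Borel in $[a,b]^n$. Since the product $\sigma$-algebra $\bigotimes_{i=1}^n \mathcal{B}(Y_i)$ on $Y$ is by definition generated by the sets $\pi_i^{-1}(A)$ with $i \in [n]$ and $A \in \mathcal{B}(Y_i)$, this shows that $F$ is measurable from $\del{[a,b]^n, \mathcal{B}([a,b]^n)}$ to $\del{Y, \bigotimes_{i=1}^n \mathcal{B}(Y_i)}$.

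The remaining — and essentially only nontrivial — step is to show $\mathcal{B}(Y) \subseteq \bigotimes_{i=1}^n \mathcal{B}(Y_i)$ (the reverse inclusion being automatic from continuity of the $\pi_i$). For this I would fix a countable base $\mathcal{U}_i$ of the topology of $Y_i$ and note that $\set{U_1 \times \dots \times U_n : U_i \in \mathcal{U}_i,\ i \in [n]}$ is a \emph{countable} base of the product topology on $Y$; each of its members lies in $\bigotimes_{i=1}^n \mathcal{B}(Y_i)$, and every open subset of $Y$ is a countable union of such members, hence also lies in $\bigotimes_{i=1}^n \mathcal{B}(Y_i)$. Combining this with the previous paragraph yields $F^{-1}(\mathcal{B}(Y)) \subseteq \mathcal{B}([a,b]^n)$, and together with the first reduction this gives the claim. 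The main obstacle is precisely this product‑$\sigma$-algebra identification: without the separability (equivalently second‑countability) of the images, $\mathcal{B}(Y)$ can be strictly larger than $\bigotimes_{i=1}^n \mathcal{B}(Y_i)$ and the argument fails; everything else is routine bookkeeping with subspaces and compositions.
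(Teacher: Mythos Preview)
Your proof is correct and follows essentially the same approach as the paper's: both reduce to the separable images $Y_i = \image(\gamma_i)$ and then exploit that every open set in the (second-countable, equivalently Lindel\"of) product $\prod_i Y_i$ is a countable union of basic open rectangles, whose preimages under $F$ are Borel. The paper compresses this into a single line invoking the Lindel\"of theorem, whereas you make the intermediate step $\mathcal{B}(Y) = \bigotimes_i \mathcal{B}(Y_i)$ explicit, but the substance is identical.
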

\begin{proof}
    Let
    $
        \tilde{X}_i 
        \coloneqq 
        \image\del{ \gamma_i},
    $
    then by assumption 
    we have $\tilde{X}_i$ is separable. Therefore, by the Lindel\"{o}f Theorem, for every open set $U \subset X^n$
    we have $F_{\gamma_1,...,\gamma_n}^{-1}[U]=F_{\gamma_1,...,\gamma_n}^{-1}[\bigtimes_{i=1}^n \tilde{X}_i \cap U]$ is a Borel subset of $[a,b]^n$. 
\end{proof}
Let us remark that assuming the continuum hypothesis $\bf{CH}$ we have separability of image of Borel maps.
\begin{remark}
\label{lem::image_of_Borel_from_interval_is_separable}
    Let $(X, \metricAlone)$ be a metric space and 
    $
        \gamma \colon [a,b] \to X
    $
    be a Borel map. Then, $\image (\gamma)$ is separable.
\end{remark}
\begin{proof}
      Suppose that the image of $\gamma$
    is not separable. 
    Then there exist $\eps > 0$ and an uncountable
    family 
    $
        \set{t_i}_{i \in I}
    $
    such that 
    $
        \set{ \gamma(t_i)}_{i \in  I}
    $
    is a $2\eps$-separated family of elements of $\image\del{\gamma}$.

    We have that 
    $
        \mathcal{B} 
        \coloneqq
        \set{ B\del{ \gamma(t_i), \eps }}_{i \in I}
    $ 
    is a family of pairwise disjoint balls.
    Each element of $\mathcal{B}$ is an open set,
    hence the union of any subfamily of $\mathcal{B}$
    is open. 
    Thus, every element of
    $
                \set{
            \bigcup_{j \in J}
            B\del{ \gamma(t_j), \eps }
            \given 
            J \in 2^I
        }
    $
    is open. 
    In consequence, every element of 
    $
        \mathcal{A}
        \coloneqq 
        \set{
            \gamma^{-1}\sbr{ 
                \bigcup_{j \in J}
            B\del{ \gamma(t_j), \eps }
            }
            \given 
            J \in 2^I
        }
    $
    is Borel in $[a,b]$. Therefore, 
    \[
        \# \mathcal{A} \leq \mathfrak{c}.
    \]
    Let us notice that for $J, J' \in 2^I$
    if $J \ne J'$, then
    $
        \set{ \gamma(t_j) }_{j \in J}
        \ne 
        \set{ \gamma(t_j) }_{j \in J'},
    $
    hence
    $$
        \gamma^{-1}\sbr{ 
                \bigcup_{j \in J}
            B\del{ \gamma(t_j), \eps }
            }
        \ne 
        \gamma^{-1}\sbr{ 
                \bigcup_{j \in J'}
            B\del{ \gamma(t_j), \eps }
            },
    $$
    as the balls are pairwise disjoint.
    This shows that $\# \mathcal{A}\geq \#2^I$.
    As $I$ is uncountable, assuming the continuum hypothesis
    we have $\# \mathcal{A} \ge 2^\mathfrak{c}$ and we get a contradiction.
\end{proof}

\section{Functions of bounded variation} \label{sec::bounded_variation}
\begin{definition}[Partition of interval]
\label{def:partition_of_interval}
        Let $[a,b] \subseteq \bR$. A tuple $\Delta = \del{t_i}_{i=0}^n$ shall be called a \emph{partition} of $[a,b]$ if $a = t_0 < \cdots < t_n = b$.   The family of partitions of $[a,b]$ shall be denoted by $\partition\del{ [a,b] }$. For partition $\Delta = \del{ t_i }_{i=0}^n$ we define its \emph{diameter} as:
        \begin{equation*}
       \abs{ \Delta } 
        \coloneqq 
        \max_{i \in [n]} 
            t_i - t_{i-1}.
    \end{equation*}
     If $(X, \metricAlone)$ is a metric space, $\gamma \colon [a,b] \to X$, and $\del{ t_i}_{i=0}^n = \Delta \in \partition \del{ [a,b] }$, then we define the \emph{$\Delta$-variation} of $\gamma$ by
    \begin{equation*}
        V^\Delta(\gamma)
        \coloneqq
            \sum_{i=1}^n
                    \metric\del{ \gamma(t_i), \gamma(t_{i-1}) }.
    \end{equation*}
    A sequence $\del{ \Delta_n }_n$ of partitions of $[a,b]$ will be called \emph{normal}, if 
    $
        \lim_{n \to \infty}
            \abs{\Delta_n}
        =
        0.
    $
\end{definition}

    While we defined partitions to be tuples of elements, we shall often work with them as if they are sets. For example, for $\Delta, \sigma \in \partition\del{ [a,b] }$ by $\Delta \cup \sigma$ we shall denote the unique element of $\partition\del{ [a,b] }$ which is a tuple consisting of all terms of $\Delta$ and $\sigma$ arranged in an increasing sequence. We will also use symbol $t \in \Delta$ to mean that $t$ is a term within tuple $\Delta$.
\begin{definition}[Variation of a function]
\label{def:variation_of_function}
    Let $(X, \metricAlone)$ be a metric space. For $\gamma \colon [a,b] \to X$ we define
    \begin{equation*}
        V(\gamma)
        =
        V_\gamma 
        \coloneqq 
        \sup_{
            \Delta \in \partition\del{ [a,b] }
        }
            V^\Delta(\gamma),
    \end{equation*}
    and the value of $V(\gamma)$ shall be called \emph{variation} of $\gamma$.
\end{definition}
\begin{definition}[Functions of bounded variation]
\label{def:functions_of_bounded_variation}
    Let $(X, \metricAlone)$ be a metric space. We shall say that $\gamma \colon [a,b] \to X$ is of \emph{bounded variation} if $V(\gamma) < \infty$. We shall denote by $\boundedVariation \! \del{[a,b]; X}$ the family of all functions $\gamma \colon [a,b] \to X$ of bounded variation. We also define
    \begin{equation*}
        \boundedVariationRC\del{ [a,b]; X }
        \coloneqq 
        \set{
            \gamma \in \boundedVariation\del{ [a,b]; X }
            \given
            \gamma \text{ is right-continuous on}
            \intco{a, b}
        }.
    \end{equation*}
\end{definition}
\begin{remark}
\label{rem:restricting_functions_of_bounded_variation}
    For all $r, t \in [a,b]$ with $r \le t$, if $\gamma \in \boundedVariation\del{ [a,b]; X}$ ($\gamma \in \boundedVariationRC\del{ [a,b]; X}$), then $\gamma \rvert_{[r,t]} \in \boundedVariation\del{ [r,t]; X}$ ($\gamma \rvert_{[r,t]} \in \boundedVariationRC\del{ [r,t]; X}$).
\end{remark}
\begin{proof}
    Let $\Delta \in \partition\del{ [r,t] }$, then 
    \begin{equation*}
        V^\Delta \del{
            \gamma  \rvert_{[r,t]}      
        }
        \le 
        V^{\Delta \cup \set{a,b} } \del{
            \gamma        
        }
        \le 
        V(\gamma).
    \end{equation*}
    Taking supremum over $\Delta \in \partition\del{ [r,t] }$ we see $V(\gamma \rvert_{[r,t]} ) \le V(\gamma) < \infty$, so $\gamma \rvert_{[r,t]} \in \boundedVariation\del{ [a,b]; X}$. The other claim follows from the fact that restricting functions preserves their right-continuity.
\end{proof}
\begin{lemma}
\label{lem:functions_of_bounded_variation_have_cauchy_limits_everywhere}
    Let $(X, \metricAlone)$ be a metric space and $\gamma \in \boundedVariation\del{ [a,b]; X}$.
    Then
    \begin{enumerate}[itemindent=2em,leftmargin=0em, label=\roman*), itemsep=1em]
        \item
    \begin{equation}
    \label{lem:functions_of_bounded_variation_have_cauchy_limits_everywhere::eq:almost_right_limits}
        \forall t \in \intco{a,b}
        \quad 
        \forall \eps > 0
        \quad 
        \exists \delta > 0
        \quad \forall r, s \in (t, t+\delta)
        \qquad 
        \metric \del{
            \gamma(r),\gamma(s)
        } \le \eps 
    \end{equation}
    and
    \begin{equation}
    \label{lem:functions_of_bounded_variation_have_cauchy_limits_everywhere::eq:almost_left_limits}
        \forall t \in \intoc{a,b}
         \quad 
        \forall \eps > 0
        \quad 
        \exists \delta > 0
        \quad \forall r, s \in (t-\delta, t)
        \qquad 
        \metric \del{
            \gamma(r),\gamma(s)
        } \le \eps.
    \end{equation}

    In consequence, for all $t \in [a,b]$, if $t_n \to t^+$ (and $t \in \intco{a,b})$ or $t_n \to t^-$ (and $t \in \intoc{a,b}$), then $\del{\gamma(t_n)}_n$ is a Cauchy sequence. 
    Moreover, if $t_n \to t^-$ (or $t^+$) and $s_n \to t^-$ (or $t^+$)
    (respectively), then if $(r_n)$ is a sequence which alternates between $(t_n)$ and $(s_n)$ then $\del{\gamma(r_n)}_n$ is a Cauchy sequence. 
    \item
    \begin{equation}
    \label{cor:bounded_variation_limits_of_distances_exist::eq:main:right_limit_of_metric}
        \forall r \in \intco{a,b}
        \quad 
            \forall t \in \intcc{a,b}
        \qquad 
        \lim_{s \to r^+}
            \metric\del{
                \gamma(s),
                \gamma(t)
            }
        \text{ exists}
    \end{equation}
    and
    \begin{equation}
    \label{cor:bounded_variation_limits_of_distances_exist::eq:main:left_limit_of_metric}
        \forall r \in \intoc{a,b}
        \quad 
            \forall t \in \intcc{a,b}
        \qquad 
        \lim_{s \to r^-}
            \metric\del{
                \gamma(s),
                \gamma(t)
            }
        \text{ exists}.
    \end{equation}
    \end{enumerate}
\end{lemma}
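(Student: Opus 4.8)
The plan is to prove i) by contradiction against $V(\gamma)<\infty$, and then to obtain the Cauchy statements and all of ii) as essentially formal consequences. The only genuinely substantive step is the first displayed inequality of i); everything after it is bookkeeping.

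For \eqref{lem:functions_of_bounded_variation_have_cauchy_limits_everywhere::eq:almost_right_limits} I would argue by contradiction. Suppose there are $t \in \intco{a,b}$ and $\eps>0$ such that every right neighbourhood of $t$ contains a pair of points on which $\gamma$ oscillates by more than $\eps$. Starting from $\delta_1 = b-t$, I recursively pick, for each $k\ge 1$, points $r_k<s_k$ in $(t,t+\delta_k)$ with $\metric\del{\gamma(r_k),\gamma(s_k)}>\eps$ and then set $\delta_{k+1}=r_k-t$; this forces the nesting $t<\cdots<r_{k+1}<s_{k+1}<r_k<s_k<\cdots<r_1<s_1<b$. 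For each $N$ the terms $a,b,r_1,s_1,\dots,r_N,s_N$ arranged in increasing order (deleting repetitions among the endpoints) form a partition $\Delta_N\in\partition\del{[a,b]}$, and by Definition \ref{def:variation_of_function} we get $V^{\Delta_N}(\gamma)\ge\sum_{k=1}^N\metric\del{\gamma(r_k),\gamma(s_k)}>N\eps$. Letting $N\to\infty$ gives $V(\gamma)=\infty$, contradicting $\gamma\in\boundedVariation\del{[a,b];X}$. The inequality \eqref{lem:functions_of_bounded_variation_have_cauchy_limits_everywhere::eq:almost_left_limits} is proved identically, building instead an increasing sequence approaching $t$ from below.

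The Cauchy claims in i) drop out of these two estimates. If $t_n\to t^+$ with $t\in\intco{a,b}$, then for a given $\eps$ all but finitely many $t_n$ lie in $(t,t+\delta)$, whence $\metric\del{\gamma(t_n),\gamma(t_m)}\le\eps$ for $n,m$ large; the case $t_n\to t^-$ uses \eqref{lem:functions_of_bounded_variation_have_cauchy_limits_everywhere::eq:almost_left_limits}. Moreover, a sequence $(r_n)$ that alternates between two sequences tending to $t^-$ (or $t^+$) itself tends to $t^-$ (or $t^+$), so the same estimate applies verbatim to $(r_n)$ and $\del{\gamma(r_n)}_n$ is Cauchy.

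For ii), fix $r\in\intco{a,b}$ and $t\in\intcc{a,b}$ and take an arbitrary sequence $s_n\to r^+$. By i) the sequence $\del{\gamma(s_n)}_n$ is Cauchy, and the reverse triangle inequality $\abs{\metric\del{\gamma(s_n),\gamma(t)}-\metric\del{\gamma(s_m),\gamma(t)}}\le\metric\del{\gamma(s_n),\gamma(s_m)}$ shows that $\del{\metric\del{\gamma(s_n),\gamma(t)}}_n$ is Cauchy in $\bR$, hence converges. To see the limit does not depend on the chosen sequence, given $s_n\to r^+$ and $s_n'\to r^+$ I interleave them into one sequence $r_n\to r^+$; by the ``moreover'' part of i) the real sequence $\del{\metric\del{\gamma(r_n),\gamma(t)}}_n$ converges, and since it contains both $\del{\metric\del{\gamma(s_n),\gamma(t)}}_n$ and $\del{\metric\del{\gamma(s_n'),\gamma(t)}}_n$ as subsequences, all three share the same limit. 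This proves \eqref{cor:bounded_variation_limits_of_distances_exist::eq:main:right_limit_of_metric}, and \eqref{cor:bounded_variation_limits_of_distances_exist::eq:main:left_limit_of_metric} follows by the symmetric argument. The one place that needs a little care is the bookkeeping in the nested-interval construction — ensuring the chosen points stay in $[a,b]$ and are correctly ordered so that they genuinely assemble into an element of $\partition\del{[a,b]}$ — but this is routine.
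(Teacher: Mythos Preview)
Your proof is correct and follows essentially the same approach as the paper's: a contradiction argument for part~i) that stacks infinitely many $\eps$-oscillations into partitions of unbounded $\Delta$-variation, followed by the Cauchy and reverse-triangle-inequality consequences. The only cosmetic difference is that in part~ii) the paper applies the $\eps$--$\delta$ Cauchy criterion for the limit directly, while you pass through sequences and then argue independence of the chosen sequence via interleaving; both routes are standard and equally short.
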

\begin{proof}    
    \vspace{-1em}
    \begin{enumerate}[itemindent=2em,leftmargin=0em, label=\roman*), itemsep=0.5em]
        \item Without loss of generality, it suffices to prove only one of \eqref{lem:functions_of_bounded_variation_have_cauchy_limits_everywhere::eq:almost_right_limits} and
        \eqref{lem:functions_of_bounded_variation_have_cauchy_limits_everywhere::eq:almost_left_limits}, as the other can
        be proved analogously.
        We will show \eqref{lem:functions_of_bounded_variation_have_cauchy_limits_everywhere::eq:almost_left_limits}. For this purpose we fix $t \in \intoc{a,b}$ and suppose the claim is false. Then, there exist $\eps > 0$ and sequences $(s_n)_n$ and $(r_n)_n$ such that
         \begin{equation*}
                \forall n \in \bN 
                \qquad 
                s_n, r_n \in \del{ a, t},
                \quad 
                s_n < r_n < s_{n+1},
                \quad \text{ and } \quad 
                \metric \del{
                    \gamma(s_n), \gamma(r_n)
                }
                \ge \eps.
         \end{equation*}

        Let us define partitions 
        $
            \Delta_k = \del{ a, s_{1}, r_{1}, s_{2}, \ldots, s_{k-1},
            r_{k-1}, s_{k}, b
            }
        $, where $k \in \mathbb{N}$.
        Then, for all $k \in \bN$ we have
        \begin{align*}
            V^{\Delta_k}
            &=
            \metric \del{
                    \gamma(a), \gamma(s_{1})
                }
            + \sum_{i=1}^{k-1}
                \del{
                    \metric \del{
                        \gamma(s_{i}), \gamma(r_{i})
                    }
                    +\metric \del{
                        \gamma(r_{i}), \gamma(s_{i+1})
                    }
                }
            +
            \metric \del{
                    \gamma(s_{k}), \gamma(b)
                }
            \\
            &\ge 
             \sum_{i=1}^{k-1}
                    \metric \del{
                        \gamma(s_{i}), \gamma(r_{i})
                    }
            \\
            &\ge (k-1)\eps,
        \end{align*}
        so $V^{\Delta_k} \to \infty$ as $k \to \infty$. 
        This contradicts $\gamma \in \boundedVariation\del{ [a,b]; X}$. Hence, the claim is proved.
         
        For the claim with Cauchy sequences, it is again sufficient to 
        prove it for one of the sides.
        Let $t \in \intoc{a,b}$ and $t_n \to t^-$.
        Let $\eps > 0$. There exists $\delta > 0$ as in \eqref{lem:functions_of_bounded_variation_have_cauchy_limits_everywhere::eq:almost_left_limits}.
        Then there exists $N \in \bN$ such that for $n, m \ge N$
        we have 
        $
            \metric\del{ \gamma( t_n), \gamma(t_m) } \le \eps 
        $
        and the claim is proved. The last  claim follows from the fact that a sequence alternating between the two still approaches from below (or above).
        \item 
        Without loss of generality, it suffices to prove only one of 
        \eqref{cor:bounded_variation_limits_of_distances_exist::eq:main:right_limit_of_metric}
        and 
        \eqref{cor:bounded_variation_limits_of_distances_exist::eq:main:left_limit_of_metric}
        as the other can be proved analogously. 
        We will show \eqref{cor:bounded_variation_limits_of_distances_exist::eq:main:right_limit_of_metric}. Let $r \in \intco{a,b}$, $t \in [a,b]$ and $\eps > 0$.
        By property \eqref{lem:functions_of_bounded_variation_have_cauchy_limits_everywhere::eq:almost_right_limits} from Lemma
        \ref{lem:functions_of_bounded_variation_have_cauchy_limits_everywhere}
        there exists $\delta > 0$ such that 
        if $s_1, s_2 \in (r, r+\delta)$, then
        $
        \metric\del{ \gamma(s_1), \gamma(s_2) }\le \eps. 
        $
        Therefore
        \begin{equation*}
            \abs{
                \metric\del{ \gamma(s_1), \gamma(t) }
                - \metric\del{ \gamma(t), \gamma(s_2) }
            }
            \le 
            \metric\del{ \gamma(s_1), \gamma(s_2) }
            \le 
            \eps
        \end{equation*}
        and since $\eps > 0$ is arbitrary, the claim is proved.
    \end{enumerate}
\end{proof}
\begin{corollary}[Image of bounded variation function is totally bounded]
\label{cor:functions_of_bounded_variation_image_totally_bounded}
    Let $(X, \metricAlone)$ be a metric space and $\gamma \in \boundedVariation\del{ [a,b]; X}$. Then $\image\del{\gamma}$ is totally bounded and $\diam\del{ \image\del{ \gamma}} \leq V(\gamma)$. 
\end{corollary}
\begin{proof}
    It is sufficient to show that every sequence in $\image\del{\gamma}$ has a Cauchy subsequence. Let $(\gamma(t_n))_n$ be a sequence in $\image\del{\gamma}$, where $t_n \in [a,b]$. By compactness of $[a,b]$, there exist $t\in [a,b]$ and a subsequence $(t_{n_k})_k$ which converges to $t$ in such a way that all of its terms are either:
    strictly smaller than $t$, strictly greater than $t$, or equal to $t$. 
    In either of these cases, sequence $\del{\gamma\del{ t_{n_{k}}}}_k$ is Cauchy by Lemma \ref{lem:functions_of_bounded_variation_have_cauchy_limits_everywhere} or by being a constant sequence.

    Now, for the second part. Let $x, y \in \image\del{ \gamma}$, then from the very definition of variation of $\gamma$ we have
    \begin{equation*}
                \metric\del{ x, y} \leq V(\gamma).
    \end{equation*}
    Therefore, taking supremum over $x, y \in \image\del{ \gamma}$, we have $ \diam\del{ \image\del{ \gamma}}\leq V(\gamma)$.    
\end{proof}

\begin{definition}[Left-jump, right-jump]
\label{def:left-jump_right-jump}
    Let $(X, \metricAlone)$ be a metric space and $\gamma \in \boundedVariation\del{ [a,b]; X}$.
    We define functions 
    $
        \phi_\gamma^L, \phi_\gamma^R 
        \colon 
        [a,b]
        \to 
        \intco{0. \infty}
    $
    of the \emph{left-jumps} and the \emph{right-jumps} of $\gamma$
    by the formulas
    \begin{equation*}
       \forall t \in [a,b]
        \qquad 
        \phi_\gamma^L(t)
        \coloneqq 
        \lim_{s \to t^-}
            \metric\del{
                \gamma(s), \gamma(t)
            }
        \quad \text{ and } \quad 
        \phi_\gamma^R(t)
        \coloneqq 
        \lim_{s \to t^+}
            \metric\del{
                \gamma(s), \gamma(t)
            },
    \end{equation*}
    where we put $\phi_\gamma^L(a) = 0$ and $\phi^R_\gamma(b) = 0$.
    
    Functions $\phi_\gamma^L$ and $\phi_\gamma^R$
    are well-defined by Lemma
    \ref{lem:functions_of_bounded_variation_have_cauchy_limits_everywhere}.
    In the case of 
    $\gamma \in \boundedVariationRC\del{ [a,b]; X}$
    we have $\phi^R_\gamma \equiv 0$;
    in this case we will also simplify our 
    notation by writing $\phi_\gamma$
    instead of $\phi^L_\gamma$.
\end{definition}
\begin{remark}
\label{rem:L_and_R_are_the_sets_of_left_and_right_discontinuities_of_gamma}
    Let $(X, \metricAlone)$ be a metric space and $\gamma \in \boundedVariation\del{ [a,b]; X}$.
    Then sets
    \begin{equation*}
        L_{\gamma}
        \coloneq 
        \set{
            t \in [a,b]
            \given 
            \phi_\gamma^L(t) > 0
        }
        \quad \text{ and } \quad 
        R_{\gamma}
        \coloneq 
        \set{
            t \in [a,b]
            \given 
            \phi_\gamma^R(t) > 0
        }
    \end{equation*}
    are precisely the sets of points of left-
    and right-discontinuity of $\gamma$.
\end{remark}
\begin{lemma}
\label{cor:variation_bounds_sum_of_jumps_from_above}
    Let $(X, \metricAlone)$ be a metric space and $\gamma \in \boundedVariation\del{ [a,b]; X}$.
    Then\footnote{If $g:[a,b]\rightarrow [0,\infty]$, then $\sum_{t \in [a,b]} g(t) := \mathop{\sup}_{\substack{K\subset [a,b],\\ \#K<\infty}} \sum_{t \in K} g(t)$.} 
    \begin{equation}
        \label{nier}
        \sum_{t \in [a,b]}
            \phi_\gamma^L(t)
            + \phi_\gamma^R(t)
        \le 
        V(\gamma).
    \end{equation}
    Moreover, the set of points at which $\gamma$ is not
    continuous is at most countable.  
\end{lemma}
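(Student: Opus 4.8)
The plan is to prove \eqref{nier} by first establishing it with the summation index set $[a,b]$ replaced by an arbitrary finite set $K\subseteq[a,b]$, and then invoking the definition of $\sum_{t\in[a,b]}$ from the footnote to pass to the supremum. So fix a finite set $K=\set{u_1<\cdots<u_m}\subseteq[a,b]$ and $\eta>0$. By Lemma~\ref{lem:functions_of_bounded_variation_have_cauchy_limits_everywhere}~ii) the limits defining $\phi_\gamma^L$ and $\phi_\gamma^R$ genuinely exist, and by Corollary~\ref{cor:functions_of_bounded_variation_image_totally_bounded} they are finite (bounded by $V(\gamma)$). Hence for each $j$ with $u_j\in\intoc{a,b}$ I can pick $p_j$ just below $u_j$ with $\metric(\gamma(p_j),\gamma(u_j))\ge\phi_\gamma^L(u_j)-\eta/(2m)$, and for each $j$ with $u_j\in\intco{a,b}$ a point $q_j$ just above $u_j$ with $\metric(\gamma(q_j),\gamma(u_j))\ge\phi_\gamma^R(u_j)-\eta/(2m)$.

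The technical point is to make these choices simultaneously so that everything fits into one partition. I would choose a single $\delta>0$ that works for all the limits above and additionally satisfies $\delta<u_1-a$ (when $u_1>a$), $\delta<b-u_m$ (when $u_m<b$), and $2\delta<\min_j(u_{j+1}-u_j)$, with the $p_j,q_j$ taken in $\intoo{u_j-\delta,u_j}$ and $\intoo{u_j,u_j+\delta}$ respectively. This forces $q_j<\tfrac12(u_j+u_{j+1})<p_{j+1}$, so the chosen points, together with the points of $K$ and the endpoints $a,b$, are pairwise distinct and, listed in increasing order, form a partition $\Delta\in\partition([a,b])$.

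Since each pair $(p_j,u_j)$ and $(u_j,q_j)$ appears as a consecutive pair in $\Delta$ and all summands of $V^\Delta(\gamma)$ are nonnegative,
\begin{equation*}
V(\gamma)\ \ge\ V^\Delta(\gamma)\ \ge\ \sum_{j=1}^m\del{\metric(\gamma(p_j),\gamma(u_j))+\metric(\gamma(u_j),\gamma(q_j))}\ \ge\ \sum_{t\in K}\del{\phi_\gamma^L(t)+\phi_\gamma^R(t)}-\eta,
\end{equation*}
where the terms missing for $u_j=a$ or $u_j=b$ vanish by the convention $\phi_\gamma^L(a)=\phi_\gamma^R(b)=0$. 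Letting $\eta\to0$ and then taking the supremum over all finite $K\subseteq[a,b]$ yields \eqref{nier}.

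For the final assertion, by Remark~\ref{rem:L_and_R_are_the_sets_of_left_and_right_discontinuities_of_gamma} the set of discontinuity points of $\gamma$ is $L_\gamma\cup R_\gamma=\bigcup_{n\in\bN}\set{t\in[a,b]\given\phi_\gamma^L(t)+\phi_\gamma^R(t)>1/n}$. If one of these sets were infinite, it would contain finite subsets $K$ of arbitrarily large cardinality, and \eqref{nier} would give $V(\gamma)\ge\#K/n$ for all of them, contradicting $V(\gamma)<\infty$; hence each such set is finite and the union is at most countable. I do not expect a serious obstacle here: the only care needed is the coordinated choice of the auxiliary points $p_j,q_j$ near the neighbouring elements of $K$, which is routine precisely because $K$ is finite.
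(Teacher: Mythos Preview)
Your proof is correct and follows essentially the same approach as the paper's: both arguments pick, for each point of a finite subset $K$, auxiliary points just to the left and right approximating the jumps, assemble these into a single partition of $[a,b]$, bound $V(\gamma)$ from below by $V^\Delta(\gamma)$, and then let the error tend to zero and take the supremum over finite $K$; the countability then follows by slicing into the level sets $\{\phi_\gamma^L+\phi_\gamma^R>1/n\}$ (the paper treats $L_\gamma$ and $R_\gamma$ separately, but this is cosmetic). Your version is in fact slightly more explicit than the paper's about how to choose a single $\delta$ ensuring the interlacing $q_j<p_{j+1}$, which the paper simply asserts.
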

\begin{proof}
    Let $A= \del{s_i}_{i=1}^k$ be a finite subset of $[a,b]$. We arrange $s_i$ in ascending order, that is, $s_1 < s_2 < \cdots < s_k$. For $\eps > 0$ there exist $\del{r_i}_{i=1}^k \subset [a,b]$ and $\del{t_i}_{i=1}^k \subset [a,b]$ such that
    \begin{itemize}
        \item 
        $r_1 \in (a,s_1)$ if $s_1 \neq a$ or $r_1 = a$ if $s_1 = a$,
        \item 
        $t_k \in (s_k,b)$ if $s_k \neq b$ or $t_k = b$ if $s_k = b$,
        \item 
        For all $i \in [k-1]$ we have
        $s_i < t_i < r_{i+1} < s_{i+1}$
        \item 
        For all $i \in [k]$ we have:
        $
            \metric\del{
                \gamma(s_i),
                \gamma(r_i)
            }
            \ge 
            \phi^L_\gamma(s_i) - \frac{\eps}{2k}
        $
        and 
        $
            \metric\del{
                \gamma(s_i),
                \gamma(t_i)
            }
            \ge 
            \phi^R_\gamma(s_i) - \frac{\eps}{2k}
        $.
    \end{itemize}
   From the definition of the variation of $\gamma$ we have
    \begin{align*}
        V(\gamma)
        &\ge 
        \sum_{i=1}^k
            \del{
                \metric\del{
                    \gamma(r_i),
                    \gamma(s_i)
                }
                +
                \metric\del{
                    \gamma(s_i),
                    \gamma(t_i)
                }
            }
        \\ 
        &\ge 
        \sum_{i=1}^k
            \del{
                \phi_\gamma^L(s_i)-
                \frac{\eps}{2k} +
                \phi_\gamma^R(s_i)-
                \frac{\eps}{2k}
            }
        \\
        &=
        \sum_{i=1}^k
            \del{
                \phi_\gamma^L(s_i)
            +\phi_\gamma^R(s_i)}
        -
        \eps.
    \end{align*}
  Therefore, since $\eps$ is arbitrary and since $A$ is an arbitrary finite subset of $[a,b]$, we have
    \begin{equation*}
         V(\gamma)
         \ge 
        \sum_{t \in [a,b] }
            \phi^L_\gamma(t)
            +
            \phi_\gamma^R(t)
    \end{equation*}
    as needed. 

    Next, we prove that the set of points at which $\gamma$ is not
    continuous is at most countable. For this purpose it is enough to prove that $L_{\gamma}$ and $R_{\gamma}$ are at most countable. We shall prove that $L_{\gamma}$ is at most countable. Since we have $L_{\gamma} = \bigcup_{n=1}^{\infty}L_n$, where 
    \begin{equation*}
        L_n
        \coloneqq 
        \set{
            t \in [a,b]
            \given
            \phi^L_\gamma(t) \ge 
                \frac{1}{n}
        },
    \end{equation*}
    it is enough to show that $L_n$ is a finite set for every $n$. Let us suppose there is $N \in \bN$
     such that $L_N$ is not a finite set.  Then, for each $k \in \bN$ we can select $k$ elements 
     $t_1^k, \ldots, t_k^k \in L_N$. Therefore, by (\ref{nier}) we have
     \begin{equation*}
         V(\gamma)
         \ge 
        \sum_{n=1}^k
            \phi^L_\gamma(t^k_n)\ge \sum_{n=1}^k \frac{n}{N} = \frac{k}{N}.
    \end{equation*}
      In this way we get $V(\gamma) =\infty$. 
     However, that contradicts $\gamma \in \boundedVariation\del{ [a,b]; X}$.
     Hence, the claim is proved.
\end{proof}

\begin{proposition}
\label{cor:variation_is_a_uniform_limit_of_Delta-variations}
    Let $(X, \metricAlone)$ be a metric space and $\gamma \in \boundedVariationRC\del{ [a,b]; X}$. Then 
    \begin{equation*}
        V(\gamma)
        =
        \lim_{ 
            \substack{ 
                \abs{ \Delta } \to 0  \\
                \Delta \in \partition\del{ [a,b] }
            }
        }
            V^\Delta(\gamma).
    \end{equation*}
\end{proposition}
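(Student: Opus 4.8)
The plan is to establish the two halves of the equality separately. One direction is immediate: by Definition~\ref{def:variation_of_function} we have $V^\Delta(\gamma) \le V(\gamma)$ for every $\Delta \in \partition\del{[a,b]}$, and moreover $V(\gamma) < \infty$ since $\gamma$ is of bounded variation. Hence it remains to prove that for every $\eps > 0$ there exists $\delta > 0$ such that $V^\Delta(\gamma) > V(\gamma) - \eps$ whenever $\abs{\Delta} < \delta$; combined with the first inequality this gives $\abs{V^\Delta(\gamma) - V(\gamma)} < \eps$ for all such $\Delta$, which is exactly the asserted limit.

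Fix $\eps > 0$. Since $V(\gamma)$ is the supremum of the numbers $V^\sigma(\gamma)$ over $\sigma \in \partition\del{[a,b]}$, choose a single partition $\sigma = \del{s_i}_{i=0}^m$ with $V^\sigma(\gamma) > V(\gamma) - \eps/2$. Because $s_0 = a$ and $s_m = b$ belong to every partition, only the interior nodes $s_1, \dots, s_{m-1}$ can fail to lie in a given $\Delta$. The strategy is to show that refining any sufficiently fine $\Delta$ by $\sigma$ costs at most $\eps/2$ of variation. Using the elementary monotonicity $V^{\Delta \cup \sigma}(\gamma) \ge V^\sigma(\gamma)$ — inserting a node into a partition cannot decrease the $\Delta$-variation, an immediate consequence of the triangle inequality, already used in Remark~\ref{rem:restricting_functions_of_bounded_variation} — together with a bound $V^{\Delta \cup \sigma}(\gamma) - V^\Delta(\gamma) \le \eps/2$ valid for fine $\Delta$, we obtain
\[
    V^\Delta(\gamma) \ge V^{\Delta \cup \sigma}(\gamma) - \tfrac{\eps}{2} \ge V^\sigma(\gamma) - \tfrac{\eps}{2} > V(\gamma) - \eps,
\]
as desired.

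To produce that bound, I would first require $\delta < \min_{1 \le j \le m-1} \del{s_{j+1} - s_j}$, so that for $\abs{\Delta} < \delta$ each interval of $\Delta = \del{t_i}_{i=0}^n$ contains at most one of the nodes $s_1, \dots, s_{m-1}$ in its interior. Given such a $\Delta$ and an interior node $s_j \notin \Delta$, let $t_{i-1} < s_j < t_i$ be the unique interval of $\Delta$ containing it; passing from $\Delta$ to $\Delta \cup \sigma$ replaces the summand $\metric\del{\gamma(t_{i-1}), \gamma(t_i)}$ by $\metric\del{\gamma(t_{i-1}), \gamma(s_j)} + \metric\del{\gamma(s_j), \gamma(t_i)}$, and by the triangle inequality the increase is at most $2\,\metric\del{\gamma(s_j), \gamma(t_i)}$. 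This is precisely where right-continuity enters: $t_i \in (s_j, s_j + \delta)$ and $s_j \in \intco{a,b}$, so by shrinking $\delta$ further (imposing one condition per node, finitely many in all) I can ensure $\metric\del{\gamma(s_j), \gamma(t)} < \eps/(4m)$ for every $t \in (s_j, s_j + \delta)$ and every $j \in [m-1]$. Summing the increases over the at most $m-1$ interior nodes of $\sigma$ not in $\Delta$ then gives $V^{\Delta \cup \sigma}(\gamma) - V^\Delta(\gamma) < (m-1)\cdot\tfrac{\eps}{2m} < \tfrac{\eps}{2}$, which completes the argument.

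The one genuinely delicate point is this last estimate, and it is where the hypothesis $\gamma \in \boundedVariationRC\del{[a,b];X}$ is essential and not merely technical: for a general function of bounded variation a two-sided jump is invisible to every partition that avoids the jump point, so $V^\Delta(\gamma)$ need not converge to $V(\gamma)$ at all. Bounding the local defect by $2\,\metric\del{\gamma(s_j), \gamma(t_i)}$ — the distance to the \emph{right} endpoint of the enclosing $\Delta$-interval — rather than symmetrically or by the left endpoint, is exactly what allows right-continuity to make it small; the rest is triangle-inequality bookkeeping plus the choice of a near-optimal $\sigma$.
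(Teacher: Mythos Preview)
Your proof is correct. Both your argument and the paper's follow the same overall scheme: fix a near-optimal partition $\sigma$, then show that for any sufficiently fine $\Delta$ the defect $V^{\Delta \cup \sigma}(\gamma) - V^\Delta(\gamma)$ is small, and conclude via $V^{\Delta \cup \sigma}(\gamma) \ge V^\sigma(\gamma)$. The difference lies in how that defect is controlled. The paper first proves an auxiliary ``substitute for uniform continuity'' lemma (Lemma~\ref{lem:substitution_for_uniform_continuity_for_RC_bounded_variation}): for every $\eps>0$ there is $\delta>0$ such that whenever $|t-r|\le\delta$ one has $\metric(\gamma(r),\gamma(s))+\metric(\gamma(s),\gamma(t))\le\metric(\gamma(r),\gamma(t))+\eps$ for all $s\in[r,t]$. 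This is a uniform statement, established by a contradiction argument using compactness and the Cauchy-type properties of Lemma~\ref{lem:functions_of_bounded_variation_have_cauchy_limits_everywhere}. You instead bound the local increase at each inserted node $s_j$ by $2\,\metric(\gamma(s_j),\gamma(t_i))$ with $t_i$ the \emph{right} neighbour in $\Delta$, and then invoke right-continuity \emph{pointwise} at the finitely many nodes $s_1,\dots,s_{m-1}$. This is more elementary and avoids the auxiliary lemma entirely; the paper's route, by contrast, isolates a uniform oscillation estimate that is of some independent interest but is not strictly needed for the proposition itself.
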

\begin{proof}
    First of all we shall prove the lemma
\begin{lemma}
\label{lem:substitution_for_uniform_continuity_for_RC_bounded_variation}
    Let $(X, \metricAlone)$ be a metric space and $\gamma \in \boundedVariationRC\del{ [a,b]; X}$. Then for every $\eps > 0$ there exists $\delta > 0$ such that for all $r, t \in [a,b], r < t$
    with $\abs{ t - r } \le \delta$ we have
       \begin{equation*}
         \sup_{s \in [r,t]}\metric\del{ \gamma(r), \gamma(s) }
            + \metric\del{ \gamma(s), \gamma(t) }
            \le 
            \metric\del{ \gamma(r), \gamma(t) }
            + \eps.
        \end{equation*} 
\end{lemma}
\begin{proof}
    Suppose the thesis is false. Then there exists $\eps > 0$ such that for all $n \in \bN$ there are $r_n, t_n \in [a,b]$ such that  $r_n < t_n$, 
    \begin{equation}
        \label{lem:almost_uniform_continuity::eq:distance_from_rn_to_rn}
        \abs{t_n - r_n } \le 1/n,
    \end{equation}
    and there exists $s_n \in [r_n, t_n]$ such that
    \begin{equation}         
        \label{lem:almost_uniform_continuity::eq:false_inequalities_from_thesis}
        \metric\del{ \gamma(r_n), \gamma(s_n) }
        + \metric\del{ \gamma(s_n), \gamma(t_n) }
        > 
        \metric\del{ \gamma(r_n), \gamma(t_n) }
        + \eps.
    \end{equation}
    Taking into account \eqref{lem:almost_uniform_continuity::eq:distance_from_rn_to_rn} with compactness of $[a,b]$, each of $(r_n)_n, (t_n)_n, $ and $(s_n)_n$ has convergent subsequence $(r_{n_k})_k, (t_{n_k})_k$, and $(s_{n_k})_k$ converging to $\tau$.

    We have three possibilities:
    \begin{enumerate}[label=(\alph*), itemindent=*]
        \item 
        Sequence $(r_{n_k})_k$ has a further subsequence $(r_{n_{k_l}})_l$ such that for all $l \in \bN$ we have $\tau \le r_{n_{k_l}}$. 

        We then have $r_{n_{k_l}}, s_{n_{k_l}}, t_{n_{k_l}} \to \tau^+$ and from the right-continuity of $\gamma$ at $\tau$, we have
        \begin{equation*}
            \metric\del{ \gamma(r_{n_{k_l}}), \gamma(s_{n_{k_l}}) }
            + \metric\del{ \gamma(s_{n_{k_l}}), \gamma(t_{n_{k_l}}) }
            \xrightarrow{k \to \infty}
            0, \quad 
            \metric\del{ \gamma(r_{n_{k_l}}), \gamma(t_{n_{k_l}}) }
            \xrightarrow{k \to \infty}
            0.
        \end{equation*}
        However, this contradicts \eqref{lem:almost_uniform_continuity::eq:false_inequalities_from_thesis}.

        \item 
        Sequence $(t_{n_k})_k$ has a further subsequence $(t_{n_{k_l}})_l$ such that for all $l \in \bN$ we have $t_{n_{k_l}} < \tau$.

        We have $r_{n_{k_l}}, s_{n_{k_l}}, t_{n_{k_l}} \to \tau^-$. Let $(\tau_l)_l$ be a sequence whose terms alternate between the terms of the other three other sequences. By Lemma \ref{lem:functions_of_bounded_variation_have_cauchy_limits_everywhere} we know that  $\del{ \gamma\del{\tau_l}}_l$ is a Cauchy sequence. In consequence,
        \begin{equation*}
            \metric\del{ \gamma(r_{n_{k_l}}), \gamma(s_{n_{k_l}}) }
            + \metric\del{ \gamma(s_{n_{k_l}}), \gamma(t_{n_{k_l}}) }
            \xrightarrow{k \to \infty}
            0, \quad 
            \metric\del{ \gamma(r_{n_{k_l}}), \gamma(t_{n_{k_l}}) }
            \xrightarrow{k \to \infty}
            0,
        \end{equation*}
        and this contradicts \eqref{lem:almost_uniform_continuity::eq:false_inequalities_from_thesis}.
        \item 
        Neither of the mentioned cases is true. Then, for all large $k \in \bN$ we have $r_{n_k} < \tau \le t_{n_k}$. We then have two possibilities:
        \begin{itemize}
            \item 
                Sequence $(s_{n_k})_k$ has a subsequence $(s_{n_{k_l}})_l$ such that for all $l \in \bN$ we have $s_{n_{k_l}} < \tau$. 

                Let $(\tau_l)_l$ be a sequence which terms alternate between the terms of $\del[1]{r_{n_{k_l}}}_l$ and $\del[1]{ s_{n_{k_l}}}_l$. By Lemma \ref{lem:functions_of_bounded_variation_have_cauchy_limits_everywhere} sequence $\del{ \gamma \del{ \tau_l}}_l$ is Cauchy. In consequence,
                \begin{equation*}
                    \left|-\metric\del{ \gamma(r_{n_{k_l}}), \gamma(t_{n_{k_l}}) }
                    +\metric\del{ \gamma(s_{n_{k_l}}), \gamma(t_{n_{k_l}}) }\right|
                    \le 
                    \metric\del{ \gamma(r_{n_{k_l}}), \gamma(s_{n_{k_l}}) }
                    \xrightarrow{k \to \infty}
                    0,
                \end{equation*}
                and this contradicts \eqref{lem:almost_uniform_continuity::eq:false_inequalities_from_thesis}.

            \item 
                Sequence $(s_{n_k})_k$ has a subsequence $(s_{n_{k_l}})_l$ such that for all $l \in \bN$ we have $\tau \le s_{n_{k_l}}$. 

                Let $(\tau_l)_l$ be a sequence which terms alternate between the terms of $\del[1]{t_{n_{k_l}}}_l$ and $\del[1]{ s_{n_{k_l}}}_l$. By Lemma \ref{lem:functions_of_bounded_variation_have_cauchy_limits_everywhere} sequence $\del{ \gamma \del{ \tau_l}}_l$ is Cauchy. In consequence,
                \begin{equation*}
                    \left|\metric\del{ \gamma(r_{n_{k_l}}), \gamma(t_{n_{k_l}}) }
                    - \metric\del{ \gamma(r_{n_{k_l}}), \gamma(s_{n_{k_l}}) }\right|
                    \le 
                    \metric\del{ \gamma(s_{n_{k_l}}), \gamma(t_{n_{k_l}}) }
                    \xrightarrow{k \to \infty}
                    0,
                \end{equation*}
                and once again, this contradicts \eqref{lem:almost_uniform_continuity::eq:false_inequalities_from_thesis}.
        \end{itemize}
    \end{enumerate}
    We see that in all cases we have arrived at a contradiction. As such, the thesis of the lemma is true.
\end{proof}
     Now, we are in position to prove the proposition. It is sufficient to show that for any $M > 0$ such that $M < V(\gamma)$ there exists $\delta > 0$ such that if $\Delta \in \partition\del{ [a,b] }$ satisfies $\abs{\Delta} \le \delta $, then $M \le V^\Delta(\gamma)$.
    
    Since $\gamma \in \boundedVariationRC\del{ [a,b]; X}$, we have $V(\gamma) < \infty$. 
    Let $\eps \coloneqq \del{ V(\gamma) - M }/2$.
    There exists $\del{\tau_j}_{j=0}^m = \sigma \in \partition\del{ [a,b] }$ such that $V(\gamma) - V^\sigma(\gamma) \le \eps$. 
    Let $\delta_1 \coloneqq \min_{j \in [m] } \del{\tau_j - \tau_{j-1} }/2$. By Lemma \ref{lem:substitution_for_uniform_continuity_for_RC_bounded_variation} there exists $\delta \in \intoo{ 0, \delta_1 }$ such that for all $r, t \in [a,b], r < t$ with $\abs{ t - r } \le \delta$
    we have 
    \begin{equation*}
        \forall s \in [r,t]
        \qquad 
            \metric\del{ \gamma(r), \gamma(s) }
            + \metric\del{ \gamma(s), \gamma(t) }
            \le 
            \metric\del{ \gamma(r), \gamma(t) }
            + \eps/(m-1).
    \end{equation*} 

    Let $(t_i)_{i=0}^n = \Delta \in \partition\del{ [a,b] }$ be any such that $\abs{ \Delta } \le \delta$. For $j \in [m-1]$ let us denote
    \begin{align*}
        {}^< \tau_j
        \coloneqq 
        \max 
            \set{
                t \in \Delta 
                \given 
                t < \tau_j
            }, \quad         
        \tau_j^{\le}
        \coloneqq 
        \min
            \set{
                t \in \Delta 
                \given 
                \tau_j \le t
            }.
    \end{align*}
    By definition, ${}^< \tau_j$ and $\tau_j^{\le}$ are consecutive elements of $\Delta$ and therefore $\abs[0]{ \tau_j^{\le} - {}^< \tau_j } \le \delta$. Since $\tau_j \in [{}^< \tau_j,  \tau_j^\le]$, by definition of $\delta$ we have 
    \begin{equation*}
    \label{cor:variation_is_a_uniform_limit_of_Delta-variations::eq:consequence_of_substitute_for_uinform_conitnuity}
         \metric\del{ \gamma({}^< \tau_j), \gamma(\tau_j) }
            + \metric\del{ \gamma(\tau_j), \gamma(\tau_j^\le) }
        \le 
            \metric\del{ \gamma({}^< \tau_j), \gamma(\tau_j^\le) }
            + \eps/(m-1).
    \end{equation*}
    From $\abs[0]{ \tau_j^{\le} - {}^< \tau_j } \le \delta$ and the fact that $\delta < \delta_1 = \min_{j \in [m] } \del{\tau_j - \tau_{j-1} }/2$, we also have 
    \begin{equation*}
        {}^< \tau_j 
        \enskip < \enskip
            \tau_j
        \enskip \le \enskip
            \tau_j^\le 
        \enskip \le \enskip
            {}^< \tau_{j+1}
        \enskip < \enskip
            \tau_{j+1}
        \enskip \le \enskip
            \tau_{j+1}^{\le}
    \end{equation*}
    for all $j \in [m-2]$. Let us denote by $I$ the family of indices $i$ of $\set{0, 1, \ldots, n}$ such that there is $j \in [m-1]$ with $t_i = \tau_j^{\leq}$. Thus,
    \begin{align*}
        V^{ \sigma \cup \Delta }(\gamma)
        &=
        \sum_{ 
            \substack{ i = 1 \\ i \notin I 
            }
        }^n
            \metric\del{
                \gamma \del{ t_i },
                \gamma \del{ t_{i-1}  }
            }
        +
        \sum_{ j=1 }^{m-1}
            \del{
                \metric\del{
                    \gamma \del{ {}^< \tau_j},
                    \gamma \del{ \tau_j   }
                }
                + \metric\del{
                    \gamma \del{ \tau_j},
                    \gamma \del{ \tau_j^\le   }
                }
            }
        \\
                &\le 
        \sum_{ 
            \substack{ i = 1 \\ i \notin I 
            }
        }^n
            \metric\del{
                \gamma \del{ t_i },
                \gamma \del{ t_{i-1}  }
            }
        +
        \sum_{ j=1 }^{m-1}
            \del{
                \metric\del{ \gamma \del{ \tau_j^\le   }, 
                    \gamma \del{ {}^< \tau_j}                   
                }
                + 
                \eps/(m-1)
            }
        \\ 
        &=
        \sum_{i=1}^{n}
             \metric\del{
                \gamma \del{ t_i },
                \gamma \del{ t_{i-1}  }
            }
        +
        \eps
        \\
        &=
        V^\Delta(\gamma) 
        +
        \eps.
    \end{align*}
    Terefore, by the definition of $\sigma$, we have
    \begin{equation*}
        V(\gamma) 
        \le 
        V^\sigma(\gamma) + \eps 
        \le 
        V^{ \sigma \cup \Delta }(\gamma) + \eps 
        \le 
        V^\Delta(\gamma) + 2 \eps.
    \end{equation*}
    Finally, since $\eps = \del{ V(\gamma) - M }/2$, we conclude
    \begin{equation*}
        M 
        =
        V(\gamma)
        - 2 \eps 
        \le V^{\Delta}(\gamma)
    \end{equation*}
    and $M \le V^{\Delta}(\gamma)$ for any $\Delta \in \partition\del{ [a,b] }$ with $\abs{ \Delta} \le \delta$, as needed.
\end{proof}
As a corollary we have.
\begin{corollary}[Additivity of variation]
\label{cor:variation_is_additive}
    Let $(X, \metricAlone)$ be a metric space and $\gamma \in \boundedVariationRC\del{ [a,b]; X}$. Then for $t \in [a,b]$ we have
    \begin{equation*}
        V\del{ \gamma \rvert_{ [a,t] } }
        + V\del{ \gamma \rvert_{ [t,b] } }
        =
        V\del{ \gamma }.
    \end{equation*}
\end{corollary}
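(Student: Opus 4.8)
The plan is to read the identity off Proposition~\ref{cor:variation_is_a_uniform_limit_of_Delta-variations}, which expresses the variation of a right-continuous function of bounded variation as the limit of its $\Delta$-variations along \emph{any} normal sequence of partitions. First I would dispose of the edge cases $t = a$ and $t = b$, which are trivial since $V\del{\gamma\rvert_{[a,a]}} = V\del{\gamma\rvert_{[b,b]}} = 0$; so from now on fix $t \in \intoo{a,b}$. By Remark~\ref{rem:restricting_functions_of_bounded_variation} both $\gamma\rvert_{[a,t]}$ and $\gamma\rvert_{[t,b]}$ belong to $\boundedVariationRC\del{[a,t];X}$ and $\boundedVariationRC\del{[t,b];X}$ respectively, so Proposition~\ref{cor:variation_is_a_uniform_limit_of_Delta-variations} is available for all three functions $\gamma$, $\gamma\rvert_{[a,t]}$, $\gamma\rvert_{[t,b]}$.

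Next I would pick an arbitrary normal sequence $\del{\sigma_n}_n$ of partitions of $[a,b]$ and replace each $\sigma_n$ by $\sigma_n \cup \set{t}$; since $\abs{\sigma_n \cup \set{t}} \le \abs{\sigma_n} \to 0$, this modified sequence is still normal, so without loss of generality $t \in \sigma_n$ for every $n$. Let $\sigma_n^1$ be the tuple of terms of $\sigma_n$ lying in $[a,t]$ and $\sigma_n^2$ the tuple of terms lying in $[t,b]$; then $\sigma_n^1 \in \partition\del{[a,t]}$, $\sigma_n^2 \in \partition\del{[t,b]}$, and because the terms of $\sigma_n^i$ are consecutive terms of $\sigma_n$ we have $\abs{\sigma_n^i} \le \abs{\sigma_n} \to 0$, so $\del{\sigma_n^1}_n$ and $\del{\sigma_n^2}_n$ are normal as well. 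Directly from Definition~\ref{def:partition_of_interval} (the defining sum for $V^{\sigma_n}(\gamma)$ breaks exactly at the term $t$) we obtain
\[
    V^{\sigma_n}(\gamma)
    =
    V^{\sigma_n^1}\del{\gamma\rvert_{[a,t]}}
    +
    V^{\sigma_n^2}\del{\gamma\rvert_{[t,b]}}.
\]
Letting $n \to \infty$ and applying Proposition~\ref{cor:variation_is_a_uniform_limit_of_Delta-variations} to each of the three terms yields $V(\gamma) = V\del{\gamma\rvert_{[a,t]}} + V\del{\gamma\rvert_{[t,b]}}$, which is the assertion.

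I do not expect a genuine obstacle here: the only points needing a moment's care are the trivial endpoint cases, the remark that inserting the single node $t$ into a normal sequence keeps it normal, and the observation that cutting a partition at an interior node leaves both pieces with diameter bounded by that of the original partition. (If one prefers to bypass Proposition~\ref{cor:variation_is_a_uniform_limit_of_Delta-variations}, the identity also follows elementarily: ``$\ge$'' by gluing arbitrary $\Delta_1 \in \partition\del{[a,t]}$ and $\Delta_2 \in \partition\del{[t,b]}$ into $\Delta_1 \cup \Delta_2 \in \partition\del{[a,b]}$ and taking suprema, and ``$\le$'' because inserting $t$ into any $\Delta \in \partition\del{[a,b]}$ does not decrease $V^\Delta(\gamma)$ by the triangle inequality, after which the refined partition splits at $t$; but the limiting argument above is the natural ``corollary'' route.)
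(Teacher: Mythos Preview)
Your argument is correct and is precisely the route the paper intends: the corollary is stated immediately after Proposition~\ref{cor:variation_is_a_uniform_limit_of_Delta-variations} with no proof beyond ``As a corollary we have,'' and your spelling-out via a normal sequence containing $t$, split at $t$ into normal sequences on $[a,t]$ and $[t,b]$, is exactly the implied derivation.
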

\begin{definition}[Variation function]
\label{def:variation_function}
    Let $(X, \metricAlone)$ be a metric space. For $\gamma \in \boundedVariationRC\del{ [a,b]; X }$ we define function $V_\gamma \colon [a,b] \to \intco{0, \infty}$ by the formula
    \begin{equation*}
        \forall t \in [a,b]
        \qquad 
        V_\gamma(t)
        \coloneqq 
        V\del{ \gamma \rvert_{[a, t]} }.
    \end{equation*}
\end{definition}
\begin{proposition}[Properties of variation function]
\label{rem:properties_of_variation_function}
    For $\gamma \in \boundedVariationRC\del{ [a,b]; X }$ function $V_\gamma \colon [a,b] \to \intco{0, \infty}$ is non-negative, non-decreasing and bounded by $V(\gamma)$. Furthermore, $V_\gamma \in \boundedVariationRC\del{ [a,b]; \intco{0, \infty} }$ and 
    \begin{equation*}
        \forall t \in [a,b]
        \qquad 
        V_{V_\gamma}\del{ t }
        =
        V_\gamma(t).
    \end{equation*}
    Finally, we have
    \begin{equation*}
        \forall t \in \intoc{a,b}
        \qquad 
            \lim_{s \to t^-} V_{\gamma}(t) - V_{\gamma}(s)
        =
            \phi_{\gamma}(t).
    \end{equation*}
   Hence, $V_\gamma$ is left-continuous at points of left-continuity of $\gamma$. 
\end{proposition}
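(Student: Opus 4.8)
The plan is to verify each assertion in turn, using the additivity of variation (Corollary~\ref{cor:variation_is_additive}) as the workhorse.

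\medskip

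\textbf{Plan of proof.} First, non-negativity of $V_\gamma$ is immediate from the definition of variation, and the bound $V_\gamma(t) \le V(\gamma)$ together with monotonicity follows at once from Corollary~\ref{cor:variation_is_additive}: for $s \le t$ we have $V_\gamma(t) = V(\gamma\rvert_{[a,s]}) + V(\gamma\rvert_{[s,t]}) \ge V(\gamma\rvert_{[a,s]}) = V_\gamma(s)$, and taking $s \le t = b$ gives the bound. This already shows $V_\gamma$ is of bounded variation on $[a,b]$ (a bounded monotone real function has variation equal to the total increment $V_\gamma(b) - V_\gamma(a) = V(\gamma)$), so $V_\gamma \in \boundedVariation\del{[a,b];\intco{0,\infty}}$.

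\medskip

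Next I would establish right-continuity of $V_\gamma$ on $\intco{a,b}$, which is the least routine point. Fix $t \in \intco{a,b}$ and $\eps > 0$. By Proposition~\ref{cor:variation_is_a_uniform_limit_of_Delta-variations} applied to $\gamma\rvert_{[t,b]} \in \boundedVariationRC\del{[t,b];X}$ (using Remark~\ref{rem:restricting_functions_of_bounded_variation}), there is a partition $\Delta$ of $[t,b]$ with $V^{\Delta}(\gamma\rvert_{[t,b]}) \ge V(\gamma\rvert_{[t,b]}) - \eps$; let $t' \in (t,b]$ be the first interior node of $\Delta$. For any $s \in (t,t')$, monotonicity of the variation function for $\gamma\rvert_{[t,b]}$ plus the choice of $\Delta$ forces $V(\gamma\rvert_{[t,s]})$ to be small: more precisely $V(\gamma\rvert_{[t,s]}) = V(\gamma\rvert_{[t,b]}) - V(\gamma\rvert_{[s,b]})$ and $V(\gamma\rvert_{[s,b]}) \ge V^{\Delta'}(\gamma\rvert_{[s,b]})$ where $\Delta'$ replaces the node $t$ by $s$; combined with right-continuity of $\gamma$ at $t$ (which makes $\metric(\gamma(s),\gamma(t'))$ close to $\metric(\gamma(t),\gamma(t'))$ as $s \to t^+$) this yields $V_\gamma(s) - V_\gamma(t) = V(\gamma\rvert_{[t,s]}) \to 0$ as $s \to t^+$. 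Hence $V_\gamma \in \boundedVariationRC\del{[a,b];\intco{0,\infty}}$, and since $V_\gamma$ is a non-decreasing real function, $V_{V_\gamma}(t)$ — its own variation on $[a,t]$ — equals $V_\gamma(t) - V_\gamma(a) = V_\gamma(t)$.

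\medskip

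Finally, for the jump formula, fix $t \in \intoc{a,b}$. Since $V_\gamma$ is non-decreasing, $\lim_{s\to t^-} V_\gamma(t) - V_\gamma(s)$ exists and equals $\inf_{s<t}\del{V_\gamma(t) - V_\gamma(s)} = V_\gamma(t) - \sup_{s<t} V_\gamma(s)$. Using additivity, $V_\gamma(t) - V_\gamma(s) = V(\gamma\rvert_{[s,t]})$, so I must show $\lim_{s \to t^-} V(\gamma\rvert_{[s,t]}) = \phi_\gamma(t) = \lim_{s\to t^-}\metric(\gamma(s),\gamma(t))$. The inequality $V(\gamma\rvert_{[s,t]}) \ge \metric(\gamma(s),\gamma(t))$ gives $\liminf \ge \phi_\gamma(t)$. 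For the reverse, given $\eps>0$ pick, via Proposition~\ref{cor:variation_is_a_uniform_limit_of_Delta-variations} on a small left-interval $[t-\eta,t]$, a partition realizing the variation up to $\eps$; all its nodes except the last lie strictly left of $t$, so by Lemma~\ref{lem:functions_of_bounded_variation_have_cauchy_limits_everywhere} (Cauchyness of $\gamma$ approaching $t^-$) the sum of distances between consecutive such nodes is $\le \eps$ once $\eta$ is small, leaving only the last term $\metric(\gamma(\text{last node}),\gamma(t))$, which is $\le \phi_\gamma(t) + \eps$. Letting $s \to t^-$ and then $\eps \to 0$ gives $\limsup V(\gamma\rvert_{[s,t]}) \le \phi_\gamma(t)$, completing the equality. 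The closing remark that $V_\gamma$ is left-continuous wherever $\gamma$ is left-continuous is then immediate, since $\phi_\gamma(t) = 0$ there.

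\medskip

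\textbf{Main obstacle.} The delicate step is right-continuity of $V_\gamma$: one must convert the global approximation guaranteed by Proposition~\ref{cor:variation_is_a_uniform_limit_of_Delta-variations} into control of the \emph{tail} variation $V(\gamma\rvert_{[t,s]})$ for $s$ just above $t$, and this is exactly where right-continuity of $\gamma$ (not mere bounded variation) is essential — without it $V_\gamma$ could jump at $t$ by the right-jump $\phi_\gamma^R(t)$.
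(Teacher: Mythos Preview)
Your overall strategy is sound and closely parallels the paper's: both rely on additivity (Corollary~\ref{cor:variation_is_additive}) to reduce everything to controlling $V(\gamma\rvert_{[t,s]})$ or $V(\gamma\rvert_{[s,t]})$ for $s$ near $t$, and your right-continuity argument is correct and essentially identical to the paper's (the paper phrases it as a contradiction, you do it directly, but the computation $V(\gamma\rvert_{[t,s]}) \le \eps + \metric(\gamma(t),\gamma(s))$ via a near-optimal partition with one node replaced is the same).

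There is, however, a genuine gap in your upper bound for the left-jump. You claim that for a partition of $[t-\eta,t]$ with nodes $t_0 < \dots < t_{n-1} < t_n = t$, the sum $\sum_{i=1}^{n-1} \metric(\gamma(t_i),\gamma(t_{i-1}))$ is at most $\eps$ once $\eta$ is small, appealing to Lemma~\ref{lem:functions_of_bounded_variation_have_cauchy_limits_everywhere}. But that lemma only gives that \emph{individual} distances $\metric(\gamma(r),\gamma(s))$ are small for $r,s \in (t-\delta,t)$; it says nothing about sums of arbitrarily many such distances. A function of bounded variation can still accumulate substantial variation on $[t-\eta,t)$ even though all values there are mutually close --- indeed, bounding that sum is precisely the content of the inequality you are trying to prove, so the argument as written is circular.

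The fix is immediate: simply mirror your own right-continuity argument. Take a partition $\Delta$ of $[a,t]$ with $V^{\Delta}(\gamma\rvert_{[a,t]}) \ge V(\gamma\rvert_{[a,t]}) - \eps$, let $t'$ be its last interior node, and for $s \in (t',t)$ replace the endpoint $t$ by $s$ to get a partition $\Delta'$ of $[a,s]$. Exactly as before one obtains
\[
V(\gamma\rvert_{[s,t]}) = V(\gamma\rvert_{[a,t]}) - V(\gamma\rvert_{[a,s]}) \le \eps + \metric(\gamma(s),\gamma(t)),
\]
and passing $s \to t^-$ gives $\limsup_{s\to t^-} V(\gamma\rvert_{[s,t]}) \le \eps + \phi_\gamma(t)$. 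This is precisely the paper's argument (it runs the same partition trick on $[a,t]$ with the last subinterval shrinking), so once this step is repaired your proof and the paper's coincide.
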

\begin{proof}
    Suppose $s, t \in [a,b]$ are such that $s \le t$. Let $\eps > 0$ and $\Delta_s \in \partition\del{ [a,s] }$ be such that $V_\gamma(s) \le V^{\Delta_s}( \gamma  \rvert_{[a, s]} ) + \eps$. Then $\Delta_t \coloneqq \Delta_s \cup \set{t} \in \partition\del{ [a,t] }$ and $V^{\Delta_s}( \gamma  \rvert_{[a, s]} ) \le V^{\Delta_t}( \gamma  \rvert_{[a, t]} ) \le V_\gamma(t)$. Hence, $V_\gamma(s) \le V_\gamma(t) + \eps$ for all $\eps > 0$. Thus, $V_\gamma(s) \le V_\gamma(t)$ for $s \le t$ and $V_\gamma$ is non-decreasing. In consequence, $V_\gamma$ is bounded by $V_\gamma(b) = V(\gamma)$.

    Let us prove that $V_\gamma \in \boundedVariation\del{ [a,b]; \intco{0, \infty} }$. For $\del{t_i}_{i=0}^n = \Delta \in \partition\del{ [a,b] }$ we have
    \begin{equation*}
        V^\Delta \del{ {V_\gamma} }
        =
        \sum_{i=1}^n
                V_\gamma(t_i)
                - V_\gamma\del{ t_{i-1} }            
        =
        V_\gamma(t_n) - V_\gamma(t_0)
        =
        V(\gamma).
    \end{equation*}
    Thus, by the definition of the variation we have $V(V_\gamma) = V_\gamma$. 
    Furthermore, by Remark \ref{rem:restricting_functions_of_bounded_variation} we have $\gamma \rvert_{ [a,t] } \in \boundedVariationRC\del{ [a,t]; X}$ for all $t \in [a,b] $,  and in consequence, 
    \begin{equation*}
        V_{V_\gamma}\del{ t }
        =
        V\del{ 
            V_{ \gamma \rvert_{[a,t] } } }
        =
        V \del{ \gamma \rvert_{[a,t] } }
        =
        V_{\gamma}(t).
    \end{equation*}
    Next, we shall show that $V_\gamma$ is right-continuous. For this purpose we fix $s \in \intco{a,b}$ and suppose that $V_\gamma$ is not right-continuous at $s$. 
    Then there exists $\eps > 0$ such that for all $t \in \intoc{s, b}$ we have 
    \begin{equation} \label{nier1}
             2\eps \le V_{\gamma}(t) - V_{\gamma}(s) = V\del{ \gamma \rvert_{[ s, t ]} },
    \end{equation}
    where Corollary \ref{cor:variation_is_additive} was applied. For $n \in \bN$ let $\del{ t_i^n}_{i=0}^{m_n} = \Delta_n \in \partition([s, b])$ be such that $\abs{ t_1^n-t_0^n } \le 1/n$ and 
    $$
            V\del{ \gamma \rvert_{[ s, b ]}} - V^{\Delta_n}\del{ \gamma \rvert_{[ s, b ] } } \le \eps.
    $$ 
    Let $s_n \coloneqq t_1^n$, then  
    \begin{equation*}
        V\del{
            \gamma \rvert_{[ s, s_n ] }
        }
        +V\del{
            \gamma \rvert_{[ s_n, b ] }
        }
        =
        V\del{
            \gamma \rvert_{[ s, b ] }
        }
        \le 
        V^{\Delta_n}\del{ \gamma \rvert_{[ s, b ] } } +\eps
        \le 
        \metric\del{
            \gamma(s), \gamma(s_n)
        }
        +V\del{
            \gamma \rvert_{[ s_n, b ] }
        }
        + 
        \eps,
    \end{equation*}
    hence
    \begin{equation*}
        V\del{
            \gamma \rvert_{[ s, s_n ] }
        }
        \le 
        \metric\del{
            \gamma(s), \gamma(s_n)
        }
        + \eps.
    \end{equation*}
    Therefore, by (\ref{nier1}) we get
    $\eps \le \metric\del{
            \gamma(s), \gamma(s_n)
        }$. 
       However, since $s_n \to s^+$, this inequality contradicts the right-continuity of $\gamma$. Thus, $V_\gamma$ is right-continuous. 

    Finally, let us prove that
    \begin{equation*}
        \forall t \in \intoc{a,b}
        \qquad 
            \lim_{s \to t^-} V(t) - V(s)
        =
            \lim_{s \to t^-} 
                \metric\del{
                    \gamma (t),
                    \gamma(s)
                }.
    \end{equation*}
    Let $t \in \intoc{a,b}$, $\eps > 0$ and for $n \in \bN$ let $\del{ t_i^n}_{i=0}^{m_n} = \Delta_n \in \partition\del{ [a, t]}$ be such that 
    $\abs{ t_{m_n}^n-t_{m_n-1}^n } \le 1/n$ and     
    $$
        V\del{ \gamma \rvert_{[ a, t  ]}} - V^{\Delta_n}\del{ \gamma \rvert_{[ a, t ] } } \le \eps.
    $$ 
    Denote $s_n \coloneqq t_{m_n-1}^n$, then  
    \begin{equation*}
        V\del{
            \gamma \rvert_{[ a, s_n ] }
        }
        +V\del{
            \gamma \rvert_{[ s_n, t ] }
        }
        =
        V\del{
            \gamma \rvert_{[ a, t ] }
        }
        \le 
        V^{\Delta_n}\del{ \gamma \rvert_{[ a, t ] } } +\eps
        \le 
        V\del{
            \gamma \rvert_{[ a, s_n ] }
        }
        + \metric\del{
            \gamma(s_n), \gamma(t)
        }
        +
        \eps,
    \end{equation*}
    and thus
    \begin{equation*}
        V_\gamma(t)
        -
        V_\gamma(s_n)
        =
        V\del{
            \gamma \rvert_{[ s_n , t ] }
        }
        \le 
        \metric\del{
            \gamma(s_n), \gamma(t)
        }
        + \eps.
    \end{equation*}
    Therefore, since $s_n \to t^-$, we get
    \begin{equation*}
        \lim_{ s \to  t^-}
            V_\gamma(t)
            -
            V_\gamma(s)
        =
        \lim_{ n \to \infty}
            V_\gamma(t)
            -
            V_\gamma(s_n)
        \le 
         \lim_{ n \to \infty}
            \metric\del{
                \gamma(s_n), \gamma(t)
            }
            + \eps
        =
        \lim_{ s \to  t^-}
            \metric\del{
                \gamma(t), \gamma(s)
            }
            + \eps.
    \end{equation*}
    Thus, since $\eps > 0$ was arbitrary, we have
    \begin{equation}\label{zdolu}
        \lim_{ s \to  t^-}
            V_\gamma(t)
            -
            V_\gamma(s)
        \le 
        \lim_{ s \to  t^-}
            \metric\del{
                \gamma(t), \gamma(s)
            }.
    \end{equation}
    On the other hand, for all $s \in \intco{a,t}$ we have
    \begin{equation*}
        \metric\del{
                \gamma(t), \gamma(s)
            }
        \le 
        V\del{
            \gamma \rvert_{ [s, t] }
        }
        =
        V_\gamma(t)
        - V_\gamma(s),
    \end{equation*}
    hence
    \begin{equation*}
        \lim_{ s \to  t^-}
            V_\gamma(t)
            -
            V_\gamma(s)
        \ge 
        \lim_{ s \to  t^-}
            \metric\del{
                \gamma(t), \gamma(s)
            }.
    \end{equation*}
    Thus, gathering the above inequality with (\ref{zdolu}) the proof follows.
\end{proof}
\begin{corollary}
\label{cor:continuity_of_curve_implies_the_continuity_of_its_variation_function}
    Let $(X, \metricAlone)$ be a metric space and 
    $\gamma \in \boundedVariationRC\del{ [a,b]; X}$. If $\gamma$ is continuous, 
    then so is $V_\gamma$.
\end{corollary}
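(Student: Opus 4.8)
The plan is to read off continuity of $V_\gamma$ directly from the two facts already established in Proposition \ref{rem:properties_of_variation_function}: that $V_\gamma$ is right-continuous on $\intco{a,b}$, and that for every $t \in \intoc{a,b}$ one has $\lim_{s \to t^-} \del{V_\gamma(t) - V_\gamma(s)} = \phi_\gamma(t)$. So the only thing left to do is to exploit the hypothesis that $\gamma$ is continuous in order to kill the left-jump term.

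First I would fix $t \in \intoc{a,b}$ and observe that, by definition of $\phi_\gamma$ (Definition \ref{def:left-jump_right-jump}), $\phi_\gamma(t) = \lim_{s \to t^-} \metric\del{\gamma(s),\gamma(t)}$, and this limit is $0$ because $\gamma$ is (left-)continuous at $t$; note that the limit exists by Lemma \ref{lem:functions_of_bounded_variation_have_cauchy_limits_everywhere}, so there is no subtlety in evaluating it. Plugging $\phi_\gamma(t) = 0$ into the displayed formula from Proposition \ref{rem:properties_of_variation_function} gives $\lim_{s \to t^-} \del{V_\gamma(t) - V_\gamma(s)} = 0$, i.e. $V_\gamma$ is left-continuous at $t$. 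Since $t \in \intoc{a,b}$ was arbitrary, $V_\gamma$ is left-continuous on $\intoc{a,b}$.

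Then I would combine this with the right-continuity of $V_\gamma$ on $\intco{a,b}$ (again from Proposition \ref{rem:properties_of_variation_function}): at interior points $V_\gamma$ is both left- and right-continuous, hence continuous; at the endpoint $a$ right-continuity is continuity, and at $b$ left-continuity is continuity. Therefore $V_\gamma$ is continuous on $[a,b]$, which is the claim.

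I do not expect a real obstacle here — the statement is essentially a corollary of the last two assertions of Proposition \ref{rem:properties_of_variation_function}, the new input being only the trivial observation that continuity of $\gamma$ forces $\phi_\gamma \equiv 0$. The only point requiring a word of care is making sure the one-sided limits invoked genuinely exist before setting them to zero, which is guaranteed by Lemma \ref{lem:functions_of_bounded_variation_have_cauchy_limits_everywhere}.
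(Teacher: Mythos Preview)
Your proposal is correct and matches the paper's approach exactly: the paper states this as an immediate corollary of Proposition~\ref{rem:properties_of_variation_function} without giving a separate proof, since that proposition already records both the right-continuity of $V_\gamma$ and the identity $\lim_{s\to t^-}\del{V_\gamma(t)-V_\gamma(s)}=\phi_\gamma(t)$ (and explicitly notes that $V_\gamma$ is left-continuous at points of left-continuity of $\gamma$). Your write-up simply spells out this deduction.
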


\begin{lemma}[Functions of bounded variation are Borel]
    \label{lem:functions_of_buonded_variation_are_borel}
    Let $(X, \metricAlone)$ be a metric space.
    Every element of 
    $
        \boundedVariation\del{  
            [a,b] ; X    
        }
    $ is a Borel map.
\end{lemma}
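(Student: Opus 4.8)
The plan is to show that every $\gamma \in \boundedVariation\del{[a,b];X}$ has separable image and is a pointwise limit of Borel maps with finite image, and conclude Borel measurability from there. The key observation is Corollary \ref{cor:functions_of_bounded_variation_image_totally_bounded}: the image $\image(\gamma)$ is totally bounded, hence separable, so it suffices to prove that $\gamma$, viewed as a map into the separable metric space $\tilde X \coloneqq \overline{\image(\gamma)}$ (or into $X$ directly), is Borel. Recall that a map into a separable metric space is Borel if and only if the preimage of every open ball is Borel, so the whole argument reduces to controlling $\gamma^{-1}\sbr{B(x,r)}$ for $x \in X$, $r > 0$.

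First I would handle the structure of $\gamma$. By Lemma \ref{cor:variation_bounds_sum_of_jumps_from_above} the set $D$ of discontinuity points of $\gamma$ is at most countable; write $D = \set{d_k}_k$. On $[a,b] \setminus D$ the map $\gamma$ is continuous. The idea is to approximate $\gamma$ by the maps
\begin{equation*}
    \gamma_n(t) \coloneqq \gamma\del{ \psi_n(t) },
\end{equation*}
where $\psi_n \colon [a,b] \to [a,b]$ sends $t$ to the largest point of a fixed finite mesh (say the dyadic partition of order $n$, together with the first $n$ discontinuity points $d_1,\dots,d_n$) that is $\le t$. Each $\gamma_n$ takes finitely many values and is Borel, since $\psi_n$ is a nondecreasing step function and hence Borel. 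It remains to check $\gamma_n(t) \to \gamma(t)$ for every $t \in [a,b]$: if $t \notin D$ this follows from continuity of $\gamma$ at $t$ since $\psi_n(t) \to t$; if $t = d_k$, then for $n \ge k$ the point $d_k$ itself lies in the mesh, so $\psi_n(t) = t$ for all large $n$ and $\gamma_n(t) = \gamma(t)$ eventually. A pointwise limit of Borel maps into a metric space is Borel, so $\gamma$ is Borel.

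The main obstacle I anticipate is purely bookkeeping in the approximation: one must make sure the finite meshes are nested and eventually capture each fixed discontinuity point, and that $\psi_n$ is genuinely Borel (it is, being monotone) — but no deep idea is needed beyond total boundedness of the image plus countability of the discontinuity set, both already established. An alternative, perhaps cleaner, route avoids approximation entirely: fix $x \in X$ and $r > 0$; using property \eqref{cor:bounded_variation_limits_of_distances_exist::eq:main:right_limit_of_metric} and \eqref{cor:bounded_variation_limits_of_distances_exist::eq:main:left_limit_of_metric}, the function $t \mapsto \metric\del{\gamma(t), x}$ has one-sided limits everywhere, hence is itself a (real-valued) function of bounded variation on $[a,b]$ after subtracting its jumps, so it is Borel; consequently $\gamma^{-1}\sbr{B(x,r)}$ is the preimage of an open set under a Borel real function, hence Borel, and since $\image(\gamma)$ is separable this suffices by Proposition-type reasoning as in the proof of Proposition \ref{mierzalnosc}. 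I would present the approximation argument as the main proof and mention the one-sided-limit argument as a remark.
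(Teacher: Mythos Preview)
Your approximation argument is correct, and your alternative route via the real-valued map $t \mapsto \metric\del{\gamma(t), x}$ also works (indeed that map is already of bounded variation, since $\abs{\metric\del{\gamma(t),x} - \metric\del{\gamma(s),x}} \le \metric\del{\gamma(t),\gamma(s)}$, so no subtraction of jumps is needed --- it is a difference of two monotone functions, hence Borel). However, the paper takes a shorter path: once Lemma \ref{cor:variation_bounds_sum_of_jumps_from_above} gives that the discontinuity set $D$ is countable, one simply decomposes the domain. On $[a,b]\setminus D$ the restriction of $\gamma$ is continuous, hence Borel; on $D$ the restriction is trivially Borel because every subset of a countable set is Borel. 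Since both $D$ and $[a,b]\setminus D$ are Borel subsets of $[a,b]$, for any $B \in \borel(X)$ one has $\gamma^{-1}[B] = (\gamma\rvert_D)^{-1}[B] \cup (\gamma\rvert_{[a,b]\setminus D})^{-1}[B]$, which is Borel.

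Compared to this, your approximation scheme is more constructive but also heavier: you build an explicit sequence of simple maps and invoke the closure of Borel maps under pointwise limits, whereas the paper avoids any limiting process. Your argument does have the minor advantage of producing an explicit approximation (potentially useful elsewhere), and your second route essentially reduces to the paper's decomposition once you notice that the real-valued map $t \mapsto \metric\del{\gamma(t),x}$ inherits the same countable discontinuity set. One small caution on your phrasing: saying $\gamma_n$ is Borel ``since $\psi_n$ is Borel'' reads as an appeal to the composition $\gamma \circ \psi_n$, which would be circular; the correct reason, which you also state, is that $\gamma_n$ has finite image and each level set is an interval.
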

\begin{proof}
    Let $\gamma \in 
        \boundedVariation\del{  
            [a,b] ; X    
        }
    $, then by Lemma \ref{cor:variation_bounds_sum_of_jumps_from_above} there are at most countably many points of discontinuity of $\gamma$. Let us denote this set by $D$. Set $D$ is Borel as it is countable. This means that $[a,b] \setminus D$ is Borel.
    Function $\gamma \rvert_{ [a,b] \setminus D}$ is Borel, since it is continuous. 
    Moreover, the map $\gamma \rvert_D$ is Borel, since any subset of $D$ is Borel.
    Let $B \in \borel\del{X}$, then
    \begin{equation*}
        \gamma^{-1} \sbr{ B }
        =
        \del{ \gamma \rvert_{D} }^{-1} \sbr{B}
        \cup \del{
            \gamma \rvert_{ [a,b] \setminus D}
        }^{-1} \sbr{B}.
    \end{equation*}
    Hence, $\del{ \gamma \rvert_{D} }^{-1} \sbr{B}$ is Borel in $D$, and 
    $
         \del{
            \gamma \rvert_{ [a,b] \setminus D}
        }^{-1} \sbr{B}    
    $ 
    is Borel in $[a,b] \setminus D$. 
    However, as $D$ and $[a,b] \setminus D$ are Borel subsets of $[a,b]$, both of the mentioned preimages are Borel in $[a,b]$.
    We conclude that $\gamma^{-1} \sbr{ B }$ is Borel in $[a,b]$ and therefore function $\gamma$ is Borel.
\end{proof}
\begin{remark}
Since totally bounded sets are separable, thanks to the above lemma and Corollary \ref{cor:functions_of_bounded_variation_image_totally_bounded}
 we are able to use Proposition \ref{mierzalnosc} for maps from  $\boundedVariation\del{[a,b] ; X}$.
\end{remark}

\begin{definition}[Test curves]
    Let $(X, \metricAlone)$ be a metric space.
    We shall say that a function $\gamma \colon [a,b] \to X$ is a \emph{test curve}, if
    \begin{enumerate}[label=(\roman*)]
        \item 
        $ \gamma \in \boundedVariationRC\del{ [a,b]; X}$,
        \item 
        $\gamma$ is left-continuous at $b$,
        \item 
        The limit $\lim_{ s \to t^- } \gamma(s)$ exists for every $t \in \intoo{ a, b}$.
    \end{enumerate}
    We shall denote the family of all test curves $\gamma \colon [a,b] \to X$ by 
    $
        \testCurves\del{
            [a,b];
            X
        }
    $, and for $t\in (a,b]$ we define $\gamma(t^-)= \lim_{ s \to t^- } \gamma(s)$.
\end{definition}
\begin{example}
    \label{e1}
    Let $(X, \metricAlone)$ be a metric space and $x, y \in X$. Then, $\gamma \colon [a,b] \to X$ be defined by
    \begin{equation*}
        \forall t \in [a,b] 
        \qquad 
        \gamma(t)
        \coloneqq 
        \begin{cases}
            x, &\text{ for } t \in \intco{ a, (a + b)/2 }, \\
            y, &\text{ otherwise}
        \end{cases}
    \end{equation*} 
    belongs to $\gamma \in \testCurves\del{ [a,b] ; X}$.
\end{example}
\begin{remark}
\label{rem:restrciting_test_curve_to_continutity_point}
    Let $(X, \metricAlone)$
    be a metric space and 
    $
        \gamma \in \testCurves\del{ 
            [a,b]; 
            X
        }.
    $
    If $t \in [a,b]$ is a continuity point of $\gamma$, then 
    $
        \gamma \rvert_{[a,t]}
        \in 
         \testCurves\del{ 
            [a,t]; 
            X
        }.
    $
\end{remark}
\begin{remark}[Closure of the image of a test curve is compact]
\label{rem:closure_of_the_image_of_a_test_curve_is_compact}
    Let $(X, \metricAlone)$ be a metric space and $\gamma \in \testCurves\del{ [a,b]; X}$, then 
    \begin{equation*}
        \closure{ \image\del{\gamma}}
        =
        \image\del{ \gamma }
        \cup 
        \set{
            \gamma(t^-)
            \given 
            t \in (a,b]
        }
    \end{equation*}
    and $\closure{ \image\del{ \gamma } }$ is compact.
\end{remark}
\begin{proof}
    We have 
    $
        \image\del{ \gamma } 
        \cup 
        \set{
            \gamma(t^-)
            \given 
            t \in (a,b]
        }
        \subseteq 
        \closure{ \image\del{ \gamma }}        
    $.
    Now, suppose that 
    $   
        x \in 
        \closure{ \image\del{ \gamma }} 
            $. Then by compactness of $[a,b]$ there is a sequence $(t_n)_n$ of elements of $[a,b]$ and $t\in [a,b]$ such that $\gamma(t_n) \rightarrow x$ and $t_n \rightarrow t$ as $n \to \infty$.
    
    We have then two possibilities:
    \begin{itemize}
        \item
        Sequence $(t_{n})_n$ has a subsequence $\del{t_{n_{k}}}_k$
        such that $t_{n_{k}} < t$ for all $k \in \bN$. Then $ t_{n_{k}}   \to t^-$ and 
        $\gamma \del{
                t_{n_{k}}            
            }
            \xrightarrow{k \to \infty}
            \gamma \del{ t^-}$.
        Hence $x = \gamma(t^-)$ and thus $x\in \set{
            \gamma(t^-)
            \given 
            t \in (a,b]
        }$.
        \item 
        For all large $n \in \bN$ $t \le t_{n}$ . 
        Then $ t_{n}   \to t^+$ and therefore $
            \gamma \del{
                t_{n}            
            }
            \xrightarrow{n \to \infty}
            \gamma \del{ t}.$      %
        In this way we get $x = \gamma(t) \in \image\del{ \gamma }$.
    \end{itemize}
    Thus, we have proved 
    \begin{equation*}
        \closure{ \image\del{ \gamma }} 
                    \subseteq 
        \set{
            \gamma(t^-)
            \given 
            t \in (a,b]
        } \cup 
            \image\del{ \gamma },
    \end{equation*}
    and we conclude that 
    \begin{equation*}
        \closure{ \image\del{\gamma}}
        =
        \image\del{ \gamma }
        \cup 
        \set{
            \gamma(t^-)
            \given 
            t \in (a,b]
        }.
    \end{equation*}
    
    Next, we shall prove that 
    $
        \closure{ \image\del{ \gamma } }
    $
    is compact. Let $(x_n)$ be a sequence in $\closure{\image\del{\gamma}} $. 
    Then for every $n \in \bN$ we have $t_n \in [a,b]$ such that 
    $x_n = \gamma(t_n)$
    or
    $x_n = \gamma(t_n^-)$.
    As $[a,b]$ is compact, 
    we have a subsequence $(t_{n_k})_k$ and $t \in [a,b]$ such that $t_{n_k} \to t$ as $k \to \infty$.

    We have three possibilities:
    \begin{itemize}
        \item
        Sequence $(t_{n_k})_k$ has a subsequence $(t_{n_{k_l}})_l$ such that 
        $t_{n_{k_l}} < t$ for all $l \in \bN$.
        Then $t_{n_{k_l}} \to t^-$.
        Since $\gamma \in \testCurves\del{ [a,b]; X}$,
        limit $\gamma(t^-)$ exists, so for every $\eps > 0$ there exists $\delta > 0$ such that if $\abs{ t - s} \le 2\delta$ for $s \in [a, t)$, then $\metric \del{ \gamma(t^-), \gamma(s)} \le \eps$. 
        This means that for every $s \in [a,t)$ such that $\abs{ t - s} \le \delta$ we have
        $\metric \del{ \gamma(t^-), \gamma(s)} \le \eps$ and $\metric \del{ \gamma(t^-), \gamma(s^-)} \le \eps$. 
        Hence, 
        $
            \metric\del{
                \gamma(t^-),
                x_{n_{k_l}}
            }
            \le 
            \eps             
        $
        for all large $l \in \bN$. 
        As $\eps > 0$ is arbitrary, we have 
        $x_{n_{k_l}} \to \gamma(t^-)$. 

        \item 
        Sequence $(t_{n_k})_k$ has a subsequence $(t_{n_{k_l}})_l$ such that 
        $t_{n_{k_l}} > t$ for all $n \in \bN$.
        Then $t_{n_{k_l}} \to t^+$.
        Since $\gamma \in \testCurves\del{ [a,b]; X}$,
        limit $\gamma(t^+) = \gamma(t)$ exists, so for every $\eps > 0$ there exists $\delta > 0$ such that if $\abs{ t - s} \le 2\delta$ for $s \in (t,b]$, then $\metric \del{ \gamma(t), \gamma(s)} \le \eps$. 
        This means that for every $s \in (t,b]$ such that $\abs{ t - s} \le \delta$ we have
        $\metric \del{ \gamma(t), \gamma(s)} \le \eps$ and $\metric \del{ \gamma(t), \gamma(s^-)} \le \eps$. 
        Hence, 
        $
            \metric\del{
                \gamma(t),
                x_{n_{k_l}}
            }
            \le 
            \eps             
        $
        for all large $l \in \bN$. 
        As $\eps > 0$ is arbitrary, we have 
        $x_{n_{k_l}} \to \gamma(t)$. 

        \item 
        For all large $k \in \bN$ we have $t_{n_{k}} = t$. Then $x_{n_{k}} = \gamma(t)$ for infinitely many $k$, or $x_{n_{k}} = \gamma(t^-)$ for infinitely many $k$. In either case, $x_{n_{k}}$ has a further subsequence which converges to $\gamma(t)$ or $\gamma(t^-)$.
    \end{itemize}
\end{proof}
\begin{lemma}
\label{lem:right_continuity_and_agreeing_on_a_dense_subset}
    Let $(X, \metricAlone)$ be a metric space and $C \subseteq (a,b)$ be a dense set. Then
    \begin{enumerate}
        \item     
    If $\gamma, \gamma' \colon (a,b) \to X$ are right-continuous and such that $\gamma \rvert_C = \gamma' \rvert_C$, then $\gamma = \gamma'$.
     \item If $\wave{\gamma} \colon C \to X$ is such that $\wave{\gamma}(t^+):=\lim_{C \ni s \rightarrow t^+} \wave{\gamma}(s)$  exists for $t \in [a,b)$, then $t \mapsto \wave{\gamma}(t^+)$ is right-continuous.
     \end{enumerate}
\end{lemma}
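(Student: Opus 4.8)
The plan is to prove the two assertions separately, each time reducing matters to uniqueness of limits in $X$ together with the density of $C$.

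For part~(1), I would fix $t \in \intoo{a,b}$ and, using that $C$ is dense in $\intoo{a,b}$ with $t < b$, select a sequence $(s_n)_n$ in $C$ with $s_n > t$ and $s_n \to t$, so that $s_n \to t^+$. Right-continuity of $\gamma$ and of $\gamma'$ at $t$ gives $\gamma(s_n) \to \gamma(t)$ and $\gamma'(s_n) \to \gamma'(t)$, whereas $\gamma(s_n) = \gamma'(s_n)$ for all $n$ because $s_n \in C$; uniqueness of limits in the metric space then forces $\gamma(t) = \gamma'(t)$, and since $t \in \intoo{a,b}$ is arbitrary, $\gamma = \gamma'$.

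For part~(2), I would write $g(t) \coloneqq \wave{\gamma}(t^+)$ for $t \in \intco{a,b}$ and aim to show that $g(s) \to g(t)$ as $s \to t^+$ for each fixed $t \in \intco{a,b}$. Given $\eps > 0$, the assumed existence of $\wave{\gamma}(t^+) = \lim_{C \ni s \to t^+} \wave{\gamma}(s)$ provides $\delta > 0$ with $t + \delta \le b$ such that $\metric\del{\wave{\gamma}(s), g(t)} \le \eps$ whenever $s \in C \cap \intoo{t, t+\delta}$. Then for an arbitrary $r \in \intoo{t, t+\delta}$ one has $r \in \intoo{a,b} \subseteq \intco{a,b}$, so $g(r) = \wave{\gamma}(r^+)$ is well-defined; as $s \to r^+$ along $C$ we eventually have $s \in C \cap \intoo{t, t+\delta}$ (since $t < r < t+\delta$ and $s$ is squeezed between $r$ and $t+\delta$), hence $\metric\del{\wave{\gamma}(s), g(t)} \le \eps$, and letting $s \to r^+$ and invoking continuity of the metric gives $\metric\del{g(r), g(t)} \le \eps$. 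Since $r$ ranges over all of $\intoo{t, t+\delta}$, this establishes right-continuity of $g$ at $t$.

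I expect no serious obstacle: both arguments are elementary. The one delicate point is the nested limit in part~(2), where one must confirm that points of $C$ tending to $r$ from the right fall inside the interval $\intoo{t, t+\delta}$ on which the $\eps$-estimate from the definition of $\wave{\gamma}(t^+)$ holds — which is immediate from $t < r < t+\delta$. Everything else is routine bookkeeping.
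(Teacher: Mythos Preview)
Your proposal is correct and follows essentially the same approach as the paper. The paper dismisses part~(1) as ``straightforward'' and handles part~(2) by the same nested-limit idea, phrased via a $2\delta$ device (choose $\delta$ so the estimate holds on $(t,t+2\delta)\cap C$, then conclude for all $s\in(t,t+\delta)$), which is just a cosmetic variant of your argument that $r<t+\delta$ leaves room for points of $C$ approaching $r^+$ to remain inside $(t,t+\delta)$.
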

\begin{proof}
    The statement $(1)$ is straightforward. Now, for the second part of the lemma we fix $\eps > 0$ and $t \in [a,b)$. There exists $\delta > 0$ such if 
    $s \in \del{ t, t+2\delta}\cap C$, then $\metric\del{\wave{\gamma}(t^+), \wave{\gamma}(s)}\le \eps$. Hence, if $s \in \del{ t, t+\delta}$, then $\metric\del{\wave{\gamma}(t^+), \wave{\gamma}(s^+)}  \le \eps $ and $t \mapsto \wave{\gamma}(t^+)$ is right-continuous as needed.
\end{proof}
\begin{definition}[Reversal of a curve]
    For $\gamma \in \testCurves\del{ [a,b]; X}$ we define $\overleftarrow{\gamma} \colon [a,b] \to X$ by
    \begin{equation*}
        \forall t \in [a,b]
        \qquad 
        \overleftarrow{\gamma}(t) \coloneqq \gamma \del{\del{ a + b - t }^-},
     \end{equation*}
    where we use $\gamma \del{ a^-} = \gamma(a)$.
\end{definition}
\begin{lemma}
\label{lem:reversing_test_curve}
    Let $(X, \metricAlone)$ be a metric space and for $\gamma \in BV([a,b];X)$ we denote by $C_{\gamma}$ the set of continuity points of $\gamma$. Then function $\overleftarrow{\cdot} $ has the following properties:
    \begin{enumerate}[label=(\alph*)]
        \item 
        $
            \overleftarrow{\cdot} \colon \testCurves\del{ [a,b]; X} \to \testCurves\del{ [a,b]; X}$,
        \item
        $\overleftarrow{\overleftarrow{\gamma}} = \gamma$,
        \item 
         $C_{\overleftarrow{\gamma}} = b+a-C_{\gamma}$,
        \item
        For all $t \in [a,b]$ we have 
        $
            V_{\overleftarrow{\gamma}}(t) + \overleftarrow{V_{\gamma}}(t) =
            V(\gamma)
        $
        and hence $V(\overleftarrow{\gamma}) = V(\gamma)$.
    \end{enumerate}
\end{lemma}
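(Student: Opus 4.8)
The strategy is to verify the four items in order, since each builds on the previous one. Throughout I will write $c \colon [a,b] \to [a,b]$ for the reflection $c(t) = a+b-t$, so that $\overleftarrow{\gamma}(t) = \gamma(c(t)^-)$ with the convention $\gamma(a^-) = \gamma(a)$.

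For item (a), the key point is to describe $\overleftarrow{\gamma}$ via one-sided limits of $\gamma$. First I would observe that for $t \in (a,b]$ the left limit $\lim_{s\to t^-}\gamma(s) = \gamma(t^-)$ exists by the test-curve hypothesis (and equals $\gamma(a)$ at $a$ by convention), so $\overleftarrow{\gamma}$ is well-defined. Right-continuity of $\overleftarrow{\gamma}$ on $[a,b)$ amounts to left-continuity of $t\mapsto\gamma(t^-)$ on $(a,b]$, which is exactly the ``left limit of the left-limit function'' statement; I would prove it directly from the $\eps$--$\delta$ characterization of $\gamma(t^-)$, essentially as in the proof of Remark \ref{rem:closure_of_the_image_of_a_test_curve_is_compact} (the estimate $\metric(\gamma(t^-),\gamma(s^-)) \le \eps$ for $s$ close to $t$ from the left). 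Similarly the existence of $\lim_{s\to t^-}\overleftarrow{\gamma}(s)$ for $t \in (a,b)$ reduces to the existence of $\lim_{s\to t^+}\gamma(s) = \gamma(t)$, which holds since $\gamma$ is right-continuous; and left-continuity of $\overleftarrow{\gamma}$ at $b$ reduces to existence of $\lim_{s\to a^+}\gamma(s)$, which again follows from right-continuity of $\gamma$ at $a$. Bounded variation of $\overleftarrow{\gamma}$ will drop out of item (d) (or can be seen directly: any partition of $[a,b]$ pulls back under $c$ to a partition, and replacing values $\gamma(c(t_i))$ by nearby values $\gamma(c(t_i)^-)$ changes $V^\Delta$ by an arbitrarily small amount after refining, so $V(\overleftarrow\gamma) \le V(\gamma)$).

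For item (b), I would compute $\overleftarrow{\overleftarrow{\gamma}}(t) = \overleftarrow{\gamma}(c(t)^-) = \lim_{s\to c(t)^-}\overleftarrow{\gamma}(s) = \lim_{s\to c(t)^-}\gamma(c(s)^-)$. Since $c$ is an orientation-reversing homeomorphism, $s\to c(t)^-$ corresponds to $c(s)\to t^+$, so this is $\lim_{u\to t^+}\gamma(u^-)$. Now $u\mapsto \gamma(u^-)$ and $\gamma$ agree off the (countable) set of discontinuity points and both are right-continuous, hence by Lemma \ref{lem:right_continuity_and_agreeing_on_a_dense_subset}(1) they agree everywhere on $[a,b)$; so $\lim_{u\to t^+}\gamma(u^-) = \lim_{u\to t^+}\gamma(u) = \gamma(t)$. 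The boundary case $t=b$: $\overleftarrow{\overleftarrow\gamma}(b) = \overleftarrow\gamma(a) = \gamma(a^-)=\gamma(a)$? No — rather $\overleftarrow{\overleftarrow\gamma}(b) = \overleftarrow\gamma(c(b)^-)$ with $c(b)=a$, and by convention this is $\overleftarrow\gamma(a) = \gamma(c(a)^-) = \gamma(b^-) = \gamma(b)$ by left-continuity of $\gamma$ at $b$. Item (c) is then immediate from (a) and (b): a point $t$ is a continuity point of $\overleftarrow\gamma$ iff $\lim_{s\to t^-}\overleftarrow\gamma(s) = \overleftarrow\gamma(t)$ iff $\gamma(c(t)) = \gamma(c(t)^-)$... more carefully, since $\overleftarrow\gamma$ is already right-continuous, $\overleftarrow\gamma$ is continuous at $t$ iff it is left-continuous at $t$ iff (unwinding the limits as above) $\gamma$ is continuous at $c(t) = a+b-t$; hence $C_{\overleftarrow\gamma} = c(C_\gamma) = a+b-C_\gamma$.

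For item (d), the plan is to show $V_{\overleftarrow\gamma}(t) = V(\gamma) - V_\gamma(a+b-t)$, i.e. $V_{\overleftarrow\gamma}(t) = V(\gamma\rvert_{[a+b-t,\,b]})$, using additivity of variation (Corollary \ref{cor:variation_is_additive}). Since $V(\gamma\rvert_{[a+b-t,b]})$ only depends on the values of $\gamma$ on $[a+b-t,b]$, and $\overleftarrow\gamma\rvert_{[a,t]}$ is obtained from $\gamma\rvert_{[a+b-t,b]}$ by the same reversal construction on a subinterval, the identity $V(\overleftarrow{\gamma\rvert_{[a+b-t,b]}}) = V(\gamma\rvert_{[a+b-t,b]})$ is a special case of the claim $V(\overleftarrow\gamma)=V(\gamma)$ on $[a,b]$; conversely that claim is item (d) at $t=b$. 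So the real content is a single equality $V(\overleftarrow\gamma) = V(\gamma)$, which I would establish by the partition argument sketched under (a): given a partition $\Delta = (t_i)$ of $[a,b]$, its reflection $c(\Delta)$ is a partition, and $V^{c(\Delta)}(\gamma)$ involves the sums $\sum \metric(\gamma(c(t_{i-1})), \gamma(c(t_i)))$; approximating each $\gamma(c(t_i)^-)$ by $\gamma(s)$ for $s$ slightly larger than $c(t_i)$ and invoking Proposition \ref{cor:variation_is_a_uniform_limit_of_Delta-variations} to pass to normal sequences of partitions gives $V^\Delta(\overleftarrow\gamma) \le V(\gamma)$, hence $V(\overleftarrow\gamma)\le V(\gamma)$; applying this to $\overleftarrow\gamma$ and using (b) gives the reverse inequality. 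Once $V(\overleftarrow{\gamma})=V(\gamma)$ is known on every subinterval, the additive decomposition $V_\gamma(a+b-t) + V(\gamma\rvert_{[a+b-t,b]}) = V(\gamma)$ combined with $V_{\overleftarrow\gamma}(t) = V(\overleftarrow{\gamma\rvert_{[a+b-t,b]}}) = V(\gamma\rvert_{[a+b-t,b]})$ and the observation $\overleftarrow{V_\gamma}(t) = V_\gamma((a+b-t)^-) = V_\gamma(a+b-t)$ when $a+b-t \notin L_\gamma$, handled together with the jump relation $\lim_{s\to r^-}V_\gamma(r)-V_\gamma(s) = \phi_\gamma(r)$ from Proposition \ref{rem:properties_of_variation_function}, yields $V_{\overleftarrow\gamma}(t) + \overleftarrow{V_\gamma}(t) = V(\gamma)$ for all $t$.

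The main obstacle is bookkeeping the one-sided limits and the endpoint conventions correctly — in particular making sure that $\overleftarrow{V_\gamma}$, which uses the left-limit convention on the already-left-continuous-ish function $V_\gamma$, matches up with $V(\gamma) - V_{\overleftarrow\gamma}$ at the (countably many) discontinuity points of $\gamma$; this is where Proposition \ref{rem:properties_of_variation_function} (left-continuity of $V_\gamma$ at left-continuity points of $\gamma$, and the jump formula) is essential. Everything else is routine once the identity $V(\overleftarrow\gamma) = V(\gamma)$ on subintervals is in hand.
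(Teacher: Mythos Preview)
Your overall strategy is sound and parallels the paper's, but item (b) contains a genuine error. You assert that $u \mapsto \gamma(u^-)$ is right-continuous and hence coincides with $\gamma$ on all of $[a,b)$ via Lemma~\ref{lem:right_continuity_and_agreeing_on_a_dense_subset}. This is false at jump points: if $\gamma$ is discontinuous at $u_0$, then choosing $r_s \in (u_0,s)$ close enough to $s$ shows $\lim_{s \to u_0^+}\gamma(s^-) = \lim_{s\to u_0^+}\gamma(r_s) = \gamma(u_0)$ by right-continuity of $\gamma$, yet the value of the function at $u_0$ is $\gamma(u_0^-) \ne \gamma(u_0)$. The conclusion you actually need, $\lim_{u \to t^+}\gamma(u^-) = \gamma(t)$, is correct and follows from this very computation---but Lemma~\ref{lem:right_continuity_and_agreeing_on_a_dense_subset} plays no role. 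The paper instead verifies $\overleftarrow{\overleftarrow\gamma} = \gamma$ only on the dense set $C_\gamma$ (where $\gamma(t^-) = \gamma(t)$ makes everything trivial) and then invokes density and right-continuity.

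In item (d), your identification $V_{\overleftarrow\gamma}(t) = V(\overleftarrow{\gamma\rvert_{[a+b-t,b]}})$ is off by $\phi_\gamma(a+b-t)$ when $a+b-t \notin C_\gamma$: the left-endpoint convention forces $\overleftarrow{\gamma\rvert_{[a+b-t,b]}}$ to take the value $\gamma(a+b-t)$ at its right endpoint, whereas $\overleftarrow\gamma(t) = \gamma((a+b-t)^-)$, so $\overleftarrow\gamma\rvert_{[a,t]}$ carries an extra jump at $t$. You rightly flag this as the main obstacle and point to the jump relation of Proposition~\ref{rem:properties_of_variation_function}; that does repair the discrepancy, but the cleaner route---which is what the paper does---is to first establish the identity for $t \in C_{\overleftarrow\gamma}$ (where the endpoint issue vanishes, and one can compare partitions directly) and then extend to all $t$ by right-continuity of both sides via Lemma~\ref{lem:right_continuity_and_agreeing_on_a_dense_subset}. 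A minor slip: in your bounded-variation sketch, ``$s$ slightly larger than $c(t_i)$'' should read ``slightly smaller''; the paper instead obtains the cruder bound $V^\Delta(\overleftarrow\gamma) \le V^{c[\Delta]}(\gamma) + 2\sum_s\phi_\gamma(s) \le 3V(\gamma)$ directly from the triangle inequality and Lemma~\ref{cor:variation_bounds_sum_of_jumps_from_above}, which suffices for (a) and avoids any circularity with (d).
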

\begin{proof}
    For simplicity of notation, let us define function $w \colon [a,b] \to [a,b]$
    by the formula $w(t) = a+b-t$ for $t \in [a,b]$.
    \begin{enumerate}[label=(\alph*), itemsep=0.5em,leftmargin=0em, itemindent=2em,]
        \item 
    Let $t \in \intco{a, b}$, then, since $\gamma \in \testCurves\del{ [a,b]; X}$,
    the limit $\gamma 
            \del{
                \del{w(t)}^-            
            }$ exists.
    For $\eps > 0$, there exists $\delta > 0$ such that if $w(s) \in \intco{ a, w(t)}$ satisfies 
    $
        \abs{t-s} = \abs{ w(t) - w(s) } \le 2\delta
    $,
    then $\metric\del[2]{
            \gamma 
            \del[1]{
                \del{ w(t)}^-            
            },
            \gamma\del{ w(s) }
        } 
        \le 
        \eps.$
    Therefore, if $s \in (t,b]$ satisfies $|t-s| \leq \delta$, then 
    \begin{equation*}
        \metric\del{
            \overleftarrow{ \gamma}(t),
            \overleftarrow{ \gamma}(s)
        }
        =
        \metric\del[2]{
            \gamma 
            \del[1]{
                \del{ w(t)}^-            
            },
            \gamma\del{ 
                \del{ w(s) }^-     
            }
        } 
        \le 
        \eps.
    \end{equation*}
      As $\eps > 0$ is arbitrary, we conclude that $\overleftarrow{ \gamma}$ is right-continuous at $t$ for all $t \in \intco{a, b}$.

    Let $t \in \intoc{a,b}$, we will show
    if $\gamma$ is left-continuous at
    $w(t)$, then $\overleftarrow{\gamma}$
    is left-continuous at $t$. Let $\eps >0$, since $\gamma$ be left-continuous at
    $w(t)$, then
    $
        \overleftarrow{\gamma}(t)
        =
        \gamma\del{ (w(t))^- }
        =
        \gamma\del{ w(t)}
    $.
    Since $\gamma$ is right-continuous at $w(t)$, there exists $\delta > 0$ such that
    if 
    $
        w(s) \in \intcc{ w(t), b},
    $
    satisfies
    $
        \abs{t-s}
        =
        \abs{w(t) - w(s)}
        \le 
        2\delta
    $,
    then
    $
        \metric\del{
            \gamma(w(t)),
            \gamma(w(s))
        }
        \le 
        \eps
    $.
    Therefore, if $s \in \intcc{a,t}$,
    satisfies
    $
        \abs{t-s}
        \le 
        \delta
    $,
    then
    \begin{equation*}
        \metric\del{
            \overleftarrow{\gamma}(t),
            \overleftarrow{\gamma}(s)
        }
        =
        \metric\del{
            \gamma(w(t)),
            \gamma\del{(w(s))^-}
        }
        \le 
        \eps.
    \end{equation*}
    We conclude that $\overleftarrow{\gamma}$
    is left-continuous at $t$.
    
    Note that, in particular, since
    $\gamma$ is left-continuous at $a$, 
    we have that $\overleftarrow{\gamma}$ 
    is left-continuous at $b$.
    Moreover, we have shown that if
    $t\in [a,b]$ is such that $\gamma$
    is continuous at $a+b-t$, then
    $\overleftarrow{\gamma}$ is continuous at $t$.

    Let $\Delta=\del{t_i}_{i=0}^k$
    be a partition of $[a,b]$. Then we have
    \begin{align*}
        V^{\Delta}\del{ \overleftarrow{\gamma} }
        &=
        V^{w\sbr{ \Delta }}(\gamma)
        +
        \del{
             V^{\Delta}\del{ \overleftarrow{\gamma} }
            -V^{w\sbr{ \Delta }}(\gamma)
        }
        \\
        &=
        V^{w\sbr{ \Delta }}(\gamma)
        +
        \sum_{i=1}^k
            \del{
                \metric\del{
                    \overleftarrow{\gamma}(t_i),
                    \overleftarrow{\gamma}(t_{i-1})
                }
                -
                \metric\del{
                    \gamma\del{ 
                         w\del{ t_{i}  }
                    },
                    \gamma\del{ 
                        w\del{ t_{i-1} } 
                    }
                }
            }   
        \\
        &=
        V^{w\sbr{ \Delta }}(\gamma)
        +
        \sum_{i=1}^{k}
            \del{
                \metric\del{
                    \gamma\del{ 
                        \del{ w\del{t_{i}} }^- 
                    },
                    \gamma\del{ 
                        \del{ w\del{ t_{i-1}} }^-  
                    }
                }
                -\metric\del{
                    \gamma\del{ 
                         w\del{ t_{i}  }
                    },
                    \gamma\del{ 
                        w\del{ t_{i-1} } 
                    }
                }
            }           
        \\ 
        &\le 
        V^{w\sbr{ \Delta }}(\gamma)
        +
        \sum_{i=1}^{k}
        \del{
                    \metric\del{
                        \gamma\del{ 
                            \del{ w\del{ t_{i} }}^- 
                        },
                        \gamma\del{ 
                            w\del{ t_{i}} 
                        }
                    }
                    +
                    \metric\del{
                        \gamma\del{ 
                             \del{ w\del{ t_{i-1}  } }^-
                        },
                        \gamma\del{ 
                             w\del{ t_{i-1} }
                        }
                    }
                }
        \\
        &=
        V^{w\sbr{ \Delta }}(\gamma)
        +
        \sum_{i=1}^{k}
        \del{
                    \phi_\gamma\del{w\del{ t_{i} }  }
                    +
                    \phi_\gamma \del{w\del{ t_{i-1} } }
                }
        \\
        &\le 
        V^{w\sbr{ \Delta }}(\gamma)
        +
        2
        \sum_{t \in [a,b] }
            \phi_\gamma(t)
        \\
        &\le 
        3 V(\gamma),
    \end{align*}
    where in the last step we used 
    Corollary \ref{cor:variation_bounds_sum_of_jumps_from_above}.
    Therefore, 
    $V(\overleftarrow{\gamma}) \le 3V(\gamma)$
    and $\overleftarrow{\gamma} \in \boundedVariationRC\del{ [a,b]; X}$.

    By the definition of $\overleftarrow{\gamma}$
    we have 
    $
        \image\del{
            \overleftarrow{\gamma}
        }
        \subseteq
        \closure{
            \image\del{ \gamma}
        }
    $
    and $\closure{
            \image\del{ \gamma}
        }$
    is compact by Remark \ref{rem:closure_of_the_image_of_a_test_curve_is_compact}. Therefore, the existence of left-limits
    $\overleftarrow{\gamma}(t^-)$ 
    for $t \in \intoc{a,b}$  follows from Lemma \ref{lem:functions_of_bounded_variation_have_cauchy_limits_everywhere}. 
    We conclude that 
    $\overleftarrow{\gamma} \in \testCurves\del{ [a,b]; X}$.

    \item Next, we will prove that 
    $\overleftarrow{\overleftarrow{\gamma}} = \gamma$.
    Let $t \in C_{\gamma}$, then $\overleftarrow{\gamma}$ is continuous at $a+b-t$. 
    Hence, $\overleftarrow{\overleftarrow{\gamma}}$ is continuous at $t$. 
    Therefore, 
    \begin{equation*}
        \overleftarrow{\overleftarrow{\gamma}}(t)
        =
        \overleftarrow{\gamma}\del{ (a+b-t)^-}
        =
        \overleftarrow{\gamma}\del{a+b-t}
        =
        \gamma\del{
            (a+b - (a+b-t))^-
        }
        =
        \gamma(t^-)
        =
        \gamma(t),
    \end{equation*}
    and thus,  $\overleftarrow{\overleftarrow{\gamma}} \rvert_{C_{\gamma}} = \gamma \rvert_{C_{\gamma}}$. Since $a, b \in C_{\gamma}$, $C_{\gamma}$ is dense in $(a,b)$, and $\overleftarrow{\overleftarrow{\gamma}}$, $\gamma$ are right-continuous functions on $(a,b)$, 
    by Lemma \ref{lem:right_continuity_and_agreeing_on_a_dense_subset} we have $\overleftarrow{\overleftarrow{\gamma}} = \gamma$ on the entire $[a,b]$.

    \item We have previously shown that if $\gamma$
    is continuous at $t$, then 
    $\overleftarrow{\gamma}$ is continuous at $w(t)$. 
    Now, if $\overleftarrow{\gamma}$ is continuous at $t$, then $\overleftarrow{\overleftarrow{\gamma}} = \gamma$ is continuous at $w(t)$.
    This proves that $a+b - C_{\gamma}= C_{\overleftarrow{\gamma}}$.

    \item We know that the set $C_{\gamma}$ contains $a$ and $b$, and is dense in $[a,b]$. 
    The same is true for $C_{{\overleftarrow{\gamma}}}$, the set
    of continuity points of 
    $\overleftarrow{\gamma}$.
    Let $t \in C_{\overleftarrow{\gamma}}$. Since $C_{\overleftarrow{\gamma}}$ is dense in $[a,b]$,
    then $C_{\overleftarrow{\gamma}}\cap[a,t]$ is dense in $[a,t]$.
    Therefore, there exists a normal sequence
    $\del{\tau_i^n}_{i=0}^{m_n} = \Delta_n$
    of partitions of $[a,t]$ such that
    $\Delta_n \subseteq C_{\overleftarrow{\gamma}}$ for all $n \in \bN$.
    Then $\del{ a+b-\Delta_n}_n \subset C_{\gamma}$
    is a normal sequence of partitions of 
    $[a+b-t,b]$.  Therefore, for all $n \in \bN$ we have
    \begin{align*}
        V^{\Delta_n}\del{ 
            \overleftarrow{\gamma}\rvert_{[a,t]}
        }
        &=
        \sum_{i=0}^{m_n}
            \metric\del{
                \overleftarrow{\gamma}(\tau_i^n),
                \overleftarrow{\gamma}(\tau_{i-1}^n)
            }
        \\
        &=
        \sum_{i=0}^{m_n}
            \metric\del{
                \gamma\del{(w(\tau_i^n))^-},
                \gamma\del{(w(\tau_{i-1}^n))^-}
            }
        \\
        &=
        \sum_{i=0}^{m_n}
            \metric\del{
                \gamma\del{w(\tau_i^n)},
                \gamma\del{w(\tau_{i-1}^n)}
            }
        \\
        &=
        V^{a+b-\Delta_n}\del{ 
            \gamma\rvert_{[w(t),b]}
        }.
    \end{align*}
    Hence, by Corollary \ref{cor:variation_is_a_uniform_limit_of_Delta-variations} and Corollary \ref{cor:variation_is_additive} we have
    \begin{equation*}
        V_{\overleftarrow{\gamma}}
        (t)
        =
        V\del{ \overleftarrow{\gamma} \rvert_{[a,t]} }
        =
        V\del{ \gamma \rvert_{[w(t),b]} }
        =
        V_\gamma(b) - V_\gamma(w(t))
        =
        V(\gamma) - V_\gamma(w(t))= V(\gamma) - V_\gamma((w(t))^-),
    \end{equation*}
    where the last equality follows from Proposition \ref{rem:properties_of_variation_function} and the assumption $t\in C_{\overleftarrow{\gamma}}$. Let us point out that by Proposition \ref{rem:properties_of_variation_function} we have $V_{\gamma} \in \testCurves\del{ [a,b]; [0,\infty)}$ and by point $(a)$ we have $\overleftarrow{V_{\gamma}} \in \testCurves\del{ [a,b]; [0,\infty)}$. Therefore, for $t \in C_{\overleftarrow{\gamma}}$ we have
    $$
        V_{\overleftarrow{\gamma}}
        (t)
        =
        V(\gamma) - \overleftarrow{V_\gamma}(t).
    $$
    Finally, since $(a,b) \ni t \mapsto V(\gamma) - \overleftarrow{V_\gamma}(t)$ and $(a,b) \ni t \mapsto V_{\overleftarrow{\gamma}}(t)$ are right-continuous maps which coincide on a dense set $C_{\overleftarrow{\gamma}}$ and $a,b \in C_{\overleftarrow{\gamma}}$, Lemma \ref{lem:right_continuity_and_agreeing_on_a_dense_subset} finishes the proof.

    The final part of the lemma
    is a consequence of the fact that
    \begin{equation*}
        V \del{\overleftarrow{\gamma}}
        =
        V_{\overleftarrow{\gamma}}(b)
        =
        V(\gamma) - V_\gamma(a+b-b)
        =
        V(\gamma) - V_\gamma(a)
        =
        V(\gamma),
    \end{equation*}
    where we used the fact that $a \in C_{\gamma}$.
    \end{enumerate}
\end{proof}

\subsection{Integration along a curve} \label{subsec::integration_along_a_curve}
 Let $(X, \metricAlone)$ be a metric space and  $\gamma \in \boundedVariationRC\del{[a,b];X}$. Since $V_{\gamma} \in \boundedVariationRC\del{[a,b];[0, \infty)}$ and $V_{\gamma}$ is non-decreasing, by the Caratheodory Extension Theorem there exists a measure\footnote{Such a measure will be called a Lebesgue--Stieltjes measure induced by $V_{\gamma}$.} $\mu_{\gamma}$ defined on $\borel\del{ [a,b] }$ such that 
    \begin{equation*}
        \forall r, t \in [a,b],
        \enskip r < t
        \qquad 
            \mu_{\gamma}\del{
                \intoc{ r, t}        
            }
        =
            V_{\gamma}(t) - V_{\gamma}(r) \quad \text{and} \quad \mu_{\gamma}\del{ \set{a} } =  0.
    \end{equation*}
     Let  $f \colon X \to \bRExtended$ be Borel. We define the Lebesgue--Stieltjes integral of $f$ along curve $\gamma$ by 
    \begin{equation*}
        \int_\gamma f
        \coloneqq 
        \integral{ [a,b] }{ f \circ \gamma }{ \mu_\gamma }.
    \end{equation*}
    We note that $f \circ \gamma$ is Borel since $f$ is Borel and $\gamma$ is Borel by Lemma \ref{lem:functions_of_buonded_variation_are_borel}.
\begin{example}
\label{ex:assymetry_of_the_Lebesgue-Stieltjes_integral}
    Let $(X, \metricAlone)$ be a metric space and $x, y \in X$. Let $\gamma \colon [a,b] \to X$ be defined by
    \begin{equation*}
        \gamma(t)
        \coloneqq 
        \begin{cases}
            x, &\text{ for } t \in \intco{ a, (a + b)/2 }, \\
            y, &\text{ otherwise}.
        \end{cases}
    \end{equation*} 
    Then, for Borel map $f \colon X \to \bRExtended$ we have $\int_\gamma f = f(y) \metric\del{ x, y}$.
\end{example}
Having in mind the above example we define the symmetrized integral. 
\begin{definition}[Symmetrized Lebesgue--Stieltjes integral]\label{def::symmetrized_integral}
      Let $(X, \metricAlone)$ be a metric space and 
    $
        \gamma \in \testCurves\del{  
            [a,b] ; X    
        }
    $.
    Let  $f \colon X \to \bRExtended$ be Borel. We define the symmetrized Lebesgue--Stieltjes integral of $f$ along curve $\gamma$ by
    \begin{equation*}
        \sint{ \gamma }{ f }
        \coloneqq 
        \frac{1}{2}
        \del{
            \int_\gamma f
            + \int_{ \overleftarrow{\gamma} } f     
        }.
    \end{equation*}
\end{definition}
\begin{example}\label{prz325}
    Let $(X, \metricAlone)$ be a metric space, $x, y \in X$ and let $\gamma \colon [a,b] \to X$ be a curve from Example \ref{ex:assymetry_of_the_Lebesgue-Stieltjes_integral}, then 
    \begin{equation*}
         \overleftarrow{\gamma}(t)
        =
        \begin{cases}
            y, &\text{ for } t \in \intco{ a, (a + b)/2 }, \\
            x, &\text{ otherwise}.
        \end{cases}
    \end{equation*} 
    Thus, for  Borel map $f \colon X \to \bRExtended$ such that $ (f(x) + f(y))$ makes sense we have
    $\sint{ \gamma }{ f }
            =
            (f(x) + f(y))
            \metric\del{ x, y }/2.$
\end{example}
\begin{remark} \label{rem::continuous_gamma_then_symmetrized_integral_is_the_usual_integral}
     Let $(X, \metricAlone)$ be a metric space and $\gamma \in \testCurves\del{ [a,b]; X}$. If $\gamma$ is continuous, then for every Borel map  $f \colon X \to [0,\infty]$ we have
     \begin{equation*}
          \sint{ \gamma }{ f }
            =
            \int_{\gamma} f.
    \end{equation*}
    Moreover, the right hand side in the above equality coincides with  the integral of a Borel function along a rectifiable curve \cite{Bjorn}.
    \end{remark}
\begin{proof}
     Let $w \colon [a,b] \to [a,b]$ be defined by the formula $w(t) = a+b-t$, then since $\gamma$ is continuous we have
    $ \overleftarrow{\gamma} = \gamma \circ w$. Moreover, by Lemma \ref{lem:reversing_test_curve} we get
$
            V_{\overleftarrow{\gamma}}(t) =V(\gamma) -V_{\gamma}(w(t)). 
        $
    Let us observe that    
    \begin{equation*}
        \mu_{\overleftarrow{\gamma}}\del{
            \set{a}
        }
        =
        0
        =
        \lim_{s \to b^-}
            V_\gamma(b) - V_\gamma(s)
        =
        \lim_{s \to b^-}
            \mu_\gamma\del{ \intoc{s, b} }
        =
        \mu_\gamma\del{ \set{b} }
        =
        \mu_\gamma\del{
            w \sbr{ \set{a } }
        },
    \end{equation*}
    and for $r,t \in [a,b]$ we have
    \begin{align*}
        \mu_{\overleftarrow{\gamma}}\del{
            \intoc{r,t}
        }
        &=
        V_{\overleftarrow{\gamma}}(t)
        - V_{\overleftarrow{\gamma}}(r)
        \\
          &=
        V_\gamma\del{ w(r) }
        - V_\gamma\del{ w(t) }
        \\
         &=
        \mu_{\gamma}\del{ 
            \intoc{ w(t), w(r)}
        }
        \\
        &=
        \mu_{\gamma}\del{
            w \sbr{ \intoc{r,t} }
        }.
    \end{align*}
    In this way we have proved that for every $B$ such that $B=\{a\}$ or $B=(r,t]$ we have 
    $
        \mu_{\overleftarrow{\gamma}}\del{ B}
        =
        \mu_{\gamma}\del{
            w \sbr{ B }
        }
    $.
    Therefore, since sets of the mentioned form
    generate the entire $\sigma$-algebra
    of Borel sets on $[a,b]$, by a standard application of the Dynkin $\pi$-$\lambda$ Lemma, we have 
    $
        \mu_{\overleftarrow{\gamma}}\del{ B}
        =
        \mu_{\gamma}\del{
            w \sbr{ B }
        }
        =
        \del{ (w^{-1})_\# \mu_{\gamma} }(B)
    $ 
    for all Borel sets.

    Therefore, for all Borel $f \colon X \to [0,\infty]$ we have
    \begin{equation*}
        \int_{\overleftarrow{\gamma}}
            f
        =
        \integral{ [a,b] } { 
            f \circ \overleftarrow{\gamma}
         }{ 
            \mu_{\overleftarrow{\gamma}}
         }
         =
         \integral{ [a,b] } { 
            f \circ \gamma \circ w
         }{ 
            \del{ (w^{-1})_\# \mu_{\gamma} }
         }
         =
         \integral{ [a,b] } { 
            f \circ \gamma 
         }{ 
            \mu_{\gamma} 
         }
         =
        \int_\gamma f.
    \end{equation*}
 Now, we shall prove $ \int_{\gamma} f$ coincides with  the integral of a Borel function along a rectifiable curve.
 Let $\tilde{\gamma}: [0, V(\gamma)] \rightarrow X$ be the arc-length parametrization of $\gamma$, i.e, $\gamma = \tilde{\gamma} \circ V_{\gamma}$. 
 It is well known that $\tilde{\gamma}=\gamma \circ h$, where $h:[0,V(\gamma)]\rightarrow [a,b]$ is given by the formula $h(t)=\inf V_{\gamma}^{-1}[\{t\}]$ (see \cite{HajCon}). Therefore,
 \[
  \int_{[0,V(\gamma)]} f \circ \tilde{\gamma} \ \mathrm{d} \lambda = \int_{[0,V(\gamma)]} f \circ \gamma \circ h \ \mathrm{d} \lambda =
  \int_{[a,b]} f \circ \gamma \ \mathrm{d} h_\#\lambda.
 \]
Furthermore, for every $r, t \in [a,b]$ such that $r<t$ we have
$h^{-1}(\{a\})=\{0\}$ and
\[
h^{-1}((r,t])=(V_{\gamma}(r),V_{\gamma}(t)].
\]
Thus 
\[
\mu_{\gamma}(\{a\})=0=\lambda(\{0\})=h_\#\lambda(\{a\})
\]
and
\[
\mu_{\gamma}((r,t])=V_{\gamma}(t) - V_{\gamma}(r) = \lambda((V_{\gamma}(r), V_{\gamma}(t)]) = h_\#\lambda ((r,t]).
\]
Hence, $ h_\#\lambda=\mu_\gamma$ on Borel sets of $[a,b]$ and we get 
\[
\int_{[0,V(\gamma)]} f \circ \tilde{\gamma} \ \mathrm{d} \lambda = 
  \int_{[a,b]} f \circ \gamma \ \mathrm{d} \mu_{\gamma}.
\]
\end{proof}
\begin{lemma}
\label{lem:symmetrized_Lebesgue-Stieltjes_integral_as_a_pushforward}
    Let $(X, \metricAlone)$ be a metric space, $\gamma \in \testCurves\del{ [a,b]; X}$ and let us define a Borel measure $\mu^S_\gamma$ on $X$ as follows 
    $
        \mu_{\gamma}^S
        \coloneqq 
        \frac{1}{2}
            \del{
                \gamma_\# \mu_\gamma
                +
                \overleftarrow{\gamma}_\#
                \mu_{\overleftarrow{\gamma}}
            }
    $. Then for every Borel map $g \colon X \to \intcc{0, \infty}$ we have 
    \begin{equation*}
               \sint{\gamma}{g}
        =
        \integral{X}{g}{\mu^S_\gamma}.
    \end{equation*}
\end{lemma}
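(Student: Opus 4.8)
The plan is to unwind the definition of the symmetrized integral and recognize each of its two summands as an integral against a pushforward measure. By Definition \ref{def::symmetrized_integral} we have
\[
    \sint{\gamma}{g}
    =
    \frac{1}{2}
    \del{
        \int_\gamma g
        + \int_{\overleftarrow{\gamma}} g
    },
\]
and by the very definition of the Lebesgue--Stieltjes integral along a curve (given just before Example \ref{ex:assymetry_of_the_Lebesgue-Stieltjes_integral}) we have $\int_\gamma g = \integral{[a,b]}{g \circ \gamma}{\mu_\gamma}$ and $\int_{\overleftarrow{\gamma}} g = \integral{[a,b]}{g \circ \overleftarrow{\gamma}}{\mu_{\overleftarrow{\gamma}}}$. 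Here we use that $\overleftarrow{\gamma} \in \testCurves\del{[a,b];X} \subseteq \boundedVariationRC\del{[a,b];X}$ by Lemma \ref{lem:reversing_test_curve}(a), so that $\mu_{\overleftarrow{\gamma}}$ is well-defined.

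Next I would apply the standard change-of-variables formula for pushforward measures: if $\nu$ is a Borel measure on $[a,b]$, $\gamma\colon[a,b]\to X$ is Borel, and $g\colon X\to[0,\infty]$ is Borel, then $\integral{[a,b]}{g\circ\gamma}{\nu} = \integral{X}{g}{\gamma_\#\nu}$. Both $\gamma$ and $\overleftarrow{\gamma}$ are Borel by Lemma \ref{lem:functions_of_buonded_variation_are_borel}, so this applies to each summand, giving
\[
    \sint{\gamma}{g}
    =
    \frac{1}{2}
    \del{
        \integral{X}{g}{\gamma_\#\mu_\gamma}
        + \integral{X}{g}{\overleftarrow{\gamma}_\#\mu_{\overleftarrow{\gamma}}}
    }.
\]
Finally, since $g\ge 0$, linearity of the integral in the measure (which for nonnegative integrands holds for arbitrary, not merely finite, Borel measures by monotone convergence on simple functions) lets me combine the two terms into a single integral against $\frac{1}{2}\del{\gamma_\#\mu_\gamma + \overleftarrow{\gamma}_\#\mu_{\overleftarrow{\gamma}}} = \mu^S_\gamma$, yielding $\sint{\gamma}{g} = \integral{X}{g}{\mu^S_\gamma}$, as claimed.

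There is no serious obstacle here; the statement is essentially bookkeeping. The only points that require a word of care are: (i) checking that $\mu^S_\gamma$ is genuinely a Borel measure on $X$, which follows because pushforwards of Borel measures under Borel maps are Borel measures and a nonnegative combination of Borel measures is again one; and (ii) making sure the change-of-variables and the splitting of the integral are invoked only for nonnegative Borel integrands, where no integrability hypotheses are needed. Since the hypothesis restricts $g$ to take values in $\intcc{0,\infty}$, all of this goes through without incident.
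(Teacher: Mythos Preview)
Your proof is correct and follows essentially the same route as the paper: unfold the definition of the symmetrized integral, write each summand as an integral on $[a,b]$, apply the pushforward change-of-variables formula, and combine via linearity in the measure. The extra justifications you include (well-definedness of $\mu_{\overleftarrow{\gamma}}$ via Lemma~\ref{lem:reversing_test_curve}(a), Borelness of $\gamma$ and $\overleftarrow{\gamma}$, and that $\mu^S_\gamma$ is a Borel measure) are accurate and merely make explicit what the paper leaves implicit.
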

\begin{proof}
    Let $g \colon X \to \intcc{0, \infty}$ be Borel.
    Then
    \begin{align*}
        \sint{\gamma}{g}
        &=
        \frac{1}{2}
        \del{
            \int_\gamma g
            +
            \int_{\overleftarrow{\gamma}} g
        }
                \\
        &=
        \frac{1}{2}
        \del{
            \integral{[a,b]}{ g \circ \gamma}{ \mu_\gamma}
            +
             \integral{[a,b]}{ g \circ \overleftarrow{\gamma}}{
                \mu_{\overleftarrow{\gamma}}
             }        
        }
        \\ 
        &=
        \frac{1}{2}
        \del{
            \integral{X}{ g}{ \gamma_\# \mu_\gamma}
            +
             \integral{X}{ g }{
                \overleftarrow{\gamma}_\#
                \mu_{\overleftarrow{\gamma}}
             }        
        }
        \\
        &=
        \integral{X}{g}
        {\mu_{\gamma}^S
                   }.
    \end{align*}
\end{proof}
\begin{remark}
   \label{remM}
    Let $(X, \metricAlone)$ be a metric space
    and $\gamma \in \testCurves\del{ [a,b]; X}$, then 
    \begin{equation*}
        \mu^S_\gamma(\closure{ \image{ \gamma } })=\mu^S_\gamma(X)= V(\gamma).
    \end{equation*}
\end{remark}
\begin{proof}
    By Lemma \ref{lem:reversing_test_curve} we have
    \begin{equation*}
        \mu^S_\gamma(X) = \frac{1}{2}\del{
            \int_\gamma 1
            +
            \int_{\overleftarrow{\gamma}} 1
        } = \frac{1}{2}\del{ V(\gamma )
            +
            V(\overleftarrow{\gamma}) 
        }= V(\gamma).
    \end{equation*}
    Furthermore,
    \begin{align*}
        \mu^S_\gamma(\closure{ \image{ \gamma } }) 
        &\le 
        \mu^S_\gamma(X) 
        \\
        &= 
        \frac{1}{2}
            \del{
                \gamma_\# \mu_\gamma (X)
                +
                \overleftarrow{\gamma}_\#
                \mu_{\overleftarrow{\gamma}}(X)
            }\\
            &=\frac{1}{2}
            \del{
                \gamma_\# \mu_\gamma (\image{ \gamma } )
                +
                \overleftarrow{\gamma}_\#
                \mu_{\overleftarrow{\gamma}}(\image{ \overleftarrow{\gamma} )}
            }\\
            &\le \frac{1}{2}
            \del{
                \gamma_\# \mu_\gamma (\closure{\image{ \gamma } })
                +
                \overleftarrow{\gamma}_\#
                \mu_{\overleftarrow{\gamma}}(\closure{\image{ \gamma } })} \\
                &=\mu^S_\gamma(\closure{ \image{ \gamma } }).
    \end{align*}
\end{proof}
\begin{proposition}
\label{rem:test_Curves_changing_intervals}
    Let $\psi \colon [c,d] \to [a,b]$ 
    be defined by the formula
    \begin{equation*}
        \forall t \in [c,d]
        \qquad 
        \psi(t)
            \coloneqq 
            a
            +
            \frac{ b-a}{d-c}(t-c).
    \end{equation*}
    Then the push-forward $\psi_\#$ defined by the formula 
    \begin{equation*}
        \forall 
            \gamma \in 
            \testCurves\del{ 
                [a,b];
                X
            }
        \qquad 
            \psi_\#(\gamma)
            =
            \gamma \circ \psi
    \end{equation*}
    has the following properties:
    \begin{enumerate}[label=(\alph*)]
        \item 
        $
            \psi_\#
            \colon
            \testCurves\del{ 
                [a,b];
                X
            }
            \to 
            \testCurves\del{ 
                [c,d];
                X
            }
        $
        is a bijection,
        \item 
        $C_{\psi_\#(\gamma)} = \psi^{-1}(C_{\gamma})$, 
        \item 
        $
            \psi_\#\del{ 
                \overleftarrow{\gamma}
            }
            =
            \overleftarrow{
                \psi_\#(\gamma)
            }
        $,
        \item 
        $
            V_{\psi_\#(\gamma)}
            =
            V_\gamma \circ \psi
        $
        \item 
        $
            (\psi^{-1})_\#(\mu_\gamma)
            =
            \mu_{\psi_\#(\gamma)}
        $
        \item 
        For all Borel $f \colon X \to [0,\infty]$ we have
        $
            \int_\gamma f
            =
            \int_{\psi_\#(\gamma) } f
        $
        and 
        $
            \sint{\gamma}{ f }
            =
            \sint{\psi_\#(\gamma) }{ f}
        $.
    \end{enumerate}
\end{proposition}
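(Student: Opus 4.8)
The plan is to establish the six properties essentially in the order listed, since each relies on the previous ones. The affine map $\psi$ is an increasing homeomorphism of $[c,d]$ onto $[a,b]$ with $\psi(c)=a$, $\psi(d)=b$, and $\psi^{-1}(t) = c + \frac{d-c}{b-a}(t-a)$ is also affine increasing. For part (a), I would first check that $\psi_\#(\gamma) = \gamma\circ\psi$ lands in $\testCurves([c,d];X)$: right-continuity on $[c,d)$ and left-continuity at $d$ follow because $\psi$ is an increasing homeomorphism (so one-sided limits are preserved and not swapped), and the existence of the left-limit $(\gamma\circ\psi)(t^-) = \gamma(\psi(t)^-)$ for $t\in(c,d)$ follows from property (iii) of test curves for $\gamma$ together with $\psi(t)\in(a,b)$. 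Bounded variation with $V(\gamma\circ\psi) = V(\gamma)$ follows from the fact that $\Delta\mapsto\psi[\Delta]$ is a bijection $\partition([c,d])\to\partition([a,b])$ with $V^\Delta(\gamma\circ\psi) = V^{\psi[\Delta]}(\gamma)$; taking suprema gives equality of variations (this also gives the $V_\gamma\circ\psi = V_{\psi_\#(\gamma)}$ statement in (d) after restricting to $[c,t]$ and invoking Corollary \ref{cor:variation_is_additive}-style bookkeeping). Bijectivity is then immediate since $(\psi^{-1})_\#$ is a two-sided inverse.

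For part (b), $\gamma\circ\psi$ is continuous at $t$ iff $\gamma$ is continuous at $\psi(t)$ (again because $\psi$ is an increasing homeomorphism, so it neither creates nor destroys one-sided discontinuities), which is exactly $C_{\psi_\#(\gamma)} = \psi^{-1}(C_\gamma)$. For part (c), the cleanest route is to use the characterization of $\overleftarrow{\cdot}$ via its action on continuity points together with Lemma \ref{lem:right_continuity_and_agreeing_on_a_dense_subset}: both $\psi_\#(\overleftarrow{\gamma})$ and $\overleftarrow{\psi_\#(\gamma)}$ are right-continuous functions on $(c,d)$, so it suffices to show they agree on the dense set of points $t\in C_{\psi_\#(\overleftarrow\gamma)}$ (which contains $c,d$). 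At such points one unwinds the definitions: writing $w_{[a,b]}(s) = a+b-s$ and $w_{[c,d]}(t) = c+d-t$, a short computation using $\psi\circ w_{[c,d]} = w_{[a,b]}\circ\psi$ shows both sides equal $\gamma((w_{[a,b]}(\psi(t)))^-)$.

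For part (e), since $\mu_{\psi_\#(\gamma)}$ and $(\psi^{-1})_\#\mu_\gamma$ are both Borel measures on $[c,d]$, it suffices to check they agree on the generating $\pi$-system of sets $\{c\}$ and $(r,s]$ and then invoke the Dynkin $\pi$-$\lambda$ lemma (exactly as in Remark \ref{rem::continuous_gamma_then_symmetrized_integral_is_the_usual_integral}). On $(r,s]$ one gets $\mu_{\psi_\#(\gamma)}((r,s]) = V_{\psi_\#(\gamma)}(s) - V_{\psi_\#(\gamma)}(r) = V_\gamma(\psi(s)) - V_\gamma(\psi(r)) = \mu_\gamma((\psi(r),\psi(s)]) = \mu_\gamma(\psi[(r,s]]) = (\psi^{-1})_\#\mu_\gamma((r,s])$ using part (d), and on $\{c\}$ both sides vanish by construction. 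Finally, part (f) follows by the change-of-variables formula for pushforwards: $\int_{\psi_\#(\gamma)} f = \int_{[c,d]} f\circ\gamma\circ\psi \,\mathrm{d}\mu_{\psi_\#(\gamma)} = \int_{[c,d]} f\circ\gamma\circ\psi\,\mathrm{d}(\psi^{-1})_\#\mu_\gamma = \int_{[a,b]} f\circ\gamma\,\mathrm{d}\mu_\gamma = \int_\gamma f$, and then the symmetrized version is obtained by averaging this identity for $\gamma$ with the same identity for $\overleftarrow\gamma$, using part (c) to identify $\psi_\#(\overleftarrow\gamma) = \overleftarrow{\psi_\#(\gamma)}$.

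I expect the only mildly delicate point to be part (c): one must be careful that $\overleftarrow{\cdot}$ is only pinned down at continuity points and elsewhere determined by right-continuity, so the argument should be phrased through Lemma \ref{lem:right_continuity_and_agreeing_on_a_dense_subset} rather than by a naive pointwise substitution (a naive substitution at a discontinuity point could involve the wrong one-sided limit). Everything else is routine bookkeeping with the affine change of variables and the pushforward/change-of-variables formalism.
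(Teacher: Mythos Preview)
Your proposal is correct and follows essentially the same approach as the paper: both arguments exploit that $\psi$ is an increasing affine homeomorphism to transport partitions, continuity points, and one-sided limits, handle (c) by checking agreement on a dense set of continuity points and invoking Lemma \ref{lem:right_continuity_and_agreeing_on_a_dense_subset} (via the identity $\psi\circ w_{[c,d]}=w_{[a,b]}\circ\psi$), verify (e) on the $\pi$-system $\{\{c\}\}\cup\{(r,s]\}$ using (d) and Dynkin, and derive (f) by change of variables plus (c). Your treatment of (d) via the direct bijection of partitions on $[c,t]\leftrightarrow[a,\psi(t)]$ is in fact slightly cleaner than the paper's, which first establishes (d) only at continuity points and then extends by right-continuity and density, but the substance is the same.
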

\begin{proof}
    \vspace{-1em}
    \begin{enumerate}[itemindent=2em,leftmargin=0em, label=(\alph*), itemsep=0.5em]
        \item 
        The map $\psi$ is an increasing homeomorphism.
    Let 
    $
        \gamma 
        \in 
        \testCurves\del{
            [a,b]; X
        }
    $, then for all $\Delta \in \partition\del{ [c,d] }$
    we have 
    $
        \psi\sbr{ \Delta }\in \partition\del{ [a,b] }
    $
    and 
    $
        V^{\psi\sbr{ \Delta }}(\gamma)
        =
        V^{\Delta}(\psi_\#(\gamma))
    $.
    Therefore,  
    $
        V\del{ \psi_\#(\gamma) }
        \le 
        V( \gamma)
    $
    and we get  
    $
        \psi_\#(\gamma)
        \in 
        \boundedVariation\del{
            [a,b]; X
        }
    $.
    Next, let us note that since $\psi$ is an increasing homeomorphism, 
    for 
    $
        \gamma \in \testCurves\del{
            [c,d];
            X
        }
    $
    we have that if $t \in [a,b]$
    is a point of left-continuity
    (right-continuity) of $\gamma$,
    then $\psi^{-1}(t)$
    is a point of left-continuity
    (right-continuity) of $\psi_\#(\gamma)$.
    Moreover, for all $s \in \intoc{c,d}$
    we have
    $
        \psi_\#(\gamma)(s^-)
        =
        \gamma\del{ \del{ \psi(s) }^- }
    $.
    This means that 
    $
        \psi_\#(\gamma)
        \in 
        \testCurves\del{
            [c,d]; X
        }
    $.
        We have proved that 
    $
            \psi_\#
            \colon
            \testCurves\del{ 
                [a,b];
                X
            }
            \to 
            \testCurves\del{ 
                [c,d];
                X
            }
        $.
    Since $\psi^{-1}$ has the same properties as 
    $\psi$, we have 
    $
        (\psi^{-1})_\# 
        \colon 
        \testCurves\del{ 
                [c,d];
                X
            }
            \to 
            \testCurves\del{ 
                [a,b];
                X
            }
        $
    and $(\psi^{-1})_\# $ is an inverse 
    of $\psi_\#$.
    Hence, $\psi_\#$ is a bijection. 

    \item We have previously shown that $\psi^{-1}\sbr{C_{\gamma}}\subset C_{\psi_\#(\gamma)}$. Hence, we also get $\psi\sbr{C_{\psi_\#(\gamma)}}\subset C_{\gamma}$,
    which implies  
    $
        \psi^{-1}\sbr{C_{\gamma}}= C_{\psi_\#(\gamma)}.  
    $

    \item Let $s \in d+c -C_{\psi_\#(\gamma)}$, then $\psi(s) \in a+b -C_{\gamma}$. Hence,
    \begin{multline*}
        \overleftarrow{
            \psi_\#(\gamma)
        }(s)
        =
        \psi_\#(\gamma)\del{
            (d+c-s)^-
        }
        =
        \psi_\#(\gamma)\del{
            d+c-s
        }
        \\
        =
        \gamma\del{
            \psi\del{
                c+d-s
            }
        }
        =
        \gamma\del{
            a+b-\psi(s)
        }
        \\
        =
        \gamma\del{
            (a+b-\psi(s))^-
        }
        =
        \overleftarrow{\gamma}\del{ \psi(s) }
        =
        \psi_\#(\overleftarrow{\gamma})(s).
    \end{multline*} 
    Therefore, 
    $
        \overleftarrow{
            \psi_\#(\gamma)
        } \rvert_{d+c -C_{\psi_\#(\gamma)}}
        =
        \psi_\#(\overleftarrow{\gamma})
        \rvert_{d+c -C_{\psi_\#(\gamma)}}
    $.
    Since $d+c -C_{\psi_\#(\gamma)}$ is dense in $[c,d]$,
    the equality from third point 
    is a consequence of Lemma
    \ref{lem:right_continuity_and_agreeing_on_a_dense_subset}.

    \item We have previously shown that 
    $
        V\del{ \psi_\#(\gamma) }
        \le 
        V( \gamma)
    $.
    Since $\psi^{-1}$ has the same properties 
    as $\psi$ and $(\psi^{-1})_\# $ is an inverse of $\psi_\#$, we also have
    $
        V\del{ \psi_\#(\gamma) }
        \ge 
        V( \gamma)
    $.
    Hence, 
    $
        V\del{ \psi_\#(\gamma) }
        = 
        V( \gamma)
    $.
    For $s \in C_{\psi_\#(\gamma)}$ let us define function 
    $\psi^s \colon [c,s] \to [a,\psi(s)]$
    by the formula
    \begin{equation*}
        \forall r \in [c,s]
        \qquad 
            \psi^s(r)
            \coloneqq 
            a + 
            \frac{ \psi(s) - a }{ s -c }( r- c).
    \end{equation*}
    By a straightforward calculation we have $\psi^s = \psi \rvert_{[c,s]}$. 
    Then by Remark \ref{rem:restrciting_test_curve_to_continutity_point} we have
    $
        \psi^s_\#\del{ \gamma \rvert_{[a,\psi(s)]}}
        =
        \psi_\#(\gamma) \rvert_{[c,s]}
        \in 
        \testCurves\del{
            [c,s];
            X
        }
    $.
    Therefore, by already proved equality of variations we get
    \begin{equation*}
        V_\gamma( \psi(s))
        =
        V\del{
            \gamma \rvert_{[a, \psi(s)]}
        }
        =
        V\del{
            \psi^s_\#\del{
                \gamma \rvert_{[a,\psi(s)]}
            }   
        }
        =
        V\del{
            \psi_\#\del{
                \gamma
            }    \rvert_{[c, s]}
        }
        =
        V_{\psi_\#\del{
                \gamma
            } }
        (s).
    \end{equation*}
    Therefore, $V_\gamma \circ \psi = V_{\psi_\#\del{
                \gamma
            } }$ on $C_{\psi_\#(\gamma)}$.
        Set $C_{\psi_\#(\gamma)}$ is dense in $[c,d]$ and since $V_\gamma \circ \psi$ and 
    $V_{\psi_\#\del{
                \gamma
            } }$
    are both right-continuous, we have
    $V_\gamma \circ \psi = V_{\psi_\#\del{
                \gamma
            } }$
    by Lemma
    \ref{lem:right_continuity_and_agreeing_on_a_dense_subset}.

    \item We have
    \begin{equation*}
        \mu_{\psi_\#(\gamma)}(\set{c})
        =
        0
        =
        \mu_{\gamma}(\set{a})
        =
        \mu_{\gamma}( \psi\sbr{ \set{c} } )
        =
        (\psi^{-1})_\#(\mu_\gamma)(\set{c}),
    \end{equation*}
    and for $r, s \in [c,d]$ 
    \begin{multline*}
        \mu_{\psi_\#(\gamma)}(\intoc{r, t})
        =
        V_{\psi_\#(\gamma)}(t)
        - V_{\psi_\#(\gamma)}(r)
        =
        V_\gamma( \psi(t))
        - V_\gamma( \psi(r))
        \\
        =
        \mu_\gamma\del{
            \intoc{ \psi(r), \psi(t) }
        }
        =
        \mu_\gamma\del{
            \psi\sbr{ 
                \intoc{ r, t }
            }
        }
        =
        (\psi^{-1})_\#(\mu_\gamma)\del{
            \intoc{ r, t }
        }.
    \end{multline*}
    Thus, by the Dynkin Lemma we conclude that 
    $\mu_{\psi_\#(\gamma)} = (\psi^{-1})_\#(\mu_\gamma)$ on Borel subsets of $[c,d]$.
    
    \item Finally, for the last point, 
    let us note that
    \begin{equation*}
        \int_\gamma f
        =
        \integral{ [a,b] }{ f \circ \gamma }{
            \mu_\gamma 
        }
        =
        \integral{ [c,d]  }{ f \circ \gamma \circ \psi }{
            ((\psi^{-1})_\#\del{\mu_\gamma) }
        }
        =
        \integral{ [c,d]  }{ f \circ (\psi_\#(\gamma)) }{
            \del{\mu_{\psi_\#(\gamma)} }
        }
        =
        \int_{\psi_\#(\gamma)} f.
    \end{equation*}
    Therefore,
    \begin{equation*}
        \sint{\gamma}{f}
        =
        \frac{1}{2}
        \del{
            \int_\gamma f
            +
            \int_{\overleftarrow{\gamma}} f
        }
        =
        \frac{1}{2}
        \del{
            \int_{\psi_\#(\gamma)} f
            +
            \int_{
                \psi_\#\del{\overleftarrow{\gamma}
                }} f
        }
        =
        \frac{1}{2}
        \del{
            \int_{\psi_\#(\gamma)} f
            +
            \int_{
                \overleftarrow{\psi_\#(\gamma)
                }} f
        }
        =
        \sint{\psi_\#(\gamma)}{f}
    \end{equation*}
    which proves the last claim.
    \end{enumerate}
\end{proof}

\begin{definition}[Left-adjusted restriction]
\label{def:left-adjusted_restriction_of_test_cruve}
    Let $(X, \metricAlone)$ be a metric space.
    Let $r, t \in [a,b]$ satisfy $r <t$. 
    For $\gamma \in \testCurves\del{ [a,b]; X}$
    we define its left-adjusted restriction 
    $
        \gamma \rvert_{[r,t^-]}
        \colon 
        [r,t] 
        \to 
        X
    $ by
    \begin{equation*}
        \forall s \in [r,t]
        \qquad 
        \gamma \rvert_{[r,t^-]}(s)
        \coloneqq 
        \begin{cases}
            \gamma(s), &\text{ for }  s \in \intco{r,t}, \\ 
            \gamma(t^-), &\text{ for } s = t.
        \end{cases}
    \end{equation*}
    It is worth noting that 
    $
        \gamma \rvert_{[r,t^-]}
        \in 
        \testCurves\del{ 
            [r,t]; X
        }
    $ 
    and if $\gamma$ is continuous at $t$, then
     $
        \gamma \rvert_{[r,t^-]}
        =
        \gamma \rvert_{[r,t]}
    $.
\end{definition}
\begin{remark} \label{rem}
     Let $(X, \metricAlone)$ be a metric space, $t \in (a,b]$ and $\gamma \in \testCurves\del{ [a,b]; X}$. Then, 
     for all $s \in \intco{a,t}$
    we have $\gamma(s)  = \gamma \vert_{ \intcc{a,t^-}}(s)$
    and $V_\gamma(s) = V_{\gamma \rvert_{ \intcc{a,t^-}}}(s)$.
    Moreover, since $ \gamma \vert_{ \intcc[0]{a,t^-}}$
    is by definition left-continuous at $t$, then 
    by Proposition \ref{rem:properties_of_variation_function} so is $V_{ \gamma \vert_{ \intcc[0]{a,t^-}}}$. In particular $V_{ \gamma \vert_{ \intcc[0]{a,t^-}}}=V_{\gamma}(t^{-})$.
\end{remark}
\begin{lemma}
\label{lem:adding_symmetrized_integrals_over_intervals}
    Let $(X, \metricAlone)$ be a metric space
    and $\gamma \in \testCurves\del{ [a,b]; X}$.
    If $f \colon X \to [0,\infty]$ is Borel, then
    for all $t \in (a,b)$ we have
    \begin{equation*}
        \sint{\gamma}{f}
        =
        \sint{ \gamma \rvert_{[a,t^-]}}{ f}
        +
        \frac{
             f(\gamma(t)) + f(\gamma(t^-))
        }{2}
        \del{
            V_\gamma(t) - V_\gamma(t^-)
        }
        +
        \sint{ \gamma \rvert_{[t, b]}}{ f}.
    \end{equation*}
    Moreover, if $t$ is a point of continuity of $\gamma$, 
    then
    \begin{equation*}
        \sint{\gamma}{f}
        =
        \sint{ \gamma \rvert_{[a,t]}}{ f}
        +
        \sint{ \gamma \rvert_{[t, b]}}{ f}.
    \end{equation*}
\end{lemma}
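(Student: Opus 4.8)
The plan is to reduce the statement to an ``unsymmetrized'' splitting of the plain Lebesgue--Stieltjes integral and then average. First I would prove that for every $\gamma\in\testCurves\del{[a,b];X}$, every $t\in(a,b)$ and every Borel $f\colon X\to[0,\infty]$ one has
\begin{equation*}
    \int_\gamma f
    =
    \int_{\gamma\rvert_{[a,t^-]}} f
    +
    f(\gamma(t))\,\del{V_\gamma(t)-V_\gamma(t^-)}
    +
    \int_{\gamma\rvert_{[t,b]}} f,
\end{equation*}
where $V_\gamma(t^-)=\lim_{s\to t^-}V_\gamma(s)$. To see this, write $\int_\gamma f=\integral{[a,b]}{f\circ\gamma}{\mu_\gamma}$ and decompose $[a,b]=[a,t)\cup\set{t}\cup(t,b]$. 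From the defining property of $\mu_\gamma$ and continuity from below one gets $\mu_\gamma(\set{t})=\mu_\gamma((a,t])-\mu_\gamma((a,t))=V_\gamma(t)-V_\gamma(t^-)$, which produces the middle term. For the left piece, Remark~\ref{rem} gives $V_{\gamma\rvert_{[a,t^-]}}=V_\gamma$ on $[a,t)$ and $V_{\gamma\rvert_{[a,t^-]}}(t)=V_\gamma(t^-)$; hence $\mu_{\gamma\rvert_{[a,t^-]}}$ and $\mu_\gamma$ agree on the $\pi$-system $\set{(r,s]\given a\le r<s<t}\cup\set{\set{a}}$, so by the Dynkin $\pi$--$\lambda$ Lemma they agree on $\borel([a,t))$, while $\mu_{\gamma\rvert_{[a,t^-]}}(\set{t})=0$; since $\gamma\rvert_{[a,t^-]}=\gamma$ on $[a,t)$ this yields $\int_{\gamma\rvert_{[a,t^-]}}f=\integral{[a,t)}{f\circ\gamma}{\mu_\gamma}$. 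For the right piece, Corollary~\ref{cor:variation_is_additive} applied to $\gamma\rvert_{[a,s]}$ and the point $t$ gives $V_{\gamma\rvert_{[t,b]}}(s)=V_\gamma(s)-V_\gamma(t)$ for $s\in[t,b]$, so $\mu_{\gamma\rvert_{[t,b]}}$ and $\mu_\gamma$ agree on $\set{(r,s]\given t\le r<s\le b}$, hence on $\borel((t,b])$ by Dynkin; since $\mu_{\gamma\rvert_{[t,b]}}(\set{t})=0$ by construction, $\int_{\gamma\rvert_{[t,b]}}f=\integral{(t,b]}{f\circ\gamma}{\mu_\gamma}$. Summing the three pieces proves the displayed identity.

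Next I apply this identity to the test curve $\overleftarrow{\gamma}$ (Lemma~\ref{lem:reversing_test_curve}) at the point $u\coloneqq a+b-t\in(a,b)$. By definition $\overleftarrow{\gamma}(u)=\gamma(t^-)$. Using $V_{\overleftarrow{\gamma}}(s)=V(\gamma)-V_\gamma((a+b-s)^-)$ from Lemma~\ref{lem:reversing_test_curve}(d) together with the right-continuity and monotonicity of $V_\gamma$ (which give $\lim_{s\to u^-}V_\gamma((a+b-s)^-)=V_\gamma(t)$), one computes $\mu_{\overleftarrow{\gamma}}(\set{u})=V_{\overleftarrow{\gamma}}(u)-V_{\overleftarrow{\gamma}}(u^-)=\del{V(\gamma)-V_\gamma(t^-)}-\del{V(\gamma)-V_\gamma(t)}=V_\gamma(t)-V_\gamma(t^-)$. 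It remains to identify the two restrictions of $\overleftarrow{\gamma}$ that occur: a direct check from the definitions of the reversal and of the left-adjusted restriction (Definition~\ref{def:left-adjusted_restriction_of_test_cruve}) shows that $\overleftarrow{\gamma}\rvert_{[u,b]}$ equals $\overleftarrow{\gamma\rvert_{[a,t^-]}}$ precomposed with the increasing affine bijection $[u,b]\to[a,t]$, and $\overleftarrow{\gamma}\rvert_{[a,u^-]}$ equals $\overleftarrow{\gamma\rvert_{[t,b]}}$ precomposed with the increasing affine bijection $[a,u]\to[t,b]$; by Proposition~\ref{rem:test_Curves_changing_intervals}(f) the corresponding integrals of $f$ coincide. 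Therefore
\begin{equation*}
    \int_{\overleftarrow{\gamma}} f
    =
    \int_{\overleftarrow{\gamma\rvert_{[t,b]}}} f
    +
    f(\gamma(t^-))\,\del{V_\gamma(t)-V_\gamma(t^-)}
    +
    \int_{\overleftarrow{\gamma\rvert_{[a,t^-]}}} f.
\end{equation*}

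Averaging the two displayed identities and recalling $\sint{\gamma}{f}=\tfrac12\del{\int_\gamma f+\int_{\overleftarrow{\gamma}}f}$ (and the analogous formulas for $\gamma\rvert_{[a,t^-]}$ and $\gamma\rvert_{[t,b]}$) yields the first assertion. For the ``moreover'' part, if $t$ is a continuity point of $\gamma$, then $V_\gamma(t)-V_\gamma(t^-)=\phi_\gamma(t)=0$ by Proposition~\ref{rem:properties_of_variation_function}, so the middle term vanishes, and $\gamma\rvert_{[a,t^-]}=\gamma\rvert_{[a,t]}$ by Definition~\ref{def:left-adjusted_restriction_of_test_cruve}.

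I expect the main obstacle to be the bookkeeping for the reversed curve: one must track that the point of $\overleftarrow{\gamma}$ lying over $t$ carries the value $\gamma(t^-)$ rather than $\gamma(t)$ while its atom nonetheless has mass $V_\gamma(t)-V_\gamma(t^-)$, and one must verify the two reparametrization identities $\overleftarrow{\gamma}\rvert_{[u,b]}=\overleftarrow{\gamma\rvert_{[a,t^-]}}\circ\psi$ and $\overleftarrow{\gamma}\rvert_{[a,u^-]}=\overleftarrow{\gamma\rvert_{[t,b]}}\circ\phi$ down to the boundary points, where the conventions $\gamma(a^-)=\gamma(a)$, $\overleftarrow{\gamma}(b)=\gamma(a)$ and the left-adjusted endpoint value interact. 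Everything else is a routine application of the Dynkin Lemma and of the additivity of variation.
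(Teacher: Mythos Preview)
Your proposal is correct and follows essentially the same route as the paper: split $\int_\gamma f$ over $[a,t)\cup\{t\}\cup(t,b]$, identify the pieces with $\int_{\gamma\rvert_{[a,t^-]}}f$, the atom $f(\gamma(t))\bigl(V_\gamma(t)-V_\gamma(t^-)\bigr)$, and $\int_{\gamma\rvert_{[t,b]}}f$; then apply the same decomposition to $\overleftarrow{\gamma}$ at $u=a+b-t$, use Lemma~\ref{lem:reversing_test_curve} to compute the atom and recognize $\overleftarrow{\gamma}(u)=\gamma(t^-)$, match the two restrictions of $\overleftarrow{\gamma}$ with $\overleftarrow{\gamma\rvert_{[a,t^-]}}$ and $\overleftarrow{\gamma\rvert_{[t,b]}}$ via the affine maps of Proposition~\ref{rem:test_Curves_changing_intervals}, and average. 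Your anticipated obstacle (the endpoint bookkeeping for the reversed curve) is exactly where the paper spends its effort, carrying out the pointwise verification of the two reparametrization identities you state.
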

\begin{proof}
    Let $t \in (a,b)$.
    We have
    \begin{align*}
        \int_\gamma f
        &=
        \integral{[a,b]}{f \circ \gamma}{ \mu_\gamma}
        \\ 
        &=
        \integral{ \intco{a, t} }{
            f \circ \gamma
        }{ \mu_\gamma }
        +  \integral{\set{t}}{
            f \circ \gamma
        }{ \mu_\gamma }
        + \integral{ (t,b] }{
            f \circ \gamma
        }{ \mu_\gamma }.
    \end{align*}
      Therefore by Remark \ref{rem}, we get
    \begin{equation*}
         \integral{ \intco{a, t} }{
            f \circ \gamma
        }{ \mu_\gamma }
        =
         \integral{ \intco{a, t} }{
            f \circ \del{\gamma \rvert_{ \intcc[0]{a,t^-}}}
        }{ \mu_{\gamma \rvert_{ \intcc[0]{a,t^-}}} }
        =
         \integral{ \intcc{a, t} }{
            f \circ \del{\gamma \rvert_{ \intcc[0]{a,t^-}}}
        }{ \mu_{\gamma \rvert_{ \intcc[0]{a,t^-}}} }
        =
        \int_{\gamma \rvert_{ \intcc[0]{a,t^-}}} f.
    \end{equation*}
    For similar reasons we have
    \begin{equation*}
        \integral{ (t,b] }{
            f \circ \gamma
        }{ \mu_\gamma }
        =
        \integral{ (t, b] }{
            f \circ \del{\gamma \rvert_{ \intcc[0]{t, b }}}
        }{ \mu_{\gamma \rvert_{ \intcc[0]{ t, b }}} }
        =
         \integral{ \intcc{t, b} }{
            f \circ \del{\gamma \rvert_{ \intcc[0]{t, b }}}
        }{ \mu_{\gamma \rvert_{ \intcc[0]{ t, b }}} }
        =
        \int_{ \gamma \rvert_{ \intcc[0]{ t, b }} } f.
    \end{equation*}
    Also
    \begin{equation*}
        \integral{\set{t}}{
            f \circ \gamma
        }{\mu_\gamma }
        =
        f \circ \gamma(t)
        \del{
            V_\gamma(t) - V_\gamma(t^-)
        },
    \end{equation*}
    where we used the fact that
    \begin{equation*}
        \mu_\gamma( \set{t} )
        =
        \lim_{n \to \infty} \mu_{\gamma}\del{ \intoc{ t-\frac{1}{n}, t }}
        =
        \lim_{n \to \infty} V_{\gamma}(t) - V_{\gamma}\del{ t -\frac{1}{n}}
        =
        V_{\gamma}(t) - V_{\gamma}(t^-).
    \end{equation*}
    Therefore, for all $t \in (a,b)$ we get
    \begin{equation*}
        \int_\gamma f
        =
        \int_{\gamma \rvert_{ \intcc[0]{a,t^-}}} f
        +
        f \circ \gamma(t)
        \del{
            V_\gamma(t) - V_\gamma(t^-)
        }
        +
        \int_{ \gamma \rvert_{ \intcc[0]{ t, b }} } f.
    \end{equation*}

    Using the previously found formula for 
    $\overleftarrow{\gamma}$ and point $a+b-t$, we have
    \begin{equation*}
        \int_{\overleftarrow{\gamma}} f
        =
        \int_{
            \overleftarrow{\gamma} \rvert_{ \intcc[0]{a,(a+b-t)^-}}
        } f
        +
        f \circ\overleftarrow{\gamma}(a+b-t)
        \del{
            V_{\overleftarrow{\gamma}}(a+b-t) 
            - V_{\overleftarrow{\gamma}}((a+b-t)^-)
        }
        +
        \int_{ 
            \overleftarrow{\gamma} \rvert_{ \intcc[0]{ a+b-t, b }} 
        } f.
    \end{equation*}
    By Lemma \ref{lem:reversing_test_curve} we get
    \begin{multline*}
        f \circ\overleftarrow{\gamma}(a+b-t)
        \del{
            V_{\overleftarrow{\gamma}}(a+b-t) 
            - V_{\overleftarrow{\gamma}}((a+b-t)^-)
        }
        \\= f \circ\overleftarrow{\gamma}(a+b-t)
        \del{
            V(\gamma) - V_{\gamma}(t^-) - (V({\overleftarrow{\gamma}}) - V_{\overleftarrow{\overleftarrow{\gamma}}}(t))} 
        \\= f\circ \gamma(t^-)
        \del{
            V_{\gamma}(t) 
            - V_{\gamma}(t^-)
        }.
    \end{multline*}
    Let $\Phi_t : [a,t] \rightarrow [a+b-t,b]$ be defined as follows $\Phi_t (s)= s+b-t$. Then for $s \in [a,t]$ we have 
    \begin{multline*}
        (\Phi_t)_{\#}  \del{
            \overleftarrow{ \gamma }\rvert_{ [a+b-t,b]  }
        }(s)
            =
         \overleftarrow{ \gamma} \rvert_{ [a+b-t,b]} 
         \del{s+b-t }
         \\
        =
         \overleftarrow{ \gamma}\del{ s + b - t }
         =
         \gamma\del{
            \del{a + b -\del{ s +b - t }}^-
         }
         \\
         =
         \gamma\del{
            \del{a+t-s}^-
         }
         =
         \gamma\rvert_{[a,t^-]}\del{
            \del{a+t-s}^-
         }
         =
         \overleftarrow{ \gamma \rvert_{[a,t^-]} }(s).
    \end{multline*}
    Hence,
    \[
       (\Phi_{t})_{\#}   ( \overleftarrow{\gamma} \rvert_{ [ a+b-t, b]}) = \overleftarrow{ \gamma \rvert_{ [a,t^-] }}
    \]
    and by Proposition \ref{rem:test_Curves_changing_intervals} we have 
    \begin{equation*}
        \int_{ \overleftarrow{\gamma} \rvert_{ [ a+b-t, b]}} f
        =
                \int_{ 
            \overleftarrow{ \gamma \rvert_{ [a,t^-] } }
        } f.
    \end{equation*}

    Next, let 
    $
        \Psi_t \colon [t,b] \to [a, a+b-t]
    $
    by defined as follows
    $
        \Psi_t(s) \coloneqq a + s-t.
    $
    For $s \in \intco{t, b}$ we have
    $
        a+s-t \in \intco{a, a+b-t},
    $
    hence
    \begin{multline*}
        (\Psi_t)_{\#}  \del{
            \overleftarrow{ \gamma }\rvert_{ [a,(a+b-t)^-]  }
        }(s)
            =
         \overleftarrow{ \gamma} \rvert_{ [a,(a+b-t)^-]} 
         \del{ \Psi_t(s) }
         \\
        =
         \overleftarrow{ \gamma} \rvert_{ [a,(a+b-t)^-]} 
         \del{ a+ s - t }
         =
         \overleftarrow{ \gamma}\del{ a+ s - t }
         =
         \gamma\del{
            \del{a + b -\del{ a+ s - t }}^-
         }
         \\
         =
         \gamma\del{
            \del{b+t-s}^-
         }
         =
         \gamma\rvert_{[t,b]}\del{
            \del{b+t-s}^-
         }
         =
         \overleftarrow{ \gamma \rvert_{[t,b]} }(s).
    \end{multline*}
    Also
    \begin{multline*}
        (\Psi_t)_{\#}  
            \del{
                \overleftarrow{ \gamma }\rvert_{ [a,(a+b-t)^-]  } }
            (b)
            =
         \overleftarrow{ \gamma} \rvert_{ [a,(a+b-t)^-]} 
         \del{ \Psi_t(b) }
        =
         \overleftarrow{ \gamma} \rvert_{ [a,(a+b-t)^-]} 
         \del{ a + b -t }
         \\
         =
         \overleftarrow{ \gamma}\del{ \del{ a + b- t }^- }
         =
         \overleftarrow{ \overleftarrow{ \gamma}}(t)
         =
         \gamma(t)
         =
         \gamma \rvert_{[t,b]}(t)
         =
         \gamma \rvert_{[t,b]}( (t+b-b)^- )
         =
         \overleftarrow{ \gamma\rvert_{[t,b]}}(b),
    \end{multline*}
    where we have used part (b) of Lemma \ref{lem:reversing_test_curve}.
    Therefore, we conclude that
    \[
       (\Psi_{t})_{\#}   
       \del{
        \overleftarrow{\gamma} \rvert_{ [a, (a+b-t)^-]}
        } 
        = 
        \overleftarrow{ \gamma\rvert_{[t,b]}}
    \]
    and by Proposition \ref{rem:test_Curves_changing_intervals} we have 
    \begin{equation*}
        \int_{ 
            \overleftarrow{\gamma} \rvert_{ [a, (a+b-t)^-]}
        } f
        =
                \int_{ 
            \overleftarrow{ \gamma\rvert_{[t,b]}}
        } f.
    \end{equation*}
    Therefore we can write
    \begin{equation*}
        \int_{\overleftarrow{\gamma}} f
        =
        \int_{
             \overleftarrow{ \gamma \rvert_{ [t,b] } }
        } f
        +
        f \circ \gamma(t^-)
        \del{
            V_{\gamma}(t) 
            - V_{\gamma}(t^-)
        }
        +
        \int_{ 
            \overleftarrow{ \gamma \rvert_{ [a,t^-] } }
        } f,
    \end{equation*}
    and finally, we conclude 
    \begin{align*}
        \sint{\gamma}{f}
        &=
        \frac{1}{2}\del{
            \int_\gamma f 
            +
            \int_{\overleftarrow{\gamma}} f
        }
        \\
        &=
        \frac{1}{2}\left(
            \int_{\gamma \rvert_{ \intcc[0]{a,t^-}}} f
            +
            f \circ \gamma(t)
            \del{
                V_\gamma(t) - V_\gamma(t^-)
            }
            +
            \int_{ \gamma \rvert_{ \intcc[0]{ t, b }} } f
            \right.
        \\
        &\qquad \qquad +
        \left. 
            \int_{
                     \overleftarrow{ \gamma \rvert_{ [t,b] } }
                } f
                +
                f \circ \gamma(t^-)
                \del{
                    V_{\gamma}(t) 
                    - V_{\gamma}(t^-)
                }
                +
                \int_{ 
                    \overleftarrow{ \gamma \rvert_{ [a,t^-] } }
                } f
        \right)
        \\
        &=
        \sint{ \gamma \rvert_{[a,t^-]}}{ f}
        +
        \frac{
             f(\gamma(t)) + f(\gamma(t^-))
        }{2}
        \del{
            V_\gamma(t) - V_\gamma(t^-)
        }
        +
        \sint{ \gamma \rvert_{[t, b]}}{ f}
    \end{align*}
    as claimed.

    Now, let $t \in (a,b)$ be a point of continuity of $\gamma$. 
    It is also a point of continuity of $V_\gamma$ by Proposition \ref{rem:properties_of_variation_function}. 
    Hence, $V_\gamma(t) - V_\gamma(t^-) = 0$.
    Moreover, from the definition
    of 
    $\gamma \rvert_{[a,t^-]}$
    we have
     $
        \gamma \rvert_{[r,t^-]}
        =
        \gamma \rvert_{[r,t]}
    $.
    Therefore
    \begin{equation*}
        \sint{\gamma}{f}
        =
        \sint{ \gamma \rvert_{[a,t^-]}}{ f}
        +
        \frac{
             f(\gamma(t)) + f(\gamma(t^-))
        }{2}
        \del{
            V_\gamma(t) - V_\gamma(t^-)
        }
        +
        \sint{ \gamma \rvert_{[t, b]}}{ f}
        =
        \sint{ \gamma \rvert_{[a,t]}}{ f}
        +
        \sint{ \gamma \rvert_{[t, b]}}{ f}
    \end{equation*}
    as needed.
\end{proof}
\begin{lemma} \label{lem::descending_family_of_subsets_to_0}
       Let $(X, \metricAlone)$ be a metric space and $\gamma \in \testCurves{([a,b]; X)}$.
    If $f \colon X \to [0,\infty]$ is Borel and such that 
    $\sint{ \gamma } f < \infty$, then:
    \begin{itemize}
        \item If $t \in \intoc{a,b}$ and $r_n \to t^-$, then 
        $
            \lim_{n \rightarrow \infty}\sint{\gamma \rvert_{ [r_n, t^-] }} f =0,
        $
        \item If $t \in \intco{a,b}$ and $r_n \to t^+$, then 
        $
           \lim_{n \rightarrow \infty} \sint{\gamma \rvert_{ [t, r_n^-] }} f 
            +\frac{ f(\gamma(r_n^-)) + f(\gamma(r_n))}{2} 
            \metric\del{ \gamma(r_n^-), \gamma(r_n)}
            =0.
        $
       \end{itemize}
\end{lemma}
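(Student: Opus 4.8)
The plan is to convert both symmetrized integrals into ordinary Lebesgue--Stieltjes integrals over shrinking subintervals of $[a,b]$, where the hypothesis is exactly what is needed for dominated convergence. Since $\int_\gamma f$ and $\int_{\overleftarrow{\gamma}} f$ are nonnegative with sum $2\sint{\gamma}{f}<\infty$, the functions $f\circ\gamma$ and $f\circ\overleftarrow{\gamma}$ are $\mu_\gamma$- and $\mu_{\overleftarrow{\gamma}}$-integrable, respectively. I would first record, for every Borel $g\colon X\to[0,\infty]$ and every $a\le r<t\le b$, the identity
\[
    \int_{\gamma\rvert_{[r,t^-]}} g = \integral{(r,t)}{g\circ\gamma}{\mu_\gamma},
\]
which comes from a direct computation of $\mu_{\gamma\rvert_{[r,t^-]}}$ via Corollary \ref{cor:variation_is_additive} (it agrees with $\mu_\gamma$ on Borel subsets of $(r,t)$ and has no mass at $r$ or at $t$, the latter by left-continuity of the variation function, Proposition \ref{rem:properties_of_variation_function}); this is the computation already made in the proof of Lemma \ref{lem:adding_symmetrized_integrals_over_intervals} for $r=a$. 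Because $\gamma$ is left-continuous at $b$ we have $\gamma\rvert_{[r,b]}=\gamma\rvert_{[r,b^-]}$ and $\mu_\gamma(\{b\})=0$, so the same formula also gives $\int_{\gamma\rvert_{[r,b]}} g=\integral{(r,b]}{g\circ\gamma}{\mu_\gamma}$.

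Next I would express the reversed restrictions as affine reparametrizations of left-adjusted restrictions of $\overleftarrow{\gamma}$: with $w(s):=a+b-s$, one checks by inspection on $(r,t)$ and at the two endpoints that
\[
    \overleftarrow{\gamma\rvert_{[r,t^-]}} = \psi_\#\bigl(\overleftarrow{\gamma}\rvert_{[w(t),\,(w(r))^-]}\bigr),
\]
where $\psi\colon[r,t]\to[w(t),w(r)]$ is the increasing affine bijection. Combining this with Proposition \ref{rem:test_Curves_changing_intervals}(f) and the previous identity applied to $\overleftarrow{\gamma}$ gives
\[
    \int_{\overleftarrow{\gamma\rvert_{[r,t^-]}}} f = \integral{(w(t),\,w(r))}{f\circ\overleftarrow{\gamma}}{\mu_{\overleftarrow{\gamma}}},
    \qquad
    \int_{\overleftarrow{\gamma\rvert_{[r,b]}}} f = \integral{[a,\,w(r))}{f\circ\overleftarrow{\gamma}}{\mu_{\overleftarrow{\gamma}}}.
\]

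Granting these, the first bullet is immediate: for $r_n\to t^-$ (so eventually $r_n\in(a,t)$) one has $2\,\sint{\gamma\rvert_{[r_n,t^-]}}{f}=\integral{(r_n,t)}{f\circ\gamma}{\mu_\gamma}+\integral{(w(t),\,w(r_n))}{f\circ\overleftarrow{\gamma}}{\mu_{\overleftarrow{\gamma}}}$, and the indicators of $(r_n,t)$ and of $(w(t),w(r_n))$ tend to $0$ pointwise while being dominated by $f\circ\gamma$, respectively $f\circ\overleftarrow{\gamma}$, so dominated convergence forces the limit $0$. For the second bullet, note $\gamma\rvert_{[t,b]}\in\testCurves\del{[t,b];X}$ and, for large $n$, $r_n\in(t,b)$; applying Lemma \ref{lem:adding_symmetrized_integrals_over_intervals} to $\gamma\rvert_{[t,b]}$ at $r_n$, together with $V_{\gamma\rvert_{[t,b]}}(r_n)-V_{\gamma\rvert_{[t,b]}}(r_n^-)=V_\gamma(r_n)-V_\gamma(r_n^-)=\metric\del{\gamma(r_n^-),\gamma(r_n)}$ (Corollary \ref{cor:variation_is_additive}, Proposition \ref{rem:properties_of_variation_function}), yields
\[
    \sint{\gamma\rvert_{[t,b]}}{f}
    = \sint{\gamma\rvert_{[t,r_n^-]}}{f}
    + \frac{f(\gamma(r_n))+f(\gamma(r_n^-))}{2}\metric\del{\gamma(r_n^-),\gamma(r_n)}
    + \sint{\gamma\rvert_{[r_n,b]}}{f}.
\]
Since $\sint{\gamma\rvert_{[t,b]}}{f}\le\sint{\gamma}{f}<\infty$, the quantity in the statement equals $\sint{\gamma\rvert_{[t,b]}}{f}-\sint{\gamma\rvert_{[r_n,b]}}{f}$, and by the identities above $2\,\sint{\gamma\rvert_{[r_n,b]}}{f}=\integral{(r_n,b]}{f\circ\gamma}{\mu_\gamma}+\integral{[a,\,w(r_n))}{f\circ\overleftarrow{\gamma}}{\mu_{\overleftarrow{\gamma}}}$ converges, by dominated convergence, to $\integral{(t,b]}{f\circ\gamma}{\mu_\gamma}+\integral{[a,\,w(t))}{f\circ\overleftarrow{\gamma}}{\mu_{\overleftarrow{\gamma}}}=2\,\sint{\gamma\rvert_{[t,b]}}{f}$; hence the difference tends to $0$. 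The hardest part will be the bookkeeping in the second paragraph: verifying the reparametrization identity for the reversed left-adjusted restrictions with the correct endpoint conventions — the delicate point is $\overleftarrow{\gamma}\bigl((w(r))^-\bigr)=\lim_{v\to r^+}\gamma(v^-)=\gamma(r)$, which follows from right-continuity of $\gamma$ at $r$. Once that is in place, everything reduces to dominated convergence over nested intervals.
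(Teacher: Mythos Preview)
Your proposal is correct and follows essentially the same route as the paper: both reduce $\sint{\gamma\rvert_{[r,t^-]}}{f}$ to ordinary Lebesgue--Stieltjes integrals of $f\circ\gamma$ and $f\circ\overleftarrow{\gamma}$ over the open intervals $(r,t)$ and $(w(t),w(r))$ via the same reparametrization identity (the paper's formula~(\ref{zal})), and then invoke integrability to pass to the limit over shrinking intervals.

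The only notable difference is in the second bullet. The paper handles the two summands separately: it shows $\sint{\gamma\rvert_{[t,r_n^-]}}{f}\to 0$ by the same shrinking-interval argument, and then, after reducing to a strictly monotone subsequence, bounds the jump terms $\tfrac{f(\gamma(r_n^-))+f(\gamma(r_n))}{2}\metric\del{\gamma(r_n^-),\gamma(r_n)}$ by the terms of a convergent series via Lemma~\ref{lem:adding_symmetrized_integrals_over_intervals}. You instead apply Lemma~\ref{lem:adding_symmetrized_integrals_over_intervals} once to $\gamma\rvert_{[t,b]}$ at $r_n$ to rewrite the full quantity as $\sint{\gamma\rvert_{[t,b]}}{f}-\sint{\gamma\rvert_{[r_n,b]}}{f}$ and then show the latter converges to the former. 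Your packaging is a bit cleaner (no monotone-subsequence reduction, both terms handled at once), but the underlying identities and the appeal to dominated convergence are identical.
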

\begin{proof}
    First of all let us observe that if $c, d \in [a,b]$ and $c<d$, then 
     \begin{equation} \label{zal}
        \int_{ \overleftarrow{\gamma} \rvert_{ [ a+b-d, a+b-c]}} f
        =
                \int_{ 
            \overleftarrow{ \gamma \rvert_{ [c,d^-] } }
        } f.
    \end{equation}
    Indeed, let $\Phi : [c,d] \rightarrow [a+b-d,a+b-c]$ be defined as follows $\Phi (s)= a+b -c -d +s$. Then 
    \[
       (\Phi)_{\#}   ( \overleftarrow{\gamma} \rvert_{ [ a+b-d, a+b-c]}) = \overleftarrow{ \gamma \rvert_{ [c,d^-] }}
    \]
    and by Proposition \ref{rem:test_Curves_changing_intervals} we get (\ref{zal}). 
   
    Without loss of generality we can assume that $r_n$ is strictly monotone. 
    
        First, let us assume that $t \in \intoc{a,b}$ and $r_n \to t^-$. Since $\lim_{n\rightarrow \infty}\mu_{\gamma}((r_n, t))=\mu_{\gamma}(\bigcap_{n \in \bN}  \intoo{r_n, t }) =0$, $\lim_{n\rightarrow \infty} \mu_{\overleftarrow{\gamma}}\del{(a+b-t, a+ b -r_n)}=0$,  and     $\sint{ \gamma } f < \infty$, having in mind (\ref{zal}) we get
    \begin{align*}
       2 \sint{\gamma \rvert_{ [r_n, t^-] }} f
        &=
        \int_{\gamma \rvert_{ [r_n, t^-] }} f
        + \int_{ \overleftarrow{\gamma \rvert_{ [r_n, t^-] }}} f
        \\ 
        &=
        \int_{\gamma \rvert_{ [r_n, t^-] }} f
        + \int_{ \overleftarrow{\gamma } \rvert_{ [a+b-t, a+b-r_n] }} f
        \\ 
        &=
        \int_{ \intoo{r_n, t }} f\circ \gamma \  \mathrm{d} \mu_{\gamma}
        + \int_{ \intoo{a+b-t, a+ b -r_n } } f \circ \overleftarrow{\gamma} \ \mathrm{d}\mu_{\overleftarrow{\gamma}}
        \xrightarrow{n \to \infty}
        0.
    \end{align*}

    Next, in the same manner, assuming that $t \in \intco{a,b}$ and $r_n \to t^+$ we have
    \begin{align*}
        2\sint{\gamma \rvert_{ [t, r_n^-] }} f
        &=
        \int_{\gamma \rvert_{ [t, r_n^-] }} f
        + \int_{ \overleftarrow{\gamma \rvert_{ [t, r_n^-] }}} f
        \\ 
        &=
        \int_{\gamma \rvert_{ [t, r_n^-] }} f
        + \int_{ \overleftarrow{\gamma } \rvert_{ [a+b -r_n,a+ b-t] }} f
        \\ 
        &=
        \int_{ \intoo{t, r_n }} f \circ \gamma \ \mathrm{d}\mu_{\gamma}
        + \int_{ \intoo{a+b-r_n, a+b-t } } f \circ \overleftarrow{\gamma} \ \mathrm{d}\mu_{\overleftarrow{\gamma}}
        \xrightarrow{n \to \infty}
        0.
    \end{align*}
       Finally, by Lemma \ref{lem:adding_symmetrized_integrals_over_intervals} we have
    \begin{equation*}
        \sum_{n=2}^\infty 
        \frac{ f(\gamma(r_n^-)) + f(\gamma(r_n))}{2} 
            \metric\del{ \gamma(r_n^-), \gamma(r_n)}
                \le 
        \sint{\gamma} f
        < 
        \infty,
    \end{equation*}
    hence 
    $
        \frac{ f(\gamma(r_n^-)) + f(\gamma(r_n))}{2} 
            \metric\del{ \gamma(r_n^-), \gamma(r_n)}
        \to 
        0
    $
    as $n \to \infty$, which ends the proof.
\end{proof}

\section{Topology on $TC$} \label{sec::topology_on_TC}
\begin{definition}
    Let $(X, \metricAlone)$ be a metric space.
    By $\measurablePre\del{ [a,b]; X}$ we will
    denote the space of 
    Borel functions\footnote{ If we assume {\bf CH} then by Remark \ref{lem::image_of_Borel_from_interval_is_separable} every Borel map     $\gamma \colon [a,b] \to X$ has separable image.}  
    $\gamma \colon [a,b] \to X$ with separable image.
    On the space $\measurablePre\del{ [a,b]; X}$
    we introduce an equivalence relation $\sim$
    by
    \begin{equation*}
        \forall 
            \gamma, \gamma' \in \measurablePre\del{ [a,b]; X}
        \qquad 
            \gamma \sim \gamma' 
            \iff 
            \gamma = \gamma '
            \text{almost everywhere}.
    \end{equation*}
    We define space 
    $\measurable\del{ [a,b]; X}$
    as a quotient
    $
        \measurable\del{ [a,b]; X}
        \coloneqq 
        \measurablePre\del{ [a,b]; X } / {\sim}
    $.
    For simplicity, we will also refer to elements of
    $\measurable\del{ [a,b]; X}$
    as Borel functions when we can refer
    to a representative of its equivalence class.
    On space $\measurable\del{ [a,b]; X}$
    we define a metric 
    $
        \metricAlone_{\measurable}
        \colon 
        \measurable\del{ [a,b]; X}
        \times 
        \measurable\del{ [a,b]; X}
        \to 
        \intco{ 0, \infty}
    $
    by the formula\footnote{By Proposition \ref{mierzalnosc} the map $t \mapsto \min \del{ 1, \metric \del{\gamma(t), \gamma'(t)}}$ is measurable.}
    \begin{equation*}
        \forall 
            \gamma, \gamma' 
            \in \measurable\del{ [a,b]; X}
        \qquad 
            \metric_{\measurable}\del{
                \gamma, \gamma'
            }
            \coloneqq 
            \integral{ [a,b] }{ 
                \min \del{
                    1, 
                    \metric \del{
                        \gamma(t), 
                        \gamma'(t)
                    }
                }
            }{t}.
    \end{equation*} 
\end{definition}

\begin{proposition}
    Let $(X, \metricAlone)$ be a metric space, then metric $\metricAlone_{\measurable}$ metrizes 
    convergence in the Lebesgue measure $\lambda$. Moreover:
    \begin{enumerate}[label=\roman*)]
        \item 
            Let $\gamma, \gamma_n \in \measurable\del{ [a,b]; X}$ for all $n \in \bN$.  Suppose that $\gamma_n \to \gamma$ in $\del{\measurable\del{ [a,b]; X}, \metricAlone_{\measurable}}$.
            Then there exists a subsequence $(\gamma_{n_k})_k$ such that 
            $\gamma_{n_k} \to \gamma$ almost everywhere.
        \item
            The space $
                \del{
                    \measurable\del{ [a,b]; X},
                    \metricAlone[\metricAlone_{\measurable}]  
                }
            $ is complete if and only if $(X, \metricAlone)$ is complete.
        \item
            The space     
            $
                \del{
                    \measurable\del{ [a,b]; X},
                    \metricAlone[\metricAlone_{\measurable}]  
                }
            $ is separable if and only if $(X, \metricAlone)$ is separable.
    \end{enumerate}
\end{proposition}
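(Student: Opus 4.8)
The plan is to dispatch the metrization claim first and then the three numbered assertions by the standard arguments for the topology of convergence in measure over a finite measure space. Since $\min(1,\metricAlone)$ is itself a metric on $X$, integrating it preserves the triangle inequality, symmetry is clear, and $\metric_{\measurable}\del{\gamma,\gamma'}=0$ holds exactly when $\gamma=\gamma'$ $\lambda$-almost everywhere, so $\metricAlone_{\measurable}$ is a metric on $\measurable\del{[a,b];X}$. To identify the induced convergence with convergence in measure, fix $\eps\in(0,1)$: Chebyshev's inequality gives $\lambda\del{\set{t\given\metric\del{\gamma_n(t),\gamma(t)}>\eps}}\le\metric_{\measurable}\del{\gamma_n,\gamma}/\eps$, so $\metric_{\measurable}\del{\gamma_n,\gamma}\to0$ implies $\gamma_n\to\gamma$ in measure; conversely, splitting $[a,b]$ according to whether $\metric\del{\gamma_n(t),\gamma(t)}\le\eps$ yields $\metric_{\measurable}\del{\gamma_n,\gamma}\le\eps(b-a)+\lambda\del{\set{t\given\metric\del{\gamma_n(t),\gamma(t)}>\eps}}$, and letting $n\to\infty$ and then $\eps\to0$ proves the reverse implication.

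For (i), given $\gamma_n\to\gamma$ in measure, pick $n_k$ so that $E_k\coloneqq\set{t\given\metric\del{\gamma_{n_k}(t),\gamma(t)}>2^{-k}}$ has $\lambda(E_k)<2^{-k}$; by Borel--Cantelli $\lambda(\limsup_k E_k)=0$, and off that set $\metric\del{\gamma_{n_k}(t),\gamma(t)}\le 2^{-k}$ for all large $k$, so $\gamma_{n_k}(t)\to\gamma(t)$. For the implication ``$X$ complete $\Rightarrow$ $\measurable\del{[a,b];X}$ complete'' in (ii): given a Cauchy sequence $(\gamma_n)$, pass to a subsequence with $\metric_{\measurable}\del{\gamma_{n_k},\gamma_{n_{k+1}}}<4^{-k}$, apply Chebyshev and Borel--Cantelli as above to see that $(\gamma_{n_k}(t))_k$ is Cauchy in $X$ for $\lambda$-a.e.\ $t$, and define $\gamma(t)$ to be its limit there and a fixed basepoint of $X$ elsewhere. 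Then $\gamma$ is Borel with separable image, its values lying a.e.\ in the closure of $\bigcup_k\image\del{\gamma_{n_k}}$, which is separable; moreover $\gamma_{n_k}\to\gamma$ a.e., hence in measure by dominated convergence (dominating function $1\in L^1([a,b])$), so the Cauchy sequence $(\gamma_n)$ converges to $\gamma$.

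For the converse directions of (ii) and (iii), let $\jmath\colon X\to\measurable\del{[a,b];X}$ send $x$ to the constant curve $\hat x\equiv x$; then $\metric_{\measurable}\del{\hat x,\hat y}=(b-a)\min(1,\metric\del{x,y})$, so $\jmath$ is a homeomorphism onto its image and carries Cauchy sequences to Cauchy sequences. If $\measurable\del{[a,b];X}$ is complete, a Cauchy sequence $(x_n)$ in $X$ gives $\hat x_n\to\gamma$ for some $\gamma$; by (i) a subsequence of $(\hat x_n)$ converges $\lambda$-a.e., which forces $\gamma$ to agree a.e.\ with a constant curve $\hat x$, and then $x_n\to x$, so $X$ is complete. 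If $\measurable\del{[a,b];X}$ is separable, then its subspace $\jmath(X)$ is separable, hence so is $X$.

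It remains to prove ``$X$ separable $\Rightarrow$ $\measurable\del{[a,b];X}$ separable'', which I expect to be the main obstacle. Fix a countable dense $D\subseteq X$ and let $\mathcal D$ be the countable family of curves $\sum_{i=1}^N d_i\indicator{Q_i}$ with $N\in\bN$, $d_i\in D$, and $Q_1,\dots,Q_N$ a partition of $[a,b]$ into finite unions of intervals with rational endpoints. Given $\gamma\in\measurable\del{[a,b];X}$ and $\eps>0$: cover the separable set $\closure{\image\del{\gamma}}$ by countably many open $\eps$-balls centred at points of $D$, disjointify their $\gamma$-preimages to obtain a Borel partition $(B_i)_i$ and a countably valued curve $\sigma=\sum_i d_i\indicator{B_i}$ with $\metric\del{\gamma(t),\sigma(t)}<\eps$ for every $t$; discard all but finitely many pieces, using $\lambda([a,b])<\infty$, at the cost of altering $\sigma$ on a set of arbitrarily small measure; finally, using regularity of $\lambda$, approximate each of the finitely many remaining Borel pieces in measure by a finite union of rational intervals, re-disjointifying within that family so as to land in $\mathcal D$. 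Choosing the parameters small makes the resulting element of $\mathcal D$ lie at arbitrarily small $\metric_{\measurable}$-distance from $\gamma$, so $\mathcal D$ is dense. The delicate point is precisely the chaining of these three successive approximations — to $D$-valued, to finitely valued, to rational-interval pieces — while ensuring that the final curve is a genuine member of the single fixed countable set $\mathcal D$; it is here that the disjointification of finite unions of rational intervals and the regularity of Lebesgue measure must be invoked with care.
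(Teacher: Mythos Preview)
Your proposal is correct and tracks the paper closely for the metrization claim, for (i), and for both directions of (ii); the constant-curve embedding $\jmath$ you use for the converse directions of (ii) and (iii) is precisely what the paper does. The one genuine divergence is the forward direction of (iii). The paper bypasses your three-step chain entirely by invoking Lusin's theorem: given $\gamma$ and $\eps>0$, it finds a compact $F\subseteq[a,b]$ with $\lambda([a,b]\setminus F)\le\eps/3$ on which $\gamma|_F$ is (uniformly) continuous, chooses $n$ so that the equispaced partition $A_{i,n}=[\tfrac{i-1}{n}(b-a),\tfrac{i}{n}(b-a))$ has mesh below the modulus of continuity, and approximates $\gamma$ directly by a $D$-valued step function on that fixed partition. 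Thus the paper's countable dense family consists only of step functions on the equispaced grids, with no need for arbitrary rational-interval pieces. Your route---countably valued, then finitely valued, then rational-interval step functions---is the classical measure-theoretic chain and is valid, but the re-disjointification you flag does require the bookkeeping you anticipate (the rational-interval algebra is closed under Boolean operations, and the accumulated symmetric differences are controlled by $N^2\delta$). The paper's Lusin argument buys a one-shot approximation and a simpler dense family at the cost of citing an external theorem; your approach is more self-contained but, as you correctly identify, fiddlier at the final step.
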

\begin{proof}
     Let $\gamma, \gamma_n \in \measurable\del{ [a,b]; X}$ for all $n \in \bN$. 
    Suppose that $\gamma_n \to \gamma$ in $\metricAlone[\metricAlone_{\measurable}]$.
    Then for all $\eps > 0$
    \begin{equation*}
        \lambda \left(
            \set{
                \metric\del{ \gamma_n(t), \gamma(t)} > \eps 
            }
        \right)
        \leq
        \integral{ \set{ \metric \, \del[0]{ \gamma_n(t), \gamma(t) }  > min(1,\eps) } }{
            1
        }{t}
        \leq 
        \frac{b-a}{ \min(1,\eps)} \metric_{\measurable}\del{
            \gamma_n,
            \gamma
        }
        \xrightarrow{ n \to \infty} 0,
    \end{equation*}
    so $\gamma_n \to \gamma$ in measure.    
    Now suppose $\gamma_n \to \gamma$ in measure.
    Fix $\eps > 0$. 
    Then 
    \begin{equation*}
        \metric_{\measurable}\del{
            \gamma_n,
            \gamma
        }
        \le 
        \eps (b-a)
        + 
        \lambda \left(
            \set{
                \metric\del{ \gamma_n(t), \gamma(t)} > \eps 
            }
        \right)
        \xrightarrow{ n \to \infty}
        \eps (b-a).
    \end{equation*}
    As $\eps > 0$ is arbitrary, we have that $\gamma_n \to \gamma$ in 
    $\metricAlone_{\measurable}$.

    \begin{enumerate}[itemindent=2em,leftmargin=0em, label=\roman*), itemsep=0.5em]
        \item If $\gamma_n \to \gamma$ in $\del{\measurable\del{ [a,b]; X}, \metricAlone_{\measurable}}$, then     $
        \metric\del{ \gamma_{n}, \gamma} \rightarrow 0
    $
    in Lebesgue measure. Therefore, by the classical Riesz Theorem, there exists a subsequence such that for almost every $t \in [a,b]$ we have $\metric \del{\gamma_{n_k}(t) , \gamma(t)} \rightarrow 0$. 

    \item Now, let assume that $X$ is complete. 
    Let $(\gamma_n)_n$ be a Cauchy sequence in $\measurable\del{ [a,b]; X}$. There exists strictly increasing sequence $n_k$ such that
    \[
        \lambda\left(\set{
                \metric\del{ \gamma_{n_k}(t), \gamma_{n_{k+1}}(t)} > 1/2^k 
            }\right) < 1/2^k.
    \]
    For $k \in \mathbb{N}$ we define
    \[
        A_k =\bigcup_{j=k}^{\infty}\set{
                \metric\del{ \gamma_{n_k}(t), \gamma_{n_{k+1}}(t)} > 1/2^k}, \quad A = \bigcap_{k=1}^{\infty}A_k.
    \]
    Then, $\lambda(A_k) \rightarrow 0$ and $\lambda(A)=0$. Let us observe that for $t\in [a,b] \setminus A$ we have that $\gamma_{n_j}(t)$ is Cauchy in $X$. Indeed, if $t \notin A_k$ and $i \geq j \geq k$, then we have
    \begin{eqnarray}\label{cc}
    \metric\del{ \gamma_{n_j}(t), \gamma_{n_{i}}(t)} \leq  1/2^{j-1}.
    \end{eqnarray}
    Since $X$ is complete, we can define the following map $\gamma: [a,b]\setminus A \rightarrow X$, $\gamma(t)=\lim_{j\rightarrow \infty}\gamma_{n_j}(t)$. 
    Let us observe that for every closed set $D \subset X$ we have\footnote{$(D)_{1/l}=\bigcup_{x\in D}B(x,1/l)$}
    \[
        \gamma^{-1}(D)=  ([a,b]\setminus A) \cap\bigcap_{l=1}^{\infty}\bigcup_{k=1}^{\infty}\bigcap_{j=k}^{\infty}\gamma_{n_j}^{-1}((D)_{1/l}).
    \]
    Therefore, the function $\tilde{\gamma}:[a,b] \rightarrow X$ 
    defined by
    \begin{equation*}
       \tilde{\gamma}(t)
        \coloneqq 
        \begin{cases}
            \gamma(t), &\text{ for } t \in [a,b] \setminus A, \\
            \tilde{x}, &\text{ for } t \in A,
        \end{cases}
    \end{equation*} 
    where $\tilde{x}$ is a fixed point from $X$, is a Borel map.
    Moreover, for a fixed $\eps>0$ and $k \in \mathbb{N}$, by (\ref{cc}) we have 
    \[
        ([a,b]\setminus A) \cap \set{
                \metric\del{ \gamma_{n_j}(t), \tilde{\gamma}(t)} > \eps } \subset A_k,
    \]
    for $j \geq k$ such that $\eps/2 > 1/2^j$. Therefore, since $\lambda(A_k) \rightarrow 0$, we have that $\gamma_{n_j} \to \tilde{\gamma}$ in $\metricAlone_{\measurable}$. 
    Hence, since $\gamma_n$ is a Cauchy sequence, $\gamma_n \to \tilde{\gamma}$ in $\metricAlone_{\measurable}$.
    
    Next, let us suppose that  the space $
        \del{
            \measurable\del{ [a,b]; X},
            \metricAlone[\metricAlone_{\measurable}]  
        }
    $ is complete. We shall prove that $(X, \metricAlone)$ is complete. For this purpose we fix a Cauchy sequence $y_n$ in $X$. Now, we define the sequence $f_n:[a,b] \rightarrow X$ as follows $f_n(t)=y_n$. It is obvious that $f_n$ is a Cauchy sequence in $
        \del{
            \measurable\del{ [a,b]; X},
            \metricAlone[\metricAlone_{\measurable}]  
        }$. Therefore, there exists $f\in \del{
            \measurable\del{ [a,b]; X},
            \metricAlone[\metricAlone_{\measurable}]  
        }$ such that $f_n \to f$ in $\metricAlone_{\measurable}$. Furthermore, by $i)$ there exists a subsequence $f_{n_j}$ and a set of full measure $D\subset [a,b]$ such that 
        $\metricAlone (f_{n_j}(t),f(t)) \rightarrow 0$ for $t\in D$. Let $t_0 \in D$, then in particular $\metricAlone (y_{n_j},f(t_0)) \rightarrow 0$. Therefore, since $y_n$ is a Cauchy sequence, we have $y_n \rightarrow f(t_0)$ in $(X, \metricAlone)$.

    \item  
    Let us assume that $X$ is separable and let $S$ be a countable dense subset. For $n \in \bN$ and $i \in [n]$ denote
    $
        A_{i,n} \coloneqq \intco{ \tfrac{i-1}{n}(b-a), \tfrac{i}{n}(b-a) }.
    $
    We will show that 
    \begin{equation*}
        D=\bigcup_{n=1}^{\infty}D_n \, \text{where}\
        D_n 
        \coloneqq 
        \set{
            t \mapsto 
            \sum_{i=1}^n x_i \indicator{ A_{i,n}}(t)
            \given
            x_i \in S
            \text{ for all } i \in [n] 
        }
    \end{equation*}
    is a countable dense subset in $\measurable\del{ [a,b]; X}$.
    It is clear that $D$ is countable. 
    
    Fix $\eps > 0$ and $\gamma \in \measurable\del{ [a,b]; X}$. 
    By Lusin's Theorem\footnote{Separabilty of $X$ allows us to use the Lusin Theorem, see \cite{federer}},
    there exists a compact $F \subseteq [a,b]$ 
    such that $\abs{ [a,b] \setminus F } \le \eps/3$ 
    and $\gamma \rvert_{F}$ is continuous. 
    As $F$ is compact, $\gamma \rvert_{F}$ is absolutely continuous and there exists $\delta > 0$ such that 
    if $\abs{s - t} \le \delta$, then $\metric\del{ \gamma(s), \gamma(t) } \le \eps/4$. 
    Let $n \in \bN$ be such that $\delta \ge (b-a)/n$.
    Then $\diam\del{ A_{i,n} } \le \delta$.

    For every $i \in [n]$, if $F \cap A_{i,n} \ne \emptyset$, let 
    $x_i \in S$ be such that 
    $
        \overline{B}\del{x_i, \frac{\eps}{3(b-a)}} \cap \gamma \sbr{
            F \cap A_{i,n}
        }\ne \emptyset;
    $
    otherwise, let $x_i$ be arbitrary. 
    Let $\gamma' \in \measurable\del{ [a,b]; X}$ be defined by the formula
    $\gamma'(t) \coloneqq \sum_{i=1}^n x_i \indicator{ A_{i,n}}(t)$. 
    Then $\gamma' \in D$. 
    Furthermore, if $t \in F\setminus \set{b}$, then $\metric\del{ \gamma'(t), \gamma(t) } \le \frac{2\eps}{3(b-a)}$. 
    Indeed, for such a $t$ there exists $i \in [n]$ such that $t \in A_{i,n} \cap F$. Thus, there exists
    $t_0 \in A_{i,n}\cap F$ such that 
    $
        \gamma(t_0) \in \overline{B}\del{x_i, \frac{\eps}{3(b-a)}},
    $
    and since $\diam\del{ A_{i,n} } \le \delta$ we have $\abs{t_0 - t} \le \delta$.
        Then
    $$
        \metric\del{ \gamma'(t), \gamma(t) }
        =
        \metric\del{ x_i, \gamma(t) }
        \le 
        \metric\del{ x_i, \gamma(t_0) } + \metric\del{ \gamma(t_0), \gamma(t) }
        \le 
        \frac{2\eps}{3(b-a)}.
    $$
    Finally,
   \begin{align*}
        \metric_{\measurable}\del{
                \gamma, \gamma'
            }
            &=
            \integral{ [a,b] }{ 
                \min \del{
                    1, 
                    \metric \del{
                        \gamma(t), 
                        \gamma'(t)
                    }
                }
            }{t}
            \\
            &=
            \integral{ [a,b] \setminus F }{ 
                \min \del{
                    1, 
                    \metric \del{
                        \gamma(t), 
                        \gamma'(t)
                    }
                }
            }{t}
            +
            \integral{ F }{ 
                \min \del{
                    1, 
                    \metric \del{
                        \gamma(t), 
                        \gamma'(t)
                    }
                }
            }{t}
        \\ 
        &\le 
        \integral{ [a,b] \setminus F }{ 
                1
            }{t}
            +
            \integral{ F }{ 
                \frac{2\eps}{3(b-a)}
            }{t}
        \\ 
        &\le 
         \eps,
    \end{align*}
    which proves that $D$ is dense in $\measurable\del{ [a,b]; X}$.

    Next, we shall show the converse implication. For this we suppose that $(X, \metricAlone )$ is not separable. Hence, there exists $\delta >0$ and uncountable $\delta$-separated set $X_{\delta} \subset X$. Then 
    \[
    Y_{\delta} :=\{\gamma : [a,b] \rightarrow X: \gamma \equiv x, \text{where}\, x \in X_{\delta}\}
    \]
    is uncountable $\min(1, \delta)$-separated set in $\measurable\del{ [a,b]; X}$. Therefore, $\measurable\del{ [a,b]; X}$ is not separable.
    \end{enumerate}
\end{proof}

 By Lemma \ref{lem:right_continuity_and_agreeing_on_a_dense_subset} the canonical immersion 
    $
        T
        \colon 
        \testCurves\del{
            [a,b]; 
            X
        }
        \to 
        \measurable\del{
            [a,b];
            X
        }
    $
    is injective. 
    Hence, the map $ \metricAlone_{TC}:  \testCurves\del{
            [a,b]; 
            X
        } \times \testCurves\del{
            [a,b]; 
            X
        } \rightarrow [0, \infty)
    $ defined by the formula
    \begin{equation*}
        \forall    
            \gamma, \gamma' 
            \in 
            \testCurves\del{
                [a,b];
                X
            }
        \qquad 
            \metricAlone_{TC}
            \del{ \gamma, \gamma'}
            \coloneqq 
            \metric[\metric_{\measurable}]
            \del{ 
                T( \gamma), 
                T( \gamma')
            }
    \end{equation*}
    is a metric on $\testCurves\del{
                [a,b];
                X
            }$.

\begin{definition}[Essential variation]
    Let $(X, \metricAlone)$ be a metric space.
    For $\gamma \in \measurable\del{ [a,b]; X}$
    we define its essential variation
    $
        \essentialVariation 
        \colon 
        \measurable\del{ [a,b]; X}
        \to 
        \intcc{ 0, \infty}
    $
    by the formula
    \begin{equation*}
        \forall 
            \gamma \in 
            \measurable\del{ [a,b]; X}
        \qquad 
            \essentialVariation (\gamma)
            \coloneqq 
            \inf 
            \set{
                V(\gamma')
                \given 
                \gamma'
                \sim \gamma 
                            }.
    \end{equation*}
\end{definition}
\begin{lemma}
\label{lem:TC_representatives_have_minimal_variation}
    Let $(X, d)$ be a metric space. 
    Let 
    $
        \wave{\gamma}
        \in \measurable\del{ 
            [a,b];
            X
        }
    $
    be such that there exists
    $
        \gamma \in 
        \testCurves\del{ 
            [a,b];
            X
        }
    $
    such that $\wave{\gamma} = \gamma$ almost everywhere.
    Then 
    $
        \essentialVariation\del{ \wave{\gamma}}
        =
        V( \gamma)
    $.
\end{lemma}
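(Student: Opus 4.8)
The plan is to prove the two inequalities $\essentialVariation\del{\wave{\gamma}}\le V(\gamma)$ and $\essentialVariation\del{\wave{\gamma}}\ge V(\gamma)$ separately, the first being immediate and the second carrying all the content. For the first inequality, note that $\gamma\in\testCurves\del{[a,b];X}\subseteq\boundedVariation\del{[a,b];X}$, so $\gamma$ is Borel by Lemma \ref{lem:functions_of_buonded_variation_are_borel} and has totally bounded — hence separable — image by Corollary \ref{cor:functions_of_bounded_variation_image_totally_bounded}; thus $\gamma$ is one of the functions over which the infimum defining $\essentialVariation\del{\wave{\gamma}}$ is taken, and $\essentialVariation\del{\wave{\gamma}}\le V(\gamma)$.

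For the reverse inequality it suffices to show that every Borel $\gamma'$ with $\gamma'=\gamma$ almost everywhere satisfies $V(\gamma')\ge V(\gamma)$. I would set $C\coloneqq\set{t\in[a,b]\given \gamma'(t)=\gamma(t)}$; since $[a,b]\setminus C$ is contained in a Lebesgue-null set, $C$ has full measure and therefore meets every nondegenerate subinterval of $[a,b]$. The heart of the argument is to prove that for every partition $\Delta=(t_i)_{i=0}^n\in\partition\del{[a,b]}$ and every $\eps>0$ one has $V^\Delta(\gamma)\le V(\gamma')+\eps$; taking the supremum over $\Delta$ and letting $\eps\to0$ then gives $V(\gamma)\le V(\gamma')$ directly from Definition \ref{def:variation_of_function}.

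To obtain $V^\Delta(\gamma)\le V(\gamma')+\eps$ I would perturb the partition points into the set $C$. Using right-continuity of $\gamma$ on $\intco{a,b}$ at the points $t_0,\dots,t_{n-1}$ and left-continuity of $\gamma$ at $b=t_n$ (property (ii) of a test curve), I pick $\delta_i>0$, all strictly smaller than $\tfrac12\min_j(t_j-t_{j-1})$, such that $\metric\del{\gamma(s),\gamma(t_i)}\le\eps/(2n)$ whenever $s\in\intco{t_i,t_i+\delta_i}$ for $i<n$, respectively whenever $s\in\intoc{b-\delta_n,b}$. The size restriction makes the intervals $\intco{t_0,t_0+\delta_0},\dots,\intco{t_{n-1},t_{n-1}+\delta_{n-1}},\intoo{b-\delta_n,b}$ pairwise disjoint and increasingly ordered, and since $C$ meets each of them I choose $s_i\in C$ inside the $i$-th one. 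Then $s_0<s_1<\cdots<s_n$, so $\sigma\coloneqq(s_i)_{i=0}^n\in\partition\del{[s_0,s_n]}$ and, by Remark \ref{rem:restricting_functions_of_bounded_variation}, $V^\sigma(\gamma')\le V\del{\gamma'\rvert_{[s_0,s_n]}}\le V(\gamma')$; moreover $\gamma'(s_i)=\gamma(s_i)$ because $s_i\in C$, so $V^\sigma(\gamma')=\sum_{i=1}^n\metric\del{\gamma(s_i),\gamma(s_{i-1})}$. Finally, summing the triangle-inequality estimate
\[
\metric\del{\gamma(t_i),\gamma(t_{i-1})}\le\metric\del{\gamma(s_i),\gamma(s_{i-1})}+\metric\del{\gamma(t_i),\gamma(s_i)}+\metric\del{\gamma(t_{i-1}),\gamma(s_{i-1})}
\]
over $i\in[n]$ and using that each point $t_j$ is counted at most twice among the defect terms $\metric\del{\gamma(t_j),\gamma(s_j)}\le\eps/(2n)$ yields $V^\Delta(\gamma)\le V^\sigma(\gamma')+\eps\le V(\gamma')+\eps$, as wanted.

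I expect the only real obstacle to be the bookkeeping in this last step: keeping the perturbed points $s_i$ strictly ordered while simultaneously controlling every metric defect, and in particular treating the right endpoint correctly — this is precisely where left-continuity of $\gamma$ at $b$ (rather than the vacuous right-continuity there) is needed. No deeper difficulty is anticipated, since $V(\gamma)<\infty$ for test curves and the remaining ingredients — Borel measurability, total boundedness of the image, and monotonicity of the variation under restriction — are already available from the preceding results.
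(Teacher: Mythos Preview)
Your proof is correct and follows the same underlying idea as the paper's --- exploit the density of the agreement set together with the right-continuity of $\gamma$ on $[a,b)$ and its left-continuity at $b$ --- but the execution differs in a way worth noting. The paper fixes a \emph{normal} sequence of partitions $(\Delta_n)_n$ whose interior nodes lie in the agreement set $D$, leaves the endpoints $a,b$ untouched, and then invokes Proposition~\ref{cor:variation_is_a_uniform_limit_of_Delta-variations} to get $V^{\Delta_n}(\gamma)\to V(\gamma)$; the only defect terms that survive come from the two boundary edges, and these vanish because $t_1^n\to a^+$ and $t_{m_n-1}^n\to b^-$. You instead work with an arbitrary partition $\Delta$ and perturb \emph{every} node (including $a$ and $b$) into the agreement set $C$, controlling each defect by $\eps/(2n)$ via pointwise right-continuity (or left-continuity at $b$); this yields $V^\Delta(\gamma)\le V(\gamma')+\eps$ directly and bypasses Proposition~\ref{cor:variation_is_a_uniform_limit_of_Delta-variations} altogether. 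Your route is thus slightly more elementary and self-contained, at the cost of a bit more bookkeeping to keep the perturbed points ordered; the paper's route is shorter once that proposition is available.
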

\begin{proof}
    Of course we have $\essentialVariation\del{ \wave{\gamma}} \leq V( \gamma)$. Next, let 
        $
        \gamma' 
        \in 
        \boundedVariation\del{
            [a,b];
            X
        }
    $
    be such that $\wave{\gamma} = \gamma'$ almost everywhere.
    We also have that $\gamma' = \gamma$ almost everywhere, so we have set $D$ of full measure in 
    $[a,b]$ (hence, dense)
    such that $\gamma' = \gamma$ in $D$. Since $D$ is dense in $[a,b]$, there exists a normal sequence  $\del{(t^n_i)_{i=0}^{m_n}}_n = (\Delta_n)_n$ of 
    partitions of $[a,b]$ such that for all $n \in \bN$ we have
    $
        \Delta_n
        \setminus \set{a,b}
        \subseteq 
        D
    $.
    We have
    $
        \abs{ \Delta_n } \to 0
    $,
    so 
    $
        t^n_{1} \to t^n_0 = a
    $
    and 
    $
        t^n_{m_n-1} \to t^n_{m_n} = b
    $.
    Hence, since $\gamma \in \testCurves\del{ [a,b]; X}$,
    we have
    \begin{equation*}
        \metric\del{
                \gamma\del{ t^n_{1} },
                \gamma(a)
            }
        \xrightarrow{ n \to \infty }
        0 
        \quad \text{ and } \quad 
            \metric\del{
                \gamma\del{b },
                \gamma( t^n_{m_n-1})
            }
        \xrightarrow{ n \to \infty }
        0. 
    \end{equation*}
     By Proposition \ref{cor:variation_is_a_uniform_limit_of_Delta-variations}
    we have that $V^{\Delta_n}(\gamma) \to V(\gamma)$ as $n \to \infty $, and therefore
    \begin{align*}
        V(\gamma')
        &\ge 
        \limsup_{n \to \infty}
        V^{\Delta_n}(\gamma')
        \\
        &=
        \limsup_{n \to \infty}
            \metric\del{
                \gamma'\del{ t^n_{1} },
                \gamma'(a)
            }
            +
            \sum_{i=2}^{m_n-1}
                \metric\del{
                \gamma( t^n_{i} ),
                \gamma\del{ t^n_{i-1} }
            }
            +
            \metric\del{
                \gamma'\del{b },
                \gamma'( t^n_{m_n-1})
            }
        \\
        &=
        \limsup_{n \to \infty}
            \metric\del{
                \gamma'\del{ t^n_{1} },
                \gamma'(a)
            }
            -
            \metric\del{
                \gamma\del{ t^n_{1} },
                \gamma(a)
            }
            +
            V^{\Delta_n}(\gamma)
            -
            \metric\del{
                \gamma\del{b },
                \gamma( t^n_{m_n-1})
            }
            +
            \metric\del{
                \gamma'\del{b },
                \gamma'( t^n_{m_n-1})
            }
        \\
        &=
        V(\gamma)
        +
        \limsup_{n \to \infty}
            \metric\del{
                \gamma'\del{ t^n_{1} },
                \gamma'(a)
            }
            +
            \metric\del{
                \gamma'\del{b },
                \gamma'( t^n_{m_n-1})
            }
        \\ 
        &\ge 
        V(\gamma).
    \end{align*}
    Hence,  $V(\gamma') \ge  V(\gamma)$ and in this way we have $
        \essentialVariation\del{ \wave{\gamma}}
        \geq
        V( \gamma)
    $.
\end{proof}
\begin{lemma}
\label{lem:in_complete_metric_spaces_finite_essential_variation_has_TC_representative}
    Let $(X, \metricAlone)$ be a complete metric space. Let 
   $\wave{\gamma} \in  \measurable\del{[a,b];X}$ be such that $  \essentialVariation( \wave{\gamma})
        < \infty  $, then there exists $\gamma \in 
                \testCurves\del{
                    [a,b];
                    X
                }$ such that $\wave{\gamma} = \gamma$ almost everywhere.
\end{lemma}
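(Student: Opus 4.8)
The plan is to produce the required test curve by choosing a concrete representative of bounded variation and replacing it by its right‑continuous modification built from one‑sided limits along continuity points; the completeness of $X$ is what makes those limits exist.

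Since $\essentialVariation\del{\wave\gamma}<\infty$, there is $\gamma'\in\boundedVariation\del{[a,b];X}$ with $\gamma'=\wave\gamma$ almost everywhere. By Corollary \ref{cor:functions_of_bounded_variation_image_totally_bounded} the set $\image\del{\gamma'}$ is totally bounded, and since $X$ is complete every Cauchy sequence with terms in $\image\del{\gamma'}$ converges in $X$. By Lemma \ref{cor:variation_bounds_sum_of_jumps_from_above} the set of discontinuity points of $\gamma'$ is at most countable, so $C\coloneqq C_{\gamma'}\cap(a,b)$ is a dense subset of $(a,b)$ of full Lebesgue measure. For each $t\in[a,b)$ and each sequence $C\ni s_n\to t^+$, Lemma \ref{lem:functions_of_bounded_variation_have_cauchy_limits_everywhere} shows that $\del{\gamma'(s_n)}_n$ is Cauchy, and the alternating‑sequence clause of that lemma shows its limit is independent of the chosen sequence; hence $\gamma'(t^+)\coloneqq\lim_{C\ni s\to t^+}\gamma'(s)$ exists in $X$. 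Define $\gamma(t)\coloneqq\gamma'(t^+)$ for $t\in[a,b)$ and $\gamma(b)\coloneqq\lim_{C\ni s\to b^-}\gamma'(s)$ (this left limit exists for the same reason). Then $\gamma$ is right‑continuous on $\intco{a,b}$ by Lemma \ref{lem:right_continuity_and_agreeing_on_a_dense_subset}(2), and since $\gamma'$ is continuous at every $t\in C$ we get $\gamma\rvert_C=\gamma'\rvert_C$; as $C$ has full measure, $\gamma=\gamma'=\wave\gamma$ almost everywhere.

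It remains to check $\gamma\in\testCurves\del{[a,b];X}$. For the one‑sided limits, fix $t\in\intoc{a,b}$ and $s_n\to t^-$ with $s_n<t$; using right‑continuity of $\gamma$ at $s_n$, density of $C$, and $\gamma\rvert_C=\gamma'\rvert_C$, pick $u_n\in C\cap(s_n,t)$ with $\abs{u_n-s_n}<1/n$ and $\metric\del{\gamma(s_n),\gamma'(u_n)}<1/n$. Then $u_n\to t^-$, so $\del{\gamma'(u_n)}_n$ is Cauchy by Lemma \ref{lem:functions_of_bounded_variation_have_cauchy_limits_everywhere}, hence so is $\del{\gamma(s_n)}_n$, which converges in $X$; interleaving two such sequences (and the associated $u_n$'s) shows the limit does not depend on $\del{s_n}_n$. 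This gives existence of $\lim_{s\to t^-}\gamma(s)$ for all $t\in\intoc{a,b}$, which is condition (iii); taking $t=b$ identifies this limit with $\gamma(b)$, so $\gamma$ is left‑continuous at $b$, which is condition (ii). For condition (i), given a partition $(t_i)_{i=0}^n$ of $[a,b]$ and $\eps>0$, choose $s_i\in C$ with $s_0<s_1<\cdots<s_n$, with $s_i$ so close to $t_i$ (and $s_n$ so close to $b$) that $\metric\del{\gamma(t_i),\gamma'(s_i)}<\eps/(n+1)$ for all $i$; the triangle inequality together with Remark \ref{rem:restricting_functions_of_bounded_variation} then yields
\[
\sum_{i=1}^{n}\metric\del{\gamma(t_i),\gamma(t_{i-1})}
\;\le\; 2\eps+\sum_{i=1}^{n}\metric\del{\gamma'(s_i),\gamma'(s_{i-1})}
\;\le\; 2\eps+V(\gamma').
\]
Letting $\eps\to0$ and taking the supremum over partitions gives $V(\gamma)\le V(\gamma')<\infty$, so $\gamma\in\boundedVariationRC\del{[a,b];X}$; it is Borel by Lemma \ref{lem:functions_of_buonded_variation_are_borel}. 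Hence $\gamma\in\testCurves\del{[a,b];X}$ and $\gamma=\wave\gamma$ almost everywhere, as required (and, by Lemma \ref{lem:TC_representatives_have_minimal_variation}, in fact $V(\gamma)=\essentialVariation\del{\wave\gamma}$).

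The work here is organizational rather than conceptual: the only points needing care are that the approximating points ($u_n$ above, and the $s_i$ in the variation estimate) can always be taken inside $C$ while still approaching from the correct side and in the correct order, and that the new function genuinely agrees with $\gamma'$ on a full‑measure set while being right‑continuous everywhere on $\intco{a,b}$ — all the analytic substance is already contained in Lemma \ref{lem:functions_of_bounded_variation_have_cauchy_limits_everywhere} and Lemma \ref{lem:right_continuity_and_agreeing_on_a_dense_subset}, so there is no single hard step.
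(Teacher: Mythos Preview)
Your proof is correct and follows essentially the same approach as the paper: define $\gamma$ via one-sided limits of $\gamma'$ along the dense set of continuity points, invoke Lemma \ref{lem:right_continuity_and_agreeing_on_a_dense_subset}(2) for right-continuity, and check the remaining test-curve axioms. The paper dispatches conditions (i)--(iii) with the phrase ``by simple considerations,'' whereas you have written those considerations out explicitly (the approximation argument for the left limits and the partition-perturbation estimate for $V(\gamma)\le V(\gamma')$); the extra detail is sound and does not diverge from the paper's route.
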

\begin{proof}
   There exists 
    $
        \gamma' \in \boundedVariation\del{
            [a,b]; 
            X
        }   
    $
    such that 
    $
        \essentialVariation( \wave{\gamma})
        \le 
        V( \gamma' )
    $
    and $\wave{\gamma} = \gamma'$ almost everywhere.
    By Lemma \ref{cor:variation_bounds_sum_of_jumps_from_above}  the set $[a,b] \setminus C_{\gamma'}$ of points of discontinuity of $\gamma'$ is at most countable. Since $(X, \metricAlone)$ is complete, by Lemma \ref{lem:functions_of_bounded_variation_have_cauchy_limits_everywhere::eq:almost_left_limits} the quantities $\gamma' \rvert_{C_{\gamma'}} (t^+)$ and $\gamma' \rvert_{C_{\gamma'}} (b^-)$ are well defined.
    We define function 
    $
        \gamma 
        \colon [a,b] \to X    
    $
    by the formula
    \begin{equation*}
        \forall t \in [a,b]
        \qquad 
        \gamma(t)
        \coloneqq 
        \begin{cases}
            \gamma' \rvert_{C_{\gamma'}} (t^+)
            & \text{ if } 
            t \in \intco{a,b},  \\
            \gamma' \rvert_{C_{\gamma'}} (b^-)
            &\text{ if } t= b.
        \end{cases} 
    \end{equation*} 
    Then, by Lemma \ref{lem:right_continuity_and_agreeing_on_a_dense_subset}
and by simple considerations we have  $
        \gamma 
        \in \testCurves\del{
            [a,b];
            X
        }.
    $
    Moreover, since $\gamma \rvert_{C_{\gamma'}} = \gamma' \rvert_{C_{\gamma'}}$
    and $C_{\gamma'}$ is of full measure in $[a,b]$,
    then $\gamma = \wave{\gamma}$ almost everywhere.
\end{proof}
\begin{lemma}
\label{lem:almost_lower_semicontinuity_of_essential_variation}
    Let $(X, d)$ be a metric space and $M \ge 0$. 
    Let $(\wave{\gamma}_n)_n$ be a sequence of elements 
    of $\measurable\del{ [a,b]; X}$ such that
    $\wave{\gamma}_n \to \wave{\gamma}$ in 
    $\metricAlone[\metricAlone_{\measurable}]$
    and $\essentialVariation( \wave{\gamma}_n) \le M$
    for all $n \in \bN$. 
    If
    $(X, \metricAlone)$ is complete, or
    $\wave{\gamma}$ has a right-continuous representative,
    then $\essentialVariation(\wave{\gamma}) \le M$.
\end{lemma}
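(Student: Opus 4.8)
The plan is to pass to an a.e.\ convergent subsequence, extract from it a ``bounded variation along a dense set'' estimate for a fixed representative of $\wave{\gamma}$, and then convert that into a genuine test-curve representative of variation $\le M$, closing with Lemma \ref{lem:TC_representatives_have_minimal_variation}. \emph{Step 1.} Since $\metricAlone_{\measurable}$ metrizes convergence in measure, the Riesz-type statement i) proved above for $\measurable\del{[a,b];X}$ lets me replace $(\wave{\gamma}_n)_n$ by a subsequence converging a.e. After fixing representatives $g$ of $\wave{\gamma}$ and $g_n$ of $\wave{\gamma}_n$ and choosing (by definition of $\essentialVariation$) curves $\gamma_n'\sim\wave{\gamma}_n$ with $V(\gamma_n')\le M+1/n$, I would let $D$ be the intersection of $\{t:g_n(t)\to g(t)\}$ with the full-measure sets $\{t:\gamma_n'(t)=g_n(t)\}$; then $D$ is dense and of full measure and $\gamma_n'(t)\to g(t)$ for $t\in D$, so for every finite $a\le t_0<\dots<t_k\le b$ in $D$,
\begin{equation*}
 \sum_{i=1}^{k}\metric\del{g(t_i),g(t_{i-1})}
 =\lim_{n\to\infty}\sum_{i=1}^{k}\metric\del{\gamma_n'(t_i),\gamma_n'(t_{i-1})}
 \le\liminf_{n\to\infty}V(\gamma_n')\le M. \tag{$\star$}
\end{equation*}

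\emph{Step 2 (complete $X$).} The proofs of Lemma \ref{lem:functions_of_bounded_variation_have_cauchy_limits_everywhere} and Lemma \ref{lem:right_continuity_and_agreeing_on_a_dense_subset} only ever evaluate a curve at partition points, so, $D$ being dense, $(\star)$ lets me run them for $g\rvert_D$: the one-sided limits of $g$ along $D$ are Cauchy at every point, hence exist by completeness. I would then set $\gamma(t)\coloneqq\lim_{D\ni s\to t^+}g(s)$ on $[a,b)$ and $\gamma(b)\coloneqq\lim_{D\ni s\to b^-}g(s)$ and verify, exactly as in the proof of Lemma \ref{lem:in_complete_metric_spaces_finite_essential_variation_has_TC_representative}, that $\gamma\in\testCurves\del{[a,b];X}$; that $V(\gamma)\le M$ follows by approximating the nodes of an arbitrary partition of $[a,b]$ by nearby points of $D$ (from the right, except $b$ from the left) and invoking $(\star)$; and the jump-counting argument of Lemma \ref{cor:variation_bounds_sum_of_jumps_from_above} applied to $g\rvert_D$ shows $g\rvert_D$ is right-continuous off a countable subset of $D$, whence $\gamma=g=\wave{\gamma}$ a.e. Lemma \ref{lem:TC_representatives_have_minimal_variation} then gives $\essentialVariation(\wave{\gamma})=V(\gamma)\le M$.

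\emph{Step 3 (right-continuous representative).} Here I would pass to the completion $\hat X$: in $\measurable\del{[a,b];\hat X}$ one still has $\wave{\gamma}_n\to\wave{\gamma}$ and $\essentialVariation_{\hat X}(\wave{\gamma}_n)\le\essentialVariation_X(\wave{\gamma}_n)\le M$, so Step 2 applied in $\hat X$ yields $\hat\gamma\in\testCurves\del{[a,b];\hat X}$ with $\hat\gamma=\wave{\gamma}$ a.e.\ and $V_{\hat X}(\hat\gamma)\le M$. Since $\hat\gamma$ and the right-continuous representative $g$ agree a.e.\ and are both right-continuous on $[a,b)$, Lemma \ref{lem:right_continuity_and_agreeing_on_a_dense_subset}(1) forces $\hat\gamma=g$ on $[a,b)$, so $\hat\gamma$ is $X$-valued except possibly at $b$. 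Given $\eps>0$ I would pick $x_\eps\in X$ with $\metricAlone_{\hat X}(x_\eps,\hat\gamma(b))<\eps$ and put $\gamma^\eps\coloneqq g$ on $[a,b)$, $\gamma^\eps(b)\coloneqq x_\eps$; then $\gamma^\eps\sim\wave{\gamma}$ and, since changing the endpoint value from $x_\eps$ to $\hat\gamma(b)$ shifts every $V^\Delta$ by at most $\eps$, $V_X(\gamma^\eps)\le V_{\hat X}(\hat\gamma)+\eps\le M+\eps$. Letting $\eps\to0$ gives $\essentialVariation(\wave{\gamma})\le M$.

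\emph{Main obstacle.} The genuinely subtle part is Step 3: a priori $\essentialVariation$ in $X$ could exceed $\essentialVariation$ in $\hat X$, because no $X$-valued representative may be able to realise the ``ideal'' endpoint value $\hat\gamma(b)$; the resolution is that right-continuity of $g$ pins $\hat\gamma$ down inside $X$ on all of $[a,b)$, so the discrepancy is confined to the single point $b$, where density of $X$ in $\hat X$ makes it negligible. Everything else — in particular that the dense-set estimate $(\star)$ really is enough to rerun the one-sided-limit and jump-counting lemmas on $D$ — is routine bookkeeping.
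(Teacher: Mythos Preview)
Your argument is correct, but it proceeds rather differently from the paper's proof. The paper avoids separating the two hypotheses: after passing to an a.e.\ convergent subsequence it applies Helly's Selection Theorem to the variation functions $V_{\gamma_{n_k}}$, obtaining a monotone $v\colon[a,b]\to[0,M+\eps]$ with $\metric\del{\gamma(s),\gamma(t)}\le v(t)-v(s)$ for $s<t$ in the a.e.-convergence set $D$. This single inequality controls the one-sided Cauchy limits of $\gamma\rvert_D$ in both cases at once (they converge either because $X$ is complete, or because the chosen representative $\gamma$ is already right-continuous), and yields a modified representative $\hat\gamma$ with $V(\hat\gamma)\le M+2\eps$ directly from the telescoping bound $\hat v(b)-\hat v(a)$. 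By contrast, you work with the partition inequality $(\star)$ alone and treat the non-complete case via a pass to the completion, then exploit right-continuity of both $\hat\gamma$ and $g$ together with Lemma \ref{lem:right_continuity_and_agreeing_on_a_dense_subset} to force $\hat\gamma=g$ on $[a,b)$ and handle the remaining point $b$ by an $\eps$-perturbation. Your route is more elementary in that it avoids Helly's theorem, and your Step~3 observation that the discrepancy between $\essentialVariation$ in $X$ and in $\hat X$ is confined to the single endpoint $b$ is a nice structural point; the paper's route is shorter and handles both hypotheses with a single construction. Both are complete; the paper simply does not need the completion detour.
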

\begin{proof}
    Let $\eps > 0$, then there exists a
    sequence $(\gamma_n)$ of elements of 
    $
        \boundedVariation\del{
            [a,b];
            X
        }
    $
    such that for all $n \in \bN$
    $
        V(\gamma_n) \le M + \eps
    $
    and $\gamma_n = \wave{\gamma}_n$
    almost everywhere. If $\wave{\gamma}$ has a right-continuous representative, then let us denote it by $\gamma$.
    Otherwise, let $\gamma$ be any representative of 
    of $\wave{\gamma}$.
    There exists a subsequence $(\gamma_{n_k})_k$
    such that $\gamma_{n_k} \to \gamma$
    almost everywhere.
    Let $D$ be a set of full measure in $[a,b]$
    such that $\gamma_{n_k} \to \gamma$
    everywhere in $D$. 
    Let us also require that $b \notin D$.
    
    Let $v_k \coloneqq V_{\gamma_{n_k}}$.
    Then $(v_k)_k$ is a sequence of 
    non-decreasing functions bounded by 
    $ M + \eps$. By Helly's Selection Theorem
    there exists a non-decreasing function $v \colon [a,b] \to [0, M+\eps]$ 
    and a subsequence $(v_{k_m})_m$
    such that $v_{k_m} \to v$ everywhere.

    For all $m \in \bN$ and $s,t  \in D$ such that
    $s \le t$ we have
    \begin{equation*}
        \metric\del{
            \gamma_{n_{k_m}}(s),
             \gamma_{n_{k_m}}(t)
        }
        \le 
        v_{k_m}(t) - v_{k_m}(s).
    \end{equation*}
    Hence, by passing with $m \to \infty$, for $s,t  \in D$ such that
    $s \le t$ we have
    \begin{equation*}
        \metric\del{
            \gamma(s),
            \gamma(t)
        }
        \le 
        v(t) - v(s).
    \end{equation*}
    Let $t \in \intoc{a,b} \setminus D$ and $(r_n)_n \subset D$ 
    such that $r_n \to t^+$, then since $ v \in 
        \boundedVariation\del{
            [a,b];
            [0,\infty)
        }
    $ by the above inequality we have $(\gamma(r_n))_n$ is a Cauchy sequence which is convergent either because $\gamma$ is right-continuous, or $(X, \metricAlone)$ is complete. Hence, $\gamma\rvert_D(t^+)$ exists for $t \in \intoc{a,b} \setminus D$. Therefore, we can define 
    $
        \hat{\gamma}
        \colon 
        \intco{a,b} 
        \to 
        X
    $
    by the formula
    \begin{equation*}
        \forall 
        t \in \intco{a, b}
        \qquad 
        \hat{\gamma}(t)
        \coloneqq 
        \begin{cases}
            \gamma(t),
            &\text{ if } t \in D, \\
            \gamma\rvert_D(t^+),
            &\text{ if } t \in [a,b) \setminus D,
        \end{cases}
    \end{equation*}
    and let us define $\hat{v} \colon [a,b] \to \bR$
    by the formula
    \begin{equation*}
        \forall  t \in [a,b]
        \qquad 
        \hat{v}(t)
        \coloneqq 
        \begin{cases}
            v(t), &\text{ if } t \in D, \\
            v \rvert_D(t^+), &\text{ if } 
            t \in \intco{ a, b} \setminus D, \\
            M+2\eps, &\text{ if } t = b.
        \end{cases}
    \end{equation*}
    Let us note that  $\hat{v}(b)\ge \hat{v}(b^-) + \eps$ as $v$ was bounded by $M + \eps$ from above.
        Then, since $D$ is dense, for all $t, s \in \intco{a, b}$ such that
    $s < t$ we have
    \begin{equation*}
        \metric\del{
            \hat{\gamma}(s),
            \hat{\gamma}(t)
        }
        \le 
        \hat{v}(t) - \hat{v}(s).
    \end{equation*}
    Since $\hat{v}$ is non-decreasing and bounded, the limit $\hat{v}(b^-)$ exists. Hence, there exists $\delta > 0$ such that,
    if $r, s \in (b-\delta, b)$ and $s < r$, then
    \begin{equation*}
        \metric\del{
            \hat{\gamma}(r),
             \hat{\gamma}(s)
        }
        \le 
        \hat{v}(r) - \hat{v}(s)
        \le 
        \eps.
    \end{equation*}
    Hence, if we define $\hat{\gamma}(b) = \hat{\gamma}(t)$, where $t \in (b- \delta, b)$
    is arbitrary, then for all $r, s \in (b-\delta, b]$ such that $s < r$ we have
    \begin{equation*}
        \metric\del{
            \hat{\gamma}(r),
             \hat{\gamma}(s)
        }
        \le 
        \hat{v}(r) - \hat{v}(s).
    \end{equation*}
    Note that we used the fact that $\hat{v}(b)\ge \hat{v}(b^-) + \eps$.
    Finally, for $
         r, s \in [a,b]
        $ such that $ 
        s < r
        $
    we have
    \begin{equation*}
                \metric\del{
            \hat{\gamma}(r),
             \hat{\gamma}(s)
        }
        \le 
        \hat{v}(r) - \hat{v}(s).
    \end{equation*}
    Since $\hat{\gamma} = \gamma$ on $D$ (which is a set of full measure) and $\gamma$ ia a representative of $\wave{\gamma}$, we have that $\hat{\gamma}$ also is a representative of $\wave{\gamma}$.

    Next, let $\Delta = (t_i)_i^{m}$ be a partition of $[a,b]$, then
    \begin{equation*}
        V^\Delta(\hat{\gamma})
        =
        \sum_{i=1}^m
            \metric\del{
            \hat{\gamma}(t_i),
             \hat{\gamma}(t_{i-1})
        }
        \le 
        \sum_{i=1}^m
            \hat{v}(t_i)
            - \hat{v}(t_{i-1})
        =
        \hat{v}(t_m) - \hat{v}(t_0)
        =
        \hat{v}(b) - \hat{v}(a)
        \le 
        M + 2 \eps.
    \end{equation*}
    This shows that $V(\hat{\gamma}) \le M + 2\eps$ and thus $\essentialVariation\del{ \wave{\gamma}} \le M + 2\eps$.
    As $\eps > 0$ was arbitrary, 
    we have $\essentialVariation\del{ \wave{\gamma}} \le M$ as needed.
\end{proof}
\begin{corollary}
\label{cor:lower_semicontinuity_of_variation_function_on_test_curves}
    Let $(X, \metricAlone)$ be a metric space.
    Then 
    $
        V 
        \colon 
        \testCurves\del{ [a,b]; X}
        \to 
        \intco{ 0, \infty}
    $
    is lower semicontinuous. 
\end{corollary}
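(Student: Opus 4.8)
The plan is to obtain this as a direct consequence of Lemma~\ref{lem:TC_representatives_have_minimal_variation} and Lemma~\ref{lem:almost_lower_semicontinuity_of_essential_variation}. Since $\del{\testCurves\del{[a,b];X}, \metricAlone_{TC}}$ is a metric space, it suffices to verify sequential lower semicontinuity of $V$: namely, that whenever $\gamma_n \to \gamma$ in $\metricAlone_{TC}$, one has $V(\gamma) \le \liminf_{n\to\infty} V(\gamma_n)$.

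So first I would set $M := \liminf_{n\to\infty} V(\gamma_n)$; if $M = \infty$ there is nothing to prove, so assume $M < \infty$, and after passing to a subsequence assume $V(\gamma_n) \to M$. By the definition of $\metricAlone_{TC}$, the images $T(\gamma_n)$ under the canonical immersion $T \colon \testCurves\del{[a,b];X} \to \measurable\del{[a,b];X}$ converge to $T(\gamma)$ in $\del{\measurable\del{[a,b];X}, \metricAlone_{\measurable}}$. Fix $\eps > 0$. For all sufficiently large $n$ we have $V(\gamma_n) \le M + \eps$, and by Lemma~\ref{lem:TC_representatives_have_minimal_variation} this means $\essentialVariation\del{T(\gamma_n)} = V(\gamma_n) \le M + \eps$ for such $n$.

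Now $T(\gamma)$ has a right-continuous representative, namely $\gamma$ itself, since $\gamma \in \boundedVariationRC\del{[a,b];X}$. Hence Lemma~\ref{lem:almost_lower_semicontinuity_of_essential_variation}, applied to the tail of the sequence $\del{T(\gamma_n)}_n$ with the bound $M+\eps$, yields $\essentialVariation\del{T(\gamma)} \le M + \eps$. Applying Lemma~\ref{lem:TC_representatives_have_minimal_variation} once more gives $V(\gamma) = \essentialVariation\del{T(\gamma)} \le M + \eps$, and since $\eps > 0$ was arbitrary we conclude $V(\gamma) \le M = \liminf_{n\to\infty} V(\gamma_n)$, as required.

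There is essentially no real obstacle here beyond bookkeeping, since the substantive work is carried out in the two cited lemmas. The one point worth a moment's care is that we invoke the ``right-continuous representative'' branch of Lemma~\ref{lem:almost_lower_semicontinuity_of_essential_variation}: this is exactly what makes the statement hold for an arbitrary (not necessarily complete) metric space $X$, because the limit object $\gamma$ is already a test curve and therefore right-continuous on $\intco{a,b}$, so no appeal to completeness is needed.
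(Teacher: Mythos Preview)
Your proof is correct and uses the same two lemmas as the paper, but invokes a different branch of the hypothesis in Lemma~\ref{lem:almost_lower_semicontinuity_of_essential_variation}. The paper first assumes $(X,\metricAlone)$ is complete and applies the ``completeness'' alternative of that lemma, then handles a general metric space by passing to a completion $(\hat X,\hat{\metricAlone})$ and composing with the isometric embedding $i\colon X\to\hat X$, using that $V = V_{\hat X}\circ I$ for the induced map $I\colon\testCurves([a,b];X)\to\testCurves([a,b];\hat X)$. You instead observe directly that the limit $\gamma$, being a test curve, is already a right-continuous representative of $T(\gamma)$, so the ``right-continuous representative'' alternative of Lemma~\ref{lem:almost_lower_semicontinuity_of_essential_variation} applies immediately without any completeness assumption. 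Your route is slightly more economical since it avoids the completion step entirely; the paper's approach has the minor advantage that it makes explicit why the complete case is the essential one.
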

\begin{proof}
    In the first step we assume that $(X, d)$ is complete.
    Suppose that $\gamma_n \to \gamma $
    in $\del{ \testCurves\del{ [a,b]; X}, \metricAlone[\metricAlone_{TC}]}$.
    Let 
    $
        T
        \colon 
        \testCurves\del{ [a,b]; X}
        \to 
        \measurable\del{ [a,b]; X}
    $
    be the canonical immersion.
    Then $T(\gamma_n) \to T(\gamma)$ in 
    $\del{ \measurable\del{ [a,b]; X}, \metricAlone[\metricAlone_{\measurable}]}$.
    By Lemmata 
    \ref{lem:TC_representatives_have_minimal_variation}
    and \ref{lem:almost_lower_semicontinuity_of_essential_variation}
    we have  
    \begin{equation*}
        \liminf_{n\to \infty }
        V(\gamma_n)
        =
        \liminf_{n\to \infty }
        \essentialVariation\del{ T(\gamma_n)}
        \ge 
        \essentialVariation\del{ T(\gamma)}
        =
        V(\gamma)
    \end{equation*}
    which, as $(\gamma_n)_n$ was arbitrary, is equivalent
    to lower semicontinuity of $V$.

    Now, in the second step we we consider an arbitrary metric space $(X, d)$. Let $(\hat{X}, \hat{d})$ be a completion of $(X, d)$ and let $i : X \rightarrow \hat{X}$ be an isometric embedding. Then, $I: \testCurves\del{ [a,b]; X} \rightarrow \testCurves\del{ [a,b]; \hat{X}}$ defined as $I(\gamma)(t) =i(\gamma(t))$ is a continuous map, and by the first step 
    $V_{\hat{X}}\colon \testCurves\del{ [a,b]; \hat{X}}       \to  \intco{ 0, \infty}$ is lower semicontinuous. Therefore, $V= V_{\hat{X}} \circ I$ is lower semicontinuous.
\end{proof}
\begin{theorem}
\label{lem::hyper_envaluations_are_Borel}
    Let $(X, \metricAlone)$ be a metric space, then:
     \begin{enumerate}[label=(\alph*)]
        \item for $(s_1, \ldots, s_n) \in [a,b]^n$ and a Borel map $f \colon X^n \to \bRExtended$ the function
    \begin{equation*}
        \testCurves\del{ [a,b]; X}
        \ni 
        \gamma
        \mapsto 
        f\del{
            \gamma(s_1), \ldots, \gamma(s_n)
        }
    \end{equation*}
    is Borel,
    \item for every Borel and bounded from below or from above map 
     $f \colon X \to \bRExtended$ the function
    \begin{equation*}
                \testCurves\del{ [a,b]; X}
              \ni \gamma
        \mapsto 
        \int_\gamma f
    \end{equation*}
    is Borel,
    \item for every Borel and bounded from below or from above map 
     $f \colon X \to \bRExtended$ the function
    \begin{equation*}
              \testCurves\del{ [a,b]; X}
        \ni 
        \gamma
        \mapsto 
        \sint{\gamma}{ f}
    \end{equation*}
    is Borel.

    \end{enumerate}
    
\end{theorem}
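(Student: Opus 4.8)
The plan is to prove the three items together, with the case $n=1$ of (a) and a time‑averaging regularization of point evaluation as the common engine; (c) will then follow from (b) together with the observation that curve reversal is an isometry of $\del{\testCurves\del{[a,b];X},\metricAlone_{TC}}$. Throughout I use that convergence in $\metricAlone_{TC}$ is, by construction, convergence of the underlying curves in Lebesgue measure.

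\emph{Item (a).} Fix $\del{s_1,\dots,s_n}$ and, for $h>0$, put $I_i^h\coloneqq\intcc{s_i,s_i+h}$ if $s_i<b$ and $I_i^h\coloneqq\intcc{b-h,b}$ if $s_i=b$. For bounded Lipschitz $g\colon X^n\to\bR$ set
\[
A_h(\gamma)\coloneqq\frac1{h^n}\int_{I_1^h\times\dots\times I_n^h} g\bigl(\gamma(t_1),\dots,\gamma(t_n)\bigr)\,\mathrm d\lambda^n(t_1,\dots,t_n),\qquad\gamma\in\testCurves\del{[a,b];X}.
\]
By Proposition~\ref{mierzalnosc} (test curves have totally bounded, hence separable, image by Corollary~\ref{cor:functions_of_bounded_variation_image_totally_bounded}) the integrand is Borel on $[a,b]^n$, so $A_h$ is well defined and bounded by $\|g\|_\infty$. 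If $\gamma_m\to\gamma$ in $\metricAlone_{TC}$ then $\gamma_m\to\gamma$ in Lebesgue measure, so $(t_1,\dots,t_n)\mapsto(\gamma_m(t_i))_i$ converges to $(t_1,\dots,t_n)\mapsto(\gamma(t_i))_i$ in $\lambda^n$-measure on $[a,b]^n$ (union bound over the coordinates); since $g$ is bounded and continuous, $A_h(\gamma_m)\to A_h(\gamma)$, so each $A_h$ is continuous on $\testCurves\del{[a,b];X}$. Right‑continuity of $\gamma$ on $[a,b)$, left‑continuity at $b$ and joint continuity of $g$ give $A_h(\gamma)\to g(\gamma(s_1),\dots,\gamma(s_n))$ as $h\to0^+$, so $\gamma\mapsto g(\gamma(s_1),\dots,\gamma(s_n))$ is a pointwise limit of continuous maps, hence Borel. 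Since the bounded Lipschitz functions on $X^n$ form a multiplicative class generating $\borel\del{X^n}$ (a closed $C$ is the zero set of $x\mapsto\min\del{1,\operatorname{dist}(x,C)}$), the functional monotone class theorem extends this to all bounded Borel $f$, and truncating, $f=\lim_N\max(-N,\min(N,f))$, to arbitrary Borel $f\colon X^n\to\bRExtended$.

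\emph{Borelness of $V_\gamma(t)$ and of $\gamma\mapsto\mu_\gamma(B)$.} For $t\in(a,b)$ the restriction $\gamma\mapsto[\gamma\rvert_{[t,b]}]\in\testCurves\del{[t,b];X}$ and the left‑adjusted restriction $\gamma\mapsto[\gamma\rvert_{[a,t^-]}]\in\testCurves\del{[a,t];X}$ are $1$-Lipschitz for the respective $\metricAlone_{TC}$-metrics, these being obtained by integrating over subdomains of $[a,b]$; composing with $V$ (lower semicontinuous, hence Borel, by Corollary~\ref{cor:lower_semicontinuity_of_variation_function_on_test_curves}) and using Remark~\ref{rem} and Corollary~\ref{cor:variation_is_additive} shows that $\gamma\mapsto V_\gamma(t^-)=V\del{\gamma\rvert_{[a,t^-]}}$ and $\gamma\mapsto V_\gamma(t)=V(\gamma)-V\del{\gamma\rvert_{[t,b]}}$ are Borel on $\testCurves\del{[a,b];X}$ for every $t$ (with $V_\gamma(a)=0$ and $V_\gamma(b)=V(\gamma)$). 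As $\mu_\gamma(\intoc{r,t})=V_\gamma(t)-V_\gamma(r)$ and $\mu_\gamma(\{t\})=V_\gamma(t)-V_\gamma(t^-)$, the Dynkin $\pi$--$\lambda$ theorem makes $\gamma\mapsto\mu_\gamma(B)$ Borel for every $B\in\borel\del{[a,b]}$.

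\emph{Item (b).} It suffices to treat $f$ bounded from below (replace $f$ by $-f$ otherwise). First I would show $\gamma\mapsto\int_\gamma f$ is Borel for \emph{bounded} Borel $f$. For this one checks that $(\gamma,t)\mapsto f(\gamma(t))$ is Borel on $\testCurves\del{[a,b];X}\times[a,b]$: for $g\in C_b(X)$ and fixed $h>0$, the forward average $(\gamma,t)\mapsto\frac1h\int_{[t,t+h]\cap[a,b]}g(\gamma(u))\,\mathrm du$ is Lipschitz in $t$ and, by the convergence‑in‑measure argument above, continuous in $\gamma$; a separately continuous function on a product of metric spaces is Borel, and letting $h\to0^+$ exhibits $(\gamma,t)\mapsto g(\gamma(t))$ as a pointwise limit of such maps (use a backward average and left‑continuity at $t=b$), after which the monotone class theorem covers all bounded Borel $f$. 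Since $(\mu_\gamma)_\gamma$ is a Borel family of finite measures, a Dynkin argument over measurable rectangles shows $\gamma\mapsto\int_{[a,b]}\Psi(\gamma,t)\,\mathrm d\mu_\gamma(t)$ is Borel for every bounded Borel $\Psi$; with $\Psi(\gamma,t)=f(\gamma(t))$ this gives that $\gamma\mapsto\int_\gamma f=\int_{[a,b]}f\circ\gamma\,\mathrm d\mu_\gamma$ is Borel. For general Borel $f$ bounded from below, $\int_\gamma f=\lim_N\int_\gamma\min(f,N)$ by monotone convergence against the finite measures $\mu_\gamma$, so $\gamma\mapsto\int_\gamma f$ is Borel as a pointwise limit.

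\emph{Item (c) and the main obstacle.} Since $\overleftarrow{\gamma}$ agrees $\lambda$-a.e.\ with $t\mapsto\gamma(a+b-t)$, the substitution $s=a+b-t$ shows reversal is an isometry of $\del{\testCurves\del{[a,b];X},\metricAlone_{TC}}$; hence by (b) both $\gamma\mapsto\int_\gamma f$ and $\gamma\mapsto\int_{\overleftarrow{\gamma}}f$ are Borel, and so is $\sint{\gamma}{f}=\frac12\del{\int_\gamma f+\int_{\overleftarrow{\gamma}}f}$ (for $f$ bounded from below both summands lie in $\intoc{-\infty,\infty}$, so the sum is well defined; the other sign is symmetric). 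The one genuinely delicate point is that evaluation $(\gamma,t)\mapsto\gamma(t)$ is Borel but never continuous on $\testCurves\del{[a,b];X}$; the purpose of the regularizations $A_h$ and of the forward averages is exactly to replace it by continuous approximants, after which the argument is bookkeeping with the monotone class theorem, the lower semicontinuity of $V$, and Fubini‑type facts for the $\gamma$-dependent measures $\mu_\gamma$.
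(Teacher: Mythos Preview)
Your proof is correct, and for parts (a) and (c) it is essentially the paper's argument: time-averaging to obtain continuous approximants of the evaluation map, then a functional monotone class argument for (a); and the observation that curve reversal is a $\metricAlone_{TC}$-isometry, combined with (b), for (c).

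Part (b) is where you genuinely diverge. The paper proves an auxiliary Riemann-sum lemma: for $f\in\continuousAndBounded(X)$ and any normal sequence of partitions $(\Delta_n)$, one has
\[
\int_\gamma f=\lim_{n\to\infty}\sum_{i=1}^{m_n} f(\gamma(t_i^n))\,\metric\del{\gamma(t_i^n),\gamma(t_{i-1}^n)},
\]
so that $\gamma\mapsto\int_\gamma f$ is a pointwise limit of functions already known Borel from (a); then monotone class. You instead set up a transition-kernel picture: show $(\gamma,t)\mapsto f(\gamma(t))$ is jointly Borel, show $\gamma\mapsto\mu_\gamma(B)$ is Borel for every Borel $B$ (via Lipschitzness of restriction maps and lower semicontinuity of $V$), and then run a Dynkin argument on measurable rectangles to integrate. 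Your route avoids the Riemann-sum computation (which itself rests on Proposition~\ref{cor:variation_is_a_uniform_limit_of_Delta-variations}) and produces useful intermediate facts---joint measurability of evaluation and Borelness of the kernel $\gamma\mapsto\mu_\gamma$---at the price of a more abstract measure-theoretic argument. The paper's route is more elementary and entirely self-contained in (a).

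Two small technical points you should make explicit. First, ``separately continuous on a product of metric spaces implies Borel'' needs one factor to be separable; here $[a,b]$ is, so the standard nearest-point-in-a-dense-set approximation works and in fact yields $\mathcal{B}(\testCurves)\otimes\mathcal{B}([a,b])$-measurability. Second, your Dynkin argument over rectangles produces Borelness of $\gamma\mapsto\int\Psi(\gamma,t)\,\mathrm d\mu_\gamma(t)$ only for $\Psi$ measurable with respect to the \emph{product} $\sigma$-algebra $\mathcal{B}(\testCurves)\otimes\mathcal{B}([a,b])$; to feed in $\Psi(\gamma,t)=f(\gamma(t))$ you need $\mathcal{B}(\testCurves\times[a,b])=\mathcal{B}(\testCurves)\otimes\mathcal{B}([a,b])$, which again holds because $[a,b]$ is second countable. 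Both are routine, but without them the argument has a formal gap.
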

\begin{proof}
First of all we shall prove the following lemma.

\begin{lemma}
\label{lem:subspaces_of_Borel_functions_closed_upon_pointwise_limits_with_continuous_are_families_of_Borel}
    Let $(X, \metricAlone)$ be a metric space and let $C$ be a linear subspace of Borel functions on $X$ with values in $\mathbb{R}$ such that
    \begin{itemize}
        \item 
        $C$ contains all (bounded) Lipschitz functions on $X$,
        \item 
        $C$ is closed upon
        pointwise limits of
        (equibounded)
        functions.
    \end{itemize}
    Then $C$ contains all (bounded and) Borel functions on $X$.
\end{lemma}
\begin{proof}
    The strategy of the proof is based on the proof of \cite[Theorem 11.6]{descriptive-set-theory}.
    
    First of all we shall show that $C$ contains characteristic functions of all open sets. It is clear $\indicator{ \emptyset}, \indicator{X} \in C$. Therefore, let us fix nonempty open set $U \subseteq X$ such $X \setminus U \neq \emptyset$. For such $U$ we define the following sets
    \begin{equation*}
        F_n
        \coloneqq 
        \set{
            x \in U
            \given 
            \dist\del{ x, X \setminus U} 
            \ge 1/n
        }.
    \end{equation*}
    Then $(F_n)_n$ is an increasing sequence of closed sets
    such that $U = \bigcup_{n=1}^\infty F_n$.
    Let $f_n \colon X \to \bR$
    be a sequence of Lipschitz and bounded functions given by 
    the formula
    \begin{equation*}
        \forall x \in X
        \qquad 
        f_n(x)
        \coloneqq 
        \min\del{
            1, 
            n\dist\del{ x, X \setminus U} 
        }.
    \end{equation*}
    Since $f_n \equiv 1$ on $F_n$, $f_n \equiv 0$ on $X \setminus U$
    and $f_n \colon X \to [0,1]$, then $f_n \to \indicator{ U }$
    pointwise. In this way we have proved that $\indicator{ U } \in C$.

    Next, let 
    $$
    \mathcal{A} \coloneqq \set{ B\in \borel\del{X} \given \indicator{ B} \in C}.
    $$
    We have shown that family $\mathcal{A}$ contains a
    $\pi$-system of open sets. 
    Let us notice that if $B \in \mathcal{A}$, 
    then 
    $
        \indicator{ X \setminus B}
        =
        1
        - \indicator{ B}
        \in C
    $
    as $1$ is Lipschitz (and bounded).
    Hence, $X \setminus B \in \mathcal{A}$.
    Moreover, if $(B_n)$ is a sequence of disjoint elements of $\mathcal{A}$, then
    $
        \indicator{ \bigcup_{n=1}^\infty B_n}
        =
        \lim_{n \to \infty} 
            \sum_{k=1}^n
            \indicator{ B_k }
    $,
    so 
    $\indicator{ \bigcup_{n=1}^\infty B_n}$ 
    is a pointwise limit of (equibounded by $1$)  sequence of elements of $C$. 
    Thus, $\indicator{ \bigcup_{n=1}^\infty B_n} \in C$ and $\bigcup_{n=1}^\infty B_n \in \mathcal{A}$.
    We have shown that $\mathcal{A}$
    is a $\lambda$-system containing a $\pi$-system
    of open sets.
    Hence, by the Dynkin Lemma $\mathcal{A}$ contains all Borel sets.

    Now, let $f \colon X \to \bR$ be bounded Borel map. 
    There exists $M \in \bN$ such that $\abs{f} \le M$ everywhere.
    For $n \in \bN$ and $i \in \set{0, \ldots, n}$, let
    \begin{equation*}
        A^n_i
        \coloneqq 
        f^{-1} \sbr{ 
            \intco{
                M\del{ -1 + 2i/n },
                M\del{ -1 + 2(i+1)/n }
            } 
        }
    \end{equation*}
    and 
    $
        f_n 
        \coloneqq 
        \sum_{i=0}^{n}
            M\del{ -1 + 2i/n }
            \indicator{ A^n_i}
    $.
    Then $(f_n)_n$ is a sequence
    of functions (equibounded by $M$),
    which are linear combinations of indicators of Borel sets, so $f_n \in C$ for all $n \in\bN$.
    Moreover, we have $\abs{ f_n - f }\le 2/n$ everywhere.
    Therefore, $f_n \to f$ everywhere
    and     we have $f \in C$. 
    Thus, we have proved that $C$ contains 
    all bounded Borel functions $f \colon X \to \bR$.

    In the case when $C$ is closed
    upon pointwise limits of not necessarily equibounded sequences of elements of $C$,
    then, if $f \colon X \to \bR$ is Borel,
    we have that 
    $
        f_n 
        \coloneqq 
        \max\del{ -n, \min\del{ n, f}}
    $
    is a sequence of elements of $C$ such that $f_n \to f$ pointwise.
    Hence, $C$ contains all Borel functions $f \colon X \to \bR$. 
 \end{proof}
   $(a)$  Since the map $
        \testCurves\del{ [a,b]; X}
        \ni 
        \gamma
        \mapsto 
        f\del{
            \gamma(s_1), \ldots, \gamma(s_n)
        }$ 
        is a pointwise limit of maps $
        \testCurves\del{ [a,b]; X}
        \ni 
        \gamma
        \mapsto 
        f_k\del{
            \gamma(s_1), \ldots, \gamma(s_n)
        }$, where $f_k=\max(-k, \min(k,f))$, we can assume that 
        $f \colon X^n \to \bR$. Let 
    \begin{equation*}
        \mathsf{Test}^n\del{X}
        \coloneqq 
        \set{
            f \colon [a,b]^n \times X^n \to \bR 
            \given 
            \abs{f} \le 1
            \text{ and }
            \mathrm{Lip}(f) \le 1
        }.
    \end{equation*}
    Then, for all 
    $f \in \mathsf{Test}^n(X)$, the map
    \begin{equation*}
        \mathcal{M}\del{ [a,b]; X }
        \ni 
        \gamma 
        \mapsto 
        \integral{ [a,b]^n }
        {
            f\del{ 
                s_1, \ldots, s_n, \gamma(s_1), \ldots, \gamma(s_n)
            }
        }{s}
    \end{equation*}
    is Lipschitz, hence Borel.
   Indeed, for all 
    $\gamma, \gamma' \in \mathcal{M}\del{ [a,b]; X }$
     we have\footnote{The measurability of the map $( s_1, \ldots, s_n)\mapsto f\del{ s_1, \ldots, s_n, \gamma(s_1), \ldots, \gamma(s_n) 
            }$ follows from Proposition \ref{mierzalnosc}.}
    \begin{align*}
        &
        \abs{
            \integral{ [a,b]^n }
            {
                f\del{ s_1, \ldots, s_n, \gamma(s_1), \ldots, \gamma(s_n) }
            }{s}
            -
            \integral{ [a,b]^n }
            {
                f\del{ s_1, \ldots, s_n, \gamma'(s_1), \ldots, \gamma'(s_n) }
            }{s}
        }
        \\
        &\qquad\le 
        \integral{ [a,b]^n }{
            \abs{
                f\del{ s_1, \ldots, s_n, \gamma(s_1), \ldots, \gamma(s_n) }
                - f\del{ s_1, \ldots, s_n, \gamma'(s), \ldots, \gamma'(s_n) }
            }
        }{s}
        \\
        &\qquad=
         \integral{ [a,b]^n }{
            \min\del{
                2,
                \abs{
                    f\del{ s_1, \ldots, s_n, \gamma(s_1), \ldots, \gamma(s_n)}
                    - f\del{ s_1, \ldots, s_n, \gamma'(s_1), \ldots, \gamma'(s_n) }
                }
            }
        }{s}
        \\
        &\qquad\le 
         \integral{ [a,b]^n }{
            \min\del{
                2,
                \sum_{i=1}^n
                \metric\del{
                     \gamma(s_i), 
                     \gamma'(s_i) 
                }
            }
        }{s}
        \\
        &\qquad\le 
        \sum_{i=1}^n
        \integral{ [a,b] }
        {
        \min \del{2, \metric\del{
                     \gamma(s_i), 
                     \gamma'(s_i) 
                }
                }
        }{s_i}
        \\
        &\qquad\le 
        2n \metric_\mathcal{M} \del{
            \gamma , \gamma'
        }.
    \end{align*}

    Let $l_1, \ldots, l_n, r_1, \ldots, r_n \in [a,b]$ satisfy $l_i < r_i$ for all $i \in [n]$. 
    Fix $f \in \mathsf{Test}^n(X)$
    and define 
    $
        f_m \colon [a,b]^n \times X^n \to \bR 
    $
    by $f_m(s,x) \coloneqq f(s,x) g_m(s)$,
    where $g_m \colon [a,b]^n \to [0,1]$ is a smooth function such that  
        $
            g_m \equiv 1
        $
        on
        $
            \bigtimes_{i=1}^n
            [l_i, r_i]
        $ 
        and  
        $
            g_m \equiv 0
        $
        on $[a,b]^n \setminus \bigtimes_{i=1}^n
            [l_i -1/m, r_i +1/m] $
    Since each of $f_m$ is Lipschitz and bounded by 1, 
 there is $c_m > 0$ such that
    $c_m f_m\in \mathsf{Test}^n(X)$.
    In consequence,
    \begin{equation*}
        \mathcal{M}\del{ [a,b]; X }
        \ni 
        \gamma 
        \mapsto 
        \integral{ [a,b]^n }
        {
            f_m\del{ s_1, \ldots, s_n, \gamma(s_1), \ldots, \gamma(s_n) }
        }{s}
    \end{equation*}
    are continuous, hence Borel.
    
    Since $f_m$ are bounded by $1$ and the pointwise limit of $f_m$ is a function 
    $
        (s, x)
        \mapsto
        f(s, x)
        \indicator{ 
            \bigtimes_{i=1}^n [l_i, r_i]
        }(s),
    $
    by the Lebesgue dominated convergence theorem we get that 
    \begin{multline*}
        \mathcal{M}\del{ [a,b]; X }
        \ni 
        \gamma 
        \mapsto 
        \integral{ \bigtimes_{i=1}^n [l_i, r_i] }
        {
            f\del{ s, \gamma(s) }
        }{s}
        =
        \integral{ [a,b]^n }
        {
            f\del{ s_1, \ldots, s_n, 
                \gamma(s_1), \ldots, \gamma(s_n) }
            \indicator{ \bigtimes_{i=1}^n [l_i, r_i] }(s)
        }{s}
        \\
        =
        \lim_{n \to \infty}
        \integral{ [a,b]^n }
        {
            f_m\del{ s_1, \ldots, s_n,
                \gamma(s_1), \ldots, \gamma(s_n) }
        }{s}
    \end{multline*}
    is Borel.
    Furthermore, since embedding $\testCurves\del{ [a,b]; X} \hookrightarrow
    \mathcal{M}\del{ [a,b]; X}$ is continuous, we get that  
    \begin{equation*}
        \mathrm{TC}\del{ [a,b]; X }
        \ni 
        \gamma 
        \mapsto 
        \integral{ \bigtimes_{i=1}^n [l_i, r_i] }
        {
            f\del{ s, \gamma(s) }
        }{s}
    \end{equation*}
    is Borel. 
    Fix $l_1, \ldots l_n \in \intco{a,b}$ 
    and let $r_{i,m} \in \intoc{l_i, 1}$ be such that $r_{i, m} \to l_i^+$. 
    Then for a fixed 
    $\gamma \in \mathsf{TC}\del{ [a,b]; X }$
    we have
    \begin{align*}
        &\abs{
            f\del{l_1, \ldots, l_n, 
                \gamma(l_1), \ldots, \gamma(l_n) }
            -
            \fint_{ \bigtimes_{i=1}^n [l_i, r_{i,m}] }
                f\del{ s_1, \ldots s_n, 
                    \gamma(s_1), \ldots, \gamma(s_n) }
            \ \mathrm{d}s
        }
        \\
        &\qquad \le 
        \fint_{ \bigtimes_{i=1}^n [l_i, r_{i,m}]  }
            \abs{
                f\del{l_1, \ldots, l_n, 
                \gamma(l_1), \ldots, \gamma(l_n) }
                -
                 f\del{s_1, \ldots, s_n, 
                \gamma(s_1), \ldots, \gamma(s_n) }
            }
        \ \mathrm{d}s
        \\
        &\qquad \le 
        \fint_{\bigtimes_{i=1}^n [l_i, r_{i,m}]  }
            \sum_{i=1}^n 
                (s_i - l_i)
                + \metric\del{ 
                    \gamma(s_i), \gamma(l_i)
                }
        \ \mathrm{d}s \xrightarrow{m \to \infty} 0,
    \end{align*}
    where we have used the right continuity of $\gamma$. 
    We conclude that
    \begin{equation*}
        \testCurves\del{ [a,b]; X}
        \ni 
        \gamma 
        \mapsto 
        f\del{ r_1, \ldots, r_n , \gamma(r_1), \ldots, \gamma(r_n) },
    \end{equation*}
    where $r_1, \ldots, r_n \in \intco{a,b}$,
    is Borel as a pointwise limit of Borel functions.
    Since elements of 
    $
        \testCurves\del{ [a,b]; X}
    $
    are left-continuous at $b$, for all tuples
    $(r_i)_{i=1}^n$ such that $r_i \in [a,b]$ 
    we 
    can select $l_{i, m} \in \intco{a,b}$
    such that $l_{i, m} \to r_i^-$
    if $r_i = b$ and $l_{i, m} = r_i$ otherwise.
    Then
    \begin{equation*}
        f(l_{1, m}, \ldots, l_{n, m}, 
    \gamma(l_{1, m}), \ldots, \gamma(l_{n, m}))
    \to 
        f( r_1 , \ldots,  r_n, 
    \gamma( r_1 ), \ldots, \gamma( r_n))
    \end{equation*}
    for all 
    $\gamma
     \in \mathsf{TC}\del{ [a,b]; X}
     $
     and
    \begin{equation*}
        \testCurves\del{ [a,b]; X}
        \ni 
        \gamma 
        \mapsto 
        f\del{ r_1 , \ldots,  r_n, 
    \gamma( r_1 ), \ldots, \gamma( r_n) }
    \end{equation*}
    is Borel.
    This shows that
    \begin{equation*}
        \testCurves\del{ [a,b]; X}
        \ni 
        \gamma 
        \mapsto 
        f\del{ r_1 , \ldots,  r_n, 
    \gamma( r_1 ), \ldots, \gamma( r_n) }
    \end{equation*}
    are Borel for 
    all 
    $f \in \mathsf{Test}^n(X)$
    and 
    $r_1, \ldots, r_n \in [a,b]$.
    By scaling $f$ by a constant we get the same 
    result for any Lipschitz and bounded $f$.
    Since 
    $ c \mapsto \max( -m, \min( m, c))$
    are $1$-Lipschitz, by composing $f$ with the above functions,
    we can obtain the same result for 
    any Lipschitz function.

    Finally, for each $s = (s_1, \ldots, s_n) \in [a,b]^n$ we define
    \begin{equation*}
        C_s
        \coloneqq 
        \set{
            f \colon X^n \to \bR 
            \text{ Borel}
            \given 
            \testCurves\del{ [a,b]; X}
            \ni 
            \gamma 
            \mapsto  
            f\del{
                    \gamma(s_1),
                    \ldots 
                    \gamma(s_n)
                }
            \text{ is Borel}
        }.
    \end{equation*}
Since every Lipschitz 
    $
        f \colon X^n \to \bR 
    $
    can be treated as a Lipschitz 
    function
    $
        \wave{f} \colon [a,b]^n \times X^n.
    $       
    by 
    $(s,x) \mapsto \wave{f}(s, x) = f(x)$, we get that $C_s$ contains all Lipschitz functions $f \colon X^n \to \bR$. Therefore, since $X^n$ is a metric space and $C_s$ is a subspace of Borel functions, by Lemma     \ref{lem:subspaces_of_Borel_functions_closed_upon_pointwise_limits_with_continuous_are_families_of_Borel}
    we have that $C_s$ contains all Borel functions.

$(b)$
    First of all we shall prove the following lemma.
    \begin{lemma}
\label{lem:Lebesgue-Stieltjes_integral_as_a_limit_of_Riemann_sums}
    Let $(X, \metricAlone)$ be a metric space.
    Let\footnote{By $\continuousAndBounded(X)$ we denote the set of bounded and continuous functions on $X$.} $f \in \continuousAndBounded(X)$.
    Let 
    $
        \del{t^n_i}_{i=1}^{m_n} = \del{ \Delta_n }_n
    $ 
    be a normal sequence of partitions of $[a,b]$.
    Then for every 
    $
        \gamma \in \boundedVariationRC\del{
            [a,b];
            X
        }
    $
    we have 
    \begin{equation*}
        \int_\gamma f
        =
        \lim_{n \to \infty}
        \sum_{i=1}^{m_n}
            \metric\del{
                \gamma( t^n_i),
                \gamma( t^n_{i-1} )
            }
            f \circ \gamma (t^n_i).
    \end{equation*}
\end{lemma}
\begin{proof}
    For $n \in \bN$ let
    \begin{equation*}
        A_i 
        \coloneqq 
        \intoc{
            t^n_{i-1}, t^n_i
        },
        \text{ for }
        i \in [m_n],
        \text{ and }
        A_0 \coloneqq \set{a},
    \end{equation*}
    then for every $n \in \bN$ and every $i \in [m_n]$ we have
    \begin{equation*}
        \mu_{\gamma}\del{A_i}
        =
        \mu_{\gamma}\del{
            \intoc{ t^n_{i-1}, t^n_i }
        }
        =
        V_\gamma( t^n_i)
        - V_\gamma( t^n_{i-1} )
        <
        \infty.
    \end{equation*}
    Next, we define the sequence of simple functions 
    $
        g_n \colon [a, b] \to \bR
    $ 
    as follows
    \begin{equation*}
        g_n
        \coloneqq 
        \sum_{i=0}^{m_n}
            f \circ \gamma \del{ t^n_i}
            \indicator{ A_i }.
    \end{equation*}
    Since $f \in \continuousAndBounded(X)$, 
    there exists $M > 0$ such that $\abs{ f} \le M$
    everywhere in $X$. Therefore, $\abs{ g_n } \le M$ everywhere.

    Furthermore,  $g_n \to f \circ \gamma $ pointwise in $[a,b]$. Indeed, let us note that for all $n \in \bN$ and all $i \in \set{0, \ldots, m_n}$ we have that 
    $
        g_n( t^n_i ) = f \circ \gamma( t^n_i )
    $. In particular,  $g_n(a) = f \circ \gamma(a)$ and $g_n(b) = f \circ \gamma(b)$.
    
    Let $t \in (a,b)$, then $t$ is a point of right-continuity of $\gamma$ and, since $f$ is continuous, it is also a point of right-continuity of $f \circ \gamma$. 
    For each $n \in \bN$, 
    let 
    $
        t_n^\le 
        \coloneqq 
        \min 
        \set{
            s \in \Delta_n 
            \given 
            t \le s
        }.
    $ 
    Since $\del{ \Delta_n }_n$ is a normal sequence of partitions,   $t_n^\le \to t^+$ (where we allow that $t^\le_n = t$).
    Thus,
    \begin{equation*}
        g_n(t)
        =
        g_n( t_n^\le)
        =
        f \circ \gamma( t_n^\le)
        \xrightarrow{n \to \infty}
        f \circ \gamma(t).
    \end{equation*}
    Now, since $M \in L^1( \mu_{\gamma})$, by the Lebesgue Dominated Convergence Theorem we have
    \begin{equation*}
        \sum_{i=1}^{m_n}
                \mu_{\gamma}\del{
                    \intoc{ t^n_{i-1},
                        t^n_i
                    }
                }
                f \circ \gamma (t^n_i)
        =
        \integral{ [a,b] }{ g_n }{ \mu_{\gamma} }
        \xrightarrow{ n \to \infty }
        \integral{ [a,b] }{ f \circ \gamma }{ \mu_{\gamma}}
        =
        \int_\gamma f.
    \end{equation*}

    Finally, let us notice
    \begin{align*}
        \abs{
            \integral{ [a,b] }{ g_n }{ \mu_{\gamma} }
            - \sum_{i=1}^{m_n}
                \metric\del{
                    \gamma( t^n_i),
                    \gamma( t^n_{i-1} )
                }
                f \circ \gamma (t^n_i)
        }
        &=
        \abs{
            \sum_{i=1}^{m_n}
                \del{
                    \mu_{\gamma}\del{
                        \intoc{ t^n_{i-1},
                            t^n_i
                        }
                    }
                    -
                     \metric\del{
                        \gamma( t^n_i),
                        \gamma( t^n_{i-1} )
                    }
                }
                f \circ \gamma (t^n_i)
        }
        \\
        &\le 
        M
        \sum_{i=1}^{m_n}
            \abs{
                    \mu_{\gamma}\del{
                        \intoc{ t^n_{i-1},
                            t^n_i
                        }
                    }
                    -
                     \metric\del{
                        \gamma( t^n_i),
                        \gamma( t^n_{i-1} )
                    }
                }
        \\ 
        &=
        M
        \sum_{i=1}^{m_n}
            V_\gamma( t^n_i )
            - V_\gamma( t^n_{i-1} )
            - \metric\del{
                        \gamma( t^n_i),
                        \gamma( t^n_{i-1} )
                    }
        \\
        &=
        M \del{ 
            V_\gamma( t^n_{m_n}) - V_\gamma( t^n_{0})
            - V^{\Delta_n}(\gamma)
        }
        \\
        &=
        M\del{ V(\gamma) - V^{\Delta_n}(\gamma)
        }
        \xrightarrow{ n \to \infty}
        0.
    \end{align*}
     We conclude that
    \begin{equation*}
        \int_\gamma f
        =
        \lim_{n \to \infty}
            \integral{ [a,b] }{ g_n }{ \mu_{\gamma} }
        =
        \lim_{n \to \infty}
            \sum_{i=1}^{m_n}
                \metric\del{
                    \gamma( t^n_i),
                    \gamma( t^n_{i-1} )
                }
                f \circ \gamma (t^n_i)
    \end{equation*}
    as needed.
\end{proof}
 Let us define the following family
    \begin{equation*}
        C
        \coloneqq 
        \set{
            f \colon X \to \bR 
            \given
            f \text{ is Borel and bounded and }
            \quad 
            \testCurves\del{ [a,b]; X}
            \ni \gamma 
            \mapsto 
            \int_\gamma f 
            \quad 
            \text{ is Borel}
        }.
    \end{equation*}
    Let us observe that 
    $
        \continuousAndBounded\del{[a,b];\bR}
        \subseteq 
        C
    $.
    Indeed, let 
    $
        f \in \continuousAndBounded\del{ X; \bR}
    $ and $\del{t^n_i}_{i=0}^{m_n} = (\Delta_n)_n$ be a normal sequence of partitions. Then, from $(a)$ 
    we have that functions
    \begin{equation*}
            \testCurves\del{ [a,b]; X}
        \ni 
        \gamma
        \mapsto 
        \sum_{i=1}^{m_n}
            f\del{ \gamma \del{ t^n_i}}
            \metric\del{
                \gamma \del{ t^n_i},
                \gamma \del{ t^n_{i-1}}
            }
    \end{equation*}
    are Borel. 
    By Lemma \ref{lem:Lebesgue-Stieltjes_integral_as_a_limit_of_Riemann_sums}
    for all $\gamma \in \testCurves\del{ [a,b]; X}$ we have
    \begin{equation*}
        \int_\gamma f 
        =
        \lim_{n \to \infty} 
        \sum_{i=1}^{m_n}
            f\del{ \gamma \del{ t^n_i}}
            \metric\del{
                \gamma \del{ t^n_i},
                \gamma \del{ t^n_{i-1}}
            }.
    \end{equation*}
        Hence, $\gamma \mapsto \int_\gamma f$ is Borel as a pointwise limit
    of a sequence of Borel functions.

    Now, we will show that $C$ is closed upon pointwise limits
    of equibounded sequences.
    Let $(f_n)_n$ be a sequence of elements of $C$ such that
    there exists $M \ge 0$ such that $f_n \to f$ pointwise and
    for all $n \in \bN$ we have
    $\abs{ f_n } \le M$.
    Let $\gamma \in \testCurves\del{ [a,b]; X}$, then by the Lebesgue Dominated Convergence Theorem, we have
    \begin{equation*}
        \int_\gamma f_n
        =
        \integral{ [a,b] }{ f_n \circ \gamma }{ \mu_\gamma }
        \xrightarrow{ n \to \infty  }
        \integral{ [a,b] }{ f \circ \gamma }{ \mu_\gamma }
        =
        \int_\gamma f.
    \end{equation*}
    Therefore, the map 
    \begin{equation*}
        \testCurves\del{ [a,b]; X}
        \ni 
            \gamma 
            \mapsto 
            \int_\gamma f
    \end{equation*}
     is a pointwise limit of a sequence of Borel functions, hence it is Borel.
     Thus, $f \in C$.

     We have shown that $\continuousAndBounded\del{ X; \bR} \subseteq C$
     and that $C$ is bounded upon pointwise limits of 
     equibounded sequences. 
     Hence, by Lemma \ref{lem:subspaces_of_Borel_functions_closed_upon_pointwise_limits_with_continuous_are_families_of_Borel}
     family $C$ contains all Borel and bounded functions.

     Finally, if $f \colon X \to \bRExtended$ is Borel and bounded from
     above or from below, then sequence $(f_n)_n$
     defined by $f_n \coloneqq \max\del{ -n, \min\del{ n, f}}$
     is a sequence of bounded Borel functions such that $f_n \to f$
     pointwise.
     Moreover, since $f$ is bounded from above or from below, 
     there exists $N \in \bN$ such that $f \le N$ or $-N \le f$
     and subsequence $\del{ f_n }_{n \ge N}$ is monotonic.
     Therefore, by the Lebesgue Monotone Convergence Theorem,
     we have 
     \begin{equation*}
        \int_\gamma f_n
        =
        \integral{ [a,b] }{ f_n \circ \gamma }{ \mu_\gamma }
        \xrightarrow{ n \to \infty  }
        \integral{ [a,b] }{ f \circ \gamma }{ \mu_\gamma }
        =
        \int_\gamma f.
    \end{equation*}
    Thus, function
    \begin{equation*}
        \testCurves\del{ [a,b]; X}
        \ni 
            \gamma 
            \mapsto 
            \int_\gamma f
    \end{equation*}
     is a pointwise limit of a sequence of Borel functions, hence it is Borel. This proves the claim.

$(c)$
    Since $\sint{\gamma}{f} = \frac{1}{2} \del{ \int_\gamma f + \int_{\overleftarrow{\gamma}} f }$ and having in mind $(b)$, it is sufficient to show
    $
        \overleftarrow{\cdot} 
        \colon 
        \testCurves\del{ [a,b]; X}
        \to 
        \testCurves\del{ [a,b]; X}
    $
    is  Borel.
    We will prove that, in fact, this function is a $\metricAlone_{TC}$-isometry. Every element of $\testCurves\del{ [a,b]; X}$ is continuous on a set of full measure. Therefore, for $\gamma, \gamma' \in \testCurves\del{ [a,b]; X}$ we have that
        \begin{equation*}
        \overleftarrow{\gamma}(t)
        =
        \gamma 
            \del{
                 a + b - t             
            },
            \overleftarrow{\gamma'}(t)
        =
        \gamma'
            \del{
                 a + b - t             
            }
        \text{ for almost all }
        t \in [a,b].
    \end{equation*}
    Thus, 
    \begin{align*}
        \metric_{\,TC}\del{
            \overleftarrow{\gamma}, \overleftarrow{\gamma'}
        }
        &=
        \integral{ [a,b] }{
            \min \del{
                1,
                \metric\del{
                    \overleftarrow{\gamma}(t), 
                    \overleftarrow{\gamma'}(t)
                }
            }
        }{t}
                \\
        &=
        \integral{ [a,b] }{
            \min \del{
                1,
                \metric\del{
                    \gamma( s), 
                    \gamma'(s )
                }
            }
        }{s}
        \\ 
        &=
        \metric_{\,TC}\del{
            \gamma, 
            \gamma '
        },
    \end{align*}
    so 
    $
        \metric_{\,TC}\del{
            \overleftarrow{\gamma}, \overleftarrow{\gamma'}
        }
        =
        \metric_{\,TC}\del{
            \gamma, 
            \gamma '
        }
    $.
\end{proof}
\begin{proposition} \label{ewaluacje}
    Let $(X, d)$ be a metric space, then for every $t \in [0,1]$ the evaluation map $e_t :  \testCurves\del{[0,1];X} \rightarrow X$, $e_t (\gamma):=\gamma(t)$ is Borel.
\end{proposition}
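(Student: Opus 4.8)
The plan is to deduce this from the $n=1$ case of Theorem \ref{lem::hyper_envaluations_are_Borel}(a) together with a routine $\sigma$-algebra argument. The point is that although $X$ may fail to be separable (so that $\borel(X)$ need not be countably generated), every open subset of $X$ is the strict super-level set of a Lipschitz, hence Borel, real-valued function, namely a distance function; measurability of preimages therefore only has to be verified on open sets.

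First I would fix $t \in [0,1]$ and dispose of the trivial preimage $e_t^{-1}\sbr{X} = \testCurves\del{[0,1];X}$. Then, for an open set $U \subseteq X$ with $U \ne X$, I would introduce $f_U \colon X \to \intco{0,\infty}$ defined by $f_U(x) \coloneqq \dist\del{x, X \setminus U}$; since $\abs{f_U(x)-f_U(y)} \le \metric\del{x,y}$, this function is $1$-Lipschitz, hence Borel. Applying Theorem \ref{lem::hyper_envaluations_are_Borel}(a) with $n=1$, $s_1 = t$ and $f = f_U$, the composed map $\gamma \mapsto \dist\del{\gamma(t), X\setminus U}$ is Borel on $\testCurves\del{[0,1];X}$. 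Because $U$ is open, $x \in U$ is equivalent to $\dist\del{x, X\setminus U} > 0$, so
\[
    e_t^{-1}\sbr{U}
    =
    \set{
        \gamma \in \testCurves\del{[0,1];X}
        \given
        \dist\del{\gamma(t), X \setminus U} > 0
    }
\]
is the preimage of $\intoo{0,\infty}$ under a Borel function, hence a Borel subset of $\testCurves\del{[0,1];X}$.

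Finally, I would observe that the family $\mathcal{G} \coloneqq \set{ B \subseteq X \given e_t^{-1}\sbr{B} \in \borel(\testCurves\del{[0,1];X})}$ is a $\sigma$-algebra on $X$, because preimages under $e_t$ commute with complements and countable unions; by the previous step $\mathcal{G}$ contains every open subset of $X$, and therefore $\borel(X) \subseteq \mathcal{G}$, which is precisely the assertion that $e_t$ is Borel. I do not anticipate a real obstacle: once Theorem \ref{lem::hyper_envaluations_are_Borel}(a) is available, the only thing worth noticing is that the possible non-separability of $X$ is harmless, since measurability of preimages need only be checked on open sets, and these are handled uniformly by distance functions.
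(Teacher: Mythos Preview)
Your argument is correct and considerably shorter than the paper's. In fact, you could streamline it further: since Theorem~\ref{lem::hyper_envaluations_are_Borel}(a) applies to an arbitrary Borel $f\colon X\to\bRExtended$, taking $f=\indicator{B}$ for any Borel $B\subseteq X$ already gives that $\gamma\mapsto\indicator{B}(\gamma(t))$ is Borel, hence $e_t^{-1}[B]$ is Borel; the distance-function detour and the $\sigma$-algebra argument are then unnecessary.

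The paper takes a completely different route. It embeds $X$ isometrically into $\continuousAndBounded(X)$ via the Kuratowski map $\phi$, restricts to the Borel slices $Y_N=\{\gamma: V(\gamma)\le N\}$, and shows that the Bochner averages $\gamma\mapsto\fint_I\phi\circ\gamma$ are \emph{continuous} on each $Y_N$; letting the interval $I$ shrink to $\{s\}$ (using right-continuity, then left-continuity at $1$) recovers $\gamma\mapsto\phi\circ\gamma(s)$ as a pointwise limit of Borel maps, and composing with $\phi^{-1}$ gives the result on $Y_N$, after which one takes the countable union over $N$. Your approach is more economical because it reuses the heavy lifting already done in Theorem~\ref{lem::hyper_envaluations_are_Borel}(a); the paper's argument is self-contained and yields the auxiliary fact that the averaging operators are genuinely continuous on bounded-variation slices, which is of some independent interest but not needed elsewhere in the paper.
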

\begin{proof}
Let $\phi \colon X \to \continuousAndBounded(X)$ be the Kuratowski isometric embedding. Let us note that if $\gamma \in \testCurves\del{[0,1];X}$,
then $ \phi \circ \gamma \in \testCurves\del{[0,1]; \continuousAndBounded(X) }$. For a fixed $N \in \mathbb{N}$ we will consider the space
$$
    Y_N = 
    \set{
        \gamma \in \testCurves\del{ [0,1]; X}
        \given
        V( \gamma ) \le N
    }.
$$
By Corollary \ref{cor:lower_semicontinuity_of_variation_function_on_test_curves} the map $V$ is lower semicontinuous, thus each of $Y_N$ is a Borel subset of $TC([0,1]; X)$. 

Next, let us fix $s \in [0,1)$, and let 
$I = [s, s+\delta]$ for some $\delta > 0$ such that $s + \delta \le 1$.
We define the following map 
\begin{equation*}
    Y_N
    \ni
    \gamma 
    \mapsto 
    f_I( \gamma) 
    \coloneqq 
    \integral{I}{ \phi \circ \gamma(t) }{t},
\end{equation*}
where the integral is understood in the Bochner sense.
First, let us note that for every $\gamma \in \testCurves\del{[0,1]; X}$ integral
$
    \integral{I}{ \phi \circ \gamma(t) }{t}
$
is well-defined in the Bochner sense.
Indeed, the image of $ \phi \circ\gamma $
is separable, since it is totally bounded. Moreover, for every $\Lambda \in (\continuousAndBounded(X))^*$ we have 
$\Lambda \circ \phi \circ\gamma
 \in \testCurves\del{ [0,1]; \mathbb{R} }$,
 so $\Lambda \circ \phi \circ\gamma$ is Lebesgue measurable.
Thus, $\phi \circ\gamma$ has a separable image and is weakly measurable. Therefore by the Pettis Theorem,  $\phi \circ\gamma$ is Bochner measurable. 
Furthermore, since the image of 
$\phi \circ\gamma$ is totally bounded, it is bounded, hence the image of 
$\| \phi \circ\gamma \|_{\continuousAndBounded(X)}$
is bounded. Thus, 
$\| \phi \circ\gamma \|_{\continuousAndBounded(X)}$
is Lebesgue integrable, hence indeed
$ \phi \circ\gamma $ is Bochner integrable.

Next, we will show that 
$
    f_I \colon Y_N \to \continuousAndBounded(X)
$
is continuous. For this purpose we fix $\gamma \in Y_N$ and $\eps \in \intoo{ 0, \abs{ I} }$, where $\abs{ I}$ denotes the  measure of $I$. Therefore, for every 
$ \gamma' \in Y_N$ such that
$
    \metricAlone_{TC}( \gamma, \gamma' ) \le \eps
$
we see that the measure of the set
$
    \set{
        t \in [0,1]
        \given
        \metricAlone( \gamma(t), \gamma'(t) )
        \ge 1
    }
$
cannot be greater than $\eps$.
Since $\abs{I} > \eps,$ there exists
$\tau \in I$ such that
$
    \metricAlone( \gamma(\tau), \gamma'(\tau) ) < 1.
$
Now, if $t \in I$ is such that $t < \tau$, then $\Delta = (0,t, \tau, 1)$ is a partition of $[0,1]$, 
hence for all $\widetilde{\gamma} \in Y_N$ we have
\begin{equation*}
    \metricAlone( 
        \widetilde{\gamma}(t),
        \widetilde{\gamma}( \tau )
    )
    \le 
    V^{ \Delta }( \widetilde{\gamma} )
    \le 
    V(\wave{\gamma})
    \le 
    N.
\end{equation*}
In consequence, for all $t \in I$ such that $t < \tau$ we have
\begin{equation*}
    \metricAlone( \gamma(t), \gamma'(t) )
    \le 
    \metricAlone( \gamma(t), \gamma( \tau) )
    + \metricAlone( \gamma( \tau), \gamma'( \tau) )
    + \metricAlone( \gamma'( \tau), \gamma'(t) )
    \le 
    2N+1.
\end{equation*}
In a similar way we prove that 
$
    \metricAlone( \gamma(t), \gamma'(t) ) \le 2N+1
$
for $t \in I$ with $t > \tau$.
Thus, for all $t \in I$ we have
 $$
    \metricAlone( \gamma(t), \gamma'(t) ) \le 2N+1.
$$
Therefore, 
\begin{align*}
    \norm{
        f_I( \gamma) -
        f_I( \gamma')
    }_{ \continuousAndBounded(X) }
    &=
    \norm{
        \integral{I}{ 
        \phi \circ \gamma(t)
        }{t}
        -
        \integral{I}{ 
        \phi \circ \gamma'(t)
        }{t}
    }_{ \continuousAndBounded(X)}
    \\
    &\le 
    \integral{I}{ 
        \norm{
            \phi \circ \gamma(t)
            - \phi \circ \gamma'(t)
        }_{ \continuousAndBounded(X)}
    }{t}
    \\
    &=
    \integral{I}{ 
        \metricAlone( \gamma(t) , \gamma'(t) )
    }{t}
    \\
    &= 
    \integral{I}{ 
        \min\del{ 
            2N+1, 
            \metricAlone( \gamma(t) , \gamma'(t) )
        }
    }{t}
    \\
    &\le 
    (2N+1)
    \integral{I}{ 
        \min\del{ 
            1, 
            \metricAlone( \gamma(t) , \gamma'(t) )
        }
    }{t}
    \\
    &\le
    (2N+1) \metricAlone_{TC}( \gamma, \gamma' ).
\end{align*}
Hence, we conclude that 
$f_I$ is continuous, and consequently Borel.

Since each element of
$
    \testCurves\del{ [0,1]; \continuousAndBounded(X) }
$
is right-continuous on $[0,1)$, 
for a sequence of intervals 
$I_n = [s, s+1/n]$ and every 
$
    \gamma \in Y_N \subseteq \testCurves\del{ [0,1]; X}
$
we have
\begin{equation*}
    \dfrac{ 1 }{ \abs{ I_N } }
    f_{ I_n} (\gamma) 
    =
    \fint_{I_n} \phi \circ\gamma(t) \ \mathrm{d}t
    \xrightarrow{ n \to \infty }
    \phi \circ\gamma(s).
\end{equation*}
Thus, for every $s \in [0,1)$ the function
$
    \gamma 
    \mapsto 
    \phi \circ\gamma(s)
$
is a Borel map from
$ Y_N$
to
$\continuousAndBounded(X)$. This is also true for $s = 1$, since we can choose a sequence $(s_n)_n$ of elements of $[0,1)$ that converges to $1$ and we have
$
    \phi \circ\gamma(s_n)
    \to 
    \phi \circ\gamma(1)
$
pointwise.

Next, we claim that for every $s\in[0,1]$ function $\gamma \mapsto \gamma(s)$ is a Borel map from $Y_N$ to $X$. Indeed, since the inverse function $\phi^{-1} : \phi [X] \rightarrow X$ is continuous, we get $\gamma \mapsto \gamma(s)=\phi^{-1} \circ \phi \circ \gamma (s)$ is a Borel map from $Y_N$ to $X$.

Finally, we will show that for all $s \in [0,1]$ the function
$
    e_s
$
is Borel as a map from the entire $\testCurves\del{ [0,1]; X }$ to $X$. 
Indeed, suppose that $B$ is Borel in $X$. 
Then 
$
    \set{
        \gamma \in Y_N
        \given
         \gamma(s) \in B
    }
$
is Borel in $Y_N$ for all $N \in \mathbb{N}$. 
However, since sets $Y_N$ are Borel in 
$\testCurves\del{ [0,1]; X  }$, these sets are also Borel in 
$\testCurves\del{ [0,1]; X  }$.
In consequence, set
\begin{equation*}
    \set{
        \gamma \in \testCurves\del{ [0,1]; X }
        \given 
         \gamma(s) \in B
    }
    =
    \bigcup_{ N=1 }^{\infty}
    \set{
        \gamma \in Y_N
        \given 
         \gamma(s) \in B
    }
\end{equation*}
is Borel and the proof follows.   
\end{proof}
\section{Function spaces}\label{sec::function_spaces}

\subsection{$\testCurves$-Newtonian Space}\label{subsec::TC-Newtonian}

Within this subsection we introduce the $\testCurves$-Newtonian spaces which are a modification of the Newtonian spaces introduced by Shanmugalingam \cite{newtonian}. 
To perform this modification,
we replace the family of rectifiable curves with the family $\testCurves\del{[0,1];X}$ and the integral along the curve $\int_\gamma$ with the symmetrized integral $\sint{\gamma}$,
to define the $\testCurves$-Newtonian space $\newtonianTC\del{X}$.
The theory of the $p$-modulus can be recreated with almost no changes.
However, the results for the $p$-weak upper gradients require greater care due to the possible discontinuity of the considered curves.

\begin{definition}[p-modulus of the family of test curves]
\label{def:p-modulus_of_the_family_of_test_curves}
    Let $(X, \metricAlone)$ be a metric space and let $\measure$ be a Borel measure on $X$. We denote the family of \emph{non-trivial} test curves (i.e. ones with non-zero variation), by
    \begin{equation*}
        \testCurvesNonTrivial
        \coloneqq 
        \set{
            \gamma \in \testCurves\del{ [0,1]; X }
            \given
            V(\gamma) > 0
        }.
    \end{equation*}
Note that by Corollary \ref{cor:lower_semicontinuity_of_variation_function_on_test_curves}
    we have that 
    $
        V \colon 
        \testCurves\del{ [a,b]; X}
        \to \intco{0, \infty}
    $
    is lower semicontinuous, hence Borel. 
    Thus, $\testCurvesNonTrivial$ is a Borel subset
    of $\testCurves\del{ [a,b]; X}$.
    For $\Gamma \subseteq \testCurvesNonTrivial$ we define
    \begin{equation*}
        F(\Gamma)
        \coloneqq 
        \set{
            \rho \colon X \to \intcc{ 0, \infty }
            \given 
            \rho \text{ is Borel and }
            \sint{ \gamma} { \rho } 
            \ge 1
            \text{ for all }
            \gamma \in \Gamma
        }.
    \end{equation*}
    Then, for $p \in \intco{ 0, \infty }$ we define the $p$-modulus of families of test curves
    $   
        \modulus^p 
        \colon 
        2^{ \testCurvesNonTrivial }
        \to 
        \intcc{0, \infty}
    $
    by the formula
    \begin{equation*}
        \forall 
        \Gamma \subseteq \testCurvesNonTrivial
        \qquad 
        \modulus^p( \Gamma)
        \coloneqq 
        \inf_{ \rho \in F(\Gamma) }
        \norm{ \rho }^p_{L^p(\measure)}.
    \end{equation*}

    We shall say that property $P$ holds for $\modulus^p$ almost every $\gamma \in \testCurvesNonTrivial$, if the set of $\gamma \in \testCurvesNonTrivial$ for which $P$ does not hold has a $p$-modulus of $0$.
\end{definition}
\begin{proposition} \label{mod0}
    $\modulus^p$ is an outer measure on $\testCurvesNonTrivial$. Moreover, for $\Gamma \subseteq \testCurvesNonTrivial$ we have $\modulus^p( \Gamma ) = 0$ if and only if there exists 
    $\rho \in F( \Gamma)$ such that $\rho \in L^p( \measure )$ and $\sint{ \gamma }{ \rho } = \infty 
            \text{ for all }
            \gamma \in \Gamma.$
\end{proposition}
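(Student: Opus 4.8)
The plan is to exploit Lemma~\ref{lem:symmetrized_Lebesgue-Stieltjes_integral_as_a_pushforward}, which identifies $\sint{\gamma}{\cdot}$ with integration against the Borel measure $\mu^S_\gamma$ on $X$; this yields at once monotonicity, positive homogeneity, and — via the monotone convergence theorem — countable additivity of $\gamma\mapsto\sint{\gamma}{g}$ on nonnegative Borel integrands $g$. With these properties the argument runs along the classical lines for the $p$-modulus.

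\emph{Outer measure.} Since $\rho\equiv 0\in F(\emptyset)$ we have $\modulus^p(\emptyset)=0$. If $\Gamma_1\subseteq\Gamma_2$ then clearly $F(\Gamma_2)\subseteq F(\Gamma_1)$, hence $\modulus^p(\Gamma_1)\le\modulus^p(\Gamma_2)$. For countable subadditivity let $\Gamma=\bigcup_{n}\Gamma_n$; we may assume $\sum_n\modulus^p(\Gamma_n)<\infty$. Given $\eps>0$ pick $\rho_n\in F(\Gamma_n)$ with $\norm{\rho_n}_{L^p(\measure)}^p\le\modulus^p(\Gamma_n)+\eps 2^{-n}$ and set $\rho\coloneqq\del{\sum_n\rho_n^p}^{1/p}$ (for the degenerate value $p=0$ take $\rho\coloneqq\sup_n\rho_n$ instead). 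Then $\rho$ is Borel with values in $\intcc{0,\infty}$, and $\rho\ge\rho_n$ pointwise for each $n$, so $\sint{\gamma}{\rho}\ge\sint{\gamma}{\rho_n}\ge 1$ whenever $\gamma\in\Gamma_n$, i.e. $\rho\in F(\Gamma)$; moreover $\norm{\rho}_{L^p(\measure)}^p=\sum_n\norm{\rho_n}_{L^p(\measure)}^p\le\sum_n\modulus^p(\Gamma_n)+\eps$. Letting $\eps\to 0$ gives $\modulus^p(\Gamma)\le\sum_n\modulus^p(\Gamma_n)$.

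\emph{Characterization of null families.} If $\rho\in F(\Gamma)\cap L^p(\measure)$ satisfies $\sint{\gamma}{\rho}=\infty$ for all $\gamma\in\Gamma$, then for every $\eps>0$ the function $\eps\rho$ is a Borel element of $F(\Gamma)$ (since $\sint{\gamma}{\eps\rho}=\eps\sint{\gamma}{\rho}=\infty\ge 1$) with $\norm{\eps\rho}_{L^p(\measure)}^p=\eps^p\norm{\rho}_{L^p(\measure)}^p\to 0$, so $\modulus^p(\Gamma)=0$. Conversely, if $\modulus^p(\Gamma)=0$, choose $\rho_n\in F(\Gamma)$ with $\norm{\rho_n}_{L^p(\measure)}$ so small that $\rho\coloneqq\sum_n\rho_n$ lies in $L^p(\measure)$ (possible by the (quasi-)triangle inequality for $\norm{\cdot}_{L^p(\measure)}$). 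Then $\rho$ is a Borel map into $\intcc{0,\infty}$, and by countable additivity of $\sint{\gamma}{\cdot}$ we get $\sint{\gamma}{\rho}=\sum_n\sint{\gamma}{\rho_n}\ge\sum_n 1=\infty$ for every $\gamma\in\Gamma$; in particular $\rho\in F(\Gamma)$, and this is the required function.

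The argument presents no genuine obstacle; the only points deserving attention are the verification that every auxiliary gradient constructed above is a bona fide Borel $\intcc{0,\infty}$-valued element of the relevant $F(\cdot)$ — which is exactly where the measure-theoretic description of $\sint{\gamma}{\cdot}$ from Lemma~\ref{lem:symmetrized_Lebesgue-Stieltjes_integral_as_a_pushforward} is used — and the uniform treatment of the range $p\in\intco{0,\infty}$: for $p\in\intoo{0,\infty}$ the identities $\norm{\del{\sum_n\rho_n^p}^{1/p}}_{L^p(\measure)}^p=\sum_n\norm{\rho_n}_{L^p(\measure)}^p$ and $\norm{\eps\rho}_{L^p(\measure)}^p=\eps^p\norm{\rho}_{L^p(\measure)}^p$ hold verbatim, while the boundary value $p=0$ is degenerate and is dispatched directly using suprema of the $\rho_n$.
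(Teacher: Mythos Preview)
Your proof is correct and follows essentially the same route as the paper: the same construction $\rho=(\sum_n\rho_n^p)^{1/p}$ for subadditivity, the same scaling $\eps\rho$ (the paper uses $\rho/n$) for one direction of the null-set characterization, and the same sum $\rho=\sum_n\rho_n$ with geometrically decaying norms for the converse. Your explicit appeal to Lemma~\ref{lem:symmetrized_Lebesgue-Stieltjes_integral_as_a_pushforward} to justify monotonicity and countable additivity of $\gamma\mapsto\sint{\gamma}{g}$ is a welcome clarification, and your side remark on the degenerate case $p=0$ goes slightly beyond what the paper actually treats.
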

\begin{proof}
    First of all we shall prove that $\modulus^p$ is an outer measure. Let us note that $\modulus^p(\emptyset) = 0$ as $0 \in F( \emptyset)$. If $\Gamma_1 \subseteq \Gamma_2 \subseteq \testCurvesNonTrivial$, then $F(\Gamma_2) \subseteq F(\Gamma_1)$. Hence, $\modulus^p(\Gamma_1) \le \modulus^p(\Gamma_2)$.  For $n \in \bN$ let $\Gamma_n \subseteq \testCurvesNonTrivial$. Then, for a fixed $\eps > 0$ and for all $n \in \bN$ there is $\rho_n \in F( \Gamma_n)$ such that 
    $
        \norm{ \rho_n }^p_{L^p(\measure) }
        \le 
        \modulus^p( \Gamma_n ) + \eps 2^{-n}
    $.
    Let us define 
    $
        \rho 
        \coloneqq 
        \del{ 
            \sum_{n=1}^\infty
                \rho_n^p
            }^{1/p}
    $.
    Then, for all $n \in \bN$ we have 
    $\rho \ge \rho_n$, so $\rho \in F( \bigcup_{ n \in \bN} \Gamma_n )$.
    Therefore,
    \begin{equation*}
        \modulus^p \! \! \del{ 
            \bigcup_{ n =1}^\infty \Gamma_n
        }
        \le 
            \norm{ \rho }^p_{L^p(\measure)}
        \!
        =
            \integral{ X }{ 
                \sum_{n=1}^\infty 
                    \rho_n^p
            }{ \measure }
        =
            \sum_{n=1}^\infty 
                \integral{ X }{ 
                        \rho_n^p
                }{ \measure }
        \le 
            \sum_{n=1}^\infty 
                \modulus^p( \Gamma_n ) + \eps 2^{-n}
        =
            \sum_{n=1}^\infty 
                \modulus^p( \Gamma_n )
            +
            \eps.
    \end{equation*}
    Since $\eps > 0$ is arbitrary, we have 
    $$
        \modulus^p \! \! \del{ 
            \bigcup_{ n =1}^\infty \Gamma_n
        }
        \le
        \sum_{n=1}^\infty 
                \modulus^p( \Gamma_n )
    $$
    as needed.
    
    Now, we shall prove the characterization of sets of $p$-modulus $0$. Let $\rho \in F(\Gamma)$ be such that $\rho \in L^p( \measure )$ and $\sint{ \gamma }{ \rho } = \infty$ for all $\gamma \in \Gamma$. 
        Then $\rho /n \in F(\Gamma) $ for all $n \in \bN$. 
        Hence,
        $$
            \modulus^p(\Gamma)
            \le 
            \norm{ \rho / n }^p_{L^p(\measure)}
            =
            \frac{1}{n^p} \norm{ \rho }^p_{L^p(\measure)}
            \to
            0
        $$
        as $n \to \infty$.

        In order to prove the other direction, let $\modulus^p(\Gamma) = 0$.
        Then for all $n \in \bN$ 
        there is $\rho_n \in F(\Gamma)$
        such that $ \norm{ \rho_n }_{L^p(\measure)} \le 2^{-n}$.
        Then 
        $
            \rho
            \coloneqq 
            \sum_{n=1}^\infty 
                \rho_n
            \in 
            F(\Gamma)
        $,
        we have 
        $
            \norm{ \rho }_{L^p(\measure)}
            \le 
            \sum_{n=1}^\infty 
                \norm{ \rho_n }_{L^p(\measure)}
            \le 
            1
        $,
        and 
        $
            \sint{ \gamma }{ \rho } 
            =
            \sum_{n=1}^\infty 
                \int_\gamma^S \rho_n
            =
            \infty
        $
        for all $\gamma \in \Gamma $, as claimed.
\end{proof}
\begin{corollary}
    If $\rho \in L^p(\measure)$ is Borel, 
    then $\int_\gamma^S \rho < \infty$ 
    for $\modulus^p$ almost every $\gamma \in \testCurvesNonTrivial$.
\end{corollary}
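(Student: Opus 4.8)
The statement to prove is: if $\rho \in L^p(\measure)$ is Borel, then $\int_\gamma^S \rho < \infty$ for $\modulus^p$ almost every $\gamma \in \testCurvesNonTrivial$. The plan is to apply the characterization of sets of $p$-modulus zero from Proposition \ref{mod0}. Let $\Gamma \coloneqq \set{ \gamma \in \testCurvesNonTrivial \given \sint{\gamma}{\rho} = \infty }$ be the exceptional set; we must show $\modulus^p(\Gamma) = 0$. By the ``if'' direction of Proposition \ref{mod0}, it suffices to exhibit a single Borel $\tilde\rho \in F(\Gamma) \cap L^p(\measure)$ with $\sint{\gamma}{\tilde\rho} = \infty$ for every $\gamma \in \Gamma$.

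The natural candidate is $\tilde\rho \coloneqq \rho$ itself. By construction $\sint{\gamma}{\rho} = \infty$ for all $\gamma \in \Gamma$, so the only conditions to verify are that $\rho \in F(\Gamma)$, i.e.\ that $\rho$ is Borel (given) and that $\sint{\gamma}{\rho} \ge 1$ for all $\gamma \in \Gamma$ — but this last inequality is immediate since $\sint{\gamma}{\rho} = \infty \ge 1$ on $\Gamma$. Thus $\rho \in F(\Gamma)$, and $\rho \in L^p(\measure)$ by hypothesis. Proposition \ref{mod0} then yields $\modulus^p(\Gamma) = 0$, which is precisely the assertion that $\int_\gamma^S \rho < \infty$ for $\modulus^p$-almost every $\gamma \in \testCurvesNonTrivial$.

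There is essentially no obstacle here: the corollary is a direct specialization of the characterization in Proposition \ref{mod0}, with the exceptional family $\Gamma$ and the competitor function $\rho$ chosen so that the hypotheses of that proposition are satisfied trivially. One small point worth recording in the write-up is that $\Gamma$ is indeed a subset of $\testCurvesNonTrivial$ by definition, so that $\modulus^p(\Gamma)$ is well-defined as an element of $2^{\testCurvesNonTrivial}$; everything else is bookkeeping.
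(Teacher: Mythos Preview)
Your proposal is correct and is exactly the intended argument; the paper itself states this corollary without proof, treating it as an immediate consequence of Proposition \ref{mod0}. Your choice of $\Gamma = \set{\gamma \in \testCurvesNonTrivial \given \sint{\gamma}{\rho} = \infty}$ with $\rho$ itself as the witnessing function is precisely what the paper has in mind.
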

\begin{lemma}[Fuglede Lemma]
    \label{lem:Fuglede_lemma}
    Let $(X, \metricAlone)$ be a metric space,  $\measure$ be a Borel measure on $X$ 
    and $f_n, f \colon X \to \hat{\bR}$ 
    be Borel and such that $f_n \to f$ in $L^p(\measure)$, where $p \in \intco{1, \infty}$. Then, there exists a subsequence $(f_{n_k})_k$ such that 
    \begin{equation*}
                \modulus^p-\text{a.e.}
            \sint{\gamma}{ \abs{ f_{n_k} - f} }
            \to 0.
    \end{equation*}
\end{lemma}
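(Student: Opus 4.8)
The plan is to reduce the statement to the Corollary immediately preceding it, namely that a fixed Borel function in $L^p(\measure)$ has finite symmetrized integral along $\modulus^p$-almost every test curve. First I would pass to a fast-converging subsequence: since $f_n \to f$ in $L^p(\measure)$, choose $(f_{n_k})_k$ with $\norm{f_{n_k} - f}_{L^p(\measure)} \le 4^{-k}$ for every $k \in \bN$, and put $g_k \coloneqq \abs{f_{n_k} - f}$. Then $g_k$ is a nonnegative Borel function (using the convention $\abs{\infty - \infty} = \infty$, and recalling $f, f_n$ are finite $\measure$-a.e.\ as elements of $L^p$), with $\norm{g_k}_{L^p(\measure)} \le 4^{-k}$, so in particular each $\sint{\gamma}{g_k}$ is well-defined for $\gamma \in \testCurves\del{[0,1];X}$.

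Next I would introduce the majorant $\rho \coloneqq \sum_{k=1}^\infty 2^k g_k \colon X \to \intcc{0,\infty}$. This is Borel as a pointwise countable sum of Borel functions, and by Minkowski's inequality $\norm{\rho}_{L^p(\measure)} \le \sum_{k=1}^\infty 2^k \norm{g_k}_{L^p(\measure)} \le \sum_{k=1}^\infty 2^{-k} = 1 < \infty$, so $\rho \in L^p(\measure)$. Setting $N \coloneqq \set{\gamma \in \testCurvesNonTrivial : \sint{\gamma}{\rho} = \infty}$, we have $\rho \in F(N)$ (since $\infty \ge 1$) and $\sint{\gamma}{\rho} = \infty$ for all $\gamma \in N$, hence $\modulus^p(N) = 0$ by Proposition \ref{mod0} (equivalently, this is exactly the Corollary preceding the present lemma).

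Finally, fix $\gamma \in \testCurvesNonTrivial \setminus N$. By Lemma \ref{lem:symmetrized_Lebesgue-Stieltjes_integral_as_a_pushforward} we have $\sint{\gamma}{h} = \int_X h \, \mathrm{d}\mu^S_\gamma$ for every nonnegative Borel $h$ on $X$; applying this to the increasing partial sums $\sum_{k \le m} 2^k g_k \uparrow \rho$ and invoking the Monotone Convergence Theorem for the Borel measure $\mu^S_\gamma$ yields
\[
\sum_{k=1}^\infty 2^k \sint{\gamma}{g_k} = \sum_{k=1}^\infty 2^k \int_X g_k \, \mathrm{d}\mu^S_\gamma = \int_X \rho \, \mathrm{d}\mu^S_\gamma = \sint{\gamma}{\rho} < \infty .
\]
Hence the general term tends to $0$, so $\sint{\gamma}{\abs{f_{n_k} - f}} = \sint{\gamma}{g_k} \to 0$ as $k \to \infty$; since $\modulus^p(N) = 0$, this holds for $\modulus^p$-a.e.\ $\gamma \in \testCurvesNonTrivial$, which is the claim.

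I do not expect a genuine difficulty here; the only points needing care are bookkeeping ones: that $g_k$ and $\rho$ are really Borel (so that the symmetrized integrals are defined, using Lemma \ref{lem:functions_of_buonded_variation_are_borel}), and that the interchange of the infinite sum with the symmetrized integral is legitimate — which is why I route it through the pushforward measure $\mu^S_\gamma$ and ordinary monotone convergence rather than arguing directly with Lebesgue--Stieltjes integrals along $\gamma$ and $\overleftarrow{\gamma}$. If one prefers to avoid Proposition \ref{mod0}, a Borel--Cantelli variant works just as well: set $E_k \coloneqq \set{\gamma \in \testCurvesNonTrivial : \sint{\gamma}{g_k} > 2^{-k/2}}$, observe that $2^{k/2} g_k \in F(E_k)$, so $\modulus^p(E_k) \le 2^{kp/2}\norm{g_k}^p_{L^p(\measure)} \le 2^{-3kp/2}$ is summable in $k$, and then use countable subadditivity of the outer measure $\modulus^p$ to get $\modulus^p\!\del{\bigcap_m \bigcup_{k \ge m} E_k} = 0$, with $\sint{\gamma}{g_k} \to 0$ for every $\gamma$ outside this set.
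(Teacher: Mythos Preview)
Your proof is correct. The paper actually uses your alternative Borel--Cantelli variant rather than the majorant argument: it chooses the subsequence so that $\norm{f_{n_k}-f}_{L^p(\measure)}^p \le 2^{-k(p+1)}$, sets $\Gamma_k = \set{\gamma : \sint{\gamma}{\abs{f_{n_k}-f}} \ge 2^{-k}}$, bounds $\modulus^p(\Gamma_k) \le 2^{-k}$ via $2^k\abs{f_{n_k}-f} \in F(\Gamma_k)$, and then uses subadditivity on $\bigcup_{k\ge m}\Gamma_k$ --- exactly the scheme you sketch in your last paragraph.
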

\begin{proof}
    Let $(f_{n_k})_k$ be a subsequence of $(f_n)_n$ such that
    \begin{equation*}
        \forall k \in \bN
        \qquad 
        \norm{ 
                f_{n_k} - f
        }_{L^p(\measure)}^p
        \le 
        2^{-k(p+1)}.
    \end{equation*}
    For $k \in \bN$ let us define
    $
        \rho_k \coloneqq \abs{ f_{n_k} - f }
    $
    and 
    $
        \Gamma_k 
        \coloneqq 
        \set{
            \gamma \in \testCurvesNonTrivial
            \given 
            \sint{\gamma}{ \rho_k }
                \ge 
            2^{-k}
        }
    $.
    Then, for all $k \in \bN$ we have $2^k \rho_k \in F( \Gamma_k )$ and
    \begin{equation*}
        \modulus^p( \Gamma_k )
        \le 
        \norm{ 2^k \rho_k }_{L^p(\measure)}^p
        =
        2^{pk}
        \norm{ 
            \rho_k
        }_{L^p(\measure)}^p
        \le 
        2^{pk} \cdot 2^{-k(p+1)}
        =
        2^{-k}.
    \end{equation*}

    Since
    $
        \Gamma 
        \coloneqq 
        \set{
            \gamma \in \testCurvesNonTrivial
            \given 
            \sint{\gamma}{ \rho_k }
            \not \to 
            0
        }
    $
    satisfies
    $\Gamma \subseteq \bigcup_{k=m}^\infty \Gamma_k$
    for all $m \in \bN$,
    we have 
    \begin{equation*}
        \modulus^p( \Gamma )
        \le 
        \sum_{k=m}^\infty 
            \modulus^p( \Gamma_k )
        \le 
        \sum_{k=m}^\infty 
            2^{-k}
        =
        2^{-m+1}
        \xrightarrow{ m \to \infty }
        0.
    \end{equation*}
    Hence $\modulus^p( \Gamma ) = 0$ and $(f_{n_k})_k$ is the sought subsequence.
\end{proof}
\begin{definition}[Upper S-gradient, p-weak upper S-gradient]
    Let $(X, \metricAlone)$ be a metric space and $f \colon X \to \bRExtended$.
    We shall say that a Borel function 
    $g \colon X \to \intcc{0, \infty }$
    is an \emph{upper $S$-gradient} of $f$
    if
    \begin{equation*}
        \forall 
            \gamma \in \testCurvesNonTrivial
        \qquad 
            \abs{
                f \circ \gamma(1)
                - f \circ \gamma(0)
            }
            \le 
                \int_\gamma^S g
    \end{equation*}
    and a \emph{$p$-weak upper $S$-gradient} of $f$, if
    \begin{equation*}
        \modulus^p-\text{a.e.}
        \qquad 
            \abs{
                f \circ \gamma(1)
                - f \circ \gamma(0)
            }
            \le 
                \int_\gamma^S g.
    \end{equation*}
\end{definition}
We will denote the family of upper $S$-gradients of $f$ by $\upperSGradient f$ and the family of $p$-weak upper $S$-gradients of $f$ by $\pWeakUpperSGradient f$.
\begin{remark}
    Let $(X, \metricAlone)$ be a metric space, $\measure$ be a Borel measure on $X$ and $f \colon X \to \bRExtended$ be a measurable map. If $g, g' \colon X \to [0,\infty]$ are Borel functions such that $g$ is a $p$-weak upper $S$-gradient of $f$ and $g' = g$ $\measure$ almost everywhere, then $g'$ also is a $p$-weak upper $S$-gradient of $f$.
\end{remark}
\begin{proof}
    Let us consider a sequence $(\rho_n)_n$ of Borel functions defined by $\rho_n \coloneqq \abs{ g - g' }$.
    Then $\rho_n \to  0$ in $L^p(\measure)$.
    Hence, by the Fuglede Lemma (Lemma \ref{lem:Fuglede_lemma}) we have a subsequence
    $(\rho_{n_k})_k$ such that
    \begin{equation*}
        \modulus^p-\text{a.e.}
        \qquad 
        \sint{ \gamma }{ \abs{ g - g' }}
        =
        \sint{ \gamma }{ \rho_{n_k }}
        \xrightarrow{ k \to \infty }
        0.
    \end{equation*}
    Hence, 
    $
        \sint{ \gamma }{ \abs{ g - g' }}
        =
        0
    $
    for $\modulus^p$ almost every $\gamma \in \testCurvesNonTrivial$.
    Therefore,
    \begin{equation*}
        \abs{
            f \circ \gamma( b) 
            - f \circ \gamma(a)
        }
        \le 
        \sint{ \gamma }{ g }
        \le 
        \sint{ \gamma }{ g' }
        + \sint{ \gamma }{ \abs{ g- g' } }
        =
        \sint{ \gamma }{ g' }
    \end{equation*}
    for $\modulus^p$ almost every $\gamma \in \testCurvesNonTrivial$.
\end{proof}
\begin{example}

    Consider the space $(X, |.|, \lambda)$, where $X = [0,1],$ $|.|$ is the Euclidean metric, and $\lambda$ is the Lebesgue measure and let $f \colon X \to \bR$ be defined by $f \coloneqq \infty  \indicator{\set{1}}$.
    Then there is no Borel $g \colon X  \to \intcc{0,\infty}$ such that $g \in L^p(X)$ and $g$ is a $p$-weak upper $S$-gradients of $f$, even though $f = 0$ $\measure$-almost everywhere and every nonnegative Borel function $g$ is an upper $S$-gradient of the zero function.

\end{example}
\begin{proof}
    First, let us note that if $g \colon X \to \intcc{0, \infty}$ is Borel, then 
    $\sint{\gamma} g \ge 0$ for any 
    $\gamma \in \testCurvesNonTrivial$,
    hence the latter claim follows.

    Now, let us suppose that $g \colon X \to \intcc{0,\infty}$ is a $p$-weak upper $S$-gradient of $f$ such that $g \in L^p(X)$. Consider $\gamma \colon [0,1] \to X$
    given by $\gamma(t) = t$ for $t \in [0,1]$. 
    Clearly, $\gamma \in \testCurvesNonTrivial$. 
    Furthermore 
    $
        \abs{ f \circ \gamma(1) - f \circ \gamma (0)} = \infty,
    $
    but
    $
        \sint{\gamma} g
        =
        \int_0^1 g(t) \ \mathrm{d}t
        \le 
        \norm{ g }_{L^p(X)}
        < 
        \infty.
    $
    Hence, 
    $
        \abs{ f \circ \gamma(1) - f \circ \gamma (0)}
        > 
        \sint{\gamma} g.
    $
    Therefore, $\modulus^p(\set{\gamma}) = 0$ as $g$ is a $p$-weak upper $S$-gradient of $f$. 
    By Proposition \ref{mod0} there is $\rho \in L^p(X)$ such that 
    $\sint{\gamma} \rho = \infty$. 
    However, 
    $
        \sint{\gamma} \rho
        =
        \int_0^1 \rho(t) \ \mathrm{d}t
        \le 
        \norm{ \rho }_{L^p(X)}
        < 
        \infty,
    $
    so such $\rho$ cannot exist. In consequence, 
    $g$ is not a $p$-weak upper $S$-gradient of $f$. 
\end{proof}
\begin{definition}[TC-Newtonian Space]
    Let $(X, \metricAlone)$ be a metric space and $\measure$ be a Borel measure on $X$. Let $p \in \intco{1, \infty}$. We define a space
    \begin{equation*}
        \newtonianTCPre^{1,p}(X)
        \coloneqq 
        \set{
            f: X \rightarrow \mathbb{R}
            \given
            \int_X |f|^p \,\mathrm{d} \measure< \infty \,\text{and} \, L^p(\measure) \cap \pWeakUpperSGradient f \neq \emptyset 
            }
        .
    \end{equation*}
    On space $\newtonianTCPre^{1,p}(X)$ we define a seminorm
    \begin{equation*}
        \forall 
            f \in \newtonianTCPre^{1,p}(X)
        \qquad 
            \norm{ f }_{\newtonianTC^{1,p}(X)} 
            \coloneqq 
            \norm{ f }_{L^p(\measure)}
            + 
            \inf_{ g }
                \norm{ g }_{ L^p( \measure) },
    \end{equation*}
    where the infimum is taken over all $p$-weak
    upper $S$-gradients $g$ of $f$. Also, we define an equivalence relation $\sim$ by
    \begin{equation*}
        \forall 
            f, f'\in \newtonianTCPre^{1,p}(X)
        \qquad 
            f \sim f'
            \iff
            \norm{ f - f' } _{\newtonianTC^{1,p}(X)}
            =
            0.
    \end{equation*}
    Finally, we define the TC-Newtonian space, as the quotient space  $\newtonianTC^{1,p}(X) \coloneqq \newtonianTCPre^{1,p}(X) / {\sim}$.
    On this space $\norm{ \cdot } _{\newtonianTC^{1,p}(X)}$ is a norm.
\end{definition}
\begin{lemma}\label{lem::almost_acc_p}

    Let $(X, \metricAlone)$ be a metric space with Borel measure $\measure$. For a function $f:X \rightarrow \hat{\bR}$ and $g \in  L^p(\measure) \cap \pWeakUpperSGradient f $ let us define 
    \begin{align*}
        \Gamma_1
        &\coloneqq 
        \set{
            \gamma \in \testCurvesNonTrivial
            \given 
            \sint{ \gamma  }
            g 
            =
            \infty
        }, 
        \\
        \Gamma_2
        &\coloneqq 
        \set{
            \gamma \in \testCurvesNonTrivial
            \given 
            \exists
             s, t \in [0,1] 
             \text{ such that }
             s<t,
            \gamma(s) \ne \gamma(t^-)
            \text{ and }
            \abs{
                f (\gamma(s))
                - 
                f( \gamma(t^-))
            }
            > 
            \sint{ \gamma \rvert_{[s,t^-]} }
            g 
        },      
        \\
        \Gamma_3
        &\coloneqq 
        \set{
            \gamma \in \testCurvesNonTrivial
            \given 
            \exists
             t \in [0,1] 
             \text{ such that }
            \gamma(t^-) \ne \gamma(t)
            \text{ and }
            \abs{
                f (\gamma(t^-))
                - 
                f( \gamma(t))
            }
            > 
            \tfrac{g (\gamma(t^-))
                +
                g( \gamma(t))}{2}
            \metric\del{ \gamma(t^-), \gamma(t)}
               }.      
    \end{align*}
    Then $\modulus^p(\Gamma_1 \cup \Gamma_2 \cup \Gamma_3) = 0$.
\end{lemma}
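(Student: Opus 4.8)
\textit{Proof plan.} The idea is to reduce the entire claim to the single null family
\[
\Gamma_0
\coloneqq
\set{
\gamma \in \testCurvesNonTrivial
\given
\abs{f(\gamma(1)) - f(\gamma(0))} > \sint{\gamma}{g}
},
\]
which satisfies $\modulus^p(\Gamma_0) = 0$ precisely because $g \in \pWeakUpperSGradient f$. By Proposition \ref{mod0} there is a Borel function $\rho_0 \in L^p(\measure)$ with $\sint{\gamma}{\rho_0} = \infty$ for every $\gamma \in \Gamma_0$. I would set $\rho \coloneqq \rho_0 + g$, which is Borel, lies in $L^p(\measure)$, and satisfies $\rho \ge g \ge 0$, and then show $\sint{\gamma}{\rho} = \infty$ for every $\gamma \in \Gamma_1 \cup \Gamma_2 \cup \Gamma_3$; applying the characterization of Proposition \ref{mod0} in the other direction then yields $\modulus^p(\Gamma_1 \cup \Gamma_2 \cup \Gamma_3) = 0$. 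The family $\Gamma_1$ is immediate: if $\sint{\gamma}{g} = \infty$, then by monotonicity of the symmetrized integral (Lemma \ref{lem:symmetrized_Lebesgue-Stieltjes_integral_as_a_pushforward}) we get $\sint{\gamma}{\rho} \ge \sint{\gamma}{g} = \infty$.

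For $\Gamma_2$, I would fix $\gamma \in \Gamma_2$ together with $s < t$ such that $\gamma(s) \ne \gamma(t^-)$ and $\abs{f(\gamma(s)) - f(\gamma(t^-))} > \sint{\gamma \rvert_{[s,t^-]}}{g}$. The left-adjusted restriction $\gamma \rvert_{[s,t^-]}$ belongs to $\testCurves\del{[s,t];X}$ and takes the distinct values $\gamma(s)$ and $\gamma(t^-)$ at its endpoints, so it has positive variation. Transporting it to $[0,1]$ by the reparametrization of Proposition \ref{rem:test_Curves_changing_intervals} produces $\tilde\gamma \in \testCurvesNonTrivial$ with $\tilde\gamma(0) = \gamma(s)$, $\tilde\gamma(1) = \gamma(t^-)$, $V(\tilde\gamma) = V(\gamma\rvert_{[s,t^-]}) > 0$, and $\sint{\tilde\gamma}{h} = \sint{\gamma\rvert_{[s,t^-]}}{h}$ for every Borel $h \ge 0$. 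Hence $\abs{f(\tilde\gamma(1)) - f(\tilde\gamma(0))} > \sint{\tilde\gamma}{g}$, i.e. $\tilde\gamma \in \Gamma_0$, so $\sint{\gamma\rvert_{[s,t^-]}}{\rho} = \sint{\tilde\gamma}{\rho} = \infty$. It remains to dominate this by $\sint{\gamma}{\rho}$: if $t < 1$, Lemma \ref{lem:adding_symmetrized_integrals_over_intervals} at the point $t$ splits $\sint{\gamma}{\rho}$ into $\sint{\gamma\rvert_{[0,t^-]}}{\rho}$, a non-negative jump term, and $\sint{\gamma\rvert_{[t,1]}}{\rho}$; if moreover $s > 0$, a second application of the same lemma, now to $\gamma\rvert_{[0,t^-]} \in \testCurves\del{[0,t];X}$ at the point $s$, splits off the ordinary restriction of $\gamma\rvert_{[0,t^-]}$ to $[s,t]$, which is exactly $\gamma\rvert_{[s,t^-]}$. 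Discarding the non-negative remaining pieces gives $\sint{\gamma}{\rho} \ge \sint{\gamma\rvert_{[s,t^-]}}{\rho} = \infty$, and the boundary cases $s = 0$ or $t = 1$ are handled the same way with fewer splittings (when $s = 0$ and $t = 1$ one has $\gamma\rvert_{[0,1^-]} = \gamma$ and $\gamma \in \Gamma_0$ outright, since $\gamma(1^-) = \gamma(1)$). This is the step I expect to demand the most care — the verification that the restriction is a genuine non-trivial test curve, that reparametrization transports non-triviality and symmetrized integrals, and the bookkeeping in iterating Lemma \ref{lem:adding_symmetrized_integrals_over_intervals} together with its endpoint cases.

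For $\Gamma_3$, I would fix $\gamma \in \Gamma_3$ and $t$ with $\gamma(t^-) \ne \gamma(t)$ and $\abs{f(\gamma(t^-)) - f(\gamma(t))} > \tfrac{1}{2}\bigl(g(\gamma(t^-)) + g(\gamma(t))\bigr)\metric\del{\gamma(t^-),\gamma(t)}$. Since $\gamma(0^-) = \gamma(0)$ and $\gamma$ is left-continuous at $1$, necessarily $t \in (0,1)$. Let $\beta \in \testCurves\del{[0,1];X}$ be the two-valued curve of Example \ref{e1} with $\beta \equiv \gamma(t^-)$ on $[0,\tfrac12)$ and $\beta \equiv \gamma(t)$ on $[\tfrac12,1]$; then $V(\beta) = \metric\del{\gamma(t^-),\gamma(t)} > 0$, so $\beta \in \testCurvesNonTrivial$, and by Example \ref{prz325}, $\sint{\beta}{h} = \tfrac{1}{2}\bigl(h(\gamma(t^-)) + h(\gamma(t))\bigr)\metric\del{\gamma(t^-),\gamma(t)}$ for Borel $h$. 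The hypothesis reads $\abs{f(\beta(1)) - f(\beta(0))} > \sint{\beta}{g}$, so $\beta \in \Gamma_0$ and $\sint{\beta}{\rho} = \infty$; as $\metric\del{\gamma(t^-),\gamma(t)} \in (0,\infty)$, this forces $\rho(\gamma(t^-)) = \infty$ or $\rho(\gamma(t)) = \infty$. Finally, since $\gamma$ is left-discontinuous at $t$, Proposition \ref{rem:properties_of_variation_function} gives $V_\gamma(t) - V_\gamma(t^-) = \phi_\gamma(t) = \metric\del{\gamma(t^-),\gamma(t)} > 0$, so the jump term $\tfrac{1}{2}\bigl(\rho(\gamma(t)) + \rho(\gamma(t^-))\bigr)\bigl(V_\gamma(t) - V_\gamma(t^-)\bigr)$ appearing in the decomposition of $\sint{\gamma}{\rho}$ from Lemma \ref{lem:adding_symmetrized_integrals_over_intervals} at $t$ equals $\infty$; as the other two terms there are non-negative, $\sint{\gamma}{\rho} = \infty$. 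Having covered $\Gamma_1 \cup \Gamma_2 \cup \Gamma_3$ by this single $\rho \in L^p(\measure)$, Proposition \ref{mod0} finishes the proof.
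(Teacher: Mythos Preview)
Your proof is correct and follows essentially the same route as the paper: both reduce $\Gamma_2$ and $\Gamma_3$ to the ``bad'' family $\Gamma_0$ via the reparametrization of Proposition~\ref{rem:test_Curves_changing_intervals} and the two-valued curve of Example~\ref{prz325}, respectively, then invoke Proposition~\ref{mod0}. The only cosmetic difference is that you package everything with a single witness $\rho = \rho_0 + g$, whereas the paper treats $\Gamma_1$ separately and works on $\Gamma_2 \setminus \Gamma_1$ and $\Gamma_3 \setminus \Gamma_1$; your iterated use of Lemma~\ref{lem:adding_symmetrized_integrals_over_intervals} to justify $\sint{\gamma}{\rho} \ge \sint{\gamma\rvert_{[s,t^-]}}{\rho}$ is in fact more explicit than the paper's one-line assertion of the same inequality.
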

\begin{proof}
    First, let us notice that since 
    $
        g \in L^p(\measure),
    $
    then  $\modulus^p( \Gamma_1)=0 $
    by Proposition \ref{mod0}.
    Since $g$ is a $p$-weak upper $S$-gradient of $f$, 
    the family
    \begin{equation*}
        \Gamma'
        \coloneqq 
        \set{
            \gamma \in \testCurvesNonTrivial
            \given 
            \abs{
                f (\gamma(1))
                - 
                f( \gamma(0))
            }
            > 
            \sint{ \gamma  } g
        }
    \end{equation*} 
    has a $p$-modulus of $0$. 
    In consequence, by Proposition \ref{mod0} 
    there is $\rho \in L^p(\measure)$, $\rho \ge 0$ such that 
    $
        \sint{ \gamma  } \rho = \infty 
    $
    for all $\gamma \in \Gamma'$. 
    
    Fix $\gamma \in \Gamma_2 \setminus \Gamma_1$. 
    There are $s, t \in [0,1]$ such that 
    $\gamma(s) \ne \gamma(t^-)$ 
    and
    $
        \abs{
                f (\gamma(s))
                - 
                f( \gamma(t^-))
            }
            > 
            \sint{ \gamma \rvert_{[s,t^-]} }
            g.
    $
    Let us define $\Phi :[0,1] \rightarrow [s,t]$ as follows $\Phi(x)=s+(t-s)x$. Then by Proposition \ref{rem:test_Curves_changing_intervals} the map $\tilde{\gamma} =  \Phi_{\#}   ( {\gamma} \rvert_{ [ s, t^-]})$ belongs to $\testCurves\del{[0,1]; X}$ and we have
    \[
         \abs{
                f (\tilde{\gamma}(0))
                - 
                f(\tilde{ \gamma}(1))
            }=
        \abs{
                f (\gamma(s))
                - 
                f( \gamma(t^-))
            }
            > 
            \sint{ \gamma \rvert_{[s,t^-]} }
            g=
            \sint{ \tilde{\gamma} }
            g.
    \]    
    Thus, $\tilde{\gamma} \in \Gamma'$
    and 
    $
        \sint{ \tilde{\gamma} } \rho = \infty.
    $
    Therefore, 
    $
        \sint{ \gamma } \rho 
        \ge 
         \sint{ \gamma \rvert_{[s,t^-]} } \rho = \sint{ \tilde{\gamma} } \rho
         = \infty,
    $
    hence $\modulus^p(\Gamma_2 \setminus \Gamma_1) = 0$
    by Proposition \ref{mod0}.

    Now, fix $\gamma \in \Gamma_3 \setminus \Gamma_1$.
    There is $t \in [0,1]$ such that 
    $\gamma(t^-) \ne \gamma(t)$
    and
    $
            \abs{
                f (\gamma(t^-))
                - 
                f( \gamma(t))
            }
            > 
            \tfrac{g (\gamma(t^-))
                +
                g( \gamma(t))}{2}
            \metric\del{ \gamma(t^-), \gamma(t)}.
    $
    Let $\gamma' \colon [0,1] \to X$ be defined as follows 
    $\gamma'(s) = \gamma(t^-)$ for $s \in \intco{0,1/2}$ and 
    $\gamma'(s) = \gamma(t)$ for $s \in \intcc{1/2,1}$.
    Clearly, $\gamma' \in \testCurvesNonTrivial$ and by Example \ref{prz325} we have
    \begin{multline*}
         \abs{
                f (\gamma'(0))
                - 
                f( \gamma'(1))
            }
        =
         \abs{
                f (\gamma(t^-))
                - 
                f( \gamma(t))
            }
        \\
            > 
            \frac{g (\gamma(t^-))
                +
                g( \gamma(t))}{2}
            \metric\del{ \gamma(t^-), \gamma(t)}
        =
        \frac{g (\gamma'(0))
                +
                g( \gamma'(1))}{2}
            \metric\del{ \gamma'(0), \gamma'(1)}
        =
        \sint{\gamma'} g,
    \end{multline*}
    hence $\gamma' \in \Gamma'$.
    Thus, $\sint{\gamma'} \rho = \infty$ 
    and 
    \begin{equation*}
        \sint{\gamma} \rho
        \ge 
        \frac{\rho (\gamma(t^-))
                +
                \rho( \gamma(t))}{2}
            \metric\del{ \gamma(t^-), \gamma(t)}
        =
        \frac{\rho (\gamma'(0))
                +
                \rho( \gamma'(1))}{2}
            \metric\del{ \gamma'(0), \gamma'(1)}
        =
        \sint{\gamma'} \rho
        =
        \infty.
    \end{equation*}
    In consequence, $\modulus^p(\Gamma_3 \setminus \Gamma_1) = 0$ 
    by Proposition \ref{mod0}.
        
    Finally, since $p$-modulus is an outer measure, 
    $$
        \modulus^p(\Gamma_1 \cup \Gamma_2 \cup \Gamma_3)
        \le 
        \modulus^p(\Gamma_1)
        +\modulus^p(\Gamma_2 \setminus \Gamma_1)
        +\modulus^p(\Gamma_3 \setminus \Gamma_1)
        =
        0.
    $$
\end{proof}
\begin{proposition}\label{prop::when_ae_0_gives_0_in_Newtonian}

    Let $(X, \metricAlone)$ be a metric space and $\measure$ be a Borel measure on $X$. Let 
    $f\colon X \rightarrow \hat{\bR}$ be a measurabe map  such that $f = 0$ $\measure$-almost everywhere and $ L^p(\measure) \cap \pWeakUpperSGradient f \neq \emptyset$.
    If $f$ is Borel or $\measure$ is Borel regular, then 
     $0$ is a $p$-weak upper $S$-gradient of $f$. 

\end{proposition}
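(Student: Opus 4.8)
\emph{Proof plan.} The plan is to fix a function $g \in L^p(\measure) \cap \pWeakUpperSGradient f$ and to exhibit a $\modulus^p$-negligible family of test curves outside of which $f(\gamma(0)) = f(\gamma(1)) = 0$. First I would apply Lemma \ref{lem::almost_acc_p} to $g$, obtaining $\modulus^p(\Gamma_1 \cup \Gamma_2 \cup \Gamma_3) = 0$; of these only $\Gamma_1$ and $\Gamma_2$ will be used. Next, using the hypothesis, choose a Borel set $N$ with $\set{ f \ne 0 } \subseteq N$ and $\measure(N) = 0$: if $f$ is Borel put $N = \set{ f \ne 0 }$, and if $\measure$ is Borel regular take a Borel $\measure$-null hull of the ($\measure$-measurable, $\measure$-null) set $\set{ f \ne 0 }$. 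Since $\measure(N) = 0$, the Borel function $\indicator{N}$ lies in $L^p(\measure)$ with zero norm, so for every $M > 0$ the function $M\indicator{N}$ belongs to $F\del{\set{ \gamma \in \testCurvesNonTrivial \given \sint{\gamma}{\indicator{N}} \ge 1/M }}$; taking a countable union over $M \in \bN$ and using Proposition \ref{mod0} shows that $\Gamma_N \coloneqq \set{ \gamma \in \testCurvesNonTrivial \given \mu_\gamma^S(N) > 0 }$ satisfies $\modulus^p(\Gamma_N) = 0$.

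The core of the proof is the claim that every $\gamma \in \testCurvesNonTrivial \setminus (\Gamma_1 \cup \Gamma_2 \cup \Gamma_N)$ has $f(\gamma(0)) = f(\gamma(1)) = 0$; granting this, $\set{ \gamma \given \abs{ f(\gamma(1)) - f(\gamma(0)) } > 0 } \subseteq \Gamma_1 \cup \Gamma_2 \cup \Gamma_N$ is $\modulus^p$-negligible, which is exactly the assertion that $0 \in \pWeakUpperSGradient f$. By reversal-invariance of the families $\Gamma_1, \Gamma_2, \Gamma_N$ — which follows from $\sint{\overleftarrow{\gamma}}{h} = \sint{\gamma}{h}$ for Borel $h$, the identities $\overleftarrow{\gamma}(t^-) = \gamma(1-t)$ and $\overleftarrow{\overleftarrow{\gamma}} = \gamma$ of Lemma \ref{lem:reversing_test_curve}, and the change-of-interval formulas of Proposition \ref{rem:test_Curves_changing_intervals} — together with $\overleftarrow{\gamma}(1) = \gamma(0)$ and $\overleftarrow{\gamma} \in \testCurvesNonTrivial$, it suffices to prove $f(\gamma(1)) = 0$ for every such $\gamma$. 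Suppose, towards a contradiction, that $f(\gamma(1)) \ne 0$, so $\gamma(1) \in N$. Since $\gamma$ is left-continuous at $1$ it is continuous there, hence $\gamma\rvert_{[s,1^-]} = \gamma\rvert_{[s,1]}$, and combining the defining estimate of $\Gamma_2$ at the endpoint $t = 1$ with Lemma \ref{lem::descending_family_of_subsets_to_0} (whose hypothesis $\sint{\gamma}{g} < \infty$ holds because $\gamma \notin \Gamma_1$) gives $\abs{ f(\gamma(s)) - f(\gamma(1)) } \le \sint{\gamma\rvert_{[s,1]}}{g} \to 0$ as $s \to 1^-$. As $f(\gamma(1)) \ne 0$, this forces $f(\gamma(s)) \ne 0$, hence $\gamma(s) \in N$, for all $s$ in some interval $(1-\epsilon, 1)$; together with $\gamma(1) \in N$ this yields $(1-\epsilon, 1] \subseteq \gamma^{-1}(N)$. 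But $\gamma \notin \Gamma_N$ gives $\mu_\gamma(\gamma^{-1}(N)) = 0$, so $\mu_\gamma((1-\epsilon, 1]) = 0$, and by Proposition \ref{rem:properties_of_variation_function} together with left-continuity of $V_\gamma$ at $1$ this means $V_\gamma$ is constant on $[1-\epsilon, 1]$, i.e. $\gamma$ is constant on $[1-\epsilon, 1]$.

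It therefore remains to rule out in advance that such a $\gamma$ with $f(\gamma(1)) \ne 0$ is eventually constant near $1$, and this is the step I expect to be the main obstacle. For it I would normalize $\gamma$: set $\sigma^* \coloneqq \inf \set{ \sigma \in [0,1] \given \gamma \text{ is constant on } [\sigma, 1] }$, which is positive because $V(\gamma) > 0$. If $\sigma^* < 1$, then by right-continuity $\gamma$ is constant and equal to $\gamma(1)$ on $[\sigma^*, 1]$; if $\gamma$ were discontinuous at $\sigma^*$ it would carry a point mass $\mu_\gamma(\set{\sigma^*}) > 0$ located at $\sigma^* \in \gamma^{-1}(N)$ (as $\gamma(\sigma^*) = \gamma(1) \in N$), contradicting $\gamma \notin \Gamma_N$; hence $\gamma$ is continuous at $\sigma^*$, and by Remark \ref{rem:restrciting_test_curve_to_continutity_point} and Proposition \ref{rem:test_Curves_changing_intervals} the reparametrization $\eta$ of $\gamma\rvert_{[0,\sigma^*]}$ onto $[0,1]$ is again a test curve with $V(\eta) = V(\gamma) > 0$, with $f(\eta(1)) = f(\gamma(1)) \ne 0$, not constant on any $[1-\delta,1]$ by the choice of $\sigma^*$, and still outside $\Gamma_1 \cup \Gamma_2 \cup \Gamma_N$ (these three properties pass to such sub-curves via Lemma \ref{lem:adding_symmetrized_integrals_over_intervals} and Proposition \ref{rem:test_Curves_changing_intervals}). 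Applying the argument of the previous paragraph to $\eta$ now produces a genuine contradiction, completing the proof. The difficulty — and the reason the hypothesis ``$f$ Borel or $\measure$ Borel regular'' is needed — is precisely the obstruction of a curve that ``dwells'' inside the null set $N$ near an endpoint: handling it requires both that $N$ be a Borel $\measure$-null set, so that $\indicator{N}$ is an admissible competitor defining $\Gamma_N$, and the observation that a discontinuity at the end of the constancy interval would place a $\mu_\gamma$-atom inside $\gamma^{-1}(N)$ and hence be caught by $\Gamma_N$.
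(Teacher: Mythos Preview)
Your argument is correct and takes a genuinely different route from the paper's.

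The paper chooses a Borel null set $E\supseteq\{f\ne 0\}$, introduces $\Gamma_{E'}=\{\gamma:\overline{\image\gamma}\cap E'\ne\emptyset\}$ and $\Gamma_E^+=\{\gamma:\mu_\gamma^S(E)>0\}$, and then proves the stronger inclusion $\Gamma_{E'}\setminus\Gamma_E^+\subseteq\Gamma_1\cup\Gamma_2\cup\Gamma_3$. To do this it shows that for $\gamma$ outside the exceptional families the composition $f\circ\gamma$ is finite and \emph{continuous on the whole interval} $[0,1]$, including at jump points of $\gamma$; the family $\Gamma_3$ from Lemma~\ref{lem::almost_acc_p} is used precisely to force $f(\gamma(t^-))=f(\gamma(t))=0$ at every discontinuity $t$, so that continuity of $f\circ\gamma$ survives the jump. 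The contradiction is then obtained by a sup/inf argument locating an interval with $\gamma$-image in $E$ and positive $\mu_\gamma^S$-measure.

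Your approach is more targeted: you only need $f(\gamma(0))=f(\gamma(1))=0$, not continuity of $f\circ\gamma$ everywhere, so you work exclusively at the endpoint $1$ (where $\gamma$ is automatically continuous), dispense with $\Gamma_3$ altogether, and use reversal symmetry to transfer the conclusion to the endpoint $0$. The ``dwelling'' obstruction---$\gamma$ constant and equal to a point of $N$ on a terminal interval---is removed by your normalization step (trimming to $[0,\sigma^*]$), together with the observation that a discontinuity at $\sigma^*$ would place a $\mu_\gamma$-atom inside $\gamma^{-1}(N)$. The reversal-invariance of $\Gamma_2$ that you invoke does hold: with $u=1-t'$, $v=1-s'$ the identities $\overleftarrow{\gamma}(s')=\gamma(v^-)$, $\overleftarrow{\gamma}(t'^-)=\gamma(u)$, and the translation $\Psi(r)=r+1-u-v$ give $\Psi_\#\bigl(\overleftarrow{\gamma}\rvert_{[1-v,(1-u)^-]}\bigr)=\overleftarrow{\gamma\rvert_{[u,v^-]}}$, whence equality of the symmetrized integrals by Proposition~\ref{rem:test_Curves_changing_intervals}. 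This is the one step where a reader might want an extra line of justification.

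In short, the paper's proof yields a stronger intermediate statement (global continuity of $f\circ\gamma$ outside a $\modulus^p$-null family) at the cost of invoking $\Gamma_3$ and a more elaborate case analysis, while your argument is leaner, uses only $\Gamma_1$, $\Gamma_2$ and $\Gamma_N$, and exploits the symmetry of the problem more directly.
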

\begin{proof}
    Consider the set 
    $
        E'
        \coloneqq 
        \set{
            x \in X 
            \given 
            f(x) \ne 0
        }.
    $
    Since $f = 0$ $\measure$-almost everywhere, 
    we have $\measure(E') = 0$. 
    If $f$ is Borel, then $E'$ is Borel.
    If $\measure$ is Borel regular, then 
    there is a Borel set $E$ of measure $0$
    such that $E' \subseteq E$. 
    In both cases, we have a Borel set $E$ such that
    $\measure(E) = 0$ and $E' \subseteq E$.

    Let us consider the families
    \begin{equation*}
        \Gamma_{E'}
        \coloneqq 
        \set{
            \gamma \in \testCurvesNonTrivial
            \given 
            E' \cap \closure{ \image{\gamma}}
            \ne 
            \emptyset 
        }
        \quad \text{and} \quad
        \Gamma_E^+
        \coloneqq 
        \set{
            \gamma \in \testCurvesNonTrivial
            \given 
            \mu_\gamma^S\del{
                E \cap \closure{ \image{\gamma}}
            }
            >
            0
        }.
    \end{equation*}
    Our next goal is to show that 
    $\modulus^p(\Gamma_{E'}) = 0$. 
    
    First, we will show that 
    $
        \modulus^p\del{ \Gamma_E^+ } = 0.
    $
    Indeed, let us consider a function 
    $
        \rho \coloneqq \infty \indicator{E}.
    $
    Clearly, $\rho$ is a Borel function such that 
    $ \rho = 0 $ $\measure$-almost everywhere, and 
    hence $\norm{ \rho }_{L^p( \measure )} = 0$.
    If 
    $
        \gamma \in \Gamma_E^+,
    $
    then we have 
    $
        \sint{\gamma} \rho 
        =
        \int_{X} \rho \ \mathrm{d}\mu_\gamma^S 
        =
        \infty \mu_\gamma^S\del{
                E 
            } \geq  \infty \mu_\gamma^S\del{
                E \cap  \closure{ \image{\gamma}}
            }
        =
        \infty.
    $
    Therefore, 
    $
        \modulus^p\del{ \Gamma_E^+ } = 0
    $
    by Proposition \ref{mod0}.

    Since $ L^p(\measure) \cap \pWeakUpperSGradient f \neq \emptyset $, there is 
    $g \in L^p(\measure)$ that is a $p$-weak upper $S$-gradient of $f.$ Let 
    $\Gamma \coloneqq \Gamma_1 \cup \Gamma_2\cup \Gamma_3$, 
    where $\Gamma_1, \Gamma_2, \Gamma_3$ are as in Lemma \ref{lem::almost_acc_p}.
    We will show that 
    $
        \Gamma_{E'} \setminus \Gamma_E^+
        \subseteq 
        \Gamma.
    $
    Fix $\gamma \in \Gamma_{E'} \setminus \Gamma_E^+$.
    If
    $
        \sint{\gamma} g = \infty,
    $
    then $\gamma \in \Gamma$. 
    Suppose that 
    $
        \sint{\gamma} g < \infty
    $
    and 
    $
        \gamma \notin \Gamma.
    $

    Then for all $s, t \in [0,1]$ such that $s<t$ and $\gamma(s) \ne \gamma(t^-)$ we have $
        \abs{
            f(\gamma(s)) - f(\gamma(t^-))
        }
        \le 
        \sint{ \gamma \rvert_{ [s, t^-]} } g.
    $ 
       Also, for all $t \in [0,1]$ such that $\gamma(t^-) \ne \gamma(t)$ 
    we have 
    $
        \abs{
                f (\gamma(t^-))
                - 
                f( \gamma(t))
            }
            \le 
            \frac{g (\gamma(t^-))
                +
                g( \gamma(t))}{2}
            \metric\del{ \gamma(t^-), \gamma(t)}.
    $
    We will show that $f \circ \gamma$ has finite values.

    First, we will show that $f\circ \gamma(0)$ is finite.
    Notice that since $\gamma \in \Gamma_{E'} \subseteq \testCurvesNonTrivial$, 
    we have $V_\gamma > 0$ and there is $t \in \intoc{0,1}$ such that 
    $\gamma(0) \ne \gamma(t)$. 
    If $\gamma(0)=\gamma(t^-)$, then $\gamma(t^-) \ne \gamma(t)$, and both $f(\gamma(t^-))=f(\gamma(0))$ 
    and $f(\gamma(t))$ are finite since 
    \begin{equation*}
        \abs{
                f (\gamma(t^-))
                - 
                f( \gamma(t))
            }
            \le 
            \frac{g (\gamma(t^-))
                +
                g( \gamma(t))}{2}
            \metric\del{ \gamma(t^-), \gamma(t)}
            \le 
            \sint{\gamma } g
            < 
            \infty.
    \end{equation*}
    Otherwise, if $\gamma(0) \neq \gamma(t^-)$, then  $f(\gamma(0))$ and $f(\gamma(t^-))$ are finite because
    \begin{equation*}\label{eq::ineq_for_f_[0,t-]}
        \abs{
                f (\gamma(0))
                - 
                f( \gamma(t^-))
            }
            \le 
            \sint{\gamma \rvert_{[0,t^-]} } g
            \le 
            \sint{\gamma } g
            < 
            \infty.
    \end{equation*}

    Now, let us observe that $f(\gamma(s^-))$ is finite for all $s \in \intoc{0,1}$.
    Indeed, if $\gamma(0) = \gamma(s^-)$, then this fact is clear. 
    If $\gamma(0) \ne \gamma(s^-)$ then $f(\gamma(s^-))$ is finite because
    \begin{equation*}
        \abs{
                f (\gamma(0))
                - 
                f( \gamma(s^-))
            }
            \le 
            \sint{\gamma \rvert_{[0,s^-]} } g
            \le 
            \sint{\gamma } g
            < 
            \infty.
    \end{equation*}
    Finally, we have that $f(\gamma(s))$ is finite for all $s \in \intoc{0,1}$.
    Indeed, if $\gamma(s^-) = \gamma(s)$, then it is clear.
    If $\gamma(s^-) \ne \gamma(s)$, then we have
    \begin{equation*}
        \abs{
                f (\gamma(s^-))
                - 
                f( \gamma(s))
            }
            \le 
            \frac{g (\gamma(s^-))
                +
                g( \gamma(s))}{2}
            \metric\del{ \gamma(s^-), \gamma(s)}
            \le 
            \sint{\gamma } g
            < 
            \infty.
    \end{equation*}

     We will show next that for all $t \in [0,1]$ we have
    \begin{equation}\label{eq::continuity_properties_of_f_circ_gamma}
        f( \lim_{r \to t^-} \gamma(r))
        =
        \lim_{r \to t^-} f( \gamma(r))
        \quad \text{and}\quad 
        f( \lim_{r \to t^+} \gamma(r))
        =
        \lim_{r \to t^+} f( \gamma(r)).
    \end{equation}
    First, let $(r_n)_n$ be a sequence of elements of $\intoo{0,t}$
    such that $r_n \to t^-$.
    Then $\sint{\gamma \rvert_{[r_n, t^-]}} g \to 0$ as $n \to \infty$ by Lemma \ref{lem::descending_family_of_subsets_to_0}.
    For all $n \in \bN$ we either have $\gamma(r_n) =\gamma(t^-)$ 
    or $\gamma(r_n) \ne\gamma(t^-)$. Since in the former case we have 
    $\abs{f(\gamma(r_n)) -f(\gamma(t^-))} = 0$ due to the finiteness of 
    $f\circ \gamma$, in both cases we have
    \begin{equation*}
        \abs{
                f (\gamma(r_n))
                - 
                f( \gamma(t^-))
            }
            \le 
            \sint{\gamma \rvert_{[r_n,t^-]} } g
            \xrightarrow{n \to \infty}
            0,
    \end{equation*}
    proving the first equality in \eqref{eq::continuity_properties_of_f_circ_gamma}.
    Next, let $(r_n)_n$ be a sequence of elements of $\intoc{t,1}$
    such that $r_n \to t^+$.
    By Lemma \ref{lem::descending_family_of_subsets_to_0} we have
    \begin{equation*}
        \sint{\gamma \rvert_{[t, r_n^-]}} g 
        +
        \frac{g (\gamma(r_n^-))
                +
                g( \gamma(r_n))}{2}
            \metric\del{ \gamma(r_n^-), \gamma(r_n)}
        \xrightarrow{n \to \infty} 0.
    \end{equation*}
    For all $n \in \bN$ we either have $\gamma(t) = \gamma(r_n)$, 
    $\gamma(r_n) \ne\gamma(t)$ and $\gamma(r_n^-) = \gamma(r_n)$, 
    or  $\gamma(r_n) \ne\gamma(t)$ and $\gamma(r_n^-) \ne \gamma(r_n)$. In the first case we have 
    $\abs{f(\gamma(t)) - f(\gamma(r_n)) } = 0$ due to the finiteness of 
    $f\circ \gamma$. In the second case, we have
    \begin{equation*}
        \abs{
                f (\gamma(t))
                - 
                f( \gamma(r_n))
            }=
        \abs{
                f (\gamma(t))
                - 
                f( \gamma(r_n^-))
            }
            \le 
            \sint{\gamma \rvert_{[t,r_n^-]} } g
            \xrightarrow{n \to \infty}
            0.
    \end{equation*}
    In the last case we have 
    \begin{multline*}
        \abs{
                f (\gamma(t))
                - 
                f( \gamma(r_n))
            }
        \le 
        \abs{
                f (\gamma(t))
                - 
                f( \gamma(r_n^-))
            }
            +
            \abs{
                f (\gamma(r_n^-))
                - 
                f( \gamma(r_n))
            }
            \\
            \le 
            \sint{\gamma \rvert_{[t, r_n^-]}} g 
            +
            \frac{g (\gamma(r_n^-))
                    +
                    g( \gamma(r_n))}{2}
                \metric\del{ \gamma(r_n^-), \gamma(r_n)}
            \xrightarrow{n \to \infty} 0.
    \end{multline*}
    Therefore, in all cases we have $\abs{f(\gamma(t)) - f(\gamma(r_n))} \to 0$ as $n \to \infty$,
    proving the second equality in \eqref{eq::continuity_properties_of_f_circ_gamma}.

    Now, we shall show that $f \circ \gamma$ is continuous on entire $[0,1]$.    
    By \eqref{eq::continuity_properties_of_f_circ_gamma} we have $f \circ \gamma$ is continuous at points of continuity of $\gamma$. 
     On the other hand, notice that if $t$ is a point of discontinuity of $\gamma$, 
    then since $\gamma(t^-)= \overleftarrow{\gamma}(1-t)$,  
    by Lemma \ref{lem:reversing_test_curve} we have 
    \begin{eqnarray*}
        \mu_\gamma^S\del{ \{\gamma (t^-) \} }
        &\ge& 
        \frac{1}{2} \overleftarrow{\gamma}_\# \mu_{\overleftarrow{\gamma}} \del{\{\overleftarrow{\gamma}(1-t) \} } 
        \ge
        \frac{1}{2}  \mu_{\overleftarrow{\gamma}} \del{\{1-t \} } \\
        &=&
         \frac{1}{2} \left(V_{\overleftarrow{\gamma}}(1-t) - V_{\overleftarrow{\gamma}}((1-t)^-) \right)
        =
        \frac{1}{2} \left(\overleftarrow{V_{\gamma}}((1-t)^-) - \overleftarrow{V_{\gamma}}(1-t) \right)\\
        &=& \frac{1}{2}\left(V_{\gamma}(t) - V_{\gamma}(t^-)\right)= \frac{1}{2} \metric\del{
            \gamma(t^-), \gamma(t)
        }
        >
        0
    \end{eqnarray*}
       and, similarly,
    $$
        \mu_\gamma^S(\set{\gamma(t)}) 
        \ge 
        \frac{1}{2} \metric\del{ \gamma(t^-), \gamma(t) }
        >
        0.
    $$
    Hence $\set{ \gamma(t^-), \gamma(t) } \cap E = \emptyset$
    as $\gamma \notin \Gamma_E^+$. 
    In consequence, $f(\gamma(t^-)) = f(\gamma(t)) = 0$
    and by (\ref{eq::continuity_properties_of_f_circ_gamma}) we have
    \begin{equation*}
        \lim_{r \to t^-} f( \gamma(r))
        =
        f( \lim_{r \to t^-} \gamma(r))
        =
        0
        =
        f(\gamma(t))
        =
        f( \lim_{r \to t^+} \gamma(r))
        =
        \lim_{r \to t^+} f( \gamma(r)).
    \end{equation*}
    Thus, $f \circ \gamma$ is also continuous at points of discontinuity of $\gamma$. 
    
    Since $\gamma \in \Gamma_{E'}$, there exists 
    $ x \in \closure{ \image{ \gamma }}$ such that 
    $f(x) \ne 0$. 
    Therefore by Remark \ref{rem:closure_of_the_image_of_a_test_curve_is_compact} there exists $t \in [0,1]$ such that $x = \gamma(t)$
    or $x = \gamma(t^-)$.
    However, since $f \circ \gamma$ is continuous on $[0,1]$ and 
    $ f( \lim_{r \to t^-} \gamma(r))     =
        \lim_{r \to t^-} f( \gamma(r))$, 
    in both cases we have $f(x) = f(\gamma(t))$.
    Recall that 
    $\gamma \in \testCurvesNonTrivial \setminus \Gamma_E^+$,
    so $\mu_\gamma^S(X) > 0$ and $\mu_\gamma^S(E) = 0$. 
    In consequence, there is $s' \in [0,1]$ such that 
    $\gamma(s') \notin E$ or $\gamma(s'^-) \notin E$,
    hence, as $f \circ \gamma$ is continuous and $ f( \lim_{r \to s'^-} \gamma(r))
        =
        \lim_{r \to s'^-} f( \gamma(r))$, we have
    $f(\gamma(s')) = 0$. 
    First, suppose that $s' < t$.
    Let 
    $ 
        s 
        \coloneqq 
        \sup 
        \set{ 
            s' \in [0,1] 
            \given 
            f(\gamma(s')) = 0
            \text{ and }
            s' < t
        }.
    $
    Clearly, 
    $s < t$, 
    $\gamma(s) \ne \gamma(t)$, 
    and
    $\gamma \sbr{  \intoc{ s, t}} \subseteq E$.
    In consequence, recalling the definitions from subsection \ref{subsec::integration_along_a_curve},
    \begin{equation*}
        2\mu_\gamma^S(E)
        \ge 
        2\mu_\gamma^S\del{
            \gamma \sbr{  \intoc{ s, t}}
        }
        \ge 
        \gamma_\# \mu_\gamma \del{
            \gamma \sbr{  \intoc{ s, t}}
        }
        \geq
        \mu_\gamma \del{ \intoc{ s, t} }
        =
        V_\gamma(t) - V_\gamma(s)
        \ge 
        \metric\del{ \gamma(t), \gamma(s) }
        >
        0,
    \end{equation*}
    which contradicts that $\gamma \notin \Gamma_E^+$. 
    Now, suppose that $s' > t$. 
    Let
    $
        s 
        \coloneqq 
        \inf 
        \set{ 
            s' \in [0,1] 
            \given 
            f(\gamma(s')) = 0
            \text{ and }
            s' > t
        }.
    $
    Clearly, 
    $f(\gamma(s)) = 0$,
    so also $f(\gamma(s^-)) = 0$.
    Thus,
    $s > t$, 
    $\gamma(s^-) \ne \gamma(t)$, 
    and
    $\gamma \sbr{  \intoo{ t, s }} \subseteq E$.
    In consequence, 
    \begin{equation*}
        2\mu_\gamma^S(E)
        \ge 
        2\mu_\gamma^S\del{
            \gamma \sbr{  \intoo{ t, s}}
        }
        \ge 
        \gamma_\# \mu_\gamma \del{
            \gamma \sbr{  \intoo{ t, s}}
        }
        \geq
        \mu_\gamma \del{ \intoo{ t, s} }
        =
        V_\gamma(s^-) - V_\gamma(t)
        \ge 
        \metric\del{ \gamma(s^-), \gamma(t) }
        >
        0,
    \end{equation*}
    which also contradicts that $\gamma \notin \Gamma_E^+$.

    The obtained contradiction shows that there is no 
    $\gamma \in \Gamma_{E'} \setminus \Gamma_E^+$
    such that $\gamma \notin \Gamma$, 
    hence 
    $\Gamma_{E'} \setminus \Gamma_E^+ \subseteq \Gamma$. 
    From Lemma \ref{lem::almost_acc_p} we have 
    $\modulus^p(\Gamma) = 0$.
    Thus, recalling that 
    $\modulus^p(\Gamma_E^+) = 0$,
    we have
    \begin{equation*}
        \modulus^p(\Gamma_{E'})
        \le 
        \modulus^p(\Gamma_{E'} \setminus \Gamma_E^+)
        + \modulus^p(\Gamma_E^+)
        \le 
        \modulus^p( \Gamma )
        + \modulus^p(\Gamma_E^+)
        =
        0.
    \end{equation*}
    This implies that $0$ is a $p$-weak upper $S$-gradient of
    $f$.
    Indeed, 
    we have $\modulus^p(\Gamma_{E'}) = 0$
    and if 
    $
        \gamma 
        \in 
        \testCurvesNonTrivial \setminus \Gamma_{E'},
    $
    then 
    $
        f(\gamma(0)) = f(\gamma(1)) = 0
    $
    and 
    $
        \abs{ f(\gamma(0)) - f(\gamma(1)) }
        = 
        0
        =
        \sint{\gamma} 0,
    $
        which ends the proof.

\end{proof}

\subsection{$\hat{\testCurves}$-Newtonian Space} \label{subsec::TC-hat_Newtonian_space}
In this subsection we introduce the $\hat{\testCurves}$-Newtonian spaces. 
On the one hand, in the definition of $\hat{\testCurves}$-Newtonian spaces we will require that a certain condition is satisfied for sufficiently many Borel measures defined on $\testCurves\del{[0,1]; X}$. 
On the other hand, the sense in which we understand the ``sufficiently many'' part mimics the similar requirement defined for $\newtonianTC$-Newtonian spaces using the $p$-modulus.

As we will see, the main difference between the $\hat{\testCurves}$-Newtonian spaces and the $\testCurves$-Newtonian spaces arises due to the fact that we require that not just the upper gradients but also the elements of the $\hat{\testCurves}$-Newtonian space are equivalence classes of Borel functions.

\begin{definition}[Integral on space of Test Curves]
    Let $(X, \metricAlone)$ be a metric space.
    Let $\mu$ be a Borel measure on the space
    $
        \del{
            \testCurves\del{ [0,1]; X},
            \metricAlone_\measurable
        }
    $.
    For $f \colon X \to \bRExtended$ Borel, bounded from below or above,
    we define its integral along $\mu$ by    
    \begin{equation*}
        \int_\mu f
        \coloneqq 
        \integral{
            \testCurves\del{ [0,1]; X}
        }{
            \quad \int_\gamma f \quad
        }{
            \mu(\gamma)
        }
    \end{equation*}
    and the symmetrized integral along $\mu$ by
    \begin{equation*}
       \sint{ \mu }{ f }
        \coloneqq 
        \integral{
            \testCurves\del{ [0,1]; X}
        }{
            \quad \sint{ \gamma }{ f } \quad
        }{
            \mu(\gamma)
        }.
    \end{equation*}
\end{definition}
Let us remark that by Theorem \ref{lem::hyper_envaluations_are_Borel} the maps 
$ \testCurves\del{ [0,1]; X}
              \ni \gamma
        \mapsto 
        \int_\gamma f
 $ and
 $
              \testCurves\del{ [0,1]; X}
        \ni 
        \gamma
        \mapsto 
        \sint{\gamma}{ f}
$ are Borel.
\begin{definition}[Generalized p-modulus]
    Let $(X, \metricAlone)$ be a metric space, then we will denote the family of \say{test measures} by
    \begin{equation*}
        \testMeasuresNonTrivial
        \coloneqq 
        \set{
            \mu 
            \text{ Borel measure on }
            \testCurves\del{ [0,1]; X }
            \given 
            \mu\del{
                \testCurves\del{ [0,1]; X }
                \setminus 
                \testCurvesNonTrivial
            }
            =
            0
        }.
    \end{equation*}
     For $\Gamma \subseteq \testMeasuresNonTrivial$ we define
    \begin{equation*}
        \tilde{F}(\Gamma)
        \coloneqq 
        \set{
            \rho \colon X \to \intcc{ 0, \infty }
            \given 
            \rho \text{ is Borel and }
            \sint{ \mu} { \rho } 
            \ge 1
            \text{ for all }
            \mu \in \Gamma
        }.
    \end{equation*}
    Let $(X, \metricAlone)$ be a metric space and let $\measure$ be a Borel measure on $X$.  Then, for $p \in \intco{ 0, \infty }$ we define the generalized $p$-modulus of families of test measures
    $   
        \tilde{\modulus}^p 
        \colon 
        2^{ \testMeasuresNonTrivial }
        \to 
        \intcc{0, \infty}
    $
    by the formula
    \begin{equation*}
        \forall 
        \Gamma \subseteq \testMeasuresNonTrivial
        \qquad 
        \tilde{\modulus}^p( \Gamma)
        \coloneqq 
        \inf_{ \rho \in \tilde{F}(\Gamma) }
        \norm{ \rho }^p_{L^p(\measure)}.
    \end{equation*}

    We shall say that property $P$ holds for $\tilde{\modulus}^p$ almost every $\mu \in \testMeasuresNonTrivial$, if the set of $\mu \in \testMeasuresNonTrivial$ for which $P$ does not hold has a generalized $p$-modulus of $0$. 
\end{definition}
\begin{lemma}[Generalized $\bm{p}$-modulus agrees with $\bm{p}$-modulus]
\label{lem:generalized_p-modulus_agrees_with_p-modulus}
    Let $(X, \metricAlone)$ be a metric space and let $\measure$ be
    a Borel measure on $X$.
    Fix $\Gamma \subseteq \testCurvesNonTrivial$ and let
    $\tilde{\Gamma} \subseteq \testMeasuresNonTrivial$ 
    be defined by
    \begin{equation*}
        \tilde{\Gamma}
        \coloneqq 
        \set{
            \delta_{\gamma}
            \given 
            \gamma \in \Gamma
        }.
    \end{equation*}
    Then $\tilde{\modulus}^p(\tilde{\Gamma}) = \modulus^p(\Gamma)$.
\end{lemma}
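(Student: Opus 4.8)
The plan is to establish the two inequalities $\tilde{\modulus}^p(\tilde{\Gamma}) \le \modulus^p(\Gamma)$ and $\tilde{\modulus}^p(\tilde{\Gamma}) \ge \modulus^p(\Gamma)$ separately, the engine being the simple identity that for a Dirac measure $\delta_\gamma$ and a Borel $\rho$ one has $\sint{\delta_\gamma}{\rho} = \sint{\gamma}{\rho}$, which is immediate from the definition of the integral along a measure. The first step is to record this identity: since $\delta_\gamma$ is a Borel measure on $\testCurves\del{[0,1];X}$ concentrated at $\gamma$, and (by Theorem \ref{lem::hyper_envaluations_are_Borel}) the map $\gamma' \mapsto \sint{\gamma'}{\rho}$ is Borel, we get $\sint{\delta_\gamma}{\rho} = \integral{\testCurves\del{[0,1];X}}{\sint{\gamma'}{\rho}}{\delta_\gamma(\gamma')} = \sint{\gamma}{\rho}$. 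Note also that $\delta_\gamma \in \testMeasuresNonTrivial$ precisely when $\gamma \in \testCurvesNonTrivial$, so $\tilde\Gamma$ is a well-defined subset of $\testMeasuresNonTrivial$.

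For the first inequality, I would show $\tilde{F}(\tilde\Gamma) = F(\Gamma)$. Indeed, $\rho \in \tilde F(\tilde\Gamma)$ means $\rho$ is Borel and $\sint{\delta_\gamma}{\rho} \ge 1$ for all $\gamma \in \Gamma$; by the identity above this is exactly $\sint{\gamma}{\rho} \ge 1$ for all $\gamma \in \Gamma$, i.e. $\rho \in F(\Gamma)$. Taking infima of $\norm{\rho}_{L^p(\measure)}^p$ over these equal sets gives $\tilde\modulus^p(\tilde\Gamma) = \inf_{\rho \in \tilde F(\tilde\Gamma)} \norm{\rho}_{L^p(\measure)}^p = \inf_{\rho \in F(\Gamma)} \norm{\rho}_{L^p(\measure)}^p = \modulus^p(\Gamma)$.

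In fact the set equality $\tilde F(\tilde\Gamma) = F(\Gamma)$ already yields both directions at once, so strictly speaking no case split is needed; the argument is a one-line unwinding of definitions once the Dirac identity is in hand. The only point requiring a modicum of care — and the closest thing to an obstacle — is the measurability bookkeeping: one must be sure that $\gamma \mapsto \sint{\gamma}{\rho}$ is genuinely Borel so that $\int_\mu$ makes sense for $\mu = \delta_\gamma$, but this is guaranteed by Theorem \ref{lem::hyper_envaluations_are_Borel}(c), and that $V$ is Borel so that membership $\delta_\gamma \in \testMeasuresNonTrivial$ is equivalent to $\gamma \in \testCurvesNonTrivial$ (here $\delta_\gamma(\testCurves\del{[0,1];X} \setminus \testCurvesNonTrivial) = 0$ iff $\gamma \in \testCurvesNonTrivial$, using that $\testCurvesNonTrivial$ is Borel by Corollary \ref{cor:lower_semicontinuity_of_variation_function_on_test_curves}). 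With those facts cited, the proof is essentially complete.
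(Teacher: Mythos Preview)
Your proof is correct and follows essentially the same approach as the paper: both establish the Dirac identity $\sint{\delta_\gamma}{\rho} = \sint{\gamma}{\rho}$, deduce the set equality $\tilde F(\tilde\Gamma) = F(\Gamma)$, and conclude by taking infima. Your additional remarks on measurability and on why $\delta_\gamma \in \testMeasuresNonTrivial$ are sound and slightly more explicit than the paper's version, but the substance is the same.
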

\begin{proof}
    Let $\gamma \in \Gamma$, then for all Borel $\rho \colon X \to \intcc{0, \infty}$ we have
    \begin{equation*}
        \sint{\gamma}{\rho}
        =
        \integral{ \testCurves\del{ [0,1]; X} }
        { \quad \sint{\gamma'}{\rho} \quad }
        { \delta_{\gamma}(\gamma')}
        =
        \sint{\delta_{\gamma}}{\rho}.
    \end{equation*}
    Therefore, $ F(\Gamma)=\tilde{F}(\tilde{\Gamma})$.
    In consequence
    \begin{equation*}
        \tilde{\modulus}^p\del{ \tilde{\Gamma} }
        =
        \inf_{ \rho \in \tilde{F}\del{ \tilde{\Gamma}}}
        \norm{ \rho }_{L^p(\measure)}^p
        =
        \inf_{ \rho \in F\del{ \Gamma}}
        \norm{ \rho }_{L^p(\measure)}^p
        =
        \modulus^p\del{ \Gamma} 
    \end{equation*}
    and the claim is proved.
\end{proof}
\begin{lemma}[Absolute continuity of the generalized p-modulus with respect to the p-modulus]
\label{lem:absolute_continuity_for_two_moduli}
    Let $(X, \metricAlone)$ be a metric space and let $\measure$ be a Borel measure on $X$.
    Let $\Gamma \subseteq \testCurvesNonTrivial$ be a Borel set such 
    $\modulus^p(\Gamma) = 0$.
    Then for the family $\hat{\Gamma} \subseteq \testMeasuresNonTrivial$ 
    defined by
    \begin{equation*}
        \hat{\Gamma}
        \coloneqq 
        \set{
            \mu \in \testMeasuresNonTrivial
            \given 
            \mu\del{ \Gamma }
            > 
            0
        }
    \end{equation*}
    we have $\tilde{\modulus}^p\del{ \hat{\Gamma}} = 0$.
\end{lemma}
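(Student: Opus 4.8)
The plan is to show directly that for every $\eps > 0$ there is an admissible function $\rho \in \tilde{F}(\hat{\Gamma})$ with $\norm{\rho}_{L^p(\measure)} \le \eps$; this forces $\tilde{\modulus}^p(\hat{\Gamma}) = 0$.

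First I would use the characterization of $p$-modulus null sets from Proposition \ref{mod0}. Since $\modulus^p(\Gamma) = 0$, there is a Borel function $\rho_0 \colon X \to \intcc{0,\infty}$ with $\rho_0 \in L^p(\measure)$ and $\sint{\gamma}{\rho_0} = \infty$ for every $\gamma \in \Gamma$. For $n \in \bN$ put $\rho_n \coloneqq \rho_0 / n$. Then $\rho_n$ is Borel, $\rho_n \in L^p(\measure)$ with $\norm{\rho_n}_{L^p(\measure)} = \norm{\rho_0}_{L^p(\measure)}/n$, and, since the symmetrized Lebesgue--Stieltjes integral of a nonnegative Borel integrand is positively homogeneous (scaling a value $+\infty$ by $1/n$ still gives $+\infty$), we still have $\sint{\gamma}{\rho_n} = \infty$ for all $\gamma \in \Gamma$; this is exactly the argument already used inside the proof of Proposition \ref{mod0}.

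Next I would check that $\rho_n \in \tilde{F}(\hat{\Gamma})$. Fix $\mu \in \hat{\Gamma}$, so $\mu \in \testMeasuresNonTrivial$ and $\mu(\Gamma) > 0$. By Theorem \ref{lem::hyper_envaluations_are_Borel}(c) the map $\gamma \mapsto \sint{\gamma}{\rho_n}$ is Borel and it is everywhere nonnegative, and $\Gamma$ is a Borel (hence $\mu$-measurable) subset of $\testCurves\del{[0,1]; X}$. Therefore
\[
    \sint{\mu}{\rho_n}
    =
    \integral{\testCurves\del{ [0,1]; X}}{ \sint{\gamma}{\rho_n} }{ \mu(\gamma) }
    \ge
    \integral{\Gamma}{ \sint{\gamma}{\rho_n} }{ \mu(\gamma) }
    =
    \integral{\Gamma}{ \infty }{ \mu(\gamma) }
    =
    \infty
    \ge
    1 ,
\]
where the first inequality uses nonnegativity of the integrand. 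Hence $\rho_n \in \tilde{F}(\hat{\Gamma})$, so by definition $\tilde{\modulus}^p(\hat{\Gamma}) \le \norm{\rho_n}_{L^p(\measure)}^p = n^{-p}\norm{\rho_0}_{L^p(\measure)}^p$. Letting $n \to \infty$ yields $\tilde{\modulus}^p(\hat{\Gamma}) = 0$, as claimed.

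\textbf{Main obstacle.} There is no genuinely hard step; the only points needing care are the Borel measurability of $\gamma \mapsto \sint{\gamma}{\rho_n}$ (which is precisely Theorem \ref{lem::hyper_envaluations_are_Borel}) and the validity of restricting the integral to $\Gamma$, which is justified because the integrand is nonnegative. Everything else is the characterization of null sets in Proposition \ref{mod0} together with a scaling argument.
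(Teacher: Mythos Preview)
Your proof is correct and follows essentially the same route as the paper: both obtain from Proposition \ref{mod0} a Borel $\rho \in L^p(\measure)$ with $\sint{\gamma}{\rho} = \infty$ on $\Gamma$, and then show that $\sint{\mu}{\rho} = \infty$ for every $\mu \in \hat{\Gamma}$ by restricting the integral to $\Gamma$. The only cosmetic difference is that the paper concludes by invoking the analogue of Proposition \ref{mod0} for $\tilde{\modulus}^p$, whereas you unfold that analogue explicitly via the scaling $\rho_n = \rho_0/n$.
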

\begin{proof}
    Since $\modulus^p(\Gamma) = 0$, by Proposition \ref{mod0} there exists a Borel map $\rho \colon X \to \intcc{0, \infty}$ such that $\rho \in L^p(\measure)$ 
    and for all $\gamma \in \Gamma$ we have 
    $
        \sint{ \gamma}{ \rho } = \infty
    $.
    Let $\mu \in \hat{\Gamma}$. Then, since  $\mu\del{ \Gamma } > 0$, we have
    \begin{equation*}
        \sint{ \mu}{ \rho }
        =
        \integral{ \testCurves\del{ [0,1]; X} }{ 
            \quad \sint{ \gamma}{ \rho }\quad 
        }{ \mu(\gamma)}
        \ge 
        \integral{ \Gamma }{ 
            \quad \sint{ \gamma}{ \rho }\quad 
        }{ \mu(\gamma)}
        =
        \integral{ \Gamma }{ 
            \infty 
        }{ \mu(\gamma)}
        =
        \infty \mu\del{ \Gamma }
        =
        \infty
    \end{equation*}
    that is, $\sint{ \mu}{ \rho } = \infty$. 
    By the analogue for the generalized $p$-modulus to Proposition \ref{mod0}
    we have $\tilde{\modulus}^p\del{ \hat{\Gamma}} = 0$.
\end{proof}
\begin{definition}[Generalized $\bm{p}$-weak upper $\bm{S}$-gradient]
    Let $(X, \metricAlone)$ be a metric space and let $\measure$ be a Borel measure on $X$.
    For a Borel function $f \colon X \to \bRExtended$
    we will say that a Borel function $g \colon X \to \intcc{0, \infty}$
    is its generalized $p$-weak upper $S$-gradient, if
    \begin{equation*}
        \tilde{\modulus}^p-\text{a.e.}
        \qquad
            \integral{
                \testCurves\del{ [0,1]; X}
            }{
                \abs{ f \circ \gamma(1) - f \circ \gamma(0) }
            }{
                \mu(\gamma)
            }
            \le \sint{ \mu}{ g }.
    \end{equation*}
\end{definition}
Let us remark that by Theorem \ref{lem::hyper_envaluations_are_Borel} the map 
$ \testCurves\del{ [0,1]; X}
              \ni \gamma
        \mapsto |f \circ \gamma(1) - f \circ \gamma(0)|
 $ is Borel.\footnote{It is worth to remember about our notation.}
 We will denote the family of generalized $p$-weak upper $S$-gradients of $f$ by $\generalisedPWeakUpperSGradient f$
\begin{definition}[$\hat{TC}$-Newtonian Space]
    Let $(X, \metricAlone)$ be a metric space and $\measure$ be a Borel measure on $X$. Let $p \in \intco{1, \infty}$. We define a space
    \begin{equation*}
        \tilde{\mathcal{N}}_{\hat{TC}}^{1,p}(X)
        \coloneqq 
       \set{
            f: X \rightarrow \mathbb{R}
            \given  f\, \text{is Borel}, 
            \int_X |f|^p \, \mathrm{d} \measure< \infty \,\text{and} \, L^p(\measure) \cap \generalisedPWeakUpperSGradient f \neq \emptyset 
            }
        .
    \end{equation*}
    On space $\tilde{\mathcal{N}}_{\hat{TC}}^{1,p}(X)$ we define a seminorm
    \begin{equation*}
        \forall 
            f \in \tilde{\mathcal{N}}_{\hat{TC}}^{1,p}(X)
        \qquad 
            \norm{ f }_{\mathcal{N}_{\hat{TC}}^{1,p}(X)} 
            \coloneqq 
            \norm{ f }_{L^p(\measure)}
            + 
            \inf_{ g }
                \norm{ g }_{ L^p( \measure) },
    \end{equation*}
    where the infimum is taken over all generalized $p$-weak
    upper $S$-gradients $g$ of $f$. Also, we define an equivalence relation $\sim$ by
    \begin{equation*}
        \forall 
            f, f'\in \tilde{\mathcal{N}}_{\hat{TC}}^{1,p}(X)
        \qquad 
            f \sim f'
            \iff
            \norm{ f - f' } _{\mathcal{N}_{\hat{TC}}^{1,p}(X)}
            =
            0.
    \end{equation*}
    Finally, we define the $\hat{TC}$-Newtonian space, as the quotient space  $\mathcal{N}_{\hat{TC}}^{1,p}(X) \coloneqq \tilde{\mathcal{N}}_{\hat{TC}}^{1,p}(X) / {\sim}$.
    On this space $\norm{ \cdot } _{\mathcal{N}_{\hat{TC}}^{1,p}(X)}$ is a norm.
\end{definition}
\begin{theorem}
    \label{lem:comparison_between_various_upper_gradients}
    Let $(X, \metricAlone)$ be a metric space, $\measure$ be a Borel measure on $X$ and let $f \colon X \to \bRExtended$ and  $g \colon X \to \intcc{0,\infty}$
    be Borel maps. Then the following statements are equivalent
    \begin{enumerate}[label=(\roman*)]
        \item $g$ is a generalized $p$-weak upper $S$-gradient of $f$,
        \item $g$ is a $p$-weak upper $S$-gradient of $f$.
    \end{enumerate}
\end{theorem}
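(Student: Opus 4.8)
The plan is to deduce both implications from the two comparison lemmas already at hand, Lemma~\ref{lem:generalized_p-modulus_agrees_with_p-modulus} and Lemma~\ref{lem:absolute_continuity_for_two_moduli}, using Dirac measures to link curves and measures. The only preliminary observation is a measurability one: by Theorem~\ref{lem::hyper_envaluations_are_Borel}(a) (applied to the Borel map $(x,y)\mapsto\abs{f(y)-f(x)}$, with our conventions on $\abs{\infty-\infty}$) the function $\gamma\mapsto\abs{f\circ\gamma(1)-f\circ\gamma(0)}$ is Borel, and by Theorem~\ref{lem::hyper_envaluations_are_Borel}(c) the function $\gamma\mapsto\sint{\gamma}{g}$ is Borel since $g$ is bounded from below. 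Hence the set
\[
    \Gamma_0
    \coloneqq
    \set{
        \gamma\in\testCurvesNonTrivial
        \given
        \abs{f\circ\gamma(1)-f\circ\gamma(0)}>\sint{\gamma}{g}
    }
\]
is a Borel subset of $\testCurvesNonTrivial$, and it is precisely the set of curves along which the inequality $\abs{f\circ\gamma(1)-f\circ\gamma(0)}\le\sint{\gamma}{g}$ fails.

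For the implication $(ii)\Rightarrow(i)$: suppose $g$ is a $p$-weak upper $S$-gradient of $f$, so $\modulus^p(\Gamma_0)=0$. By Lemma~\ref{lem:absolute_continuity_for_two_moduli} the family $\hat\Gamma_0\coloneqq\set{\mu\in\testMeasuresNonTrivial\given\mu(\Gamma_0)>0}$ satisfies $\tilde\modulus^p(\hat\Gamma_0)=0$, so it suffices to verify the generalized inequality for every $\mu\notin\hat\Gamma_0$. For such $\mu$ one has $\mu(\Gamma_0)=0$ and $\mu(\testCurves\del{[0,1];X}\setminus\testCurvesNonTrivial)=0$, so $\mu$-almost every $\gamma$ lies in $\testCurvesNonTrivial\setminus\Gamma_0$ and satisfies $\abs{f\circ\gamma(1)-f\circ\gamma(0)}\le\sint{\gamma}{g}$; integrating this pointwise inequality over $\testCurvesNonTrivial\setminus\Gamma_0$ against $\mu$ and then enlarging the domain of the nonnegative right-hand integrand to all of $\testCurves\del{[0,1];X}$ yields
\[
    \integral{\testCurves\del{[0,1];X}}{\abs{f\circ\gamma(1)-f\circ\gamma(0)}}{\mu(\gamma)}
    \le \sint{\mu}{g},
\]
which is exactly the generalized $p$-weak upper $S$-gradient inequality.

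For the implication $(i)\Rightarrow(ii)$: suppose $g$ is a generalized $p$-weak upper $S$-gradient of $f$, so there is $\hat\Gamma\subseteq\testMeasuresNonTrivial$ with $\tilde\modulus^p(\hat\Gamma)=0$ outside of which the generalized inequality holds. For each $\gamma\in\Gamma_0$ the Dirac measure $\delta_\gamma$ lies in $\testMeasuresNonTrivial$, and since $\sint{\delta_\gamma}{g}=\sint{\gamma}{g}$ while $\integral{\testCurves\del{[0,1];X}}{\abs{f\circ\gamma'(1)-f\circ\gamma'(0)}}{\delta_\gamma(\gamma')}=\abs{f\circ\gamma(1)-f\circ\gamma(0)}$, the generalized inequality fails for $\delta_\gamma$; hence $\set{\delta_\gamma\given\gamma\in\Gamma_0}\subseteq\hat\Gamma$. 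As $\tilde\modulus^p$ is an outer measure it is monotone, so $\tilde\modulus^p(\set{\delta_\gamma\given\gamma\in\Gamma_0})=0$, and Lemma~\ref{lem:generalized_p-modulus_agrees_with_p-modulus} gives $\modulus^p(\Gamma_0)=\tilde\modulus^p(\set{\delta_\gamma\given\gamma\in\Gamma_0})=0$. Therefore $g$ is a $p$-weak upper $S$-gradient of $f$.

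I do not anticipate a genuine obstacle here: all the substance is packaged in the two lemmas and in the Borel-measurability statements of Theorem~\ref{lem::hyper_envaluations_are_Borel}, which are already available. The only points deserving a line of care are that $\Gamma_0$ is indeed Borel, so that Lemma~\ref{lem:absolute_continuity_for_two_moduli} is applicable, and that in each direction the relevant exceptional family is exactly $\Gamma_0$, respectively its image under $\gamma\mapsto\delta_\gamma$.
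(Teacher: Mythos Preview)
Your proof is correct and follows essentially the same approach as the paper's: both directions rely on the two comparison lemmas (Lemma~\ref{lem:generalized_p-modulus_agrees_with_p-modulus} and Lemma~\ref{lem:absolute_continuity_for_two_moduli}) together with the Borel measurability of $\Gamma_0$ from Theorem~\ref{lem::hyper_envaluations_are_Borel}, using Dirac measures to pass between curves and test measures. The only cosmetic difference is that the paper treats the implications in the opposite order and phrases the Dirac-measure argument as showing $D\cap\tilde\Gamma=\emptyset$ rather than $\tilde\Gamma\subseteq\hat\Gamma$, but the content is identical.
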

\begin{proof}
    $(i) \Rightarrow (ii)$
       If $g \colon X \to \intcc{0,\infty}$ is a generalized $p$-weak upper $S$-gradient of $f$, then there exists $D \subset \testMeasuresNonTrivial$ such that 
    $\tilde{\modulus}^p (\testMeasuresNonTrivial \setminus D)=0$ and for $\mu \in D$ we have
    \begin{equation*}
        \integral{
                \testCurves\del{ [0,1]; X}
            }{
                \abs{ f \circ \gamma(1) - f \circ \gamma(0) }
            }{
                \mu(\gamma)
            }  
            \le  \sint{ \mu}{ g }.
    \end{equation*}

    Let\footnote{By Theorem  \ref{lem::hyper_envaluations_are_Borel} $\Gamma$ is a Borel set.}
    \begin{equation*}
        \Gamma 
        \coloneqq 
        \set{
            \gamma \in \testCurvesNonTrivial
            \given 
            \abs{ f \circ \gamma(1)
            - f\circ \gamma(0) }
            > 
            \sint{\gamma}{g}
        }
        \quad \text{ and } \quad 
        \wave{\Gamma} 
        \coloneqq 
        \set{
            \delta_\gamma 
            \given 
            \gamma \in \Gamma 
        }.   
    \end{equation*}
    Let us observe that $D\cap \tilde{\Gamma}=\emptyset$. Indeed, suppose there exists $\gamma \in \Gamma$ such that $\delta_{\gamma} \in D\cap \tilde{\Gamma}$, then we have
    \begin{equation*}
        \abs{ f \circ \gamma(1)
            - f\circ \gamma(0) }
        =        
                \integral{
                    \testCurves\del{ [0,1]; X}
                }{\abs{
                     f \circ \gamma'(1) - f \circ \gamma'(0) 
                }}{
                    \delta_{\gamma}(\gamma')
                }            
        \le 
        \integral{
                \testCurves\del{ [0,1]; X}
            }{
                \quad \sint{\gamma'}{g}
            }{
                \delta_{\gamma}(\gamma')
            }
        =
        \sint{\gamma}{g}.
    \end{equation*}
    This leads us to the contradiction with $\gamma \in \Gamma$.
    
    Therefore, 
    \[
        \tilde{\modulus}^p (\tilde{\Gamma}) \leq \tilde{\modulus}^p ((\testMeasuresNonTrivial \setminus D)\cap \tilde{\Gamma})+\tilde{\modulus}^p ( D\cap \tilde{\Gamma}) =0.
    \]
    Hence, by Lemma \ref{lem:generalized_p-modulus_agrees_with_p-modulus} 
    we have $\modulus^p(\Gamma) = 0$
    and    $g$ is a $p$-weak upper $S$-gradient of $f$.

       $(ii) \Rightarrow (i)$   Now, let $g$ be a $p$-weak upper $S$-gradient of $f$.
    Then, keeping $\Gamma$ as it was introduced in the previous implication, we have $\modulus^p(\Gamma) = 0$. 
    Then, by Lemma \ref{lem:absolute_continuity_for_two_moduli}
    we have $\tilde{\modulus}^p\del{ \hat{\Gamma}} = 0$, where
    \begin{equation*}
        \hat{\Gamma}
        =
        \set{
            \mu \in \testMeasuresNonTrivial
            \given 
            \mu\del{ \Gamma } > 0
        }.
    \end{equation*}
    Hence, for $\mu \in \testMeasuresNonTrivial \setminus \hat{\Gamma}$ we have 
    
    \begin{equation*}
        \integral{
                \testCurves\del{ [0,1]; X}
            }{
                \abs{ f \circ \gamma(1) - f \circ \gamma(0) }
            }{
                \mu(\gamma)
            }
        =  \integral{
                \testCurvesNonTrivial \setminus \Gamma
            }{
                \abs{ f \circ \gamma(1) - f \circ \gamma(0) }
            }{
                \mu(\gamma)
            }                        \le 
            \integral{
                \testCurves\del{ [0,1]; X}
            }{
                \quad \sint{\gamma}{g}  
            }{
                \mu(\gamma)
            }.
    \end{equation*}
    This shows that $g$ is a generalized $p$-weak upper $S$-gradient
    of $f$.
\end{proof}
\begin{corollary} \label{cor::TC_hat_Newtonian_0_ae}
    Let $(X, \metricAlone)$ be a metric space and let $\measure$ be a Borel measure on $X$. If $f, f' \in \tilde{\mathcal{N}}_{\hat{TC}}^{1,p}(X)$
    are such that $f = f'$ $\measure$-almost everywhere, 
    then $\norm{ f- f'}_{\mathcal{N}_{\hat{TC}}^{1,p}(X)} = 0$.

\end{corollary}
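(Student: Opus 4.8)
The plan is to show that $0$ is a generalized $p$-weak upper $S$-gradient of $f-f'$; once this is established, the claim follows at once, since then
$\norm{f-f'}_{\mathcal{N}_{\hat{TC}}^{1,p}(X)} = \norm{f-f'}_{L^p(\measure)} + \inf_g \norm{g}_{L^p(\measure)} = 0 + 0 = 0$, the first term vanishing because $f=f'$ $\measure$-almost everywhere and the infimum being attained at $g\equiv 0$.

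First I would verify that $h \coloneqq f-f'$ genuinely belongs to $\tilde{\mathcal{N}}_{\hat{TC}}^{1,p}(X)$. As $f,f'\colon X\to\bR$ are Borel and $p$-integrable, so is $h$. Pick $g_f \in L^p(\measure)\cap\generalisedPWeakUpperSGradient f$ and $g_{f'}\in L^p(\measure)\cap\generalisedPWeakUpperSGradient{f'}$. Then $g_f+g_{f'}\in L^p(\measure)$, and from the pointwise bound $\abs{h\circ\gamma(1)-h\circ\gamma(0)} \le \abs{f\circ\gamma(1)-f\circ\gamma(0)} + \abs{f'\circ\gamma(1)-f'\circ\gamma(0)}$, integration against any test measure $\mu$ gives $\int_{\testCurves([0,1];X)}\abs{h\circ\gamma(1)-h\circ\gamma(0)}\,\mathrm{d}\mu(\gamma) \le \sint{\mu}{g_f} + \sint{\mu}{g_{f'}} = \sint{\mu}{g_f+g_{f'}}$ for all $\mu$ outside the union of the two $\tilde{\modulus}^p$-null exceptional sets; since $\tilde{\modulus}^p$ is an outer measure (the analogue of Proposition \ref{mod0}), that union is again $\tilde{\modulus}^p$-null, so $g_f+g_{f'}$ is a generalized $p$-weak upper $S$-gradient of $h$. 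Hence $h\in\tilde{\mathcal{N}}_{\hat{TC}}^{1,p}(X)$.

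Next, by Theorem \ref{lem:comparison_between_various_upper_gradients} the (nonempty) family $L^p(\measure)\cap\generalisedPWeakUpperSGradient h$ coincides with $L^p(\measure)\cap\pWeakUpperSGradient h$, which is therefore nonempty as well. Since $h$ is Borel and $h=0$ $\measure$-almost everywhere, Proposition \ref{prop::when_ae_0_gives_0_in_Newtonian} (in its ``$f$ is Borel'' case) yields that $0$ is a $p$-weak upper $S$-gradient of $h$. Applying Theorem \ref{lem:comparison_between_various_upper_gradients} once more, with $g\equiv 0$, shows that $0$ is a generalized $p$-weak upper $S$-gradient of $h$. Consequently $\inf_g\norm{g}_{L^p(\measure)}=0$, the infimum running over generalized $p$-weak upper $S$-gradients $g$ of $h$, and together with $\norm{h}_{L^p(\measure)}=0$ we conclude $\norm{f-f'}_{\mathcal{N}_{\hat{TC}}^{1,p}(X)}=0$.

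The argument is essentially bookkeeping, chaining Theorem \ref{lem:comparison_between_various_upper_gradients} with Proposition \ref{prop::when_ae_0_gives_0_in_Newtonian}; the only step demanding a little care is the membership $h\in\tilde{\mathcal{N}}_{\hat{TC}}^{1,p}(X)$, i.e.\ the subadditivity of the generalized $p$-weak upper $S$-gradient relation, which in turn rests on the routine outer-measure property of $\tilde{\modulus}^p$. I do not expect any serious obstacle.
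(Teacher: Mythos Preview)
Your proof is correct and follows essentially the same route as the paper: both reduce to applying Proposition \ref{prop::when_ae_0_gives_0_in_Newtonian} (in its ``$f$ is Borel'' case) to the difference, using Theorem \ref{lem:comparison_between_various_upper_gradients} twice to pass back and forth between $p$-weak and generalized $p$-weak upper $S$-gradients. The only difference is cosmetic: the paper immediately reduces to the case ``$f=0$ $\measure$-a.e.\ and $f\in\tilde{\mathcal{N}}_{\hat{TC}}^{1,p}(X)$'', leaving implicit the subadditivity step you spell out to ensure $h=f-f'\in\tilde{\mathcal{N}}_{\hat{TC}}^{1,p}(X)$.
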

\begin{proof}
        It is sufficient to show that if $f = 0$ $\measure$-almost everywhere, 
    then $\norm{ f }_{\mathcal{N}_{\hat{TC}}^{1,p}(X)} = 0$.
    Since $f \in \tilde{\mathcal{N}}_{\hat{TC}}^{1,p}(X)$, 
    there is a $g \in L^p(\measure)$ that is a generalized $p$-weak upper $S$-gradient of $f.$
    By Theorem \ref{lem:comparison_between_various_upper_gradients}
    it is also a $p$-weak upper $S$-gradient of $f$, 
    so $f \in \newtonianTCPre^{1,p}(X)$. 
    As $f$ is Borel, by Proposition \ref{prop::when_ae_0_gives_0_in_Newtonian}
    the zero function is a $p$-weak upper $S$-gradient of $f$,
    hence by Theorem \ref{lem:comparison_between_various_upper_gradients}
    the zero function is a generalized $p$-weak upper $S$-gradient of $f.$
    Since $f = 0$ $\measure$-almost everywhere, we have
    $\norm{ f }_{\mathcal{N}_{\hat{TC}}^{1,p}(X)} = 0$.
\end{proof}

\subsection{Ambrosio-Gigli-Savaré-like Space} \label{subsec::gigli-like_space}

Within this subsection we introduce the last of the considered function spaces --- a modification of the spaces considered by Ambrosio, Gigli and Savaré \cite{AGS1, AGS,  gigli}. 
This time we replace the family of absolutely continuous curves with $\testCurves\del{[0,1];X}$ and we again replace the integral along the curve with the symmetrized integral $\sint{\gamma}$.

In the next definition we define the $\measure$-admissibilty of Borel measures defined on the space $\testCurves\del{[0,1];X}$ which mimics the notion of test plans present in the papers of Ambrosio, Gigli and Savaré. 
Let us note that in the mentioned papers the topology on the space of curves is induced by the supremum metric, hence the evaluations maps $e_t \colon C\del{[0,1];X} \to X$, $e_t(\gamma) = \gamma(t)$ for $t \in [0,1]$, are Borel. In our setting, evaluation maps $e_t \colon \testCurves\del{[0,1];X} \to X$ are Borel by Proposition \ref{ewaluacje}.

\begin{definition}[$\measure$-admissible measure]
    Let $(X, \metricAlone)$ be a metric space and $\measure$ be a Borel measure on $X$.
    We will say that a Borel probability measure 
    $
        \mu 
    $
    on $\testCurves\del{ [0,1];X }$
    is $\measure$-admissible if
    \begin{itemize}
        \item 
        There exists a constant $C \ge 0$ such that
        for all $t \in [0,1]$ {and Borel $B \subseteq X$ we have
        $
            (e_t)_\# \mu(B) \le C \measure(B)
        $,} 
         where $e_t$ is
        the evaluation map 
        $
            e_t 
            \colon 
            \testCurves \del{ [0,1];X }
            \to 
            X
        $, $ e_t (\gamma) :=\gamma(t)$,        
        \item 
        $
            \integral{
                \testCurves\del{
                    [0,1]; 
                    X
                }
            }{
                 V(\gamma)
            }{
                \mu( \gamma)
            }
            < 
            \infty
        $.
    \end{itemize}
    We will denote the space of $\measure$-admissible probability measures by
    $
       \mathcal{P}^{(\measure)}(X )
    $ and  $\testMeasuresNonTrivial^{(\measure)}:= \testMeasuresNonTrivial \cap \mathcal{P}^{(\measure)}(X) $.
\end{definition}
\begin{definition}[$\measure$-upper S-gradient]
    Let $(X, \metricAlone)$ be a metric space and $\measure$ be a Borel measure on $X$.
    We will say that a Borel function $g \colon X \to [0,\infty]$ is an $\measure$-upper S-gradient of a Borel measurable
    function $f \colon X \to \bRExtended$  if 
    \begin{equation*}
        \forall 
        \mu \in \testMeasuresNonTrivial^{(\measure)} 
        \qquad 
        \integral{
            \testCurves\del{ [0,1]; X}
        }{
            \abs{
                f(\gamma(1)) - f(\gamma(0))
            }
        }{
            \mu(\gamma)
        }
        \le  \sint{ \mu }{ g }.
    \end{equation*}
\end{definition}
We will denote the family of all $\measure$-upper S-gradients of $f$ by $\mUpperSGradient f$.
\begin{remark}\label{r}
    Every upper S-gradient is an $\measure$-upper S-gradient.
\end{remark}

\begin{definition}[Ambrosio-Gigli-Savaré-like Space]
    Let $(X, \metricAlone)$ be a metric space and $\measure$ be a Borel measure on $X$. Let $p \in \intco{1, \infty}$. We define a space
    \begin{equation*}
        \tilde{G}^{1,p}(X)
        \coloneqq 
                \set{
            f: X \rightarrow \mathbb{R}
            \given  f\, \text{is Borel}, 
            \int_X |f|^p \mathrm{d} \measure< \infty \,\text{and} \, L^p(\measure) \cap \mUpperSGradient f \neq \emptyset 
            }.
    \end{equation*}
    On space $\tilde{G}^{1,p}(X)$ we define a seminorm
    \begin{equation*}
        \forall 
            f \in \tilde{G}^{1,p}(X)
        \qquad 
            \norm{ f }_{G^{1,p}(X)} 
            \coloneqq 
            \norm{ f }_{L^p(\measure)}
            + 
            \inf_{ g }
                \norm{ g }_{ L^p( \measure) },
    \end{equation*}
    where the infimum is taken over all $\measure$-upper $S$-gradients $g$ of $f$. Also, we define an equivalence relation $\sim$ by
    \begin{equation*}
        \forall 
            f, f'\in \tilde{G}^{1,p}(X)
        \qquad 
            f \sim f'
            \iff
            \norm{ f - f' } _{G^{1,p}(X)}
            =
            0.
    \end{equation*}
    Finally, we define the Ambrosio-Gigli-Savaré-like space, as the quotient space  $G^{1,p}(X) \coloneqq \tilde{G}^{1,p}(X) / {\sim}$.
    On this space $\norm{ \cdot } _{G^{1,p}(X)}$ is a norm.
\end{definition}
\begin{proposition}\label{prop::f=0_ae_and_in_Gigli_means_it_is_0_in_Gigli}

    Let $(X, \metricAlone)$ be a metric space and $\measure$ be a Borel measure on $X$.
    If $f \in \tilde{G}^{1,p}(X)$ is such that $f = 0$ $\measure$-almost everywhere, then $\norm{ f }_{G^{1,p}(X)} = 0$.
\end{proposition}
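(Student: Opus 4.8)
The plan is to prove the stronger statement that the zero function is an $\measure$-upper $S$-gradient of $f$. Granting this, since $\norm{ f }_{G^{1,p}(X)} = \norm{ f }_{L^p(\measure)} + \inf_g \norm{ g }_{L^p(\measure)}$ with the infimum taken over $\measure$-upper $S$-gradients $g$ of $f$, and since $f = 0$ $\measure$-almost everywhere forces $\norm{ f }_{L^p(\measure)} = 0$, one immediately gets $\inf_g \norm{ g }_{L^p(\measure)} \le \norm{ 0 }_{L^p(\measure)} = 0$, whence $\norm{ f }_{G^{1,p}(X)} = 0$.

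To verify that $0 \in \mUpperSGradient f$, I would fix an arbitrary $\mu \in \testMeasuresNonTrivial^{(\measure)}$ and check that $\integral{ \testCurves\del{[0,1];X} }{ \abs{ f(\gamma(1)) - f(\gamma(0)) } }{ \mu(\gamma) } \le \sint{ \mu }{ 0 }$. The right-hand side is $0$, because $\sint{\gamma}{0} = \tfrac{1}{2}\del{ \int_\gamma 0 + \int_{\overleftarrow{\gamma}} 0 } = 0$ for every test curve $\gamma$. For the left-hand side, set $E' \coloneqq \set{ x \in X \given f(x) \ne 0 }$; since membership in $\tilde{G}^{1,p}(X)$ forces $f$ to be Borel, $E'$ is Borel, and $f = 0$ $\measure$-almost everywhere gives $\measure(E') = 0$. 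Now I would invoke the first defining property of $\measure$-admissibility: there is a constant $C \ge 0$ with $(e_t)_\# \mu(B) \le C \measure(B)$ for all $t \in [0,1]$ and all Borel $B \subseteq X$. Applying this at $t = 0$ and $t = 1$ with $B = E'$ — which is legitimate because the evaluation maps $e_0, e_1$ are Borel by Proposition \ref{ewaluacje}, so that $(e_t)^{-1}\sbr{E'}$ is $\mu$-measurable and $(e_t)_\#\mu(E') = \mu\del{ (e_t)^{-1}\sbr{E'} }$ — yields $(e_0)_\# \mu(E') = (e_1)_\# \mu(E') = 0$. Hence for $\mu$-almost every $\gamma$ we have $\gamma(0), \gamma(1) \notin E'$, i.e.\ $f(\gamma(0)) = f(\gamma(1)) = 0$, so the integrand vanishes $\mu$-almost everywhere and the integral is $0$. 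Since $\mu \in \testMeasuresNonTrivial^{(\measure)}$ was arbitrary, $0$ is an $\measure$-upper $S$-gradient of $f$, and the proof concludes as in the first paragraph.

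There is essentially no hard obstacle here; the only point requiring care is ensuring the admissibility estimate is genuinely available at the endpoint parameters $t = 0$ and $t = 1$, which hinges on the Borel measurability of $e_0$ and $e_1$ — precisely the content of Proposition \ref{ewaluacje} — so that the pushforwards $(e_t)_\#\mu$ are well defined on Borel sets. It is also worth remarking that the argument uses only that $f$ is Borel together with the endpoint pushforward bound, and not the finite-average-variation clause in the definition of $\measure$-admissibility.
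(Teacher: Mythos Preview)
Your proof is correct and follows essentially the same approach as the paper: both show that $0$ is an $\measure$-upper $S$-gradient of $f$ by using the admissibility pushforward bound $(e_t)_\#\mu \le C\measure$ at $t=0,1$ on the Borel null set $\{f \ne 0\}$ to conclude that $f(\gamma(0)) = f(\gamma(1)) = 0$ for $\mu$-almost every $\gamma$. The only cosmetic difference is that the paper names the set of ``bad'' curves explicitly and bounds its $\mu$-measure, whereas you go directly to the pushforwards; your closing remark that the finite-average-variation clause is unused is also accurate.
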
    
\begin{proof}
        Let 
    $
        E 
        \coloneqq 
        \set{
            x \in X
            \given 
            f(x) \ne 0
        }
    $.
    Since $f \in \tilde{G}^{1,p}(X)$, $f$ is Borel, hence $E$ is a Borel set. 
    Furthermore, as $f = 0$ $\measure$-almost everywhere, $\measure(E) = 0$.
    We will show that $0$ is an $\measure$-upper S-gradient of $f$.
    
    Fix $\mu \in \testMeasuresNonTrivial^{(\measure)}$, there exists $C > 0$ such that 
    $
        (e_t)_\# \mu \le C \measure
    $
    for all $t \in [0,1]$.
    We will show that set
    \begin{equation*}
        E'
        \coloneqq 
        \set{
            \gamma \in \testCurves\del{ [0,1]; X}
            \given 
            f(\gamma(0) ) \ne 0 
            \text{ or } 
            f(\gamma(1) ) \ne 0 
        }
    \end{equation*}
    satisfies $\mu(E') = 0$. 
    Note that $E'$ is Borel as by Theorem \ref{lem::hyper_envaluations_are_Borel} $ \gamma \mapsto f(\gamma(t))$ is a Borel function for a fixed $t \in [0,1]$,
    hence $\mu(E')$ is well-defined.
    We have
    \begin{align*}
        \mu(E')
        &=
        \mu\del{
            \set{
                \gamma \in \testCurves\del{ [0,1]; X}
                \given 
                f(\gamma(0) ) \ne 0 
                \text{ or } 
                f(\gamma(1) ) \ne 0 
            }
        }
        \\
        &=
        \mu\del{
            e_0^{-1}\sbr{E}
            \cup e_1^{-1}\sbr{E}
        }
        \\
        &\le 
        \mu\del{
            e_0^{-1}\sbr{E}}
        + \mu\del{
            e_1^{-1}\sbr{E}}
        \\
        &=
        (e_0)_\#\mu(E) + (e_1)_\#\mu(E)
        \\
        &\le 
        2C \measure(E)
        \\
        &=0.
    \end{align*}    
    Therefore, 
    $
        f(\gamma(0)) = 0
    $
    and 
    $
        f(\gamma(1)) = 0
    $
    for $\mu$-almost every $\gamma \in \testCurves\del{ [0,1]; X}$, hence
    $
        \abs{f(\gamma(0)) - f(\gamma(1))} = 0
    $
    for $\mu$-almost every $\gamma \in \testCurvesNonTrivial$.
    In consequence,
    \begin{equation*}
        \int_{\testCurves\del{ [0,1]; X}}
            \abs{f(\gamma(0)) - f(\gamma(1))} 
        \ \mathrm{d}\mu(\gamma)
        =
        \int_{\testCurves\del{ [0,1]; X}}
            0
        \ \mathrm{d}\mu(\gamma)
        =
        \int_{\testCurves\del{ [0,1]; X}}
            \sint{\gamma} 0
        \ \mathrm{d}\mu(\gamma).
    \end{equation*}
    Hence, $0$ is an $\measure$-upper S-gradient of $f$.
    This, combined with the fact that $f = 0$ $\measure$-almost everywhere, gives us
    $\norm{ f }_{G^{1,p}(X)} = 0$.

\end{proof}

\section{Hajłasz--Sobolev spaces vs TC-Newtonian spaces} \label{sec::hajlasz_and_TC_Newtonian}
We devote this section to the comparison of the Hajłasz--Sobolev and the $\testCurves$-Newtonian spaces. 
The main result of this section, Theorem \ref{hn}, shows that  the $\testCurves$-Newtonian spaces are much more similar to the Hajłasz--Sobolev spaces than the usual Newtonian spaces.
Indeed, the theorems showing the equivalence between the Newtonian space $N^{1,p}$ and the Hajłasz--Sobolev space $M^{1,p}$ require that the measure $\mu$ on the space is doubling and supports some Poincar\'{e} inequality (for example, see \cite[Theorem 4.9]{newtonian} and \cite[Theorem 5.1]{comparisonNewtonianHajlasz}).

Let us note that the proof of the $(b)$ part of Theorem \ref{hn} is inspired by the proofs of \cite[Lemma 4.7]{newtonian} and \cite[Lemma 1.3]{hajlaszGradientsAreUpper}.
However, since we allow curves to be discontinuous, the proof becomes much more technical.

\begin{definition}[Hajłasz Gradient]
    Let $(X, \metricAlone)$ be a metric space and $\measure$ be a measure on $X$.
    Let $f \colon X \to \bRExtended$ be measurable and finite $\measure$- almost everywhere. 
    We will say that a measurable 
    function $g \colon X \to \intcc{0, \infty}$
    is a Hajłasz gradient of $f$, if there exists a measurable set $E \subset X$ of measure $0$ such that for every $x, y \in X \setminus E$ we have
    \begin{equation*}
                \abs{
            f(y) - f(x)
        }
        \le 
        \del{ g(x) + g(y) }
        \metric\del{
            x,y
        }.
    \end{equation*}
    We will denote the family of Hajłasz gradients of $f$
    by $\hajlaszGradient f$.
\end{definition}
\begin{definition}[Hajłasz-Sobolev space]
    Let $(X, \metricAlone)$ be a metric space and $\measure$ be a measure on $X$. 
    Let $p \in \intcc{1, \infty}$.
    We define the Hajłasz-Sobolev space $M^{1,p}(X)$ as the space
    \begin{equation*}
        M^{1,p}(X)
        \coloneqq 
        \set{
            f \in L^p(\measure)
            \given 
            \exists g \in L^p(\measure)
            \quad 
            g \in \hajlaszGradient f
        }
    \end{equation*}
    endowed with the norm
    \begin{equation*}
        \norm{ f }_{M^{1,p}(X)}
        \coloneqq 
        \norm{ f }_{L^p(\measure)}
        +
        \inf_{g \in \hajlaszGradient f}
        \norm{ g }_{L^p(\measure)}.
    \end{equation*}
\end{definition}
The following theorem is the main result of this section.
\begin{theorem} \label{hn}
    Let $(X, \metricAlone)$ be a metric space, $\measure$ be a Borel measure on $X$, and $p \in [1, \infty)$. Then, for any measurable functions $f \colon X \to \bRExtended$, $g \colon X \to \intcc{0,\infty}$ such that $f$ and $g$ are finite $\measure$-almost everywhere we have:
    \begin{enumerate}[label=(\alph*)]
        \item 
        If $g$ is a p-weak upper S-gradient of $f$, then $g/2$ is a Hajłasz gradient of $f$,
        \item 
        If $\measure$ is $\sigma$-finite and Borel regular, $g$ is a Hajłasz gradient of $f$, then there exist Borel functions $\tilde{f}:X \rightarrow \mathbb{R}$ and $\tilde{g}: X \rightarrow [0,\infty]$, equal $\measure$-a.e. to $f,g$, respectively, such that $76\tilde{g}$ is an upper S-gradient of $\tilde{f}$.
    \end{enumerate}
\end{theorem}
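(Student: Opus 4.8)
\emph{Part (a).} For $x\neq y$ the two–point curve $\gamma_x^y\in\testCurvesNonTrivial$ from Example~\ref{e1} satisfies, by Example~\ref{prz325}, $\sint{\gamma_x^y}{\rho}=\tfrac12\bigl(\rho(x)+\rho(y)\bigr)\metric\del{x,y}$ for every nonnegative Borel $\rho$. If $g$ is a $p$–weak upper $S$–gradient of $f$, then the set $\Gamma$ of curves $\gamma$ with $|f(\gamma(1))-f(\gamma(0))|>\sint{\gamma}{g}$ has $\modulus^p(\Gamma)=0$, so by Proposition~\ref{mod0} there is a nonnegative Borel $\rho\in L^p(\measure)$ with $\sint{\gamma}{\rho}=\infty$ for all $\gamma\in\Gamma$. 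Put $E:=\{\rho=\infty\}\cup\{|f|=\infty\}\cup\{g=\infty\}$, so $\measure(E)=0$. For $x,y\in X\setminus E$ with $x\neq y$ we have $\sint{\gamma_x^y}{\rho}<\infty$, hence $\gamma_x^y\notin\Gamma$ and $|f(y)-f(x)|\le\sint{\gamma_x^y}{g}=\bigl(\tfrac{g(x)}{2}+\tfrac{g(y)}{2}\bigr)\metric\del{x,y}$; the case $x=y\notin E$ is trivial. Thus $g/2\in\hajlaszGradient f$.

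\emph{Part (b): reduction and construction.} Using that $\measure$ is $\sigma$–finite and Borel regular, I would first replace $f,g$ by Borel functions $\hat f\colon X\to\mathbb R$ and $\hat g\colon X\to[0,\infty)$ agreeing $\measure$–a.e.\ with $f,g$, and absorb into a single \emph{Borel} $\measure$–null set $E$ the Hajłasz exceptional set, the sets $\{\hat f\ne f\}$ and $\{\hat g\ne g\}$, and any further $\measure$–null set needed to make $\hat f$ a good representative (throwing a point $x$ into $E$ when $\hat g$ fails to be essentially bounded near $x$, or $\hat f$ fails to have a limit along $X\setminus E$ at $x$); this keeps $\measure(E)=0$ and preserves $|\hat f(x)-\hat f(y)|\le(\hat g(x)+\hat g(y))\metric\del{x,y}$ for $x,y\in X\setminus E$. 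Set $\tilde g:=\hat g$ on $X\setminus E$ and $\tilde g:=+\infty$ on $E$; this is Borel and $=g$ $\measure$–a.e. Define $\tilde f:=\hat f$ on $X\setminus E$ and, for $x\in E$, $\tilde f(x):=\lim_{r\to0^+}\inf\{\hat f(y):y\in B\del{x,r}\setminus E\}$ (with $\inf\emptyset:=0$); since the bracketed infimum is upper semicontinuous in $x$, $\tilde f$ is Borel, and $\tilde f=f$ $\measure$–a.e.\ as $\measure(E)=0$. It remains to prove $76\,\tilde g\in\upperSGradient{\tilde f}$.

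\emph{Part (b): the chaining argument.} Fix $\gamma\in\testCurvesNonTrivial$; we may assume $\sint{\gamma}{\tilde g}<\infty$. By Lemma~\ref{lem:symmetrized_Lebesgue-Stieltjes_integral_as_a_pushforward} this forces $\mu_\gamma^S(E)=0$, hence $\sint{\gamma}{\tilde g}=\int_X\hat g\,\mathrm{d}\mu_\gamma^S$; moreover $\gamma_\#\mu_\gamma\le2\mu_\gamma^S$ gives $\mu_\gamma(\gamma^{-1}(E))=0$, and the jump lower bound $\mu_\gamma^S(\{\gamma(t^-)\}),\mu_\gamma^S(\{\gamma(t)\})\ge\tfrac12\metric\del{\gamma(t^-),\gamma(t)}$ from the proof of Proposition~\ref{prop::when_ae_0_gives_0_in_Newtonian} gives $\gamma(t),\gamma(t^-)\notin E$ at every discontinuity point $t$. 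Now I would iterate Lemma~\ref{lem:adding_symmetrized_integrals_over_intervals} along an increasingly fine partition $0=t_0<\dots<t_n=1$: the finitely many largest jumps of $\gamma$ are put among the $t_i$ and contribute the middle terms $\tfrac{\hat g(\gamma(t))+\hat g(\gamma(t^-))}{2}\bigl(V_\gamma(t)-V_\gamma(t^-)\bigr)$, which already dominate $\tfrac12|\hat f(\gamma(t))-\hat f(\gamma(t^-))|$ by the Hajłasz inequality at $\gamma(t),\gamma(t^-)\notin E$; the remaining $t_i$ are chosen among the continuity points of $\gamma$ with $\gamma(t_i)\notin E$ at which $\hat g(\gamma(t_i))$ is at most twice the $\mu_\gamma$–average of $\hat g\circ\gamma$ over the neighbouring cells, which is possible by Chebyshev's inequality applied to the non–atomic part of $\mu_\gamma$ (carried by the continuity points of $\gamma$, since its atoms sit at jumps of $V_\gamma$) together with $\mu_\gamma(\gamma^{-1}(E))=0$. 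Telescoping $|\tilde f(\gamma(0))-\tilde f(\gamma(1))|$, bounding each interior step by $(\hat g(\gamma(t_{i-1}))+\hat g(\gamma(t_i)))\bigl(V_\gamma(t_i)-V_\gamma(t_{i-1})\bigr)$ via the Hajłasz inequality, and summing against the average property of the $t_i$ and the additivity $\sum_i(V_\gamma(t_i)-V_\gamma(t_{i-1}))=V(\gamma)$ bounds everything by a fixed multiple of $\int_X\hat g\,\mathrm{d}\mu_\gamma^S$; the first and last cells are made negligible by Lemma~\ref{lem::descending_family_of_subsets_to_0}, and when $\gamma(0)$ or $\gamma(1)$ lies in $E$ their $\tilde f$–values are recovered from $X\setminus E$ along the curve using the definition of $\tilde f$ on $E$. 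Accounting for the two halves of the symmetrized integral, the passage to $\overleftarrow{\gamma}$, the Chebyshev constant, the jump terms and the triangle inequalities yields $|\tilde f(\gamma(1))-\tilde f(\gamma(0))|\le76\sint{\gamma}{\tilde g}$.

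\emph{Main obstacle.} Unlike the continuous rectifiable curves of \cite[Lemma~4.7]{newtonian} and \cite[Lemma~1.3]{hajlaszGradientsAreUpper}, test curves may jump, so the chaining has to be carried out with the left–adjusted restrictions of Lemma~\ref{lem:adding_symmetrized_integrals_over_intervals} and the jump contributions tracked separately; and, most delicately, the single Borel function $\tilde f$ must simultaneously represent $f$ $\measure$–a.e.\ and be compatible, at every point of the $\measure$–null set $E$, with every test curve emanating from that point whose symmetrized $\tilde g$–integral is finite. Arranging $E$ and the representative $\tilde f$ so that these endpoint values, as well as the behaviour near jump points, are controlled is the technical heart of the argument, and is where the extra generality over the classical proofs (in particular the absence of any doubling or Poincaré hypothesis) is paid for.
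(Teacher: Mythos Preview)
Your Part~(a) is correct and essentially the paper's argument; using a single $\rho\in L^p(\measure)$ from Proposition~\ref{mod0} and the null set $\{\rho=\infty\}$ is in fact slightly cleaner than the paper's route via a sequence $\rho_n\to 0$ almost everywhere.

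Part~(b), however, has a genuine gap. The paper's proof hinges on a step you omit: a Lipschitz truncation via McShane extension. After arranging (as you do) Borel representatives with $|f'(x)-f'(y)|\le(g'(x)+g'(y))\metric\del{x,y}$ for all $x,y$, the paper sets $E_k=\{g'\le 2^k\}$, extends $f'|_{E_k}$ to a $2^{k+1}$-Lipschitz $f'_k$ on all of $X$, and pairs it with a \emph{bounded} $g'_k\le 2g'$. The chaining is then carried out for $(f'_k,g'_k)$ with $g'_k$ bounded (this is the paper's Lemmata~\ref{lem::bounding_difference_in_f_by_integral_with_endpoints} and~\ref{lem::inequality_for_curves_for_lipshitz_case}, yielding constant $18$), and only afterwards does one set $\tilde f:=\liminf_k f'_k$ and pass to the limit. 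Boundedness of $g'_k$ is what makes the chaining work: in Lemma~\ref{lem::bounding_difference_in_f_by_integral_with_endpoints} the endpoint terms $8Mg(\gamma(a))$ and $8Mg(\gamma(b^-))$ appear, and these are sent to $0$ by letting $M\to 0^+$ \emph{only because $g$ is bounded}. Your direct chaining with unbounded $\hat g$ cannot absorb these endpoint contributions, and your appeal to Lemma~\ref{lem::descending_family_of_subsets_to_0} for ``the first and last cells'' controls only integrals over shrinking subintervals, not the pointwise values $\hat g(\gamma(0)),\hat g(\gamma(1))$ that the Haj\l asz inequality produces at the endpoints.

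Your construction of $\tilde f$ is also problematic. Defining $\tilde f(x)$ for $x\in E$ as a ball-$\liminf$ of $\hat f$ over $X\setminus E$ gives no reason why $|\tilde f(\gamma(0))-\hat f(\gamma(s))|$ should be controlled by $\sint{\gamma}{\tilde g}$ when $\gamma(0)\in E$: the ball-$\liminf$ need not match approach along the specific curve $\gamma$. Moreover, your description of $E$ (``throwing a point $x$ into $E$ when $\hat f$ fails to have a limit along $X\setminus E$ at $x$'') is circular, since $E$ is defined in terms of itself, and there is no argument that such an $E$ stays $\measure$-null. The paper's choice $\tilde f=\liminf_k f'_k$ avoids all of this: each $f'_k$ already satisfies $|f'_k(\gamma(0))-f'_k(\gamma(1))|\le\sint{\gamma}{36\tilde g}$ (after a split at a point $t$ with $\tilde g(\gamma(t)),\tilde g(\gamma(t^-))<\infty$), and the $\liminf$ inherits the bound because $f'_k$ eventually stabilises at points where $\tilde g$ is finite.
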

    Having in mind Proposition \ref{prop::when_ae_0_gives_0_in_Newtonian}, the folowing result is a corollary to Theorem \ref{hn}.
\begin{theorem} \label{ro}
    Let $(X, \metricAlone)$ be a metric space, $\measure$ be $\sigma$-finite and Borel regular measure, and $p \in [1, \infty)$. Then, 
    $$
        M^{1,p}(X)\cong \newtonianTC^{1,p}(X).
    $$
    \end{theorem}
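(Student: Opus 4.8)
The plan is to produce a linear bijection between $M^{1,p}(X)$ and $\newtonianTC^{1,p}(X)$ whose inverse is also bounded, extracting everything from the two halves of Theorem~\ref{hn}: part~(a) controls the passage from the $\testCurves$-Newtonian side to the Hajłasz side, part~(b) the reverse passage, and Proposition~\ref{prop::when_ae_0_gives_0_in_Newtonian} (applicable here since $\measure$ is Borel regular) is what makes the maps descend to the quotients.

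First I would define $\Psi\colon \newtonianTC^{1,p}(X)\to M^{1,p}(X)$. For $f\in\newtonianTCPre^{1,p}(X)$ pick $g\in L^p(\measure)\cap\pWeakUpperSGradient f$; by Theorem~\ref{hn}(a) the function $g/2$ is a Hajłasz gradient of $f$ lying in $L^p(\measure)$, so $f\in M^{1,p}(X)$ and, taking infima, $\inf_{h\in\hajlaszGradient f}\norm{h}_{L^p(\measure)}\le\tfrac12\inf_{g}\norm{g}_{L^p(\measure)}$, where the right-hand infimum runs over $p$-weak upper $S$-gradients of $f$. Hence $\norm{f}_{M^{1,p}(X)}\le\norm{f}_{\newtonianTC^{1,p}(X)}$. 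In particular $\norm{f}_{\newtonianTC^{1,p}(X)}=0$ forces $f=0$ $\measure$-a.e., so $f\mapsto[f]_{M^{1,p}(X)}$ is constant on $\newtonianTC^{1,p}$-classes and $\Psi$ is a well-defined linear contraction.

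Next I would define $\Phi\colon M^{1,p}(X)\to\newtonianTC^{1,p}(X)$. Fix a representative $f\in L^p(\measure)$ of a class in $M^{1,p}(X)$ and a Hajłasz gradient $g\in L^p(\measure)$ of $f$. As $\measure$ is $\sigma$-finite and Borel regular, Theorem~\ref{hn}(b) furnishes Borel maps $\tilde f\colon X\to\bR$ and $\tilde g\colon X\to\intcc{0,\infty}$ with $\tilde f=f$ and $\tilde g=g$ $\measure$-a.e. such that $76\tilde g$ is an upper $S$-gradient, hence a $p$-weak upper $S$-gradient, of $\tilde f$; thus $\tilde f\in\newtonianTCPre^{1,p}(X)$ with $\norm{\tilde f}_{\newtonianTC^{1,p}(X)}\le\norm{f}_{L^p(\measure)}+76\norm{g}_{L^p(\measure)}$. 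I put $\Phi([f]_{M^{1,p}(X)}):=[\tilde f]_{\newtonianTC^{1,p}(X)}$. For well-definedness, any two choices of representative and of Hajłasz gradient yield $\tilde f_1,\tilde f_2\in\newtonianTCPre^{1,p}(X)$ with $\tilde f_1=\tilde f_2$ $\measure$-a.e.; then $\tilde f_1-\tilde f_2\in\newtonianTCPre^{1,p}(X)$, a $p$-weak upper $S$-gradient being the sum of $p$-weak upper $S$-gradients of $\tilde f_1$ and $\tilde f_2$ (using additivity of $\sint{\gamma}{\cdot}$ in the integrand, via Lemma~\ref{lem:symmetrized_Lebesgue-Stieltjes_integral_as_a_pushforward}, together with subadditivity of $\modulus^p$), so Proposition~\ref{prop::when_ae_0_gives_0_in_Newtonian} gives $\norm{\tilde f_1-\tilde f_2}_{\newtonianTC^{1,p}(X)}=0$, i.e.\ $[\tilde f_1]_{\newtonianTC^{1,p}(X)}=[\tilde f_2]_{\newtonianTC^{1,p}(X)}$. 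The same computation (applied to sums and scalar multiples, whose $p$-weak upper $S$-gradients add, resp.\ rescale) shows $\Phi$ is linear, and taking the infimum over Hajłasz gradients of $f$ in the displayed estimate yields $\norm{\Phi(u)}_{\newtonianTC^{1,p}(X)}\le 76\,\norm{u}_{M^{1,p}(X)}$.

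Finally I would check that $\Phi$ and $\Psi$ are mutually inverse: $\Psi(\Phi([f]_{M^{1,p}}))=[\tilde f]_{M^{1,p}}=[f]_{M^{1,p}}$ because $\tilde f=f$ $\measure$-a.e., while $\Phi(\Psi([f]_{\newtonianTC^{1,p}}))=[\tilde f]_{\newtonianTC^{1,p}}=[f]_{\newtonianTC^{1,p}}$ by Proposition~\ref{prop::when_ae_0_gives_0_in_Newtonian} applied to $f-\tilde f\in\newtonianTCPre^{1,p}(X)$. Combining the two bounds gives $\tfrac{1}{76}\norm{u}_{\newtonianTC^{1,p}(X)}\le\norm{\Psi(u)}_{M^{1,p}(X)}\le\norm{u}_{\newtonianTC^{1,p}(X)}$ for every $u$, so $\Psi$ is a linear isomorphism of normed spaces and $M^{1,p}(X)\cong\newtonianTC^{1,p}(X)$. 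I expect the one genuinely delicate point to be the well-definedness of $\Phi$, namely upgrading ``equal $\measure$-a.e.'' to ``the same class in the $\testCurves$-Newtonian quotient''; this is exactly where Borel regularity enters, through Proposition~\ref{prop::when_ae_0_gives_0_in_Newtonian}, since on a general measure space the difference of two $\measure$-a.e.\ equal $\testCurves$-Newtonian functions need not have vanishing $\newtonianTC^{1,p}$-seminorm.
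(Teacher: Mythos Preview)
Your proof is correct and follows essentially the same route as the paper, which simply records Theorem~\ref{ro} as a corollary of Theorem~\ref{hn} together with Proposition~\ref{prop::when_ae_0_gives_0_in_Newtonian}. You have spelled out precisely the details the paper leaves implicit: the two maps coming from parts~(a) and~(b) of Theorem~\ref{hn}, and the use of Proposition~\ref{prop::when_ae_0_gives_0_in_Newtonian} (via Borel regularity of $\measure$) to make the construction descend to the quotients and to verify that the maps are mutual inverses.
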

The proof of Theorem \ref{hn} will follow from Theorem \ref{uno} and Theorem \ref{dos}.

\begin{theorem}\label{uno}
    Let $(X, \metricAlone)$ be a metric space and $\measure$ be a Borel measure on $X$.
    Let $f \colon X \to \bRExtended$ be finite $\measure$-almost everywhere.
    Then if $g \colon X \to \intcc{0, \infty}$
    is a $p$-weak upper S-gradient of $f$, then $g/2$ is a Hajłasz gradient of $f$.
\end{theorem}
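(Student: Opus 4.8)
The plan is to exploit the fact that a $p$-weak upper $S$-gradient gives the desired inequality along $\modulus^p$-almost every nontrivial test curve, and then specialise to the simplest possible test curves: the two-point curves from Example \ref{e1}. For a pair $x,y\in X$ with $x\neq y$, let $\gamma_x^y\colon[0,1]\to X$ be given by $\gamma_x^y(t)=x$ for $t\in[0,1/2)$ and $\gamma_x^y(t)=y$ otherwise; by Example \ref{e1} this is a test curve, and since $x\neq y$ it has $V(\gamma_x^y)=d(x,y)>0$, so $\gamma_x^y\in\testCurvesNonTrivial$. By Example \ref{prz325} we have $\sint{\gamma_x^y}{g}=\tfrac12\bigl(g(x)+g(y)\bigr)d(x,y)$ whenever $g(x)+g(y)$ makes sense, in particular whenever $g$ is finite at both points. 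The heart of the argument is to show that for $\measure$-almost every pair $(x,y)$ the curve $\gamma_x^y$ avoids the exceptional family, so the $S$-gradient inequality $|f(\gamma_x^y(1))-f(\gamma_x^y(0))|\le\sint{\gamma_x^y}{g}$ holds, i.e. $|f(y)-f(x)|\le\tfrac12(g(x)+g(y))d(x,y)$, which after writing $g/2$ in place of the Hajłasz gradient is exactly what is required.

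First I would fix $g$ a $p$-weak upper $S$-gradient of $f$, so there is an exceptional family $\Gamma\subseteq\testCurvesNonTrivial$ with $\modulus^p(\Gamma)=0$ such that the gradient inequality holds for all $\gamma\in\testCurvesNonTrivial\setminus\Gamma$. By Proposition \ref{mod0} there is a Borel $\rho\in L^p(\measure)$ with $\sint{\gamma}{\rho}=\infty$ for every $\gamma\in\Gamma$. The key observation is that for the two-point curve, $\sint{\gamma_x^y}{\rho}=\tfrac12(\rho(x)+\rho(y))d(x,y)$ when $\rho(x),\rho(y)<\infty$ (and in general this symmetrized integral is finite as soon as $\rho$ is finite at both endpoints, by Example \ref{prz325} and the convention on $\infty$). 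Hence if $\rho(x)<\infty$ and $\rho(y)<\infty$ then $\gamma_x^y\notin\Gamma$. Let $N\coloneqq\{x\in X : \rho(x)=\infty\}$; since $\rho\in L^p(\measure)$ we have $\measure(N)=0$. Also let $N_f\coloneqq\{x : |f(x)|=\infty\}$, which has measure zero by hypothesis, and $N_g\coloneqq\{x : g(x)=\infty\}$, also of measure zero. Put $E\coloneqq N\cup N_f\cup N_g$, so $\measure(E)=0$.

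Now for any $x,y\in X\setminus E$ with $x\neq y$: the curve $\gamma_x^y$ is a nontrivial test curve, $\rho$ is finite at both endpoints so $\gamma_x^y\notin\Gamma$, hence the $p$-weak upper $S$-gradient inequality applies and gives
\begin{equation*}
|f(y)-f(x)| = |f(\gamma_x^y(1)) - f(\gamma_x^y(0))| \le \sint{\gamma_x^y}{g} = \tfrac12\bigl(g(x)+g(y)\bigr)d(x,y),
\end{equation*}
where the last equality is Example \ref{prz325}, legitimate since $g(x),g(y)<\infty$. When $x=y$ both sides vanish. Therefore $|f(y)-f(x)|\le\bigl(\tfrac{g(x)}{2}+\tfrac{g(y)}{2}\bigr)d(x,y)$ for all $x,y\in X\setminus E$, which is precisely the statement that $g/2\in\hajlaszGradient f$. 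The only subtlety to be careful about — and what I expect to be the main point to get exactly right rather than a genuine obstacle — is the bookkeeping of infinite values: one must ensure that by removing the measure-zero set $E$ the quantities $f(y)-f(x)$, $g(x)+g(y)$, and $\sint{\gamma_x^y}{g}$ are all genuinely finite and the cited formulas from Examples \ref{e1} and \ref{prz325} literally apply; this is why we absorb $N_f$ and $N_g$ into $E$ and invoke finiteness of $\rho\in L^p(\measure)$ outside a null set. Everything else is a direct substitution into the definitions.
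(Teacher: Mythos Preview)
Your approach is essentially the same as the paper's: both specialize to the two-point curves $\gamma_x^y$ and show that for $\measure$-almost every pair $(x,y)$ this curve lies outside the exceptional family $\Gamma$. The paper works directly from the definition of $\modulus^p(\Gamma)=0$, picking $\rho_n\in F(\Gamma)$ with $\|\rho_n\|_{L^p(\measure)}^p\to 0$ and extracting a subsequence converging to $0$ almost everywhere; you instead invoke Proposition \ref{mod0} to get a single $\rho\in L^p(\measure)$ with $\sint{\gamma}{\rho}=\infty$ on $\Gamma$, which is a slightly cleaner packaging of the same idea.

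One small slip: you assert that $N_g=\{x:g(x)=\infty\}$ has measure zero, but the hypotheses of the theorem do not assume $g\in L^p(\measure)$ or that $g$ is finite almost everywhere. Fortunately this assumption is unnecessary. Since $g\ge 0$, the sum $g(x)+g(y)$ always makes sense in $[0,\infty]$, so Example \ref{prz325} applies without any finiteness restriction and gives $\sint{\gamma_x^y}{g}=\tfrac12(g(x)+g(y))\,\metric(x,y)$; if $g$ happens to be infinite at one endpoint the Hajłasz inequality is then trivially satisfied. You can therefore simply take $E=N\cup N_f$ and drop $N_g$ from the argument.
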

\begin{proof}
    Let
    \begin{equation*}
        \Gamma 
        \coloneqq 
        \set{
            \gamma \in \testCurvesNonTrivial
            \given 
            \abs{
                f \circ \gamma(1)
                - f \circ \gamma(0)
            }
            >
            \sint{\gamma}{g}
        }.
    \end{equation*}
    Since $g$ is a $p$-weak upper S-gradient of $f$, then
   we have $\modulus^p\del{ \Gamma } = 0$.
    Therefore, there exists a sequence 
    $\rho_n \colon X \to \intcc{0, \infty}$
    of Borel functions such that 
    $
        \norm{ \rho_n}_{L^p(\measure)}^p \to 0
    $
    as $n \to \infty$ 
    and
    $\sint{\gamma}{\rho_n} \ge 1$
    for all $n \in \bN$ and all $\gamma \in \Gamma$. Hence, there exists a subsequence $\del{ \rho_{n_k}}_k$
    such that $\rho_{n_k} \to 0$ $\measure$-almost everywhere
    as $k \to \infty$. Let
    $E \subseteq X$ be such that $\measure \del{ X \setminus E } = 0$ and for $x \in E$ we have $\rho_{n_k}(x) \to 0$
    as $k \to \infty$.
    
    Let
    \begin{equation*}
        \Gamma_E
        \coloneqq 
        \set{
            \gamma \in \testCurvesNonTrivial
            \given 
            \exists x, y \in E
            \qquad
            \gamma
            =
            x \indicator{ \intco{0, 1/2} }
            + y \indicator{ \intcc{1/2, 1 } }
        }.
    \end{equation*}
    We observe that $\Gamma_E \subseteq \testCurvesNonTrivial \setminus \Gamma$. Indeed, if $\gamma \in \Gamma_E$, then, since $\gamma(1), \gamma(0) \in E$, we have
    \begin{equation*}
        \sint{ \gamma }{ \rho_{n_k} }
        =
        \frac{ \rho_{n_k}( \gamma(1)) + \rho_{n_k}( \gamma( 0 ) ) }{2}
        \metric\del{
            \gamma(1), \gamma(0)
        }
        \xrightarrow{ k \to \infty }
        0.
    \end{equation*}
    In particular, there exists $k \in \bN$ such that
    $
        \sint{ \gamma }{ \rho_{n_k} } < 1
    $
    and hence $\gamma \notin \Gamma$.
    
    Let $x, y \in E$ be such that $x \ne y$.
    Then we define $\gamma \in \Gamma_E$ as follows 
    $\gamma
            =
            x \indicator{ \intco{0, 1/2 } }
            + y \indicator{ \intcc{1/2, 1 } }
    $.
    For such $\gamma$ we have
    \begin{equation*}
        \abs{ f(y) - f(x) }
        =
        \abs{
                f \circ \gamma(1)
                - f \circ \gamma(0)
            }
            \le 
            \sint{\gamma}{g}
        =
        \frac{ g(\gamma(1)) + g(\gamma(0)) }{2}
        \metric\del{
            \gamma(1), \gamma(0)
        }
        =
        \frac{ g(y) + g(x) }{2}
        \metric\del{ y, x}.
    \end{equation*}
    Since $\measure\del{ X \setminus E }= 0$,
    this shows that $g/2$ is a Hajłasz gradient of $f$.
\end{proof}
\begin{theorem}\label{dos}
\label{prop:almost_everywhere_finite_Hajlasz_gradients_are_upper_gradietnts}
    Let $(X, \metricAlone)$ be a metric space and $\measure$
    be $\sigma$-finite Borel regular measure on $X$.
    Let $f \colon X \to \bRExtended$ and $g \colon X \to [0,\infty]$ be measurable and
    finite $\measure$-almost everywhere. 
    If $g$ is a Hajłasz gradient of $f$, 
    then there exist Borel functions $\tilde{f}\colon X \rightarrow \mathbb{R}$ and $\tilde{g}\colon  X \rightarrow [0,\infty]$, equal $\measure$-a.e. to $f,g$, respectively, such that $76\tilde{g}$ is an upper S-gradient of $\tilde{f}$.
\end{theorem}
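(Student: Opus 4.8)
The plan is to localise every pathology of $f$ and $g$ into one Borel $\measure$-null set on which the modified gradient is set to $+\infty$, and then to verify the upper $S$-gradient inequality one test curve at a time, distinguishing whether the curve ``charges'' that null set. Concretely, using that $\measure$ is $\sigma$-finite and Borel regular, I would fix a Borel set $N$ with $\measure(N)=0$ and Borel functions $\hat f\colon X\to\bR$, $\hat g\colon X\to\intcc{0,\infty}$ such that $\hat f=f$, $\hat g=g$ and both are finite on $X\setminus N$, and such that $N$ contains $\set{|f|=\infty}\cup\set{g=\infty}$ together with the exceptional set of the Hajłasz inequality, so that $|\hat f(x)-\hat f(y)|\le(\hat g(x)+\hat g(y))\metric\del{x,y}$ for all $x,y\in X\setminus N$. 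Set $\tilde g\coloneqq\hat g$ on $X\setminus N$ and $\tilde g\coloneqq\infty$ on $N$; this is Borel and equals $g$ $\measure$-a.e. The gain is that whenever $\gamma\in\testCurvesNonTrivial$ satisfies $\mu_\gamma^S(N)>0$ one has, by Lemma \ref{lem:symmetrized_Lebesgue-Stieltjes_integral_as_a_pushforward}, $\sint{\gamma}{\tilde g}=\integral{X}{\tilde g}{\mu_\gamma^S}=\infty$, so the desired bound $|\tilde f(\gamma(1))-\tilde f(\gamma(0))|\le 76\sint{\gamma}{\tilde g}$ is automatic as soon as $\tilde f$ is real-valued; the same holds if $\sint{\gamma}{\hat g}=\infty$. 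Hence the whole problem reduces to curves with $\sint{\gamma}{\hat g}<\infty$ and $\mu_\gamma^S(N)=0$.

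For such a curve $\gamma$, the heart of the matter is a chaining estimate in the spirit of \cite[Lemma 4.7]{newtonian} and \cite[Lemma 1.3]{hajlaszGradientsAreUpper}, carried out directly on $[0,1]$ since a discontinuous metric-space curve has no arc-length parametrisation. Assuming first $\gamma(0),\gamma(1)\notin N$, I would decompose $[0,1]$ along dyadic values of $V_\gamma$: at each scale $j$ pick parameters whose $\gamma$-images lie outside $N$ and outside the at most countable (Lemma \ref{cor:variation_bounds_sum_of_jumps_from_above}) set of discontinuities, apply the Hajłasz inequality to consecutive pairs, and telescope $\hat f(\gamma(1))-\hat f(\gamma(0))$. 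The continuous stretches are handled as in the classical argument; each jump at a point $t$ is treated via Lemma \ref{lem:adding_symmetrized_integrals_over_intervals}, contributing the pair $\gamma(t^-),\gamma(t)$, and $(\hat g(\gamma(t^-))+\hat g(\gamma(t)))\metric\del{\gamma(t^-),\gamma(t)}$ is exactly twice the matching term of the symmetrized integral, with the full sum of jump contributions finite by iterating Lemma \ref{lem:adding_symmetrized_integrals_over_intervals}. Summing the resulting geometric series, running the argument on $\overleftarrow{\gamma}$ to symmetrise, and using Proposition \ref{cor:variation_is_a_uniform_limit_of_Delta-variations} together with the Riemann-sum description of $\int_\gamma\hat g$ to pass from dyadic sums of $\hat g$ to the integral, should give $|\hat f(\gamma(1))-\hat f(\gamma(0))|\le 76\sint{\gamma}{\hat g}$, the constant $76$ being the outcome of optimising the dyadic telescoping.

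It remains to define $\tilde f$ on $N$ and to cover good curves with an endpoint in $N$. For $x\in N$ and any good $\gamma$ with $\gamma(0)=x$, Lemma \ref{lem::descending_family_of_subsets_to_0} gives $\sint{\gamma\rvert_{[0,s^-]}}{\hat g}\to 0$ as $s\to 0^+$, so the chaining estimate applied to the segments $\gamma\rvert_{[s,s']}$ shows that $\hat f(\gamma(s))$ is Cauchy along $\set{s:\gamma(s)\notin N}$ and converges to some $L_\gamma\in\bR$. The key claim is that $L_\gamma$ does not depend on the good curve through $x$: if two of them had distinct boundary values, comparing them by the Hajłasz inequality and estimating $\metric\del{\gamma_1(s),\gamma_2(s)}$ by $V_{\gamma_1}(s)+V_{\gamma_2}(s)$ forces a divergent integral of $\hat g$ near the start, contradicting $\sint{\gamma_i}{\hat g}<\infty$. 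One then sets $\tilde f(x)$ to be this common value when such a curve exists and $\tilde f(x)\coloneqq 0$ otherwise; after checking that $\tilde f$ is Borel and equals $f$ $\measure$-a.e., passing to the limit $s\to 0^+$ (resp. $s\to 1^-$) in the chaining estimate for $\gamma\rvert_{[s,1]}$ closes the remaining cases and shows $76\tilde g$ is an upper $S$-gradient of $\tilde f$.

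I expect the genuine obstacle to be the bookkeeping forced by the discontinuities of test curves: one must keep track of the one-sided limits $\gamma(t^-)$ and of the jump contributions to the symmetrized integral at every step, and — since $N$ is $\measure$-null and $\measure$ need not be doubling, so no averaging or maximal-function input is available — the well-definedness of $\tilde f$ on $N$ through the limiting procedure above is the most delicate point, as is the accounting needed to reach the explicit constant $76$.
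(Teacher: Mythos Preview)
Your reduction to ``good'' curves (those with $\mu_\gamma^S(N)=0$ and $\sint{\gamma}{\hat g}<\infty$) is correct, and a direct chaining estimate of the type you describe can indeed be made to work for curves with both endpoints outside $N$. However, the paper takes a substantially different route, and your approach has a genuine gap at exactly the point you flag as delicate.

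\textbf{The gap.} Your definition of $\tilde f$ on $N$ via limits along good curves is problematic on two counts. First, Borel measurability: the set $\{x\in N:\text{some good curve starts at }x\}$ and the assignment $x\mapsto L_\gamma$ have no evident Borel structure, and you give no argument for it. Second, your sketch of well-definedness is incomplete: from $|L_{\gamma_1}-L_{\gamma_2}|\ge c>0$ you get $\hat g(\gamma_1(s))+\hat g(\gamma_2(s'))\ge c/(V_{\gamma_1}(s)+V_{\gamma_2}(s'))$, but this lower bound on a \emph{sum} does not force either $\sint{\gamma_1}{\hat g}$ or $\sint{\gamma_2}{\hat g}$ to diverge, since the two terms could alternate in carrying the weight along different sequences $s,s'\to 0$.

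\textbf{What the paper does instead.} The paper avoids defining $\tilde f$ on $N$ by any curve-limit procedure. It first proves the full curve estimate under the extra hypothesis that $g$ is \emph{bounded} (Lemma \ref{lem::inequality_for_curves_for_lipshitz_case}), obtaining the constant $18$; the chaining there is not dyadic but is organised around the points of \emph{largest} jump of $\gamma$, ordered by decreasing jump size, via the auxiliary Lemma \ref{lem::bounding_difference_in_f_by_integral_with_endpoints}. It then removes the boundedness assumption by McShane extension: with $E_k=\{\tilde g\le 2^k\}$, the Lipschitz extension $f'_k$ of $f'|_{E_k}$ satisfies the Haj\l asz inequality everywhere with a gradient $g'_k\le 2\tilde g$, so the bounded case gives $|f'_k(\gamma(0))-f'_k(\gamma(1))|\le\sint{\gamma}{36\tilde g}$ for every $k$ and every $\gamma$. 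One then sets $\tilde f\coloneqq\liminf_k f'_k$, which is manifestly Borel and equals $f$ $\measure$-a.e. The constant $76$ arises not from optimising a dyadic telescoping but from splitting $[0,1]$ at a single well-chosen point $t$ (found using $\mu_\gamma^S$) where $\tilde g(\gamma(t))$ and $\tilde g(\gamma(t^-))$ are finite, applying the $36$-estimate on each side, and paying $4$ more for the jump at $t$: $36+36+4=76$. This McShane/$\liminf$ device is precisely what sidesteps the measurability and well-definedness issues that block your approach.
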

\begin{proof}
The proof will be divided into some lemmata and steps.
\begin{lemma}
\label{lem:there_are_representatives_that_satisfy_Hajlasz_condition_everywhere}
    Let $(X, \metricAlone)$ be a metric space and $\measure$
    be a Borel regular and $\sigma$-finite measure on $X$.
    Let $f \colon X \to \bRExtended$ be measurable, finite $\measure$-almost everywhere and 
    $g \colon X \to \intcc{ 0, \infty}$ be a Hajłasz gradient of $f$.
    Then there exist Borel functions $f': X \rightarrow \mathbb{R}$
    and $g' : X \rightarrow [0, \infty]$ such that $f=f'$ and $g=g'$ $\measure$-almost everywhere
    such that 
    \begin{equation*}
        \forall x, y \in X
        \qquad 
        \abs{
            f'(y) - f'(x)
        }
        \le 
        \del{ g'(x) + g'(y) }
        \metric\del{
            x,y
        }.
    \end{equation*}
\end{lemma}
\begin{proof}
    Since $\measure$ is Borel regular and $\sigma$-finite, 
    there are Borel functions $\wave{f}$ and $\wave{g}$ (with $\wave{g}$ being non-negative), such that $f = \wave{f}$ and $g = \wave{g}$ $\measure$-almost everywhere.
    Since $\measure$ is Borel regular, there exists Borel set $E \subseteq X$ such that $\measure\del{ E} = 0$, $\wave{f}$ is finite on $X \setminus E$  and 
    \begin{equation*}
        \forall x, y \in X \setminus E
        \qquad 
        \abs{
            \wave{f}(y) - \wave{f}(x)
        }
        \le 
        \del{ \wave{g}(x) + \wave{g}(y) }
        \metric\del{
            y, x
        }.
    \end{equation*}

    Now, let us define the following Borel maps:
    $
        f' \coloneqq \wave{f} \indicator{X \setminus E} 
    $
    and 
    $
        g' \coloneqq \wave{g} \indicator{X \setminus E} + \infty \indicator{E}
    $.
    Then we have $f' = f$ and $g' = g$ $\measure$-almost everywhere.
    Moreover, by a straightforward checking we have 
    $$
        \abs{ f'(y) - f'(x) }
        \le 
        \del{ g'(y) + g'(x) } \metric\del{ x, y}
    $$
    for all $x, y \in X$. 
\end{proof}

\begin{lemma}
\label{lem::bounding_difference_in_f_by_integral_with_endpoints}
    Let $(X, \metricAlone)$ be a metric space.
    Let $f \colon X \to \bR$ be measurable and  $g\colon X \to \intcc{0, \infty}$ be Borel and such that 
    \begin{equation*}
        \forall x, y \in X
        \qquad 
        \abs{
            f(x) - f(y)
        }
        \le 
        \del{ g(x) + g(y) } \metric\del{ x, y}.
    \end{equation*}
    Let $\gamma \in \testCurves\del{ [0,1]; X}$. 
    Suppose that for some $a, b \in [0,1]$, $a < b$ there exists 
    $M > 0$ such that \footnote{ See Definition \ref{def:left-jump_right-jump}.}
    \begin{equation*}
        \forall s \in \intoo{a,b}
        \qquad 
        \phi_\gamma(s)
        \le 
        M.
    \end{equation*}
       Then
    \begin{equation*}
        \abs{
            f(\gamma(a)) - f(\gamma(b^-))
        }
        \le 
        8 M g(\gamma(a)) 
        + 16\sint{\gamma \rvert_{\intcc[0]{a, b^-}}}{g}
        + 8 M g(\gamma(b^-)).
    \end{equation*}
\end{lemma}
\begin{proof}
    First of all let us observe that if  
    $
        V_\gamma(b^-) - V_\gamma(a) \le 4M,
    $
    then the claim follows immediately.
    Indeed, 
    \begin{multline*}
        \abs{
            f(\gamma(a)) - f(\gamma(b^-))
        }
        \le 
        \del{
            g(\gamma(a)) + g(\gamma(b^-))
        }
        \metric\del[0]{ \gamma(a),\gamma(b^-)}
        \\
        \le 
        \del[1]{
            g(\gamma(a)) + g(\gamma(b^-))
        }
        \del{
            V_\gamma(b^-) - V_\gamma(a)
        }
        \le 
        4Mg(\gamma(a))
        +4Mg(\gamma(b^-)).
    \end{multline*}
    In the rest of the proof we assume that 
    $
        V_\gamma(b^-) - V_\gamma(a) > 4M.
    $
    
    \textbf{Step 1}
    We will iteratively construct a tuple
    $
        \del{ t_n }_{n=0}^m
    $
    of elements of $[a,b]$, where $m\geq 2$ with the following properties:
    \begin{itemize}
        \item 
        $ t_0 = a $ and $t_m = b$,
        \item 
        $
            \del{ t_n }_{n=0}^m
        $   
        is strictly increasing,
        \item 
        For all $i \in [m-1]$ $t_i$ 
        is a point of continuity of $\gamma$,
        \item 
        For all $i \in [m-1]$ we have
        $
            V_\gamma(t_i) - V_\gamma(t_{i-1} )
            \in 
            \intcc{M, 3M}
        $
        and 
        $
            V_\gamma(t_m^-) - V_\gamma(t_{m-1} )
            \in 
            \intcc{M, 4M}.
        $
    \end{itemize}

    Let $t_0 \coloneqq a$. 
    Suppose that we have constructed $t_i$ for some $i \in \bN_0$. 
    If $V_\gamma(b^-) - V_\gamma(t_i) \le 4M$, 
    then we define $m\coloneqq i+1$ and $t_{m} \coloneqq b$. 
    Otherwise, we have $V_\gamma(b^-) - V_\gamma(t_i) > 4M$.
    We will show that in this case
    there exists $t \in \intoo{t_i, b}$ such that
    $\gamma$ is continuous at $t$ and 
    $
        V_\gamma(t^-) - V_\gamma(t_i) 
        \in 
        \intcc{ M , 3M}.
    $
    We will then define $t_{i+1} \coloneqq t$.

    To prove the existence of such a point $t$, 
    we will first consider a point 
    \begin{equation*}
        s \coloneqq \min S,
        \text{ where }
        S \coloneqq 
        \set{
            \tau \in \intcc{t_i, b}
            \given 
            V_\gamma(\tau) - V_\gamma( t_i ) \ge M
        }.
    \end{equation*} 
    Clearly, $S \ne \emptyset$ as $b \in S$. 
    The right-continuity of $V_\gamma$ implies that
    $\inf S \in S$, 
    hence $s \coloneqq \min S$ is well-defined.
    Since $t_i \notin S$ and 
    $
        V_\gamma(b^-) - V_\gamma( t_i ) > 4M,
    $
    we have $s \in \intoo{ t_i, b}$.
    By the definition of $s$ and $S$ we have 
    $
        V_\gamma(s^-) - V_\gamma(t_i) \le M.
    $   
    By the assumptions of the current lemma and by Proposition \ref{rem:properties_of_variation_function} we have
    $
        V_\gamma(s) - V_\gamma(s^-) = \phi_\gamma(s) \le M,
    $
    which implies that 
    \begin{equation*}
        V_\gamma(s) - V_\gamma(t_i)
        =
        \del{
            V_\gamma(s) - V_\gamma(s^-)
        }
        + \del{
            V_\gamma(s^-) - V_\gamma(t_i)
        }
        \le 
        M + M
        = 2M.
    \end{equation*}
    This, combined with the definition of $s$ gives us
    $
        V_\gamma(s) - V_\gamma(t_i) \in \intcc{M, 2M}.
    $

    Now, since $V_\gamma$ is right-continuous at $s$,
    there is $\delta \in (0, b-s)$ such that 
    if $\tau \in \intcc{s, s+ \delta}$,
    then $0 \le V_\gamma(\tau) - V_\gamma(s) \le M$. 
    Since $\gamma$ can have at most countable points of discontinuity, there is $t \in \intcc{s, s+ \delta}$
    such that $\gamma$ is continuous at $t$. 
    Note that for such a $t$ we have
    \begin{equation*}
        V_\gamma(t) - V_\gamma(t_i)
        = 
        \del{V_\gamma(t) - V_\gamma(s) }
        + \del{ V_\gamma(s) - V_\gamma(t_i) }
        \in 
        \sbr{M, 3M}.
    \end{equation*}
    Hence, $t \in (t_i, b)$ with the desired properties exists, and, 
    as previously mentioned, we define $t_{i+1} \coloneqq t$.

    We will now show that the tuple $(t_i)_{i=0}^m$
    has all the desired properties.
    Firstly, we have to show that the construction must end, that is, 
    that for some $i \in \bN$ we have 
    $
        V_\gamma(b^-) - V_\gamma(t_i) \le 4M.
    $
    Let us suppose that for all $i \in \bN$ we have
    $
        V_\gamma(b^-) - V_\gamma(t_i) > 4M, 
    $
    then
    \begin{equation*}
        V_{\gamma} \geq V_\gamma(b^-) - V_\gamma(a)
        =
        V_\gamma(b^-) - V_\gamma(t_i)
        +
        \sum_{j=1}^i (
            V_\gamma(t_j) - V_\gamma(t_{j-1}))
        \ge 
        V_\gamma(b^-) - V_\gamma(t_i) + Mi 
        > 
        M(i+4).
    \end{equation*} 
    Since the right-hand side diverges to $\infty$ as $i \to \infty$, we get a contradiction. 
    Hence, there exists $i \in \bN$ such that 
    $
        V_\gamma(b^-) - V_\gamma(t_i) \le 4M.
    $

    The only desired property of the tuple $(t_i)_{i=0}^m$ that does not follow
    directly from the construction is that 
    $
        V_\gamma(t_m^-) - V_\gamma(t_{m-1} ) \ge M.
    $
       Recall that we assume 
    $
        V_\gamma(b^-) - V_\gamma(a) > 4M,
    $
    which implies that $m \ge 2$. 
    By the definition of $m$ we have 
    $
        V_\gamma(b^-) - V_\gamma(t_{m-2}) > 4M,
    $
    hence 
    \begin{equation*}
        V_\gamma(t_m^-) - V_\gamma(t_{m-1} )
        =
        \del{ V_\gamma(b^-) - V_\gamma(t_{m-2}) }
        - \del{ V_\gamma(t_{m-1})- V_\gamma(t_{m-2}) }
        \ge 
        4M - 3M
        = 
        M,
    \end{equation*}
    as needed.
    
    \textbf{Step 2}
    For $i \in [m]$ denote $A_i \coloneqq \sbr[0]{ t_{i-1}, t_i^- }$ and $A_0 \coloneqq A_1$, $A_{m+1} \coloneqq A_m$.
    We will show the existence of a tuple $(x_i)_{i=0}^{m+1}$ of points in $X$
    that satisfy the following properties:
    \begin{itemize}
        \item 
        $x_0 = \gamma(a)$, $x_{m+1} = \gamma(b^-)$,
        \item 
        For all $i \in \set{0, \ldots, m+1}$ we have  
        $
            x_i \in \closure{ \image\del{\gamma \rvert_{A_i}}},
        $
        \item 
        For all $i \in [m]$ we have
        $
            g(x_i)
            \le 
            \frac{1}{M}
            \sint{ 
                \gamma \rvert_{ A_{i} }
            }{g},
        $
        \item 
        For all $i \in [m+1]$ we have 
        $
            \metric\del{
                x_i, x_{i-1}
            }
            \le 
            8M.
        $
    \end{itemize}
    First of all let us observe that for all $i \in \set{0, \ldots, m+1}$ we have
    $
        \mu_{\gamma\rvert_{ A_{i} }}^S\del{
            \closure{ \image\del{\gamma \rvert_{A_i}}}
        }
        \ge 
        M.
    $
       Indeed, by Remark \ref{remM}, Remark \ref{rem} and Corollary \ref{cor:variation_is_additive}, for $i \in [m]$ we have
    \begin{eqnarray*}
        \mu_{\gamma\rvert_{ A_{i} }}^S\del{
            \closure{ \image\del{\gamma \rvert_{A_i}}}
        } = \mu_{\gamma\rvert_{ A_{i} }}^S\del{X} = V(\gamma\rvert_{ A_{i} }) = V_{\gamma\rvert_{ [t_{i-1},1] } }(t_i^-) =
        V_\gamma(t_i^-) - V_\gamma(t_{i-1}) 
        \ge M.
    \end{eqnarray*}
    Moreover, for $i \in \set{0, m+1}$ the claim follows from the fact that
    $A_0 = A_1$ and $A_m = A_{m+1}$.
    We conclude that\footnote{$\sintAveraged{ 
                \gamma \rvert_{ A_{n,i} }
            }{g} = \frac{1}{ \mu_{\gamma\rvert_{ A_{i} }}^S\del{
            X}} \int_X g d \mu_{\gamma\rvert_{ A_{i} }}^S = \frac{1}{ \mu_{\gamma\rvert_{ A_{i} }}^S\del{
            \closure{ \image\del{\gamma \rvert_{A_i}}}}} \int_{\closure{ \image\del{\gamma \rvert_{A_i}}}} g d \mu_{\gamma\rvert_{ A_{i} }}^S$.} 
    $
        \sintAveraged{ 
                \gamma \rvert_{ A_{n,i} }
            }{g}
    $
    is well-defined for all $i \in \set{0, \ldots, m+1}$.

    Let us now construct the tuple $(x_i)_{i=0}^{m+1}$. 
    We define 
    $x_0 \coloneqq \gamma(a)$ 
    and 
    $x_{m+1} \coloneqq \gamma(b^-)$.
    Next, we will show that there exists 
    $
        x \in \closure{ \image\del{\gamma \rvert_{A_i}}}
    $
    such that 
    $
        g(x)
        \le 
        \sintAveraged{ 
            \gamma \rvert_{ A_{n,i} }
        }{g}.
    $
    Indeed, suppose such $x$ does not exists. We can assume that $\sintAveraged{\gamma \rvert_{ A_{n,i} }}{g} < \infty$. 
    Then for all 
    $
        x \in \closure{ \image\del{\gamma \rvert_{A_i}}}
    $
    we have
    $
        g(x)
        >
        \sintAveraged{ 
            \gamma \rvert_{ A_{i} }
        }{g},
    $
    hence 
    \begin{equation*}
        0
        <
        \sintAveraged{ 
            \gamma \rvert_{ A_{i} }
        }{
            \del{
                g(x)
                -
                \sintAveraged{ 
                    \gamma \rvert_{ A_{i} }
                }{g}
            }
        }
        =
        \sintAveraged{ 
          \gamma \rvert_{ A_{i} }
        }{g}
        -
        \sintAveraged{ 
            \gamma \rvert_{ A_{i} }
        }{g}
        =
        0,
    \end{equation*}
    where the first inequality follows from the fact that the 
    integrand is a strictly positive function and we integrate it over a set 
    with positive measure.
    We obtain a contradiction, hence $x$ with the desired property exists --- 
    we then define $x_i \coloneqq x$.

    The fact that tuple $(x_i)_{i=0}^{m+1}$ satisfies the first two of the desired properties follows immediately from the construction.
    Let us prove the third property. 
    Fix $i \in [m]$. We then have 
    \begin{equation*}
        g(x_i)
        \leq
        \sintAveraged{ 
            \gamma \rvert_{ A_{i} }
        }{g}
        =
        \frac{1}{
             \mu_{\gamma\rvert_{ A_{i} }}^S\del{
            X}}
        \sint{ 
            \gamma \rvert_{ A_{i} }
        }{g}
        \le 
        \frac{1}{M}
        \sint{ 
            \gamma \rvert_{ A_{i} }
        }{g},
    \end{equation*}
    as needed.
    It remains to show the last property. 
    First, note that for all $i \in [m+1]$ there is some 
    $
        y_i 
        \in 
        \closure{ \image\del{\gamma \rvert_{A_i}}} 
        \cap 
        \closure{ \image\del{\gamma \rvert_{A_{i-1}}}}.
    $
    Indeed, when $i = 1$ or $i = m+1$ it follows from the fact that $A_i = A_{i-1}$,
    and when $i \in [m] \setminus \set{1}$ it follows from the fact that 
    $t_i$ is a point of continuity of $\gamma$, so 
    $
        y_i \coloneqq \gamma(t_i)
    $
    has the desired property.
    Next, note that for all $i \in [m]$ we have
    \begin{equation*}
        \diam\del{
            \closure{
                \image\del{ \gamma \rvert_{A_i}}
            }
        }
        =
        \diam\del{
            \image\del{ \gamma \rvert_{A_i}}
        }
        \leq
        V_\gamma(t_i^-) - V_\gamma(t_{i-1})
        \le 
        4M,
    \end{equation*}
    where in the first inequality we have used
    Corollary \ref{cor:functions_of_bounded_variation_image_totally_bounded} and Remark \ref{rem}.
    Hence, since $A_0 = A_1$ and $A_{m+1} = A_m$, 
    for all $i \in \set{0, \ldots, m+1}$, 
    we have 
    $
        \diam\del{
            \closure{
                \image\del{ \gamma \rvert_{A_i}}
            }
        }
        \le 
        4M.
    $    
    Therefore, 
    for all $i \in [m+1]$ we have
    \begin{equation*}
        \metric\del{
            x_i, x_{i-1}
        }
        \le 
        \metric\del{
            x_i, y_i
        }
        +
        \metric\del{
            y_i, x_{i-1}
        }
        \le 
        \diam\del{
            \closure{
                \image\del{ \gamma \rvert_{A_i}}
            }
        }
        +
        \diam\del{
            \closure{
                \image\del{ \gamma \rvert_{A_{i-1}}}
            }
        }
        \le 
        8M
    \end{equation*}
    and 
    $
        \metric\del{ x_i, x_{i-1}} \le 8M
    $
    as claimed.
     
    \textbf{Step 3} We will show the desired inequality.
    \vspace{10pt}
    Using the properties of tuple $(x_i)_{i=0}^{m+1}$, we have
    \begin{align*}
        \abs{ f(x_0) - f(x_{m+1})}
        &\le 
        \sum_{i=1}^{m+1}
            \abs{ f(x_{i-1}) - f(x_{i})}
        \\ 
        &\le 
        \sum_{i=1}^{m+1}
            \del{ g(x_{i-1}) + g(x_{i})}
            \metric\del{
                x_{i-1}, x_i
            }
        \\ 
        &\le 
        \sum_{i=1}^{m-1}
            \del{ g(x_{i-1}) + g(x_{i})}
            (8M)
        \\ 
        &=
        8Mg(x_0) + \sum_{i=1}^m 16M g(x_i) + 8Mg(x_{m+1})
        \\
        &\le 
        8Mg(x_0) + 
        \sum_{i=1}^m 
            \sint{ 
                \gamma \rvert_{ A_{i} }
            }{16 g} 
        + 8Mg(x_{m+1})
        \\
        &=
        8Mg(x_0) + 
        \sint{ 
                \gamma \rvert_{ [a, b^-] }
            }{16g} 
        + 8Mg(x_{m+1}),
    \end{align*}
    where in the last equality we have used Lemma \ref{lem:adding_symmetrized_integrals_over_intervals}
    and the fact that for $i \in [m-1]$ $t_i$ is a point of continuity of $\gamma$.
\end{proof}
\begin{lemma}
\label{lem::inequality_for_curves_for_lipshitz_case}
    Let $(X, \metricAlone)$ be a metric space. 
    Let $f \colon X \to \bR$ be measurable and  $g \colon X \to [0,\infty]$ be bounded Borel map such that 
    \begin{equation*}
        \forall 
        x, y \in X 
        \qquad 
        \abs{ f(x) - f(y) } 
        \le 
        \del{ g(x) + g(y) } \metric\del{ x, y}.
    \end{equation*}
        Then 
    \begin{equation*}
        \forall
            \gamma \in
            \testCurves\del{ [0,1]; X}
        \qquad 
        \abs{
            f(\gamma(0)) - f(\gamma(1))
        }
        \le 
        \sint{\gamma}{18g}.
    \end{equation*}
\end{lemma}
\begin{proof}
    Fix 
    $
        \gamma 
        \in
        \testCurves\del{ [0,1]; X}.
    $
   If $\gamma$ is continuous, then for all $s \in (0,1)$ and all $M > 0$
    we have $\phi_\gamma(s) \le M$. 
    Hence, by Lemma \ref{lem::bounding_difference_in_f_by_integral_with_endpoints}, for all $M > 0$ we have 
    \begin{equation*}
        \abs{ f(\gamma(0)) - f(\gamma(1)) }
        =
        \abs{ f(\gamma(0)) - f(\gamma(1^-)) }
        \le 
        8M g(\gamma(0)) 
        + \sint{\gamma}{16g}
        + 8M g(\gamma(1^-)).
    \end{equation*}
    Therefore, since $g$ is bounded,
    by passing to the limit $M \to 0^+$
    we obtain
    $
        \abs{ f(\gamma(0)) - f(\gamma(1)) }
        \le 
        \sint{\gamma}{16g}.
    $

    Suppose that $\gamma$ has points of discontinuity.
    By Lemma \ref{cor:variation_bounds_sum_of_jumps_from_above} we have that  $\sup_{\tau \in [0,1]} \phi_\gamma(\tau)$ is finite and $\set{
            t \in [0,1] 
            \given 
            \phi_\gamma(t) 
            =
            \sup_{\tau \in [0,1]} \phi_\gamma(\tau)
        }$ is a nonempty finite set. Therefore, the quantity 
    \begin{equation*}
        t_0
        \coloneqq
        \inf 
        \set{
            t \in [0,1] 
            \given 
            \phi_\gamma(t) 
            =
            \sup_{\tau \in [0,1]} \phi_\gamma(\tau)
        }
    \end{equation*}
    is well defined, $t_0$ is a point of discontinuity of $\gamma$ and $t_0 \in (0,1)$. 
    Next, for $n \in \bN$ let us define
    \begin{align*}
        t_{-n}
        &\coloneqq
    \begin{cases}
            \inf 
        \set{
            t \in \intcc{0,t_{-(n-1)}}
            \given 
            \phi_\gamma(t) 
            =
            \sup_{
                \tau \in \intco{0,t_{-(n-1)}}
            } \phi_\gamma(\tau)
        },
            &\text{ if } t_{-(n-1)} >0, \\
            0,
            &\text{ if } t_{-(n-1)} =0,
        \end{cases}
            \\
        t_{n}
        &\coloneqq
        \begin{cases}
           \sup
        \set{
            t \in \intcc{t_{n-1}, 1} 
            \given 
            \phi_\gamma(t) 
            =
            \sup_{
                \tau \in \intoc{t_{n-1}, 1}
            } \phi_\gamma(\tau)
        } ,
            &\text{ if } t_{(n-1)} <1, \\
            1,
            &\text{ if } t_{(n-1)} =1.
        \end{cases}
    \end{align*}
    We shall prove that
    \begin{equation}\label{o1}
        \abs{f(\gamma(t_0)) - f(\gamma(1))}
        \le 
        8 \phi_\gamma(t_0) g(\gamma(t_0))
            + 18\sint{\gamma \rvert_{\intcc[0]{t_0, 1}}}{g}
    \end{equation}
    and
    \begin{equation}\label{o2}
        \abs{f(\gamma(0)) - f(\gamma(t_0^-))}
        \le 
        8 \phi_\gamma(t_0) g(\gamma(t_0^-))+18\sint{\gamma \rvert_{\intcc[0]{0, t_0^-}}}{g}.
    \end{equation}
    The proof of (\ref{o2}) goes in the same manner as the proof of (\ref{o1}). Therefore, we give the detailed proof of (\ref{o1}).
    
First of all, let us observe that for any $m \in \mathbb{N}$ such that $t_{m-1}<1$ we have
    \begin{eqnarray} \label{op1}
       \sum_{i=1}^{m-1}
            \abs{f(\gamma(t_{i})) - f(\gamma(t_{i}^-))}
            &\le 2  \sint{\gamma \rvert_{\intcc[0]{t_{0}, 1^-}}}{g}.
    \end{eqnarray}
    Indeed, by our assumption and by Lemma \ref{lem:adding_symmetrized_integrals_over_intervals} we have
    \begin{align*}
       \sum_{i=1}^{m-1}
            \abs{f(\gamma(t_{i})) - f(\gamma(t_{i}^-))}
            &\le \sum_{i=1}^{m-1}
            (g(\gamma(t_{i})) + g(\gamma(t_{i}^-)))\metric\del{\gamma(t_{i}), \gamma(t_{i}^-)}\\
            &= 2\sum_{i=1}^{m-1} {\frac{g(\gamma(t_i^-)) + g(\gamma(t_i)) }{2} 
                    \del{
                        V_\gamma(t_i) - V_\gamma(t_i^-)
                                        }}\\
            &\le 2\sum_{i=1}^{m-1} \left(\sint{\gamma \rvert_{\intcc[0]{t_{i-1}, t_{i}^-}}}{g}
                    + \frac{g(\gamma(t_i^-)) + g(\gamma(t_i)) }{2} 
                    \del{
                        V_\gamma(t_i) - V_\gamma(t_i^-)
                                        }\right)\\
            &\qquad + 2  \sint{\gamma \rvert_{\intcc[0]{t_{m-1}, 1^-}}}{g}
            \\&=2  \sint{\gamma \rvert_{\intcc[0]{t_{0}, 1^-}}}{g}.
    \end{align*}
   
    \textbf{Step 1} 
    Proof of (\ref{o1}) in the case when there is $n \in \bN$ such that $t_n = 1$. 
    \vspace{10pt}

    Let $m \in \bN$ be the smallest such $n$. 
    Since $\gamma$ is left-continuous at $1$,
    the definition of $m$ implies that $\gamma$ has no points of discontinuity within $(t_{m-1}, t_m)$. 
    Let 
    $
        A_i \coloneqq \intoo{t_{i-1}, t_i}.
    $
    for $i \in [m]$. 
    Notice that for all $s \in A_i$ we have 
    $
        \phi_\gamma(s) 
        \le 
        \phi_\gamma(t_{i}).
    $
    In particular, for all $s \in A_m$ we have
    $\phi_\gamma(s) = 0$, as $\phi_\gamma(1) = 0$. 
    Fix $\eps \in \intoo{0, \phi_\gamma(t_{m-1})}$.

    By Lemma \ref{lem:adding_symmetrized_integrals_over_intervals}, Lemma \ref{lem::bounding_difference_in_f_by_integral_with_endpoints} and since $\phi_\gamma(t_i) \le \phi_\gamma(t_{i-1})$
    we have
    \begin{align*}
                \sum_{i=1}^m
            \abs{f(\gamma(t_{i-1})) - f(\gamma(t_{i}^-))}
        &\le 
        \sum_{i=1}^{m-1} \left(
            8 \phi_\gamma(t_i) g(\gamma(t_{i-1})) 
            + 16\sint{\gamma \rvert_{\intcc[0]{t_{i-1}, t_{i}^-}}}{g}
            + 8 \phi_\gamma(t_i) g(\gamma(t_i^-)) \right)
        \\
        &\qquad 
        +
            8 \eps g(\gamma(t_{m-1})) 
            + 16\sint{\gamma \rvert_{\intcc[0]{t_{m-1}, 1^-}}}{g}
            + 8 \eps g(\gamma(1^-)) 
        \\
        &\le
            8 \phi_\gamma(t_0) g(\gamma(t_0))
            + \sum_{i=1}^{m-1}
            \del{
                16\sint{\gamma \rvert_{\intcc[0]{t_{i-1}, t_{i}^-}}}{g}
                + 8 \phi_\gamma(t_i) g(\gamma(t_i^-))
                + 8 \phi_\gamma(t_i) g(\gamma(t_i))
            }
        \\
        &\qquad 
        +  16\sint{\gamma \rvert_{\intcc[0]{t_{m-1}, 1^-}}}{g}
            + 8 \eps g(\gamma(1^-))   
        \\
        &=
            8 \phi_\gamma(t_0) g(\gamma(t_0))
            + 16\left(
                \sum_{i=1}^{m-1}
                    \del{
                    \sint{\gamma \rvert_{\intcc[0]{t_{i-1}, t_{i}^-}}}{g}
                    + \frac{g(\gamma(t_i^-)) + g(\gamma(t_i)) }{2} 
                    \del{
                        V_\gamma(t_i) - V_\gamma(t_i^-)
                                        }}\right.
        \\
        &\qquad 
        +  \left. \sint{\gamma \rvert_{\intcc[0]{t_{m-1}, 1^-}}}{g} \right)
            + 8 \eps g(\gamma(1^-))  
        \\
        &=
            8 \phi_\gamma(t_0) g(\gamma(t_0))
            + 16\sint{\gamma \rvert_{\intcc[0]{t_0, 1^-}}}{g}
            + 8 \eps g(\gamma(1^-)).  
    \end{align*}
    Hence, by the triangle inequality and by (\ref{op1}) we get 
        \begin{align*}
        \abs{f(\gamma(t_0)) - f(\gamma(1^-))}
        &\le 
        \sum_{i=1}^m
            \abs{f(\gamma(t_{i-1})) - f(\gamma(t_{i}^-))} +
           \sum_{i=1}^{m-1}
            \abs{f(\gamma(t_{i})) - f(\gamma(t_{i}^-))}\\
        &\le 
        8 \phi_\gamma(t_0) g(\gamma(t_0))
            + 18\sint{\gamma \rvert_{\intcc[0]{t_0, 1^-}}}{g}
            + 8 \eps g(\gamma(1^-)).            
    \end{align*}
    By passing to the limit $\eps \to 0^+$ and using the left-continuity of $\gamma$ at $1$, we obtain
    \begin{equation*}
        \abs{f(\gamma(t_0)) - f(\gamma(1))}
        \le 
        8 \phi_\gamma(t_0) g(\gamma(t_0))
            + 18\sint{\gamma \rvert_{\intcc[0]{t_0, 1}}}{g}.
    \end{equation*}

    \textbf{Step 2}
    Proof of (\ref{o1}) in the case when there is no $n \in \bN$ such that $t_n=1$.
    \vspace{10pt}

    In this subcase, $(\phi_\gamma(t_n))_n$ is a strictly decreasing sequence. 
    By Lemma \ref{cor:variation_bounds_sum_of_jumps_from_above}
    we have $\phi_\gamma(t_n) \to 0$ as $n \to \infty$.
    For $i \in \bN$ let 
    $
        A_i \coloneqq \intoo{t_{i-1}, t_i}
    $
    and 
    $
        B_i \coloneqq \intoo{ t_i, 1}.
    $
    Notice that for all $s \in A_i$ we have 
    $
        \phi_\gamma(s) 
        \le 
        \phi_\gamma(t_{i})
    $
    and for all $s \in B_i$ we have
    $
        \phi_\gamma(s) 
        \le 
        \phi_\gamma(t_{i}).
    $
    By Lemma \ref{lem:adding_symmetrized_integrals_over_intervals}, Lemma \ref{lem::bounding_difference_in_f_by_integral_with_endpoints} and (\ref{op1})
    for all $n \in \bN$ we have
    \begin{align*}
        \abs{f(\gamma(t_0)) - f(\gamma(1^-))}
        &\le 
        \sum_{i=1}^{n-1}
            \abs{f(\gamma(t_{i-1})) - f(\gamma(t_{i}^-))}
        +
            \abs{
                f(\gamma(t_{n-1})) - f(\gamma(1^-))
            } + \sum_{i=1}^{n-1}
            \abs{f(\gamma(t_{i})) - f(\gamma(t_{i}^-))}
        \\
        &\le 
        \sum_{i=1}^{n-1}
            8 \phi_\gamma(t_i) g(\gamma(t_{i-1})) 
            + 16\sint{\gamma \rvert_{\intcc[0]{t_{i-1}, t_{i}^-}}}{g}
            + 8 \phi_\gamma(t_i) g(\gamma(t_i^-)) 
        \\
        &\qquad 
        +
            8 \phi_\gamma(t_n) g(\gamma(t_{n-1})) 
            + 16\sint{\gamma \rvert_{\intcc[0]{t_{n}, 1^-}}}{g}
            + 8 \phi_\gamma(t_{n-1}) g(\gamma(1^-)) + 2\sint{\gamma \rvert_{\intcc[0]{t_0, 1}}}{g}
        \\
        &\le
            8 \phi_\gamma(t_0) g(\gamma(t_0))
            + \sum_{i=1}^{n-1}
                16\sint{\gamma \rvert_{\intcc[0]{t_{i-1}, t_{i}^-}}}{g}
                + 8 \phi_\gamma(t_i) g(\gamma(t_i^-)) 
                + 8 \phi_\gamma(t_i) g(\gamma(t_i)) 
        \\
        &\qquad 
        +  16\sint{\gamma \rvert_{\intcc[0]{t_{n-1}, 1^-}}}{g}
            + 8 \phi_\gamma(t_{n-1}) g(\gamma(1^-)) +2\sint{\gamma \rvert_{\intcc[0]{t_0, 1}}}{g}
        \\
        &=
            8 \phi_\gamma(t_0) g(\gamma(t_0))
            + 16\left( \sum_{i=1}^{n-1}\del{
                \sint{\gamma \rvert_{\intcc[0]{t_{i-1}, t_{i}^-}}}{g}
                + \dfrac{g(\gamma(t_i^-)) + g(\gamma(t_i)) }{2} \del{V_\gamma(t_i)-V_\gamma(t_i^-)}        
                }\right.
        \\
        &\qquad 
        +  \left.\sint{\gamma \rvert_{\intcc[0]{t_{n-1}, 1^-}}}{g} \right)
            + 8 \phi_\gamma(t_{n-1}) g(\gamma(1^-))  +2\sint{\gamma \rvert_{\intcc[0]{t_0, 1}}}{g}
        \\
        &=
            8 \phi_\gamma(t_0) g(\gamma(t_0))
            + 18\sint{\gamma \rvert_{\intcc[0]{t_0, 1^-}}}{g}
            + 8 \phi_\gamma(t_{n-1}) g(\gamma(1^-)). 
    \end{align*}
    By passing to the limit $n \to \infty$ and using the left-continuity of $\gamma$ at $1$, we obtain
    \begin{equation*}
        \abs{f(\gamma(t_0)) - f(\gamma(1))}
        \le 
        8 \phi_\gamma(t_0) g(\gamma(t_0))
            + 18\sint{\gamma \rvert_{\intcc[0]{t_0, 1}}}{g},
    \end{equation*}
 and the proof of (\ref{o1}) follows.

 Now, we can finish the proof. Indeed, by (\ref{o1}) and (\ref{o2}) we have
    \begin{align*}
        \abs{f(\gamma(0)) - f(\gamma(1))}
        &\le 
        \abs{f(\gamma(0)) - f(\gamma(t_0^-))}
        + \abs{ f(\gamma(t_0^-)) - f(\gamma(t_0)) }
        + \abs{ f(\gamma(t_0)) - f(\gamma(1)) }
        \\
        &\le 
        18\sint{\gamma \rvert_{\intcc[0]{0, t_0^-}}}{g}
        + 8 \phi_\gamma(t_0) g(\gamma(t_0^-))
        + \frac{\phi_\gamma(t_0)}{2} 
        \del{
            g(\gamma(t_0^-)) + g(\gamma(t_0))
        }
        \\
        &\qquad 
        +8 \phi_\gamma(t_0) g(\gamma(t_0))
            + 18\sint{\gamma \rvert_{\intcc[0]{t_0, 1}}}{g}
        \\ 
        &\le 
        \sint{\gamma }{ 18g},
    \end{align*}
    where we have used 
     Lemma \ref{lem:adding_symmetrized_integrals_over_intervals}
    and the fact that 
    $
        \phi_\gamma(t_0) = V_\gamma(t_0) - V_\gamma(t_0^-).
    $
\end{proof}
Now we are in a position to continue the proof of Theorem \ref{dos}. Let $f'$ and $g'$ be like in Lemma \ref{lem:there_are_representatives_that_satisfy_Hajlasz_condition_everywhere} and put $\tilde{g}=g'$. It means that functions $f': X \rightarrow \mathbb{R}$
    and $\tilde{g} : X \rightarrow [0,\infty]$ are Borel such that $f=f'$ and $g=\tilde{g}$ $\measure$-almost everywhere
    such that 
    \begin{equation}\label{hg}
        \forall x, y \in X
        \qquad 
        \abs{
            f'(y) - f'(x)
        }
        \le 
        \del{\tilde{g}(x) + \tilde{g}(y) }
        \metric\del{
            x,y
        }.
    \end{equation}
    
    Let us consider the sets
    \begin{equation*}
        E_k 
        \coloneqq 
        \set{
            x \in X 
            \given 
            \tilde{g}(x) \le 2^k
        }.
        \end{equation*}
        
        Since $g$ is finite $\measure$-almost everywhere, then so is $\tilde{g}$. From (\ref{hg}) and the definition of $E_k$
        we have that $f' \rvert_{E_k}$ is $2^{k+1}$-Lipschitz.
        Therefore by the McShane's Lemma there exists function $f_k'$ which is a $2^{k+1}$-Lipschitz extension of $f' \rvert_{E_k}$. 
        This function has the following form     %
    \begin{equation*}
        f_k' (\cdot )
        \coloneqq 
        \inf_{ y \in E_k} 
            f'(y) +
            2^{k+1} \metric\del{ \cdot, y}.
    \end{equation*}
    Moreover, let us define 
    \begin{equation*}
            g_k'
        \coloneqq 
        \tilde{g} \indicator{ E_k}
        +
            2^{k+1}
        \indicator{ X \setminus E_k }.
    \end{equation*}
    Thus, for all $k \in \bN$ we have $g_k' \le 2\tilde{g}$.
    Also, since $\tilde{g}$ is finite $\measure$-almost everywhere and $$
        X \setminus \set{ x \in X \given \tilde{g}(x) = \infty }
        =
        \bigcup_{k=1}^\infty E_k,
    $$ we have  $f_k' \to f'$ $\measure$-almost everywhere.

    Moreover
    \begin{equation}\label{dwie}
        \abs{
            f_k'(x) - f_k'(z)
        }
        \le 
        \del{ g'_k(x) + g'_k(z) }
        \metric\del{ x,z}
    \end{equation}
    for all $x,z \in X$ and all $k \in \bN$.

    Indeed, if $x, z \in E_k$ then 
    \begin{equation*}
        \abs{
            f_k'(x) - f_k'(z)
        }=  \abs{
            f' (x) - f' (z) }
        \le 
        \del{ \tilde{g}(x) + \tilde{g}(z) }
        \metric\del{ x,z}
        =
        \del{ g_k'(x) + g_k'(z) }\metric\del{ x,z}.
    \end{equation*}
    If $x \notin E_k$ or $z \notin E_k$, 
    then 
    $
        g'_k(x) + g'_k(z) \ge 2^{k+1}
    $
    and since $f_k'$ is $2^{k+1}$-Lipschitz we have
    \begin{equation*}
        \abs{
            f_k'(x) - f_k'(z)
        }
        \le 
        2^{k+1}\metric\del{ x,z}
        \le 
        \del{ g'_k(x) + g'_k(z) }
        \metric\del{ x,z}.
    \end{equation*}
    Now, by (\ref{dwie}) we have that functions $f'_k$ and $g'_k$ satisfy the conditions of Lemma \ref{lem::inequality_for_curves_for_lipshitz_case}, so for all 
    $
        \gamma \in \testCurves\del{ [0,1]; X}
    $
    we have 
    \begin{equation} \label{jedna}
        \abs{
            f'_k(\gamma(0)) - f'_k(\gamma(1))
        }
        \le 
        \sint{\gamma}{ 18 g'_k}
        \le 
        \sint{\gamma}{ 36\tilde{g}}.
    \end{equation}
Next, we define  
    $
        \tilde{f} 
        \coloneqq
        \liminf_{k \to \infty}
        f'_k.
    $
    Clearly, $\tilde{f} = f$ and $\tilde{g} = g$ $\measure$-almost everywhere.
    
    Fix 
    $
        \gamma \in  \testMeasuresNonTrivial,
    $
    that is,
    $
        \gamma \in \testCurves\del{ [0,1]; X}
    $
    such that 
    $
        V(\gamma)> 0.
    $
    If $\sint{\gamma}{\tilde{g}} = \infty$,
    then 
    $$
        \abs{
            \tilde{f}(\gamma(0)) - \tilde{f}(\gamma(1))
        }
        \le 
        \sint{\gamma}{76\tilde{g}}.
    $$
    Therefore, we assume that 
    $\sint{\gamma}{\tilde{g}} < \infty$.

    {\bf Step 1} There exists $t \in (0,1)$ such that $ \tilde{g}(\gamma(t))$ and $ \tilde{g}(\gamma(t^-))$ are finite. 
    
    Since $\sint{\gamma}{\tilde{g}} < \infty$, we have
    \begin{eqnarray}\label{jeden}
        \mu_\gamma^S\del{
            X \setminus 
            \set{
                x \in X 
                \given 
                \tilde{g}(x) < \infty
            }
        }
        =
        0.
    \end{eqnarray}
    Moreover,
    \begin{equation}\label{dwa}        %
        \mu_\gamma^S\del{ \gamma\sbr{ (0,1) } }  > 0.
    \end{equation}
    Indeed, since $\gamma$ is left-continuous at $1$ we can write
    \begin{eqnarray*}
        \mu_\gamma^S\del{ \gamma\sbr{ (0,1) } }
        &\ge& 
        \frac{1}{2} \gamma_\# \mu_\gamma \del{
            \gamma\sbr{ (0,1) } 
        } 
        \ge
        \frac{1}{2} \mu_\gamma\del{ (0,1)  }\\
        &=&
        \frac{1}{2}\del{V_\gamma(1) - V_\gamma(0)}
        =
        \frac{1}{2}V_\gamma(1)
        =
        \frac{1}{2}V(\gamma) > 0.
    \end{eqnarray*}
    Therefore, gathering (\ref{jeden}) with (\ref{dwa}) we have 
    $$
        \mu_\gamma^S\del{
            \gamma\sbr{ (0,1) } 
            \cap 
            \set{
                    x \in X 
                    \given 
                    \tilde{g}(x) < \infty
                }
        }
        > 0,
    $$
    which implies the existence of $t \in (0,1)$ such that $\tilde{g}(\gamma(t)) < \infty$.
    
    Next, we shall prove that $\tilde{g}(\gamma(t^-)) < \infty$. If $\gamma(t) = \gamma(t^-)$, then 
    $
        \tilde{g}(\gamma(t^-)) = \tilde{g}(\gamma(t)) < \infty.
    $
    Therefore, we shall assume that 
    $\gamma(t) \ne \gamma(t^-)$. In this case we have\footnote{See the proof of Proposition \ref{prop::when_ae_0_gives_0_in_Newtonian}.}
     \begin{equation}\label{trzy}        %
        \mu_\gamma^S\del{ \{\gamma (t^-) \} }  > 0.
    \end{equation}
       Hence, by (\ref{jeden}) and (\ref{trzy}) we have that $\tilde{g}(\gamma(t^-)) < \infty$ and the proof of Step 1 is completed. 

    Since $\tilde{g}(\gamma(t)) < \infty $ and $\tilde{g}(\gamma(t^-)) < \infty $,
    there is $N \in \bN$ such that $\gamma(t), \gamma(t^-) \in E_N$.   Hence, for $k \ge N$ we have $\tilde{f}(\gamma(t)) = f'_k(\gamma(t))$ and $\tilde{f}(\gamma(t^-)) = f'_k(\gamma(t^-))$. In particular, $\tilde{f}(\gamma(t))$ and $\tilde{f}(\gamma(t^-))$ are finite.

    {\bf Step 2} For $t$ from Step 1 we have
    \begin{eqnarray}\label{cztery1}
                \abs{
            \tilde{f}(\gamma(t))-\tilde{f}(\gamma(1))
        }
        \le 
        \sint{ \gamma }{36\tilde{g}}
        \end{eqnarray}
        and
    \begin{eqnarray}\label{cztery2}
             \abs{
            \tilde{f}(\gamma(0))-\tilde{f}(\gamma(t^-))
        }
        \le 
        \sint{ \gamma }{36\tilde{g}}.
        \end{eqnarray}
    In order to prove (\ref{cztery1}) we define $\gamma' \colon [0,1] \to X$ 
    given by the formula
    $
        \gamma'(s) 
        \coloneqq 
        \gamma\del{
            t+ (1-t)s
        }.
    $
    We claim that 
    $
        \gamma' \in \testCurves\del{ [0,1]; X}
    $
    and 
    \begin{eqnarray}\label{piec1}
        \sint{ \gamma' }{\tilde{g}}
        \le 
        \sint{ \gamma }{\tilde{g}}.
    \end{eqnarray}
    Indeed, let us define $\Phi \colon [0,1] \to [t,1]$ as follows $\Phi(s) = t+ (1-t)s$, then
    $
    \gamma' = \Phi_{\#}(   \gamma \rvert_{ [t,1] }).
    $
    Since $\gamma \rvert_{ [t,1] } \in \testCurves\del{ [t,1]; X} $, by Proposition \ref{rem:test_Curves_changing_intervals}
    we have $
        \gamma' \in \testCurves\del{ [0,1]; X}
    $ and
    \begin{equation*}
        \sint{ \gamma' }{\tilde{g}}
        =
        \sint{ \Phi_{\#}( \gamma \rvert_{ [t,1] }) }{\tilde{g}}
        =
        \sint{\gamma \rvert_{ [t,1] }}{\tilde{g}}
        \le 
        \sint{ \gamma }{\tilde{g}},
    \end{equation*}
    where the last inequality follows from the fact that $\tilde{g} \ge 0$.
    
    Now, by (\ref{jedna}) and (\ref{piec1}) 
    for all $k \in \bN$ we have
    \begin{equation*}
        \abs{
            f'_k(\gamma(t)) - f'_k(\gamma(1))
        }
        =
        \abs{
            f'_k(\gamma'(0)) - f'_k(\gamma'(1))
        }
        \le 
        \sint{\gamma'}{36\tilde{g}}
        \le 
        \sint{ \gamma }{36\tilde{g}}
        .
    \end{equation*}
    Hence, for all $k \geq N$ we have
   $$
        f'_k(\gamma(1)) 
        \le 
        \sint{ \gamma }{36\tilde{g}}
        + \tilde{f}(\gamma(t)).
    $$
    Thus, 
    \begin{equation*}
        \tilde{f}(\gamma(1))
        =
        \liminf_{k \to \infty}
        f'_k(\gamma(1)) 
        \le 
        \sint{ \gamma }{36\tilde{g}}
        + \tilde{f}(\gamma(t))
        < 
        \infty.
    \end{equation*}
    On the other hand,
    a similar calculation shows 
    $$   
      -\infty <  \tilde{f}(\gamma(t))
        \le 
        \sint{ \gamma }{36\tilde{g}}
        + \tilde{f}(\gamma(1)).
    $$
    Note that those inequalities imply that 
    $
        \tilde{f}(\gamma(1))
    $
    is finite.
    As both $\tilde{f}(\gamma(t))$ and $\tilde{f}(\gamma(1))$
    are finite, we conclude  
    $$
        \abs{
            \tilde{f}(\gamma(t))-\tilde{f}(\gamma(1))
        }
        \le 
        \sint{ \gamma }{36\tilde{g}}
    $$
    and (\ref{cztery1}) follows. 

    Now, to prove (\ref{cztery2}) we define     
    $
        \gamma'' \colon [0,1] \to X
    $
    given by the formula 
   \begin{align*}
    \gamma''(s)
    \coloneqq
        \begin{cases}
           \gamma(st),
            &\text{ if } s \in [0,1), \\
            \gamma(t^-),
            &\text{ if } s=1.
        \end{cases}
    \end{align*}
        Then
    $
        \gamma'' \in \testCurves\del{ [0,1]; X}
    $
    and 
    \begin{eqnarray}\label{piec2}
        \sint{ \gamma'' }{ \tilde{g}} \le \sint{ \gamma }{ \tilde{g} }.
    \end{eqnarray}
    Indeed, let us define $\Psi \colon [0,1] \to [0,t]$ as follows $\Psi(s) = st$, then
    $
    \gamma'' = \Psi_{\#}(   \gamma \rvert_{ [0,t^-] }).
    $
    Since $\gamma \rvert_{ [0,t^-] } \in \testCurves\del{ [0,t]; X} $, by Proposition \ref{rem:test_Curves_changing_intervals}
    we have $
        \gamma'' \in \testCurves\del{ [0,1]; X}
    $ and
    \begin{equation*}
        \sint{ \gamma'' }{\tilde{g}}
        =
        \sint{ \Psi_{\#}( \gamma \rvert_{ [0,t^-] }) }{\tilde{g}}
        =
        \sint{\gamma \rvert_{ [0,t^-] }}{\tilde{g}}
        \le 
        \sint{ \gamma }{\tilde{g}}.
    \end{equation*}
    Now, by (\ref{jedna}) and by (\ref{piec2}) 
    for all $k \in \bN$ we have
    \begin{equation*}
        \abs{
            f'_k(\gamma(0)) - f'_k(\gamma(t^-))
        }
        =
        \abs{
            f'_k(\gamma''(0)) - f'_k(\gamma''(1))
        }
        \le 
        \sint{\gamma''}{36\tilde{g}}
        \le 
        \sint{ \gamma }{36\tilde{g}}
        .
    \end{equation*}
    Hence, for all $k \geq N$ we have
   $$
        f'_k(\gamma(0)) 
        \le 
        \sint{ \gamma }{36\tilde{g}}
        + \tilde{f}(\gamma(t^-)).
    $$
    Thus, 
    \begin{equation*}
        \tilde{f}(\gamma(0))
        =
        \liminf_{k \to \infty}
        f'_k(\gamma(0)) 
        \le 
        \sint{ \gamma }{36\tilde{g}}
        + \tilde{f}(\gamma(t^-))
        < 
        \infty.
    \end{equation*}
    On the other hand,
    a similar calculation shows that
    $$   
      -\infty <  \tilde{f}(\gamma(t^-))
        \le 
        \sint{ \gamma }{36\tilde{g}}
        + \tilde{f}(\gamma(0)).
    $$
    Since $\tilde{f}(\gamma(t^-))$ and $\tilde{f}(\gamma(0))$
    are finite, we have 
    $$
        \abs{
            \tilde{f}(\gamma(t^-))-\tilde{f}(\gamma(0))
        }
        \le 
        \sint{ \gamma }{36\tilde{g}}
    $$
    and (\ref{cztery2}) follows.

    Now we are in position to finish the whole proof. By Step 2, inequality (\ref{dwie}) and by Lemma \ref{lem:adding_symmetrized_integrals_over_intervals} for $k \geq N$
    we have
    \begin{align*}
        \abs{
            \tilde{f}(\gamma(0)) -\tilde{f}(\gamma(1))
        }
        &\le 
        \abs{
            \tilde{f}(\gamma(0)) - \tilde{f}(\gamma(t^-))
        }
        + \abs{
            \tilde{f}(\gamma(t^- )) - \tilde{f}(\gamma(t))
        }
        +\abs{
            \tilde{f}(\gamma(t)) - \tilde{f}(\gamma(1))
        }
         \\
        &\le 
        \sint{ \gamma }{36\tilde{g}}
        + \abs{ f'_k(\gamma(t^- )) - f'_k (\gamma(t)) }
        + \sint{ \gamma }{36\tilde{g}}
        \\
        &\le 
        \sint{ \gamma }{72\tilde{g}}
        + \del{ g'_k(\gamma(t^- )) + g'_k(\gamma(t)) }
        \metric\del{ \gamma(t^- ), \gamma(t) }
        \\
        &\le 
        \sint{ \gamma }{72\tilde{g}}+ 4\del{ \tilde{g}(\gamma(t^- )) + \tilde{g}(\gamma(t)) }\frac{\metric\del{ \gamma(t^- ),\gamma(t) }}{2}\\
        &\le 
        \sint{ \gamma }{76\tilde{g}}.
    \end{align*}
    Finally, let us observe that by replacing $\tilde{f}$ by $\tilde{f} \indicator{\{|\tilde{f}|<\infty\}}$ we ensure that our function has finite values and $\tilde{g}$ is an upper $S$-gradient of   $\tilde{f} \indicator{\{|\tilde{f}|<\infty\}}$. Indeed, it is sufficient to note that for all $x, y \in X$ we have
    \begin{equation*}
        \abs{
            \tilde{f} \indicator{\set[0]{|\tilde{f}|<\infty}}(x) - \tilde{f} \indicator{\set[0]{|\tilde{f}|<\infty}}(y)
        }
        \leq 
        \abs{
            \tilde{f}(x)-\tilde{f}(y)
        }.
    \end{equation*}
   If $x,y \in \set[0]{|\tilde{f}|<\infty},$ then the inequality is clear. Otherwise, the right hand side equals $\infty$, hence the inequality is also true.
\end{proof}
\section{Comparison of spaces} \label{sec::final_comparison}
%

\begin{theorem}
\label{thm::comparison_with_Gigli}
    Let $(X, \metricAlone)$ be a metric space 
    and let $\measure$ be a Borel regular measure on $X$.
    Then for any measurable functions $f \colon X \to \bRExtended$ and $g \colon X \to [0,\infty]$ such that $f, g$ are 
    finite $\measure$-almost everywhere we have:
    \begin{enumerate}[label=(\alph*)]
        \item 
        If $\measure$ is $\sigma$-finite and $g$ is a 
        is a Hajłasz gradient 
        of $f$, then there are Borel functions 
        $f': X \rightarrow \mathbb{R}$ and $g':X \rightarrow [0,\infty]$ such that $f = f'$ and $g = g'$ $\measure$-almost everywhere and $76 g'$ is an $\measure$-upper S-gradient of $f'$.
        \item 
        If $\measure$ is doubling and $f, g \in L^1_{\mathrm{loc}}(\measure)$ are Borel maps such that 
        $g$ is an $\measure$-upper S-gradient of $f$,
        then $g/2$ is a Hajłasz gradient of $f$.
    \end{enumerate}
\end{theorem}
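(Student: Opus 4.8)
For part (a) nothing new is required. Theorem \ref{hn}(b) --- i.e.\ Theorem \ref{dos} --- applied to the Hajłasz gradient $g$ of $f$ (here $\measure$ is Borel regular and $\sigma$-finite) yields Borel functions $f'$ and $g'$ equal $\measure$-a.e.\ to $f$ and $g$ such that $76g'$ is an \emph{upper} $S$-gradient of $f'$; by Remark \ref{r} it is a fortiori an $\measure$-upper $S$-gradient of $f'$, which is the claim.

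The substance is part (b). The plan is to feed the $\measure$-upper $S$-gradient inequality with a shrinking family of $\measure$-admissible measures concentrated on two-point jump curves, and then let the scale shrink, using the Lebesgue differentiation theorem (available since $\measure$ is doubling). Fix $x,y$ in the set $S$ of points at which every ball has positive $\measure$-measure and which are Lebesgue points of both $f$ and $g$ in the sense $\fint_{B(x,\rho)}\abs{h-h(x)}\,d\measure\to0$; since $\measure$ is doubling, $X$ is separable and $\measure(X\setminus\operatorname{supp}\measure)=0$, and the Lebesgue differentiation theorem with $f,g\in L^1_{\mathrm{loc}}(\measure)$ gives $\measure(X\setminus S)=0$. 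The target inequality $\abs{f(y)-f(x)}\le\tfrac12(g(x)+g(y))\metric\del{x,y}$ is trivial for $x=y$, so assume $x\ne y$ and fix $0<\rho<\metric\del{x,y}/2$. With $\gamma_u^v\coloneqq u\indicator{[0,1/2)}+v\indicator{[1/2,1]}\in\testCurves\del{[0,1];X}$ (Example \ref{e1}), the map $\Phi\colon X\times X\to\del{\testCurves\del{[0,1];X},\metricAlone_{TC}}$, $\Phi(u,v)=\gamma_u^v$, obeys $\metricAlone_{TC}(\gamma_u^v,\gamma_{u'}^{v'})=\tfrac12\min(1,\metric\del{u,u'})+\tfrac12\min(1,\metric\del{v,v'})$, so it is continuous, hence Borel, and we may set
\begin{equation*}
    \mu_\rho
    \coloneqq
    \Phi_\#\del{
        \frac{\measure\rvert_{B(x,\rho)}}{\measure(B(x,\rho))}
        \otimes
        \frac{\measure\rvert_{B(y,\rho)}}{\measure(B(y,\rho))}
    },
\end{equation*}
a Borel probability measure on $\testCurves\del{[0,1];X}$.

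Then I would check $\mu_\rho\in\testMeasuresNonTrivial^{(\measure)}$: as $B(x,\rho)\cap B(y,\rho)=\emptyset$, $V(\gamma_u^v)=\metric\del{u,v}>0$ on the support of the product measure, so $\mu_\rho$ lives on $\testCurvesNonTrivial$; for each $t$ the map $e_t\circ\Phi$ is the first or second coordinate projection according as $t<1/2$ or $t\ge1/2$, so $(e_t)_\#\mu_\rho$ is a normalized restriction of $\measure$ and $(e_t)_\#\mu_\rho\le C_\rho\measure$ with $C_\rho=\max\del{\measure(B(x,\rho))^{-1},\measure(B(y,\rho))^{-1}}$; and $\int V\,d\mu_\rho=\fint_{B(x,\rho)}\fint_{B(y,\rho)}\metric\del{u,v}\,d\measure(v)\,d\measure(u)\le\metric\del{x,y}+2\rho<\infty$. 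Applying the definition of $\measure$-upper $S$-gradient to $\mu_\rho$, and using $e_0\circ\Phi=\mathrm{pr}_1$, $e_1\circ\Phi=\mathrm{pr}_2$, the pushforward change-of-variables, Example \ref{prz325} (so that $\sint{\gamma_u^v}{g}=\tfrac12(g(u)+g(v))\metric\del{u,v}$), and Theorem \ref{lem::hyper_envaluations_are_Borel} for the relevant measurability, gives
\begin{equation*}
    \fint_{B(x,\rho)}\fint_{B(y,\rho)}\abs{f(v)-f(u)}\,d\measure(v)\,d\measure(u)
    \le
    \fint_{B(x,\rho)}\fint_{B(y,\rho)}\frac{g(u)+g(v)}{2}\,\metric\del{u,v}\,d\measure(v)\,d\measure(u).
\end{equation*}

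Finally I would let $\rho\to0^+$. On the left, $\abs{f(v)-f(u)}\ge\abs{f(y)-f(x)}-\abs{f(v)-f(y)}-\abs{f(u)-f(x)}$, so the left side is at least $\abs{f(y)-f(x)}-\fint_{B(y,\rho)}\abs{f(v)-f(y)}\,d\measure(v)-\fint_{B(x,\rho)}\abs{f(u)-f(x)}\,d\measure(u)$, both correction terms vanishing in the limit since $x,y\in S$. On the right, $\metric\del{u,v}\le\metric\del{x,y}+2\rho$ and the Lebesgue point property of $g$ at $x$ and at $y$ yield convergence to $\tfrac12(g(x)+g(y))\metric\del{x,y}$. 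Hence $\abs{f(y)-f(x)}\le\tfrac12(g(x)+g(y))\metric\del{x,y}$ for all $x,y\in S$, i.e.\ $g/2$ is a Hajłasz gradient of $f$, with exceptional null set $X\setminus S$. The main obstacle is producing and justifying the test measures $\mu_\rho$ --- above all Borel measurability of $\Phi$ and the simultaneous domination of every $(e_t)_\#\mu_\rho$ by one multiple of $\measure$ --- after which the blow-up limit is routine.
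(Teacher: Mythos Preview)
Your proposal is correct and follows essentially the same route as the paper: part (a) is reduced to Theorem \ref{dos} plus Remark \ref{r}, and for part (b) you push forward the product of normalized ball restrictions through the two-point jump map, verify membership in $\testMeasuresNonTrivial^{(\measure)}$, apply the $\measure$-upper $S$-gradient inequality, and let the radius shrink via Lebesgue differentiation. The only cosmetic difference is that the paper takes the slightly shorter path of bounding $\bigl|\fint_{B(x,r)}f-\fint_{B(y,r)}f\bigr|$ and using $\fint_{B(z,r)}f\to f(z)$ directly, whereas you bound the double average of $|f(v)-f(u)|$ from below via the triangle inequality and the $L^1$-Lebesgue point property; both are valid.
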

\begin{proof}
    \vspace{-1em}
    \begin{enumerate}[label=(\alph*),itemindent=2em,leftmargin=0em, itemsep=0.5em]
        \item 
        By Theorem \ref{prop:almost_everywhere_finite_Hajlasz_gradients_are_upper_gradietnts} there are Borel maps 
         $f'\colon X \rightarrow \mathbb{R}$ and $g'\colon X \rightarrow [0,\infty]$ such that $f = f'$ and $g=g'$ $\measure$-almost everywhere and $76 g'$ is an upper S-gradient of $f'$.
        Therefore, by Remark \ref{r} we have $76 g'$ is an $\measure$-upper S-gradient of $f'$.
        \item 
        Under these conditions the Lebesgue differentiation theorem is satisfied \parencite[Theorem 1.8]{lectures-analysis-metric}. Fix $x, y \in X$ such that $x \neq y$ and $r \in (0, \metric\del{x, y}/2)$. We then define Borel measures $\mu_{x,r}$ and $\mu_{y,r}$ as follows
        \[
            \mu_{x,r}(A) := \fint_{ B(x, r) } \indicator{ A } \ \mathrm{d}\measure
            \quad \text{ and } \quad 
            \mu_{y,r}(A) := \fint_{ B(y, r) } \indicator{ A } \ \mathrm{d}\measure.
        \]
        Next, let $\mu'$ be a Borel probability measure on $X \times X$ defined by
        \[
            \mu' =\mu_{x,r} \otimes \mu_{y,r}.
        \]
        In particular, for Borel sets $A, C \subseteq X$ we have
        \begin{equation*}
          \mu'(A \times C)
            = \mu_{x,r}(A) \mu_{y,r} (C).
        \end{equation*}
        Next, for $w, z \in X$ we define function $\gamma_w^z \colon [0,1] \to X$ by
        the formula
        \begin{equation*}
            \forall t \in [0,1]
            \qquad 
            \gamma_w^z (t) 
            \coloneqq 
            w \indicator{ [0,1/2 ) }(t)
            + z \indicator{ \intcc[0]{ 1/2, 1} }(t).
        \end{equation*}
        By Example  \ref{e1}, $\gamma_w^z \in \testCurves\del{ [0,1] ; X}$. Therefore, $h \colon X^2 \to \testCurves\del{
            [0,1]; X
        }$ defined by $h(w, z) \coloneqq \gamma_w^z$
        for all $w, z \in X$ is well-defined. Moreover, it is continuous, hence Borel.
        Now, let $\mu \coloneqq h_\# \mu'$. 
        Then $\mu$ is a Borel probability measure on 
        $\testCurves\del{
            [0,1]; X
        }$.

        Let us note that for all $t \in [0,1]$ and every Borel set $A \subset X$ we have
        \begin{equation*}
           (e_t)_\# \mu (A)
            =
            \mu_{x,r}(A)
            \indicator{ [0, 1/2 ) }(t)
            +
            \mu_{y,r}(A)
            \indicator{ \intcc[0]{ 1/2, 1} }(t)
            \le 
            C \measure(A),
        \end{equation*} 
        where
        $
            C 
            =  
            \max\del{ 
                \measure\del{ B(x, r) }^{-1} ,
                \measure\del{ B(y, r) }^{-1}
            }
        $.
        Moreover,
        \begin{multline*}
            \integral{
                \testCurves\del{ [0,1]; X}
            }{
                V(\gamma)
            }{
                \mu(\gamma)
            }
            =
            \integral{
                \testCurves\del{ [0,1]; X}
            }{
                V(\gamma)
            }{
                h_\#(\mu')(\gamma)
            }
            \\
            =
            \integral{X^2}{
                \metric\del{
                    w, z
                }
            }{
                \mu'(w, z)
            }
            \le 
            \integral{X^2}{
                r + \metric\del{x, y} + r
            }{
                \mu'(w, z)
            }
            \\
            =
            2r + \metric\del{x, y}             < \infty.
        \end{multline*}
        Therefore, $\mu \in  \mathcal{P}^{(\measure)}(X )$. Furthermore, we have
        \[
          \mu(\testCurves\del{ [0,1]; X } \setminus \testCurvesNonTrivial )= \mu'(\Delta),
        \]
        where $\Delta =\set{(z,z) \in X^2 \given z \in X}$. 
        Now, recall that $r < \metric\del{x, y}/2$, so
        $\Delta \subseteq X^2 \setminus B(x,r) \times B(y,r)$, hence
        \begin{align*}
            \mu'( \Delta )
            &\le 
            \mu'\del{
                X^2 \setminus B(x,r) \times B(y,r)
            }            \\
            &=\mu_{x,r}(X)\mu_{y,r}(X) - \mu_{x,r}(B(x,r))\mu_{y,r}(B(y,r))
                        \\
            &=
            1-1 = 0.
        \end{align*} 
         In this way we have $\mu \in \testMeasuresNonTrivial^{(\measure)}$.     
        
        Now, since $g$ is an $\measure$-upper S-gradient of $f$, 
        we have
        \begin{align*}
            \abs{
                \fint_{ B(x, r) } f \ \mathrm{d}\measure 
                - \fint_{ B(y, r) } f \ \mathrm{d}\measure 
            }
            &=
            \abs{
                \integral{X^2}{ f(w) }{ \mu'(w,z)} 
                - \integral{X^2}{ f(z) }{ \mu'(w,z)} 
            }
            \\
            &\le 
            \integral{X^2}{
                \abs{
                 f(w) - f(z) }
                 } { \mu'(w,z)} 
            \\
            &=
            \integral{
               \testCurves\del{ [0,1]; X}
            }{
                \abs{
                    f(\gamma(1)) - f(\gamma(0))
                }
            }{
                h_\#(\mu')(\gamma)
            }
            \\
            &=
            \integral{
                \testCurves\del{ [0,1]; X}
            }{
                \abs{
                    f(\gamma(1)) - f(\gamma(0))
                }
            }{
                \mu(\gamma)
            }
            \\
            &\le 
            \integral{
                \testCurves\del{ [0,1]; X}
            }{
                \enskip \sint{ \gamma }{ g } \enskip 
            }{
                \mu( \gamma)
            }
            \\
            &=
            \integral{
                \testCurves\del{ [0,1]; X}
            }{
                \enskip \sint{ \gamma }{ g } \enskip 
            }{
                h_\# \mu'( \gamma)
            }
            \\
            &=
            \integral{
                 X^2
            }{
                \frac{
                    g(w) + g(z)                
                }{2}
                \metric\del{
                    w, z
                }
            }{
                \mu'(w, z)
            }
            \\
            &=
            \integral{
                 X^2
            }{
                \frac{ g(w) }{ 2}
                \metric\del{
                    w, z
                }
            }{
                \mu'(w, z)
            }
            +
            \integral{
                X^2
            }{
                \frac{ g(z) }{ 2}
                \metric\del{
                    w, z
                }
            }{
                \mu'(w, z)
            }
            \\
            &\le 
            \integral{
                X^2
            }{
                \frac{ g(w) }{ 2}
                \del{
                    r + \metric\del{x, y} + r
                }
            }{
                \mu'(w, z)
            }
            +
            \integral{
                 X^2
            }{
                \frac{ g(z) }{ 2}
                \del{ r + \metric\del{x, y} + r}
            }{
                \mu'(w, z)
            } 
            \\
            &=
            \frac{ 2r + \metric\del{x, y} }{2}
            \del{
               \fint_{ B(x, r) } g \ \mathrm{d}\measure
               + \fint_{ B(y, r) } g \ \mathrm{d}\measure
            }.
        \end{align*}
        Therefore,
        \begin{equation*}
            \abs{
                \fint_{ B(y, r) } f \ \mathrm{d}\measure 
                - \fint_{ B(x, r) } f \ \mathrm{d}\measure 
            }
            \le 
            \frac{\del{ 2r + \metric\del{x, y} }}{2}
            \del{
               \fint_{ B(x, r) } g \ \mathrm{d}\measure
               + \fint_{ B(y, r) } g \ \mathrm{d}\measure
            }.
        \end{equation*}
        The above inequality is true for any $x, y \in X$ and $r \in (0, \metric\del{x, y}/2)$.
        Since $\mu$ is Borel regular and doubling, 
        and $f, g \in L^1_{\mathrm{loc}}(\measure)$,
        for $\measure$-almost all $z \in X$ we have
        \begin{equation*}
            \fint_{ B(z, \eps) } f \ \mathrm{d}\measure
            \xrightarrow{ \eps \to 0^+}
            f(z)
            \quad \text{ and } \quad 
            \fint_{ B(z, \eps) } g \ \mathrm{d}\measure
            \xrightarrow{ \eps \to 0^+}
            g(z).
        \end{equation*}
        Therefore, by passing to the limit $r \to 0^+$ for $\measure$-almost all $x, y \in X$ we have
        \begin{equation*}
            \abs{
               f(x) - f(y)
            }
            \le 
            \frac{\metric\del{x, y}}{2}
            \del{ g(x) + g(y) }
        \end{equation*}
        and $g/2$ is a Hajłasz gradient of $f$.
    \end{enumerate}
\end{proof}
Finally, let us make a comparison between the various definitions of First Order Sobolev spaces.%
\begin{theorem}\label{thm::final_comparison}
    Let $(X, \metricAlone)$ be a metric space $\measure$ be a Borel measure and $p\in [1, \infty)$, then:
    \begin{enumerate}
        \item 
        $
           \mathcal{N}_{\hat{TC}}^{1,p}(X, \metricAlone, \measure) \hookrightarrow  N^{1,p}_{TC}(X, \metricAlone, \measure)
            \stackrel{{\sim}}{\hookrightarrow}
            M^{1,p}(X, \metricAlone, \measure)
        $\footnote{Here, symbol $\stackrel{\sim}{\hookrightarrow}$ is used to indicatate that the ``embedding'' is  not necessarily injective; 
        however, its kernel consists of equivalence classes in which functions are equal to $0$ $\measure$-almost everywhere}     
        and
        $
           \mathcal{N}_{\hat{TC}}^{1,p}(X, \metricAlone, \measure) \hookrightarrow  
            M^{1,p}(X, \metricAlone, \measure),
        $

        \item 
       
        If $\measure$ is Borel regular, then
        \begin{equation*}
           \mathcal{N}_{\hat{TC}}^{1,p}(X, \metricAlone, \measure) \hookrightarrow  N^{1,p}_{TC}(X, \metricAlone, \measure)
            \hookrightarrow
            M^{1,p}(X, \metricAlone, \measure),
         \end{equation*}
                \item 
        If $\measure$ is $\sigma$-finite and Borel regular, then
        \begin{equation*}
            \mathcal{N}_{\hat{TC}}^{1,p}(X), \metricAlone, \measure) \cong  N^{1,p}_{TC}(X, \metricAlone, \measure) \cong M^{1,p}(X, \metricAlone, \measure)
             \hookrightarrow G^{1,p}(X, \metricAlone, \measure),
         \end{equation*}
        \item 
        If $\measure$ is doubling and Borel regular, then
        \[
            \mathcal{N}_{\hat{TC}}^{1,p}(X), \metricAlone, \measure) \cong  N^{1,p}_{TC}(X, \metricAlone, \measure) \cong M^{1,p}(X, \metricAlone, \measure)
             \cong G^{1,p}(X, \metricAlone, \measure).
        \]
    \end{enumerate}
\end{theorem}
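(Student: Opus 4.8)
The plan is to deduce Theorem~\ref{thm::final_comparison} entirely by assembling results already proved, so the work is organizational rather than computational. The backbone is the chain of linear maps
$\mathcal{N}_{\hat{TC}}^{1,p}(X)\to N^{1,p}_{TC}(X)\to M^{1,p}(X)\to G^{1,p}(X)$,
each of which I implement by sending a function to itself, or to a Borel representative of itself; I will check in turn that each map descends to the quotient spaces, is bounded, and --- depending on the hypotheses on $\measure$ --- is injective or bijective. Here $\cong$ will be an isomorphism of normed spaces with equivalent norms (the constants $\tfrac12$ and $76$ from Theorems~\ref{hn} and~\ref{thm::comparison_with_Gigli} prevent it from being isometric in general).

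First I would set up the two universal arrows. For $\mathcal{N}_{\hat{TC}}^{1,p}(X)\to N^{1,p}_{TC}(X)$: by Theorem~\ref{lem:comparison_between_various_upper_gradients} the generalized $p$-weak upper $S$-gradients and the $p$-weak upper $S$-gradients of a Borel function coincide, so the inclusion $\tilde{\mathcal{N}}_{\hat{TC}}^{1,p}(X)\subseteq\newtonianTCPre^{1,p}(X)$ preserves the seminorms and hence induces a well-defined, isometric, injective map of the quotients. For $N^{1,p}_{TC}(X)\to M^{1,p}(X)$: Theorem~\ref{hn}(a) turns a $p$-weak upper $S$-gradient $g$ into the Hajłasz gradient $g/2$, giving $\|f\|_{M^{1,p}}\le\|f\|_{N^{1,p}_{TC}}$; the map is well defined because $\|f-f'\|_{N^{1,p}_{TC}}=0$ forces $f=f'$ $\measure$-a.e., and its kernel is precisely the classes represented by $\measure$-a.e.\ zero functions, which is the content of $\stackrel{\sim}{\hookrightarrow}$. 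Composing and using Corollary~\ref{cor::TC_hat_Newtonian_0_ae} (a Borel, $\measure$-a.e.\ zero function has zero $\mathcal{N}_{\hat{TC}}^{1,p}$-seminorm) shows $\mathcal{N}_{\hat{TC}}^{1,p}(X)\hookrightarrow M^{1,p}(X)$ is injective, which finishes part~(1). For part~(2), Borel regularity lets me apply Proposition~\ref{prop::when_ae_0_gives_0_in_Newtonian}: a $\measure$-a.e.\ zero element of $\newtonianTCPre^{1,p}(X)$ has $0$ as a $p$-weak upper $S$-gradient, so its $N^{1,p}_{TC}$-seminorm vanishes and $N^{1,p}_{TC}(X)\hookrightarrow M^{1,p}(X)$ becomes injective.

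For part~(3), with $\measure$ $\sigma$-finite and Borel regular, $M^{1,p}(X)\cong N^{1,p}_{TC}(X)$ is exactly Theorem~\ref{ro}. Surjectivity of $\mathcal{N}_{\hat{TC}}^{1,p}(X)\to N^{1,p}_{TC}(X)$ follows because, given $f\in\newtonianTCPre^{1,p}(X)$ with $p$-weak upper $S$-gradient $g\in L^p$, Theorem~\ref{hn} produces Borel $\tilde f,\tilde g$ equal $\measure$-a.e.\ to $f$ and $g/2$ with $76\tilde g$ an upper $S$-gradient --- hence a generalized $p$-weak upper $S$-gradient, by Theorem~\ref{lem:comparison_between_various_upper_gradients} --- of $\tilde f$; thus $\tilde f\in\tilde{\mathcal{N}}_{\hat{TC}}^{1,p}(X)$, and $[\tilde f]=[f]$ in $N^{1,p}_{TC}(X)$ by Proposition~\ref{prop::when_ae_0_gives_0_in_Newtonian}. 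Combined with the injectivity from~(1) this gives $\mathcal{N}_{\hat{TC}}^{1,p}(X)\cong N^{1,p}_{TC}(X)$. The last arrow $M^{1,p}(X)\hookrightarrow G^{1,p}(X)$ is Theorem~\ref{thm::comparison_with_Gigli}(a) (which needs $\sigma$-finiteness): a Hajłasz gradient $g$ of $f$ gives Borel $f',g'$, equal $\measure$-a.e.\ to $f,g$, with $76g'$ an $\measure$-upper $S$-gradient of $f'\in\tilde G^{1,p}(X)$; the induced map $[f]\mapsto[f']$ is well defined and injective by Proposition~\ref{prop::f=0_ae_and_in_Gigli_means_it_is_0_in_Gigli}, and bounded with constant $76$.

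For part~(4) I first observe that a doubling $\measure$ is automatically $\sigma$-finite, since $X=\bigcup_{n}B(x_0,n)$ and each ball has finite measure; hence all of~(3) applies, and it only remains to invert $M^{1,p}(X)\hookrightarrow G^{1,p}(X)$. Because balls have finite measure we have $L^p(\measure)\subseteq L^1_{\mathrm{loc}}(\measure)$ (Hölder on balls), so Theorem~\ref{thm::comparison_with_Gigli}(b) applies: an $\measure$-upper $S$-gradient $g$ of a Borel $f\in\tilde G^{1,p}(X)$ gives the Hajłasz gradient $g/2$, whence $f\in M^{1,p}(X)$ with a norm bound, and the resulting map $G^{1,p}(X)\to M^{1,p}(X)$ is a two-sided inverse --- up to the passage to Borel representatives, absorbed by Proposition~\ref{prop::f=0_ae_and_in_Gigli_means_it_is_0_in_Gigli} --- of the map from~(3). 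This yields $M^{1,p}(X)\cong G^{1,p}(X)$, and so the whole quadruple is isomorphic. The only delicate steps throughout are the repeated passages to Borel representatives and the verifications that these set-level constructions descend to the four quotient (semi)norms; these are handled uniformly by the ``$\measure$-a.e.\ zero implies zero (semi)norm'' statements quoted above, and I expect tracking which arrows are injective under which hypothesis --- rather than any new analytic difficulty --- to be the main point requiring care.
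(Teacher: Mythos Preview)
Your proposal is correct and follows essentially the same approach as the paper: both assemble the chain of embeddings by citing Theorem~\ref{lem:comparison_between_various_upper_gradients}, Corollary~\ref{cor::TC_hat_Newtonian_0_ae}, Theorem~\ref{hn}, Proposition~\ref{prop::when_ae_0_gives_0_in_Newtonian}, Theorem~\ref{ro}, Theorem~\ref{thm::comparison_with_Gigli}, and Proposition~\ref{prop::f=0_ae_and_in_Gigli_means_it_is_0_in_Gigli} at exactly the same points. Your write-up is in fact more explicit than the paper's about why the maps descend to the quotients and why doubling implies $\sigma$-finiteness, but the underlying argument is identical.
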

\begin{proof}
    \vspace{-1em}
    \begin{enumerate}[itemindent=2em,leftmargin=0em, itemsep=0.5em]
        \item 
            By Theorem \ref{lem:comparison_between_various_upper_gradients} and Corollary \ref{cor::TC_hat_Newtonian_0_ae} we have $\mathcal{N}_{\hat{TC}}^{1,p}(X, \metricAlone, \measure) \hookrightarrow  N^{1,p}_{TC}(X, \metricAlone, \measure)$ and by Theorem \ref{hn} we have $N^{1,p}_{TC}(X, \metricAlone, \measure)
                   \stackrel{{\sim}}{\hookrightarrow}
                    M^{1,p}(X, \metricAlone, \measure)$.
            We have $\mathcal{N}_{\hat{TC}}^{1,p}(X, \metricAlone, \measure) \hookrightarrow M^{1,p}(X, \metricAlone, \measure)$ since by Corollary \ref{cor::TC_hat_Newtonian_0_ae} the composition of the previous two embeddings has a trivial kernel, and hence is injective.

        \item 
            follows from $(1)$ and Proposition \ref{prop::when_ae_0_gives_0_in_Newtonian}.

        \item 
            By Theorem \ref{ro} we have $ N^{1,p}_{TC}(X, \metricAlone, \measure) \cong M^{1,p}(X, \metricAlone, \measure)$. Theorem \ref{hn}, Theorem \ref{lem:comparison_between_various_upper_gradients} and Corollary \ref{cor::TC_hat_Newtonian_0_ae} give 
            $M^{1,p}(X, \metricAlone, \measure) \hookrightarrow \mathcal{N}_{\hat{TC}}^{1,p}(X, \metricAlone, \measure)$, and by Theorem \ref{thm::comparison_with_Gigli} 
            and Proposition \ref{prop::f=0_ae_and_in_Gigli_means_it_is_0_in_Gigli}
            we obtain $ M^{1,p}(X, \metricAlone, \measure)
                     \hookrightarrow G^{1,p}(X, \metricAlone, \measure)$.

        \item  
            It follows from $(3)$, Theorem \ref{thm::comparison_with_Gigli} and Proposition \ref{prop::f=0_ae_and_in_Gigli_means_it_is_0_in_Gigli}.
    \end{enumerate}
\end{proof}

{\bf Acknowledgement} We wish to thank Timo Schultz for an insightful question that led to an improvement in the paper.
\medskip

%
%
\printbibliography
\bigskip
{\small Przemys{\l}aw  G\'orka}\\
\small{Faculty of Mathematics and Information Science,}\\
\small{Warsaw University of Technology,}\\
\small{Pl. Politechniki 1, 00-661 Warsaw, Poland} \\
{\tt przemyslaw.gorka@pw.edu.pl}\\
\\

{\small Kacper Kurowski}\\
\small{Faculty Mathematics and Information Science,}\\
\small{Warsaw University of Technology,}\\
\small{Pl. Politechniki 1, 00-661 Warsaw, Poland} \\
{\tt kacper.kurowski.dokt@pw.edu.pl}\\
\end{document}